   \def\R{\mathbb{R}}
   \def\N{\mathbb{N}}
   \def\e{{\varepsilon}}
   \def\D{{\nabla}}
   \def\vi{{\varphi}}
   \def\cA{{\mathcal A}}
   \def\cB{{\mathcal B}}
   \def\cC{{\mathcal C}}
   \def\cD{{\mathcal D}}
   \def\cF{{\mathcal F}}
   \def\cL{{\mathcal L}}
   \def\cM{{\mathcal M}}
   \def\cN{{\mathcal D}}
   \def\cS{{\mathcal S}}
   \def\const{\mathop{\rm const}\nolimits}
   \def\loc{\mathop{\rm loc}\nolimits}
   \def\rad{\mathop{\rm rad}\nolimits}
\newcommand{\beq}{\begin{equation}}
\newcommand{\eeq}{\end{equation}}
\newcommand{\tf}{{T\!F}}
\newcommand{\ch}{{\mathscr{C}}}
\renewcommand{\P}{{\mathcal P}}
\newcommand{\n}{\nabla}
\theoremstyle{definition}
\newtheorem{df}{Definition}[section]
\theoremstyle{remark}
\newtheorem{rem}[df]{Remark}
\theoremstyle{plain}
\newtheorem{prop}[df]{Proposition}
\newtheorem{lemma}[df]{Lemma}
\newtheorem{claim}[df]{Claim}
\newtheorem{teo}[df]{Theorem}
\newtheorem{cor}[df]{Corollary}
\numberwithin{figure}{section}
\numberwithin{equation}{section}
\begin{document}
\hfill{\small\sc ET-0253A-23}\vspace{5em}


\title[Normalised solutions of the GPP equation]{Normalised solutions and limit profiles\\ of the defocusing Gross--Pitaevskii--Poisson equation}

\author{Riccardo Molle}
\address{Dipartimento di Matematica, Universit\`a di Roma ``Tor Vergata'',
Via della Ricerca Scientifica n. 1, 00133 Roma, Italy}
\email{molle@mat.uniroma2.it}
\author{Vitaly Moroz}
\address{Mathematics Department, Swansea University,
Fabian Way, Swansea SA1 8EN, Wales, UK}
\email{v.moroz@swansea.ac.uk}
\author{Giuseppe Riey}
\address{Dipartimento di Matematica e Informatica, Universit\`a della Calabria,
Via P. Bucci 31B, 87036 Rende (CS), Italy}
\email{giuseppe.riey@unical.it}

\begin{abstract}
We consider normalised solutions of the stationary Gross--Pitaevskii--Poisson (GPP) equation with a defocusing local nonlinear term,
$$-\Delta u+\lambda u+|u|^2u =(I_\alpha*|u|^2)u\quad\text{in $\mathbb R^3$},\qquad\int_{\mathbb R^3}u^2dx=\rho^2,$$
where $\rho^2>0$ is the prescribed mass of the solutions, $\lambda\in\mathbb R$ is an a-priori unknown Lagrange multiplier, and $I_\alpha(x)=A_\alpha|x|^{3-\alpha}$ is the Riesz potential of order $\alpha\in(0,3)$.
When $\alpha=2$ this problem appears in the models of self--gravitating Bose--Einstein condensates, which were proposed in cosmology and astrophysics to describe Cold Dark Matter and Boson Stars.

We establish the existence of branches of normalised solutions to the GPP equation, paying special attention to the shape of the associated mass--energy relation curves and to the limit profiles of solutions at the endpoints of these curves. The main novelty in this work is in the derivation of sharp asymptotic estimates on the mass--energy curves. These estimates allow us to show that after  appropriate rescalings, the constructed normalized solutions converge either to a ground state of the Choquard equation or to a compactly supported radial ground state of the integral Thomas--Fermi equation. The behaviour of normalized solutions depends sensitively on whether $\alpha$ is greater than, equal to, or less than one.
\end{abstract}

\maketitle

\vspace{3em}
\tableofcontents
\newpage

\section{Introduction}
\settocdepth{section}

\subsection{Background}
Our starting point is the nonlocal nonlinear Schr\"odinger equation
\begin{equation}\label{GPP}
    i\partial_t\psi=-\Delta\psi+a_s|\psi|^2\psi-\big(|x|^{-1}*|\psi|^2\big)\psi, \qquad (t,x)\in \R\times\R^3,
\end{equation}
where $\psi:\R\times\R^3\to\mathbb C$, and $*$ is the convolution in $\R^3$.
Equation \eqref{GPP}, under the name of the Gross--Pitaevskii--Poisson (GPP) equation, was proposed in cosmology as a model to describes the dynamics of the Cold Dark Matter (CDM) made of axions or bosons in the form of the Bose--Einstein Condensate (BEC) at absolute zero temperatures \cite{Wang,Bohmer-Harko,Chavanis-11,Braaten,Eby}.
The nonlocal convolution term in \eqref{GPP} represents the Newtonian gravitational attraction between bosonic particles. The local cubic term accounts for the short--range quantum force self--interaction between bosons, measured by their scattering length $a_s$. The non--self--interacting case $a_s=0$ corresponds to the Schr\"odinger--Newton (Choquard) model of self--gravitating bosons \cite{Ruffini}, which is mathematically well--studied \cite{MvS-survey}. When $a_s<0$ the quantum self--interaction between bosons is focusing/attractive, while for $a_s>0$ the self--interaction is defocusing/repulsive. Introduction of the quantum self--interaction force $a_s\neq 0$ was suggested as a way to solve the small scale problems of the CDM cosmological model, such as the ``missing satellite'' problem, and the ``too big to fail'' problem (see surveys  \cite{Chavanis-15,Lee,Paredes}). Similar models appear in the literature under the names Ultralight Axion Dark Matter, and Fuzzy Dark Matter, see \cite{Lee,Braaten} for a history survey.

The standard standing--wave ansatz $\psi(t,x):=e^{i\lambda t}u(x)$ in \eqref{GPP} leads to the stationary GPP equation
\begin{equation}\label{GPP-0}
-\Delta u+\lambda u+a_s|u|^2u =\big(|x|^{-1}*|u|^2\big)u\quad\text{ in $\R^3$},
\end{equation}
where $u:\R^3\to\R$, and $\lambda\in \R$ is the frequency of the standing--wave $\psi$.
The {\em mass}
$$
\cM_\psi(t):=\int_{\R^3}|\psi(t,x)|^2dx,
$$
and the {\em energy}
$$
\cF_\psi(t):=\frac12\int_{\R^3}|\nabla\psi(t,x)|^2dx+\frac{a_s}{4}\int_{\R^3}|\psi(t,x)|^4dx-
\frac{1}{4}\iint_{\R^3\times\R^3}\frac{|\psi(t,x)|^2|\psi(t,y)|^2}{|x-y|}\, dx\, dy
$$
are conserved quantities in \eqref{GPP}.
Therefore, physically it is relevant to look for {\em normalised} stationary solutions $u_\rho$ of \eqref{GPP-0} with a prescribed mass
$$\rho^2:=\int_{\R^3}u_\rho^2dx,$$
and to analyse the {\em mass--energy} relation
$$\rho\;\mapsto\; \cF_{\psi_\rho}$$
of the normalised solutions, where $\psi_\rho(t,x):=e^{i\lambda t}u_\rho(x)$.
In this work we are interested in the normalised stationary solutions of \eqref{GPP-0} in the defocusing case $a_s>0$.
The competition between the local and non--local terms of opposite signs but of the same cubic order of nonlinearity in \eqref{GPP-0} creates interesting and nontrivial mathematical effects.
In view of the scale invariance of \eqref{GPP-0}, without loss of generality and up the unknown frequency $\lambda$, we may assume $a_s=1$.

\subsection{Framework}
Mathematically, it is meaningful to include \eqref{GPP-0} into a family of equations
\begin{equation}\label{GPP-alpha}
    -\Delta u+\lambda u+|u|^2u =(I_\alpha*|u|^2)u\quad\text{in $\R^3$}
\end{equation}
where the nonlocal term is given by the Riesz potential $I_\alpha(x):=A_\alpha|x|^{\alpha-3}$ of a variable order $\alpha\in(0,3)$. The normalisation constant $A_\alpha:=\frac{\Gamma((3-\alpha)/2)}{\pi^{3/2}2^{\alpha}\Gamma(\alpha/2)}$
ensures that $I_\alpha(x)$ could be interpreted as the Green function of the fractional Laplacian $(-\Delta)^{\alpha/2}$ in $\R^3$, and that the semigroup property $I_{\alpha+\beta}=I_\alpha*I_\beta$ holds for all $\alpha,\beta\in(0,3)$ such that $\alpha+\beta<3$, see for example \cite{Landkof}. Recall  that as $\alpha\to 0$, the Riesz potential $I_\alpha$ vaguely converges to the Dirac mass $\delta_0$ \cite{Landkof,Seok}. In particular, we note that as $\alpha=0$ the nonlinear terms cancel out and equation \eqref{GPP-alpha} formally reduces to the linear Helmholtz equation $-\Delta u+\lambda u=0$ in $\R^3$,
which has no $H^1$--solutions for any $\lambda\in\R$.

While our motivation to introduce fractional order Riesz potentials is purely mathematical, the fractional Riesz potentials may become relevant in the Modified Newtonian Dynamics (MOND) models, where modified gravitational interactions of order $\alpha\to 3$ were proposed at large galactical scales \cite{Giusti,MOND}.
Normalised  solutions of \eqref{GPP-alpha} with a prescribed mass $\rho^2>0$ in the full admissible range $\alpha\in(0,3)$ will be the main object of our study in this work.

Equation \eqref{GPP-alpha} has a variational structure, and the corresponding energy is given by
\begin{equation*}
    F(u):=\frac{1}{2}\int_{\R^3} |\D u|^2dx+\frac{1}{4}\int_{\R^3} |u|^4dx-\frac{1}{4}\cN(u),
\end{equation*}
where, and in what follows, we denote by
$$\cN(u):=\int_{\R^3}(I_\alpha*|u|^2)\, |u|^2\, dx= A_\alpha \iint_{\R^3\times\R^3}\frac{|u (x)|^2\, |u (y)|^2}{|x-y|^{3-\alpha}}\, dx\, dy$$
the nonlocal interaction energy term.
By the Hardy--Littlewood--Sobolev (HLS) inequality \eqref{HLS}, the nonlocal term $\cN(u)$ is well-defined for all $u\in L^\frac{12}{3+\alpha}(\R^3)$.
As a consequence, it is standard to show that the energy $F$ is of class $\mathcal C^1$ on $H^1(\R^3)$ (cf. \cite[Section 4.1]{LZVM}), and that for a given $\rho>0$, critical points of $F$ subject to the constraint
$$
\cS_\rho:=\left\{u\in H^1(\R^3)\ :\ \int_{\R^3} |u|^2\, dx=\rho^2\right\}
$$
correspond to the weak solutions $u$ of the problem
\beq
\label{P}\tag{$N_\rho$}
-\Delta u+\lambda u+|u|^2u =(I_\alpha*|u|^2)u,\quad u\in \cS_\rho,{ \quad \lambda\in\R},
\eeq
where $\lambda=\lambda_u\in\R$ is an a-priori unknown Lagrange multiplier. It turns out that \eqref{P} has  no sufficiently regular solutions for $\lambda_u\le 0$, see Proposition \ref{abc-rem}; and no nonnegative solutions for $\lambda_u<0$, see Lemma \ref{l-minus}.
Thus in what follows we always assume $\lambda_u>0$.
We refer to solutions of \eqref{P} as  {\em normalised solutions} of the GPP equation with a mass $\rho^2>0$.

Vice--versa, every nonzero weak solution $u$ of the fixed frequency problem
\beq
\label{Plambda}\tag{$P_\lambda$}
-\Delta u+\lambda u+|u|^2u =(I_\alpha*|u|^2)u,\quad u\in H^1(\R^3),
\eeq
with a given $\lambda>0$ corresponds to a normalised solution of \eqref{P} with an unknown mass  $\rho^2=\int|u|^2 >0$.
A {\em ground state} of $(P_\lambda)$ is a solution that has the minimal energy amongst all nontrivial solutions of $(P_\lambda)$. 
The problem \eqref{Plambda} with a prescribed parameter $\lambda\ge 0$
was recently studied amongst a more general class of Choquard type equations in \cite{LZVM}.
In particular, the following was proved in \cite[Theorem 1.1]{LZVM}.

\begin{teo}\label{thmA}
    Let $\alpha\in(0,3)$. If $\lambda=0$ then problem \eqref{Plambda} has no  solutions $u$ with $\D u\in H^1_{\loc}(\R^3)$.
    For each $\lambda>0$, equation \eqref{Plambda} admits a positive radially symmetric ground state $u_\lambda$. Moreover, every positive radially symmetric solution $u$ of \eqref{Plambda} is smooth, monotone decreasing and there exists $C_\lambda>0$ such that
    \begin{equation}\label{eq-asy}
        \lim_{|x| \to \infty} u_\lambda(x) |x|\exp \int_{\nu_\lambda}^{|x|} \sqrt{\lambda - \frac{A_\alpha\|u_\lambda\|_2^2}{s^{3-\alpha}}\,} \,ds=C_\lambda,\quad\text{where $\nu_\lambda:=\big(A_\alpha\|u_\lambda\|_2^2\lambda^{-1}\big)^\frac{1}{3 - \alpha} $}.
    \end{equation}
\end{teo}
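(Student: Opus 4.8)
The three assertions require different tools, so I would establish them in turn. \emph{Nonexistence at $\lambda=0$.} The plan is to combine the Nehari and Pohozaev identities. Testing the equation with $\lambda=0$ against $u$ gives $\int_{\R^3}|\nabla u|^2+\int_{\R^3}|u|^4=\cN(u)$, while dilating $u\mapsto u(\cdot/t)$, under which the three energy terms scale as $t$, $t^3$ and $t^{3+\alpha}$, yields the Pohozaev identity $\tfrac12\int_{\R^3}|\nabla u|^2+\tfrac34\int_{\R^3}|u|^4=\tfrac{3+\alpha}{4}\cN(u)$; the hypothesis $\nabla u\in H^1_{\loc}(\R^3)$ is exactly what makes the integration by parts behind this identity legitimate. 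Eliminating $\cN(u)$ between the two relations produces $(1+\alpha)\int_{\R^3}|\nabla u|^2+\alpha\int_{\R^3}|u|^4=0$, and since $\alpha\in(0,3)$ both coefficients are strictly positive, forcing $u\equiv0$.

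\emph{Existence of a positive radial ground state for $\lambda>0$.} I would minimise the fixed-frequency energy $F_\lambda(u):=\tfrac12\int_{\R^3}|\nabla u|^2+\tfrac\lambda2\|u\|_2^2+\tfrac14\|u\|_4^4-\tfrac14\cN(u)$ over the Nehari manifold $\mathcal N=\{u\neq0:\|u\|_{H^1_\lambda}^2+\|u\|_4^4=\cN(u)\}$, where $\|u\|_{H^1_\lambda}^2:=\int_{\R^3}|\nabla u|^2+\lambda\|u\|_2^2$. Two structural facts make this attractive: on $\mathcal N$ the energy simplifies to $F_\lambda(u)=\tfrac14\|u\|_{H^1_\lambda}^2$, and, writing $G(u):=\langle F_\lambda'(u),u\rangle$, the constraint is natural because $\langle G'(u),u\rangle=2(\|u\|_4^4-\cN(u))<0$ on $\mathcal N$. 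The manifold is nonempty by dilation, since $\cN(u(\cdot/\sigma))/\|u(\cdot/\sigma)\|_4^4=\sigma^\alpha\,\cN(u)/\|u\|_4^4\to\infty$, so every $u$ can be rescaled to satisfy $\cN>\|\cdot\|_4^4$ and then projected onto $\mathcal N$ along its ray. By Riesz rearrangement (which increases $\cN$) together with Polya--Szego (which does not increase $\|\nabla\cdot\|_2$ while preserving $\|\cdot\|_2$ and $\|\cdot\|_4$), the infimum is attained within the radially decreasing functions. Compactness, the crux of this step, is supplied by the Strauss lemma, by which $H^1_{\mathrm{rad}}(\R^3)$ embeds compactly into $L^p(\R^3)$ for $2<p<6$: since $\tfrac{12}{3+\alpha}\in(2,4)$, a bounded radial minimising sequence converges strongly in $L^4$ and in $L^{12/(3+\alpha)}$, so $\cN$ passes to the limit, the limit is nontrivial (because $\inf_{\mathcal N}F_\lambda>0$) and realises the minimum. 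Replacing $u$ by $|u|$, the strong maximum principle gives positivity and elliptic bootstrap gives smoothness.

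\emph{Asymptotic profile.} This is the delicate part. First I would fix the far field of the nonlocal potential: the exponential decay of $u$ (a crude comparison argument, valid once $I_\alpha*u^2\to0$ and $\lambda>0$) together with a Taylor expansion of the Riesz kernel, whose first-order term is killed by radial symmetry, give $I_\alpha*u^2(x)=A_\alpha\|u\|_2^2|x|^{\alpha-3}\bigl(1+O(|x|^{-2})\bigr)$. Setting $w(r):=r\,u(r)$ converts the radial equation into the exact one-dimensional problem $w''=W(r)w$ with $W(r)=\lambda+u^2-I_\alpha*u^2=\lambda-A_\alpha\|u\|_2^2 r^{\alpha-3}+O(r^{\alpha-5})$, so that $W(r)\to\lambda>0$ and the zero of its leading symbol $\lambda-A_\alpha\|u\|_2^2 s^{\alpha-3}$ is precisely $\nu_\lambda$. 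Since $w=ru\to0$ it is the recessive solution of this ODE, and the Liouville--Green (WKB) approximation gives $w(r)\sim C\,W(r)^{-1/4}\exp\!\bigl(-\int^r\sqrt{W}\bigr)$; because $W(r)^{-1/4}\to\lambda^{-1/4}$ and $W-(\lambda-A_\alpha\|u\|_2^2 s^{\alpha-3})=O(r^{\alpha-5})$ is integrable at infinity, replacing $\sqrt W$ by $\sqrt{\lambda-A_\alpha\|u\|_2^2 s^{\alpha-3}}$ and dividing by $r$ recasts this exactly as the stated limit, all the absorbed constants being collected into $C_\lambda$. The main obstacle is to make Liouville--Green rigorous with a genuine finite, nonzero limit rather than a mere order of magnitude; this reduces to verifying that the WKB error-control function is finite, which holds because $W'=O(r^{\alpha-4})$ and $W''=O(r^{\alpha-5})$ are integrable at infinity for every $\alpha\in(0,3)$. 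Monotonicity can be read off from the same one-dimensional picture (the recessive solution is eventually decreasing, and a maximum-principle argument propagates this inward), completing the description.
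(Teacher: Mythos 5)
You should first be aware that the paper itself does not prove Theorem \ref{thmA}: it is imported from \cite[Theorem 1.1]{LZVM}, where the ground state is constructed by minimizing $F_\lambda$ over its Poho\v zaev manifold, and the decay law \eqref{eq-asy} is \cite[Proposition 4.3]{LZVM}, proved by comparison (sub/supersolution) arguments in the spirit of \cite[Theorem 4]{MvS}. Judged as a self-contained reconstruction, most of your outline is sound but follows genuinely different routes. Your nonexistence step is exactly the mechanism the present paper uses in Proposition \ref{abc-rem}: combining the Nehari identity \eqref{e-Nehari} with the Poho\v zaev identity \eqref{e-Phz} at $\lambda=0$ yields $(1+\alpha)|\nabla u|_2^2+\alpha|u|_4^4=0$ (the second relation in \eqref{abc-syst-plus}), and $\nabla u\in H^1_{\loc}$ is indeed what legitimizes \eqref{e-Phz}. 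For existence you use the Nehari manifold where \cite{LZVM} uses the Poho\v zaev manifold; your route is available here for a structural reason worth stating explicitly: the local repulsion and the nonlocal attraction are both homogeneous of degree four, so $F_\lambda(tu)=\frac{t^2}{2}\big(|\nabla u|_2^2+\lambda|u|_2^2\big)+\frac{t^4}{4}\big(|u|_4^4-\cN(u)\big)$ has a unique fiber maximum precisely when $\cN(u)>|u|_4^4$, a cone which is nonempty (by dilation, as you note), invariant under Schwarz symmetrization (rearrangement increases $\cN$ and fixes $|u|_4$), and on which the reduced functional $\frac14\big(|\nabla u|_2^2+\lambda|u|_2^2\big)$ does not increase under symmetrization plus reprojection; Strauss compactness then closes the argument. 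In the general-exponent setting of \cite{LZVM} this degree coincidence is absent, which is why they work with the Poho\v zaev manifold; for this specific equation your route is legitimately simpler. Your Liouville--Green treatment of \eqref{eq-asy} is likewise a valid alternative to the comparison arguments of \cite{MvS,LZVM}: the two inputs you isolate, namely $I_\alpha*u^2=A_\alpha|u|_2^2|x|^{\alpha-3}\big(1+O(|x|^{-2})\big)$ (dipole term killed by radial symmetry, tails by the crude exponential bound) and integrability of the error-control terms $(W')^2W^{-5/2}=O(r^{2\alpha-8})$, $W''W^{-3/2}=O(r^{\alpha-5})$, are correct for every $\alpha\in(0,3)$, and boundedness of $w=ru$ forces it to be a multiple of the recessive solution.

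The genuine gap is monotonicity. You assert that the eventual decrease of $u$ can be propagated inward by a maximum principle, but no such principle is available as stated. The natural version of your sketch runs: if $u$ were not monotone, there would be an interior local minimum at some $r_0\ge 0$ followed by a local maximum at some $r_1>r_0$ with $u(r_1)>u(r_0)$; evaluating the radial equation $u''+\frac{2}{r}u'=Wu$, with $W:=\lambda+u^2-I_\alpha\ast u^2$, at these points gives $W(r_0)\ge 0\ge W(r_1)$, and this contradicts
$W(r_1)-W(r_0)=u^2(r_1)-u^2(r_0)+\big[(I_\alpha*u^2)(r_0)-(I_\alpha*u^2)(r_1)\big]>0$
\emph{provided} $I_\alpha*u^2$ is radially nonincreasing. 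That proviso holds for $\alpha\ge 2$ (then $I_\alpha$ is superharmonic, so $I_\alpha*u^2$ is a radial superharmonic function vanishing at infinity, hence nonincreasing; for $\alpha=2$ this is Newton's theorem), but it is false in general for $\alpha<2$: the Riesz potential of a nonnegative radial density need not be radially decreasing (for a spherical shell density and any $\alpha<2$ the potential is strictly larger on the shell than at the origin). Differentiating the equation to get a linear problem for $u'$ does not help either, because the resulting zeroth-order term $W'u$ has no sign without the very monotonicity being proved. So for $\alpha\in(0,2)$ this assertion of the theorem --- one of the genuinely nontrivial points of \cite[Theorem 1.1]{LZVM} --- is not established by your argument; the rest of your outline does not depend on it, but your proof of the theorem as stated is incomplete at exactly this point.
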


The decay estimate \eqref{eq-asy} was established in \cite[Proposition 4.3]{LZVM} (note that the constant $\nu$ on \cite[p.160]{LZVM} has to be modified to $\nu_\lambda$ as in \eqref{eq-asy}). The same estimate holds for the Choquard equation \eqref{C1} below (cf.  \cite[Theorem 4]{MvS}). For a discussion of the implicit exponential decay estimate \eqref{eq-asy} we refer to Remark \ref{r-asy-rho} below, and to \cite[pp.157-158 and 177-178]{MvS} or \cite[Section 6.1]{MvS-JDE}.
\smallskip

The ground state $u_\lambda$ was constructed in \cite{LZVM} via minimization of the energy
$$
    F_\lambda(u):=F(u)+\frac{\lambda}{2}\int_{\R^3} |u|^2dx
$$
over the Poho\v zaev manifold of $F_\lambda$. Additionally, \cite{LZVM} studied the asymptotic profiles of the ground states $u_\lambda$ as $\lambda\to 0$ and $\lambda\to\infty$.
It was proved in \cite[Theorems 2.5, 2.7]{LZVM} and Lemma \ref{l-App-L2} below that:

\begin{itemize}
\item  as $\lambda\to 0$, the rescaled family
$$
w_\lambda(x):=\lambda^{-\frac{2+\alpha}{4}}u_\lambda\big(\lambda^{-1/2}x\big)
$$
converges in $H^1(\R^3)$ to a positive radially symmetric ground state $w_*\in H^1(\R^3)\cap C^2(\R^3)$ of the {\em Choquard equation}
\begin{equation}\label{C1}
    -\Delta w+w=(I_{\alpha}*|w|^2)w\quad\text{in $\R^3$},
\end{equation}
which was studied for $\alpha=2$ in \cite{Lieb} and for $\alpha\in(0,3)$ in \cite{MvS}; see \cite[Theorem 2.4]{LZVM} and Propositions \ref{tCmin}, \ref{pCmin} below.

\item
as $\lambda\to \infty$ and additionally assuming that $\alpha=2$, the rescaled family
$$z_\lambda(x):=\lambda^{-1/2}u_\lambda(x)$$
converges in $L^2\cap L^4(\R^3)$ to the nonnegative radially symmetric compactly supported ground state
$z_*\in C^{0,1/2}(\R^3)$ of the {\em Thomas--Fermi} type integral equation
\begin{equation}\label{eqTF1}
    z+|z|^{2}z=(I_{\alpha}*|z|^2)z\quad\text{in $\R^3$},
\end{equation}
see Proposition \ref{tTF} and discussions below.
Remarkably, since the limit Thomas--Fermi ground state
$z_*$ is compactly supported,
the rescaled ground states $z_\lambda$ develop as $\lambda\to\infty$ a steep ``corner layer'' near the boundary of the support of
$z_*$.
This phenomenon is well-known in astrophysics,
where the radius of support of the limit ground state provides an approximation to the radius
of the astrophysical object \cite{Wang,Bohmer-Harko, Chavanis-11}.
\end{itemize}

\noindent
It is clear that every fixed frequency ground state $u_\lambda$ of $\eqref{Plambda}$ is a normalised solution of \eqref{P}.
However, the {\em frequency--mass} relation of the ground state $u_\lambda$,
$$\lambda\mapsto\rho_\lambda:=\Big(\int_{\R^3}u_\lambda^2\,dx\Big)^{1/2},$$
which by Theorem \ref{thmA} is well defined for every $\lambda>0$, is a-priori unknown and nontrivial. 
We will show below (see Remark \ref{r-noninj}) that ground states of \eqref{Plambda} with different frequencies could have the same mass and different energy. In particular, a ground state of \eqref{Plambda}  may not be necessarily a normalised minimizer of \eqref{P}.
Similar phenomenon was recently observed in the local cubic--quintic NLS \cite{Killip,Lewin,Je21-1,Je21-2}, and in quasilinear Schr\"odinger equations \cite{Je15}.
We are not aware of previous results of this type in the context of nonlocal problem \eqref{P} (or similar NLS with nonlocal interactions). To complicate the matters, the uniqueness of the fixed frequency ground states of \eqref{Plambda} is entirely open.
\smallskip

In this work we use variational methods to establish the existence of branches of normalised solutions  $u_\rho$ to \eqref{P} and study asymptotic behaviour of the normalised solutions along the branches, paying special attention to the shape of the associated {\em mass--energy} $\rho\to F(u_\rho)$ and {\em mass--frequency} $\rho\to\lambda_{u_\rho}$ relation curves and to the limit profiles of solutions at the endpoints of these curves. The main novelty and difficulty in the paper is in the derivation of sharp asymptotic estimates on the mass--energy and mass--frequency relations. These estimates allow us to show that in suitable limit regimes and after  appropriate rescalings, the constructed normalized solutions converge either to a ground state of the Choquard equation or to a compactly supported radial ground state of the integral Thomas--Fermi equation. The structure and properties of normalized solutions depend nontrivially on the value of $\alpha$, with three different scenario for $\alpha>1$, $\alpha=1$ and $\alpha<1$. When $\alpha\ge 1$ the constructed normalised solutions are normalised global minimizers of $F_{|\cS_\rho}$, while for $\alpha<1$ we construct global minimizers, local minimizers and mountain--pass type critical points of $F_{|\cS_\rho}$. The asymptotic analysis of the mountain--pass critical points is particularly challenging and requires us to distingusish between mountain-pass solutions of Type I and Type II (see Remark \ref{rem-MPLI-II}). The precise statements of our results are given in Section 2. In Section 3 we outline some preliminary facts. In Sections 4, 5 and 6 we study the cases $\alpha>1$, $\alpha<1$ and $\alpha=1$ respectively.

\subsection{Notations}

\begin{itemize}
\item
    $C,c,\bar c,\hat c, c_1.\ldots$ denote various constants that can also vary from one line to another.\smallskip

    \item {\em Asymptotic notations.}
    For real valued functions $f(t), g(t) \geq 0$ on $\R_+$, we write:

    \smallskip

    $f(t)\lesssim g(t)$ at 0 (or at $+\infty$) if  $f(t) \le C g(t)$ for small $t$ (or for large $t$);\smallskip

    $f(t)\gtrsim g(t)$ if $g(t)\lesssim f(t)$;\smallskip

    $f(t)\sim g(t)$ if $f(t)\lesssim g(t)$ and $f(t)\gtrsim g(t)$;\smallskip

    $f(t)\simeq g(t)$ if $f(t) \sim g(t)$ and $\lim\frac{f(t)}{g(t)}=1$ as $t\to 0$ (or $t\to\infty$).\smallskip

    \noindent
    We write $f(t)=o(g(t))$ if $ \frac{f(t)}{g(t)}\to 0$ as $t\to 0$ (or $t\to +\infty$).

    \noindent
    Analogous definitions will be used for real valued functions $f(t), g(t) \leq 0$.
    \smallskip

    \item
    $L^p=L^p(\R^3)$ denotes the Lebesgue space on $\R^3$ with $1\leq p<+\infty$.
    The norm in $L^p$ is denoted by $|\cdot|_{p}$.
    If $\Omega\subsetneq\R^3$ we always use the notation $L^p(\Omega)$.
    \smallskip

    \item
    $H^1=H^1(\R^3)$ denotes the Sobolev space on $\R^3$. The standard norm in $H^1$ is denoted by $\|\cdot\|$.
    By $H^1_{\rad}$ we denote the space of radial functions in $H^1$, and
    $$
    \cS_{\rho,\rad}:=\cS_\rho\cap H^1_{\rad}.
    $$

\end{itemize}

\section{Main results}

\subsection{Scaling considerations and limit regimes.}
Before we formulate our main results, we want to introduce several scaling considerations and to describe Choquard and Thomas--Fermi problems, which provide limit profiles for \eqref{P}.

In what follows we are going to construct normalised solutions of \eqref{P} as local minimizers, or more generally, critical points of the energy $F$ subject to constraint on $\cS_\rho$. Denote by
\begin{equation}\label{eq-mrho}
    m_\rho:=\inf_{\cS_\rho}F=\inf\Big\{F(u)\ :\ u\in H^1, \ |u|_2=\rho\Big\}
\end{equation}
the minimal energy level on $\cS_\rho$. Since the energy $F$ contains three different power--like terms that scale at different rates, the relation between $\rho$ and $m_\rho$ is implicit, and we will see below that in some cases $m_\rho$ is not achieved.

On the other hand when an energy includes two terms only, due to the scaling invariance, the mass--energy and mass--frequency relation is an explicit function of the mass. We consider this in the case of {\em Choquard} and {\em Thomas--Fermi} energies.

\subsubsection*{\bf Choquard limit for $\alpha\neq 1$}
Consider the Choquard functional
$$E_\ch(u):=\frac{1}{2}|\nabla u|_2^2-\frac14\cN(u),$$
and the minimization problem
\begin{equation}\label{eqC}
    m^\ch_\rho:=\inf_{\cS_\rho} E_\ch.
\end{equation}
The case $\alpha=1$ is $L^2$--critical for \eqref{eqC} and will be considered separately, see Proposition \ref{pCmin}.

For $\alpha\neq 1$ and $\rho>0$ consider the scaling
\begin{equation}\label{eq-beta1}
    \cS_\rho\ni u(x)\;\mapsto\; w(x):=\rho^{-\frac{\alpha+2}{\alpha-1}} u\big(\rho^{-\frac{2}{\alpha-1}} x\big)\in \cS_1.
\end{equation}
If $\alpha>1$ then
$$m^\ch_\rho=\rho^{2\frac{\alpha+1}{\alpha-1}} m^\ch_1,$$
and if $w_0\in\cS_1$ is a minimizer for $m^\ch_1$ then $u_\rho(x)=\rho^{\frac{\alpha+2}{\alpha-1}} w_0\big(\rho^{\frac{2}{\alpha-1}} x\big)$ is a minimizer for $m^\ch_\rho$.

If $\alpha<1$ then $m^\ch_\rho=-\infty$ for all $\rho>0$ in view of the mass preserving scaling $t^{3/2}u(tx)$, but the same mass--energy scaling consideration applies to the higher energy critical points of $E_\ch$ on $\cS_1$.

Next we observe that if $u\in \cS_\rho$ and $u\mapsto w\in \cS_1$ via rescaling \eqref{eq-beta1} then
\beq\label{Ch-scale1}
F(u)=E_\ch(u)+\tfrac14|u|_4^4
=\rho^{2\frac{\alpha+1}{\alpha-1}}\Big(E_\ch(w)+\tfrac{1}{4}\rho^{\frac{2\alpha}{\alpha-1}}|w|_4^4\Big).
\eeq
This suggests that for $\alpha>1$ and $\rho\to 0$, or $\alpha<1$ and $\rho\to\infty$, the $L^4$--term can be neglected and at least formally, we may consider $\rho^{2\frac{\alpha+1}{\alpha-1}}E_\ch|_{\cS_1}$ as a limit functional for $F|_{\cS_\rho}$.

The following result about the properties of $E_\ch|_{\cS_1}$ is well--known, see \cite{Lieb} for $\alpha=2$, or \cite{Ye,Ye2} for fractional $\alpha\in(0,3)$ and \cite{MvS} for the qualitative properties of the ground states.

\begin{prop}\label{tCmin}
    $(i)$ If $\alpha\in(1,3)$ then $m^\ch_1<0$ and there exists a positive radially symmetric minimizer $w_0\in \cS_1$ such that $E_\ch(w_0)=m^\ch_1$.

    $(ii)$ If $\alpha\in(0,1)$ then $m^\ch_\rho=-\infty$ for all $\rho>0$, and $E_\ch|_{\cS_1}$ has a radially symmetric mountain pass critical point $v_0>0$ such that
    \beq
    \label{N2.2}
    M^\ch_1:=E_\ch(v_0)>0.
    \eeq
\end{prop}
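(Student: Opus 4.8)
The backbone of both parts is the behaviour of $E_\ch$ under the mass--preserving dilation $u_t(x):=t^{3/2}u(tx)$, which keeps $|u_t|_2=|u|_2$ while $|\nabla u_t|_2^2=t^2|\nabla u|_2^2$ and, after the change of variables $\xi=tx,\ \eta=ty$ in the double integral, $\cN(u_t)=t^{3-\alpha}\cN(u)$. Hence
\beq
E_\ch(u_t)=\tfrac12 t^2|\nabla u|_2^2-\tfrac14 t^{3-\alpha}\cN(u),
\eeq
and everything is governed by the competition between the exponents $2$ and $3-\alpha$, i.e.\ by the sign of $\alpha-1$. I would also record the two analytic inequalities controlling $\cN$: by the Hardy--Littlewood--Sobolev inequality $\cN(u)\le C|u|_{12/(3+\alpha)}^4$, and by Gagliardo--Nirenberg interpolation between $L^2$ and $L^6$, $|u|_{12/(3+\alpha)}\le C|\nabla u|_2^{(3-\alpha)/4}|u|_2^{(1+\alpha)/4}$, so that on $\cS_1$ one gets $\cN(u)\le C|\nabla u|_2^{3-\alpha}$. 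I note that $\tfrac{12}{3+\alpha}\in(2,6)$ throughout $\alpha\in(0,3)$, so the compact embedding $H^1_{\rad}(\R^3)\hookrightarrow L^{12/(3+\alpha)}(\R^3)$ will be available.

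For part $(i)$, with $\alpha>1$ one has $3-\alpha<2$. Evaluating the scaling identity at a fixed $u\in\cS_1$ and small $t$, the $t^{3-\alpha}$--term dominates and $E_\ch(u_t)<0$, so $m^\ch_1<0$; while on $\cS_1$ the bound $\cN(u)\le C|\nabla u|_2^{3-\alpha}$ with $3-\alpha<2$ gives $E_\ch(u)\ge\tfrac12|\nabla u|_2^2-C|\nabla u|_2^{3-\alpha}$, which is bounded below and coercive in $|\nabla u|_2$, so every minimising sequence is bounded in $H^1$. To extract a minimiser I would replace a minimising sequence by its symmetric decreasing rearrangement: by P\'olya--Szeg\H{o} $|\nabla u^*|_2\le|\nabla u|_2$ and by the Riesz rearrangement inequality $\cN(u^*)\ge\cN(u)$, so $E_\ch(u^*)\le E_\ch(u)$ and the sequence may be taken radial and decreasing. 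The radial compact embedding then yields a subsequence converging strongly in $L^{12/(3+\alpha)}$, hence $\cN(u_n)\to\cN(u_0)$, while $|\nabla u_0|_2^2\le\liminf|\nabla u_n|_2^2$; thus $E_\ch(u_0)\le m^\ch_1$. The only remaining point is to exclude loss of mass: if $|u_0|_2=\mu\in[0,1)$ then $u_0\in\cS_\mu$ and $E_\ch(u_0)\ge m^\ch_\mu$, whereas the scaling relation $m^\ch_\mu=\mu^{2(\alpha+1)/(\alpha-1)}m^\ch_1$ together with $m^\ch_1<0$ and the positive exponent gives $m^\ch_\mu>m^\ch_1$, a contradiction. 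Hence $\mu=1$, $u_0\in\cS_1$ is a minimiser, and the convergence is strong in $H^1$. Positivity and radial symmetry are inherited from the rearrangement, and strict positivity follows from the strong maximum principle applied to the Euler--Lagrange (Choquard) equation.

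For part $(ii)$, with $\alpha<1$ one has $3-\alpha>2$, so letting $t\to\infty$ in the scaling identity sends $E_\ch(u_t)\to-\infty$ along the admissible curve $u_t\in\cS_\rho$, giving $m^\ch_\rho=-\infty$ for every $\rho>0$. To capture the mountain--pass level I would pass to the fibre map $g_u(t):=E_\ch(u_t)$, which for each $u\in\cS_1$ satisfies $g_u(0)=g_u'(0)=0$, is positive for small $t>0$ (the $t^2$ term dominates) and tends to $-\infty$, hence has a unique maximum at some $t_u>0$. Minimising these maxima defines
\beq
M^\ch_1:=\inf_{u\in\cS_1}\max_{t>0}E_\ch(u_t)=\inf_{\P}E_\ch,\qquad \P:=\Big\{u\in\cS_1:\ |\nabla u|_2^2=\tfrac{3-\alpha}{4}\cN(u)\Big\},
\eeq
the Poho\v zaev/fibre manifold, each fibre meeting $\P$ exactly once at $t=t_u$. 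On $\P$ one computes $E_\ch(u)=\tfrac{1-\alpha}{2(3-\alpha)}|\nabla u|_2^2$, and combining the defining identity of $\P$ with $\cN(u)\le C|\nabla u|_2^{3-\alpha}$ yields $|\nabla u|_2^2\le C'|\nabla u|_2^{3-\alpha}$, i.e.\ $|\nabla u|_2\ge c>0$ on $\P$; therefore $M^\ch_1>0$. Since rearrangement decreases $g_u(t)$ pointwise in $t$, the infimum is unchanged when restricted to $H^1_{\rad}$, so I would take a radial minimising sequence on $\P$, bounded in $H^1$ (the level fixes $|\nabla u_n|_2$ and $|u_n|_2=1$), and use the radial compact embedding into $L^{12/(3+\alpha)}$ to pass to a nontrivial radial limit $v_0$. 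By the principle of symmetric criticality a minimiser over $\P$ is a genuine constrained critical point of $E_\ch|_{\cS_1}$, of mountain--pass (saddle) type precisely because $\P$ is the fibre manifold; as before it may be taken positive.

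The main obstacle in both parts is compactness, since $E_\ch$ is translation invariant and, in $(ii)$, unbounded below. In $(i)$ this is resolved cleanly by the strict monotonicity $\mu\mapsto m^\ch_\mu$ read off from the explicit scaling relation, which rules out both vanishing and dichotomy once one works with radial rearrangements. In $(ii)$ the difficulty is sharper: there is no coercivity and the scaling law degenerates, so the argument rests on the manifold $\P$ supplying a positive lower bound for $|\nabla u|_2$ together with the radial compactness of $\cN$. Verifying that the radial minimising sequence on $\P$ does not lose mass in the limit --- equivalently that the weak limit $v_0$ again lies on $\P$ with $E_\ch(v_0)=M^\ch_1$ rather than on a lower fibre --- is the delicate step, and is where I expect most of the work to concentrate.
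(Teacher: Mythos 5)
The paper itself offers no proof of Proposition \ref{tCmin}: it is quoted as well known, with citations to \cite{Lieb} ($\alpha=2$) and \cite{Ye,Ye2,MvS} for general $\alpha$, so your attempt is really a substitute for the literature argument. For part $(i)$ your proof is correct and essentially complete: the dilation identity, the coercivity from $\cN(u)\le C|\nabla u|_2^{3-\alpha}$ with $3-\alpha<2$, the rearrangement reduction to radial decreasing sequences, the compact embedding $H^1_{\rad}\hookrightarrow L^{12/(3+\alpha)}$, and the exclusion of mass loss via the strict scaling law $m^\ch_\mu=\mu^{2(\alpha+1)/(\alpha-1)}m^\ch_1<0$ is exactly the standard route, and each step is sound.

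Part $(ii)$ contains two genuine gaps. The first you flag yourself: the compactness step (no mass loss, and the limit sitting on $\P$ at level $M^\ch_1$) is not carried out, and it is the heart of the matter. It can be closed within your framework, but you must do it: on $\P$ the level pins down $|\nabla u_n|_2^2\to\tfrac{2(3-\alpha)}{1-\alpha}M^\ch_1$, the radial compact embedding gives $\cN(u_n)\to\cN(v_0)>0$, and then for every fixed $t>0$,
\[
E_\ch\big((v_0)_t\big)=\tfrac12 t^2|\nabla v_0|_2^2-\tfrac14 t^{3-\alpha}\cN(v_0)
\le\liminf_n E_\ch\big((u_n)_t\big)\le\liminf_n\max_{s>0}E_\ch\big((u_n)_s\big)=M^\ch_1 ,
\]
so $\max_{t>0}E_\ch((v_0)_t)\le M^\ch_1$; if $\mu:=|v_0|_2<1$, this maximum also dominates the level at mass $\mu$, which by the same scaling law as in $(i)$ (now with negative exponent, since $\alpha<1$) equals $\mu^{-2\frac{1+\alpha}{1-\alpha}}M^\ch_1>M^\ch_1$ --- a contradiction; hence $\mu=1$ and the rescaling of $v_0$ to its fibre maximum minimises $E_\ch$ on $\P$. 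The second gap is an actual error rather than an omission: you invoke ``the principle of symmetric criticality'' to conclude that a minimiser over $\P$ is a critical point of $E_\ch|_{\cS_1}$. Palais' principle only upgrades critical points of $E_\ch|_{\cS_{1,\rad}}$ to critical points of $E_\ch|_{\cS_1}$; it says nothing about why a minimiser on the Poho\v zaev set is a constrained critical point in the first place. That is the natural-constraint property of $\P$, and it needs a separate argument: either a Lagrange-multiplier computation showing that the multiplier attached to the constraint $|\nabla u|_2^2=\tfrac{3-\alpha}{4}\cN(u)$ vanishes, or a deformation argument of Ghoussoub type --- precisely the tool (\cite[Theorem 3.2]{Ghou}) the paper itself relies on when it runs the analogous type-II mountain-pass construction for $F$ in Section \ref{sMPL-II}. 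Note finally that the cited literature route (\cite{Ye}, cf.\ Remark \ref{r-GN}) sidesteps both difficulties at once by minimising the scale-invariant Gagliardo--Nirenberg quotient $S_\alpha$, where dilations and amplitude multiplications are free, and then rescaling a minimiser so that it solves \eqref{Ch-01}; that is a cleaner path to $(ii)$ than the fibre-manifold minimisation you set up.
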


Recall also that every critical point $w$ of $E_\ch|_{\cS_1}$ at an energy level $\mu\in\R$ is a  solution of the Choquard equation
\begin{equation}\label{Ch-01}
-\Delta w+\lambda_\ch w=(I_\alpha*w^2\big)w, \quad w\in \cS_1,
\end{equation}
where $\lambda_\ch>0$ is a Lagrange multiplier (that depends on $w$), and
\beq
\label{DR}
\lambda_\ch=\frac{2(1+\alpha)}{1-\alpha}\mu,\quad |\nabla w|_2^2=\frac{2(3-\alpha)}{1-\alpha}\mu,\quad  \cN(w)=\frac{8}{1-\alpha} \mu.
\eeq
Every positive solution of \eqref{Ch-01} is smooth, strictly monotone decreasing and has exponential decay at infinity \cite{MvS}.

\begin{rem}\label{r-GN}
    Alternatively, one can look for mininimizers of the Gagliardo-Nirenberg quotient
    \beq
    \label{N2.3}
    S_\alpha:=\inf_{u\in H^1(\R^3\setminus\{0\})}\frac{|\D u|_2^{3-\alpha}\,|u|_2^{1+\alpha}}{\cD(u)},
    \eeq
    which is invariant w.r.t. dilations and scalar multiplications. It is well known (cf. \cite{Ye}) that $S_\alpha$ is achieved for any $\alpha\in(0,3)$, minimizers for \eqref{N2.3} satisfy the Euler--Lagrange equation
    \begin{equation}\label{Ch-GN}
        -\frac{3-\alpha}{|\D u|_2^2}\Delta u+ \frac{1+\alpha}{|u|_2^2}u=\frac{4}{\cD(u)}(I_\alpha*u^2\big)u.
    \end{equation}
 and normalised solutions $w_0$ and $v_0$ constructed in Proposition \ref{tCmin} are amongst the minimizers for $S_\alpha$ (cf. \cite[Proposition 2.1]{MvS} or \cite{Ye}). Using the relations
\eqref{DR}, we realise that \eqref{Ch-GN} reduces to \eqref{Ch-01} if $u=v_0$ or $u=w_0$.
\end{rem}

\begin{rem}
\label{UC}
The uniqueness of the positive normalised minimizer of \eqref{Ch-01} for $\alpha=2$ was proved in \cite{Lieb}. For general $\alpha$ the uniqueness is an open problem. Several partial results are known, cf. \cite{Seok} where the uniqueness is proved for $\alpha\to 0$ and $\alpha\to 3$.
\end{rem}

\subsubsection*{\bf Thomas--Fermi limit}
Consider the Thomas--Fermi functional
$$
E_\tf(u):=\frac{1}{4}|u|_4^4-\frac14\cN(u)
$$
and the minimization problem
\begin{equation}\label{eqTF2}
m^\tf_\rho:=\inf\Big\{E_\tf(u)\ :\ u\in L^2\cap L^4, \ |u|_2=\rho\Big\},
\end{equation}
and observe that
$\cN(u)\le c_\alpha\rho^{\frac{4}{3}\alpha} \,|u|_4^{4-\frac{4}{3}\alpha}$ for all
$u\in \cS_\rho$ in view of \eqref{stima-CB},
so that the minimization problem is well defined and $m^\tf_\rho>-\infty$ for all $\rho>0$.

It is clear from the elementary scaling
\begin{equation}\label{eq-betaTF}
\cS_\rho\ni u(x)\;\mapsto\; z(x):=\rho^{-1} u\big(x\big)\in \cS_1
\end{equation}
that
$$
m^{T\!F}_\rho=\rho^4 m^{T\!F}_1,
$$
and that if
$z_*$ is a minimizer for $m^{T\!F}_1$ then
$\rho z_*$ is a minimizer for $m^{T\!F}_\rho$, for any $\rho>0$.

Next observe that if $u\in \cS_\rho$ and
$z=\rho^{-1}u\in \cS_1$ then
$$
F(u)=\tfrac{1}{2}|\nabla u|_2^2+E_\tf(u)=\rho^4\Big(E_\tf(z)+\tfrac{1}{2}\rho^{-2}|\nabla z|_2^2\Big),
$$
which suggest that for $\rho\to\infty$ the gradient term can be neglected and at least formally, we may consider
$\rho^{4}E_\tf|_{\cS_1}$ as a limit functional for $F|_{\cS_\rho}$, and to expect that
$$
m_\rho\simeq \rho^4 m^\tf_1.
$$
The minimization problem $m^\tf_1$ is well-understood.
The following result for $\alpha=2$ is classical and goes back to \cite{Auchmuty-Beals}, see also \cite{Lions-81}.
For general $\R^N$ and fractional range $\alpha\in(0,N)$ see \cite[Section 3]{Carrillo-CalcVar}
and further references therein.
The uniqueness for general $\alpha\le 2$ is due to \cite[Lemma 5.2 and Proposition 5.13]{Volzone}
while for $\alpha>2$ the uniqueness is due to \cite{Carrillo-unique}.

\begin{prop}\label{tTF}
    For every $\alpha\in(0,3)$, $m^\tf_1<0$ and there exists a unique nonnegative minimizer
    $z_\infty$ for $m^\tf_1$.
    The minimizer
    $z_\infty$ is radially symmetric and radially nonincreasing. Moreover,
    $z_\infty\in C^{0,1/2}$,
    $I_\alpha*z_\infty^2\in C^{0,1}$,
    $\mathrm{supp}(z_\infty)=\bar B_{R_\infty}$ for some $R_\infty<\infty$ and
    $z_\infty$ is smooth inside the support.
    The minimizer $z_\infty$ satisfies the Thomas--Fermi type integral equation
    \begin{equation}
    \label{TF-rho-r}
    \big(|z_\infty|^2-I_\alpha*|z_\infty|^2\big)z_\infty=4m^\tf_1 z_\infty \quad\text{in $\R^3$}.
    \end{equation}
    Moreover,
    $|z_\infty|_4^4=-4\frac{3-\alpha}{\alpha}m^\tf_1$,
    $\cN(z_\infty)=-\frac{12}{\alpha} m^\tf_1$.
\end{prop}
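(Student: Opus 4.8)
The plan is to treat $m^\tf_1$ as a classical Thomas--Fermi minimisation problem and to extract every listed property from the variational structure, organised in four stages.

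\emph{Stage 1: well--posedness and sign of $m^\tf_1$.} Using the $L^2$--preserving dilation $u_t(x):=t^{3/2}u(tx)$ one computes $|u_t|_4^4=t^3|u|_4^4$ and $\cN(u_t)=t^{3-\alpha}\cN(u)$, so that
$$E_\tf(u_t)=\tfrac14 t^3|u|_4^4-\tfrac14 t^{3-\alpha}\cN(u).$$
Since $3-\alpha<3$ and $\cN(u)>0$ for $u\not\equiv0$, the nonlocal term dominates as $t\to0^+$ and $E_\tf(u_t)<0$ for small $t$, whence $m^\tf_1<0$. Boundedness from below is exactly \eqref{stima-CB}: from $\cN(u)\le c_\alpha|u|_4^{\,4-\frac43\alpha}$ on $\cS_1$ and $4-\frac43\alpha<4$ one gets $E_\tf(u)\ge\frac14|u|_4^4-\frac{c_\alpha}{4}|u|_4^{\,4-\frac43\alpha}$, which is coercive in $|u|_4$; hence $m^\tf_1>-\infty$ and any minimising sequence is bounded in $L^4$.

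\emph{Stage 2: existence, symmetry, monotonicity.} Replacing $u$ by $|u|$ leaves $E_\tf$ unchanged, and symmetric decreasing rearrangement preserves $|u|_2,|u|_4$ while not decreasing $\cN(u)$ (Riesz rearrangement inequality); so I may assume a minimising sequence $\{z_n\}$ consists of nonnegative radial radially nonincreasing functions, bounded in $L^2\cap L^4$. Vanishing is excluded because $|z_n|_4\to0$ would force $\cN(z_n)\to0$ and $E_\tf(z_n)\to0>m^\tf_1$; dichotomy is excluded by the strict binding inequality $m^\tf_\rho<m^\tf_{\rho_1}+m^\tf_{\rho_2}$ for $\rho^2=\rho_1^2+\rho_2^2$, which is immediate from $m^\tf_\rho=\rho^4 m^\tf_1$ and $m^\tf_1<0$. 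Concentration--compactness (or, exploiting monotonicity, a Helly pointwise extraction together with the uniform radial bound $z_n(r)\lesssim r^{-3/2}$ and the subadditivity above to forbid mass escaping to infinity) then yields a nonnegative radial nonincreasing minimiser $z_\infty\in\cS_1$.

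\emph{Stage 3: Euler--Lagrange relation and moment identities.} It is cleanest to pass to the density $f:=z_\infty^2\ge0$, for which $E_\tf=\frac14\int f^2-\frac14\langle I_\alpha*f,f\rangle$ is minimised under $\int f=1$, $f\ge0$. Writing $\Phi:=I_\alpha*z_\infty^2$, the Karush--Kuhn--Tucker conditions give a multiplier $\ell$ with $z_\infty^2=\Phi-\ell$ where $z_\infty>0$ and $\Phi\le\ell$ where $z_\infty=0$, i.e. $z_\infty^2=(\Phi-\ell)_+$; pairing this relation with $z_\infty$ identifies the multiplier as $\ell=-4m^\tf_1$, which is precisely \eqref{TF-rho-r}. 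The moment identities follow from the scaling identity $\frac{d}{dt}E_\tf\big(t^{3/2}z_\infty(t\,\cdot)\big)\big|_{t=1}=0$, namely $3|z_\infty|_4^4=(3-\alpha)\cN(z_\infty)$; combining this with $|z_\infty|_4^4-\cN(z_\infty)=4m^\tf_1$ gives $\cN(z_\infty)=-\frac{12}{\alpha}m^\tf_1$ and $|z_\infty|_4^4=-\frac{4(3-\alpha)}{\alpha}m^\tf_1$.

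\emph{Stage 4: regularity, compact support, and uniqueness.} Boundedness $z_\infty\in L^\infty$ follows by bootstrapping $z_\infty^2\le\Phi=I_\alpha*z_\infty^2$ through the $L^p$--mapping properties of $I_\alpha$, starting from $z_\infty^2\in L^1\cap L^2$. Then standard H\"older estimates for Riesz potentials (gaining order $\alpha$ per step, iterated finitely often through $z_\infty^2=(\Phi-\ell)_+$) give $\Phi\in C^{0,1}$, whence $z_\infty=\sqrt{(\Phi-\ell)_+}\in C^{0,1/2}$, and inside $\{\Phi>\ell\}$ the relation $z_\infty^2=\Phi-\ell$ bootstraps to smoothness. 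Compact support is the clean point: since $z_\infty^2\in L^1$ the Riesz potential $\Phi$ tends to $0$ at infinity, so $\{\Phi>\ell\}=\{z_\infty>0\}$ is bounded, and by monotonicity $\supp z_\infty=\bar B_{R_\infty}$ with $R_\infty<\infty$. The genuine obstacle is \emph{uniqueness}: the $f$--functional $\frac14\int f^2-\frac14\langle I_\alpha*f,f\rangle$ is a convex minus a concave term, so neither it nor symmetrisation is by itself strict enough to pin down a single minimiser. Here I would follow the established route — the strict Riesz inequality forces any minimiser to be, up to translation, radial and strictly decreasing on its support, after which uniqueness is obtained from the complementarity relation $z_\infty^2=(\Phi-\ell)_+$ together with a convexity/comparison argument: for $\alpha\le2$ via the rearrangement--based monotonicity of \cite{Volzone}, and for $\alpha>2$ via the displacement--convexity argument of \cite{Carrillo-unique}. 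This is where the value of $\alpha$ dictates different techniques and where the bulk of the difficulty lies.
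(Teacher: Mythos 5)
Your proposal is correct in outline, but note that the paper does not actually prove Proposition \ref{tTF} at all: it is quoted from the literature, with existence/regularity attributed to \cite{Auchmuty-Beals,Lions-81} for $\alpha=2$ and to \cite[Section 3]{Carrillo-CalcVar} for the fractional range, and uniqueness to \cite{Volzone} ($\alpha\le 2$) and \cite{Carrillo-unique} ($\alpha>2$). What you have done is reconstruct the standard proof behind those citations, and your reconstruction is sound: the dilation computation $E_\tf(u_t)=\tfrac14 t^3|u|_4^4-\tfrac14 t^{3-\alpha}\cN(u)$ giving $m^\tf_1<0$, coercivity from \eqref{stima-CB}, rearrangement plus strict subadditivity $(\rho_1^2+\rho_2^2)^2>\rho_1^4+\rho_2^4$ for compactness, the identification $\ell=-4m^\tf_1$ of the multiplier, the moment identities from the fiber derivative combined with $|z_\infty|_4^4-\cN(z_\infty)=4m^\tf_1$, and the compact-support argument from $I_\alpha*z_\infty^2\to 0$ at infinity together with $\ell>0$ are all correct. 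Your deferral of uniqueness to \cite{Volzone} and \cite{Carrillo-unique} matches exactly what the paper does. One structural difference worth pointing out: you derive the Euler--Lagrange relation by passing to the density $f=z_\infty^2$ under the constraint $f\ge 0$ and invoking KKT, which yields the obstacle form $f=(I_\alpha*f-\ell)_+$, i.e.\ \eqref{eqTFsupp}, directly; the paper's remark after the proposition observes that working with $z$ itself in $L^2\cap L^4$ one needs no sign constraint, so \eqref{TF-rho-r} follows from a plain Lagrange multiplier computation (pairing with $z_\infty$ gives $\mu=|z_\infty|_4^4-\cN(z_\infty)=4m^\tf_1$), with \eqref{eqTFsupp} then obtained as an equivalent reformulation. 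Your route requires the extra KKT justification but buys the complementarity form immediately, which is precisely what your compact-support and regularity steps consume; the paper's route is lighter on the Euler--Lagrange side but must then argue the equivalence with \eqref{eqTFsupp}. Either way the content is the same, and your sketch is an acceptable stand-in for the citations the paper relies on.
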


\begin{rem}
    Note that in most of the previous works the problem \eqref{eqTF2} is formulated in $L^1\cap L^2$
    in terms of the unknown function
    $\phi=|z|^2\ge 0$, e.g.
    $$
    4m^\tf_1=\inf\Big\{|\phi|_2^2-\cN(\sqrt{\phi})\ :\  0\le \phi\in L^1\cap L^2, \ |\phi|_1=1\Big\},
    $$
    which is equivalent to \eqref{eqTF2} from the point of view of the minimization process.
    In view of the connection to \eqref{P} for us it is convenient to formulate \eqref{eqTF2} in terms of
    $z$ in $L^2\cap L^4$, see also \cite[Sections 2.5 and 6]{LZVM}.
    The  difference with the $L^1\cap L^2$ formulation is that the Euler--Lagrange equation \eqref{TF-rho-r} is written differently.
    Note however that \eqref{TF-rho-r} is straightforward to deduce since we do not need to introduce the
    $\{z\ge 0\}$ constraint in \eqref{eqTF2}.
    Since
    $E_\tf(z)=E_\tf(|z|)$,
    the existence of a nonnegative minimizer is ensured a-posteriori by considering the absolute value of any minimizer.
    The compact support of the minimizer is controlled in \eqref{TF-rho-r} by the additional multiplier
    $z_\infty$, which is significant only outside of the support of $z_\infty$.
    If we set $\phi_\infty:=|z_\infty|^2$ then \eqref{TF-rho-r}   can be seen to be  equivalent to
    \begin{equation}\label{eqTFsupp}
    \phi_\infty=\big(I_\alpha*\phi_\infty+4m^\tf_1\big)_+ \quad\text{in $\R^3$},
    \end{equation}
    which is precisely the classical Thomas--Fermi equation for $E_\tf(\sqrt{\phi})$.
    Note that while  \eqref{eqTFsupp} has a linear structure inside the support of
    $z_\infty$, this is not a linear equation but a free boundary problem, since the radius of the support is a part of the unknown.
\end{rem}

\begin{rem}
One more formal rescaling possibility is to consider the {\em defocusing Gross--Pitaevskii functional}
$$E_{GP}(u):=\frac{1}{2}|\nabla u|_2^2+\frac{1}{4}|u|_4^4.$$
The relevant rescaling is
\begin{equation}\label{eq-beta3}
\cS_\rho\ni u(x)\;\mapsto\; w(x):=\rho^2 u\big(\rho^2 x\big)\in \cS_1,
\end{equation}
which formally leads to a representation
$$F(u)=\rho^{-2}E_{GP}(w)-\tfrac14\rho^{2(\alpha-1)}\cN(w)=\rho^{-2}\Big(E_{GP}(w)-\tfrac14\rho^{2\alpha}\cN(w)\Big).$$
However $E_{GP}$ has no critical points on $\cS_1$ and the defocusing Gross--Pitaevskii functional does not bring any useful information into our considerations.
\end{rem}

\subsection{Main results}
Now we are in a position to formulate our main results.
First of all we consider the case $\alpha\in(1,3)$, which includes the astrophysical scenario $\alpha=2$.

  \begin{teo}
  \label{T11}
  Assume $\alpha\in(1,3)$ and $\rho>0$.
  Then $m_\rho<0$ and the map $\rho\mapsto m_\rho$ is strictly decreasing and strictly concave.
  For every $\rho>0$ there exists a positive radially symmetric and monotone decreasing normalised solution $u_\rho$ of \eqref{P}, which is a minimizer of $F|_{\cS_\rho}$, so that $F(u_\rho)=m_\rho$,   and  $u_\rho\in L^1 \cap C^2\cap W^{2,s}$, for every $s>1$.
  In addition, the corresponding to $u_\rho$ Lagrange multiplier $\lambda_\rho>0$, and
  \begin{itemize}
  \item[$(i)$]
  as $\rho\to 0$,
  \beq
  \label{luc}
  m_\rho\simeq m^\ch_1\rho^{2\frac{\alpha+1}{\alpha-1}},\qquad\lambda_\rho \simeq -2\frac{\alpha+1}{\alpha-1} m^\ch_1 \rho^{\frac {4}{\alpha-1}},
  \eeq
  and, for every $\rho_n\to 0$, the rescaled family
 \beq\label{Ch-scale}
 w_{\rho_n}(x):=\rho_n^{-\frac{\alpha+2}{\alpha-1}} u_{\rho_n}\left(\rho_n^{-\frac{2}{\alpha-1}}x\right)
 \eeq
has a subsequence converging in $H^1$ to a positive radially symmetric Choquard minimizer $w_0\in \cS_1$ of $E_{\ch}\big|_{\cS_1}$ such that $E_\ch(w_0)=m^\ch_1$.
  \smallskip

  \item[$(ii)$]
  As $\rho\to \infty$,
  \beq\label{TFasymp}
  m_\rho\simeq m^\tf_1\rho^4, \qquad \lambda_\rho \simeq -4m^\tf_1\rho^2,
  \eeq
  and the rescaled family
  \begin{equation}\label{eq2res}
    z_\rho(x):=\rho^{-1}u_\rho(x)
  \end{equation}
  converges in $L^2$ and $L^4$ to the nonnegative radially symmetric compactly supported Thomas--Fermi minimizer $z_\infty\in C^{0,1/2}$ of
  $E_{\tf}\big|_{\cS_1}$ such that $E_\tf(z_\infty)=m^\tf_1$.
  \end{itemize}
 \end{teo}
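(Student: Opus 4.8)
The plan is to realise $u_\rho$ as a constrained minimiser of $F$ on $\cS_\rho$ and to extract all quantitative information from a handful of scaling identities. First I would check that $F$ is bounded below and coercive on $\cS_\rho$: the estimate \eqref{stima-CB}, $\cN(u)\le c_\alpha\rho^{\frac43\alpha}|u|_4^{4-\frac43\alpha}$, has $L^4$-exponent $4-\frac43\alpha<4$, so Young's inequality absorbs the nonlocal term into $\tfrac14|u|_4^4$ and leaves $F(u)\ge\tfrac12|\nabla u|_2^2+\tfrac18|u|_4^4-C(\rho)$. That $m_\rho<0$ follows from the mass-preserving dilation $u_t(x):=t^{3/2}u(tx)\in\cS_\rho$, for which $F(u_t)=\tfrac12 t^2|\nabla u|_2^2+\tfrac14 t^3|u|_4^4-\tfrac14 t^{3-\alpha}\cN(u)$; since $3-\alpha<2$ for $\alpha>1$, the negative term dominates as $t\to0^+$.

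For the shape of $\rho\mapsto m_\rho$ I would avoid differentiability altogether. Writing every $u\in\cS_{\sqrt\tau}$ as $\sqrt\tau\,w$ with $w\in\cS_1$ gives $g(\tau):=m_{\sqrt\tau}=\inf_{w\in\cS_1}\psi_w(\tau)$, where $\psi_w(\tau)=\tfrac\tau2|\nabla w|_2^2+\tfrac{\tau^2}4\big(|w|_4^4-\cN(w)\big)$. Any $w$ with $|w|_4^4\ge\cN(w)$ gives $\psi_w\ge0>g$, so the infimum is controlled only on $\{|w|_4^4<\cN(w)\}$, where each $\psi_w$ is strictly concave in $\tau$; hence $g$ is concave as an infimum of concave functions. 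Evaluating the same $w$ at $\theta\tau$ with $\theta>1$ yields $g(\theta\tau)<\theta g(\tau)$, because the coefficient of $\tau^2$ is negative on this set; this scaling inequality gives the strict binding inequality $g(\tau_1+\tau_2)<g(\tau_1)+g(\tau_2)$ and, since $g<0$, strict monotonicity of $g$. Finally, as $\rho\mapsto\rho^2$ is strictly convex while $g$ is concave and strictly decreasing, $m_\rho=g(\rho^2)$ is strictly decreasing and strictly concave in $\rho$.

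Existence and qualitative properties come next. By P\'olya--Szeg\H{o} and the Riesz rearrangement inequality one has $F(u^*)\le F(u)$ for the Schwarz symmetrisation, so I may take a radially nonincreasing minimising sequence; it is bounded in $H^1$ and, by the compact embedding $H^1_{\rad}\hookrightarrow L^p$ ($2<p<6$), converges strongly in $L^4$ and in $L^{12/(3+\alpha)}$, so the $L^4$ and $\cN$ terms pass to the limit while $|\nabla\cdot|_2^2$ is weakly lower semicontinuous. The weak limit $u$ is nonzero (otherwise $F\ge0$, contradicting $m_\rho<0$), and if $|u|_2=\bar\rho<\rho$ then $F(u)\ge m_{\bar\rho}>m_\rho$ by strict monotonicity, a contradiction; hence $|u|_2=\rho$ and $u$ is a minimiser. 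Replacing $u$ by $|u|$ and using the strong maximum principle gives $u_\rho>0$ radial nonincreasing; the Lagrange rule produces $\lambda_\rho$ with $-\Delta u_\rho+\lambda_\rho u_\rho+u_\rho^3=(I_\alpha*u_\rho^2)u_\rho$; Proposition \ref{abc-rem} forces $\lambda_\rho>0$; elliptic bootstrap gives $u_\rho\in C^2\cap W^{2,s}$, and the decay of Theorem \ref{thmA} gives $u_\rho\in L^1$. I would also record the identities obtained by testing with $u_\rho$ and by the dilation above,
\begin{equation*}
|\nabla u_\rho|_2^2+\lambda_\rho\rho^2+|u_\rho|_4^4=\cN(u_\rho),\qquad |\nabla u_\rho|_2^2+\tfrac34|u_\rho|_4^4=\tfrac{3-\alpha}4\cN(u_\rho),
\end{equation*}
which combine to $4\lambda_\rho\rho^2=(1+\alpha)\cN(u_\rho)-|u_\rho|_4^4$; this single formula drives both multiplier asymptotics.

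For the limit regimes I would prove matched bounds. When $\rho\to0$ the lower bound is immediate, $F(u)\ge E_\ch(u)\ge m^\ch_\rho=\rho^{2\frac{\alpha+1}{\alpha-1}}m^\ch_1$, and the upper bound follows by testing $m_\rho$ against the scaled minimiser of Proposition \ref{tCmin} through \eqref{eq-beta1}, whose $L^4$-contribution is of higher order; then \eqref{Ch-scale1} forces $E_\ch(w_{\rho_n})\to m^\ch_1$ with the rescaled $L^4$-term vanishing, and the compactness of Choquard minimising sequences underlying Proposition \ref{tCmin} yields $w_{\rho_n}\to w_0$ in $H^1$. Feeding $\cN(u_\rho)=\rho^{2\frac{\alpha+1}{\alpha-1}}\cN(w_\rho)$ together with $\cN(w_0)=\tfrac{8}{1-\alpha}m^\ch_1$ from \eqref{DR} into the multiplier formula produces \eqref{luc}. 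When $\rho\to\infty$ the lower bound is again free, $F(u)\ge E_\tf(u)\ge m^\tf_\rho=\rho^4 m^\tf_1$, but the matching upper bound is the delicate point: the minimiser $z_\infty$ of Proposition \ref{tTF} is only $C^{0,1/2}$ and develops a corner layer, so I must mollify $z_\infty$ into $H^1$ test functions, control the rescaled gradient term $\tfrac12\rho^{-2}|\nabla z|_2^2$, and pass to the limit by a diagonal argument. Then \eqref{eq2res} is a minimising sequence for $E_\tf|_{\cS_1}$ with $\tfrac12\rho^{-2}|\nabla z_\rho|_2^2\to0$, and uniqueness together with the compactness of the Thomas--Fermi minimisation (Proposition \ref{tTF}) gives $z_\rho\to z_\infty$ in $L^2\cap L^4$; inserting $|z_\infty|_4^4=-4\tfrac{3-\alpha}\alpha m^\tf_1$ and $\cN(z_\infty)=-\tfrac{12}\alpha m^\tf_1$ into $4\lambda_\rho\rho^2=\rho^4\big[(1+\alpha)\cN(z_\rho)-|z_\rho|_4^4\big]$ yields \eqref{TFasymp}. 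The main obstacle is precisely this Thomas--Fermi endpoint: the corner-layer upper bound and the strong $L^2\cap L^4$ convergence of $z_\rho$ needed for the sharp constant $-4m^\tf_1$ rest on the fine compactness theory of the Thomas--Fermi problem rather than on soft arguments.
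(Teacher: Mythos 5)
Your overall architecture---constrained minimisation on $\cS_\rho$, the two identities (Nehari plus the dilation identity), matched upper and lower bounds, and rescaled convergence to the Choquard and Thomas--Fermi profiles---matches the paper's (Propositions \ref{Pmin}, \ref{PF}, \ref{ThAs>1}, Lemma \ref{lMin}, Propositions \ref{pTFconv}, \ref{pCconv}), but several of your steps take a genuinely different, and in places cleaner, route. For monotonicity and concavity you work with $g(\tau)=m_{\sqrt\tau}=\inf_{w\in\cS_1}\big[\tfrac\tau2|\nabla w|_2^2+\tfrac{\tau^2}4\big(|w|_4^4-\cN(w)\big)\big]$, an infimum of concave quadratics, \emph{before} proving existence; the paper instead proves Proposition \ref{PF} using the minimizers themselves (via Lemma \ref{CVXLemma}). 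Your ordering pays off in the compactness step: you exclude loss of mass by strict monotonicity of $m_\rho$, whereas the paper runs Ekeland's principle and shows the Lagrange multipliers converge to a positive limit \eqref{eq:lambda>0}, using that to recover $|u_n|_2\to\rho$. Likewise your one-line lower bounds $F\ge E_\ch\ge m^\ch_\rho$ and $F\ge E_\tf\ge m^\tf_\rho$ give the sharp constants immediately, while the paper first proves the rougher bounds \eqref{1933+}, \eqref{1933++} by minimizing $g_1,g_2$ and upgrades them only in Propositions \ref{pTFconv} and \ref{pCconv}. Two small repairs are needed: (a) the strict inequality $g(\theta\tau)<\theta g(\tau)$ requires a uniform positive lower bound on $\cN(w)-|w|_4^4$ along near-minimizers, which you get for free because $\psi_w(\tau)\le g(\tau)/2<0$ forces $\cN(w)-|w|_4^4\ge 2|g(\tau)|/\tau^2$; (b) you invoke Proposition \ref{abc-rem} (which presupposes $\nabla u\in H^1_{\loc}$) before establishing regularity --- positivity of $\lambda_\rho$ should instead come first, from the Nehari identity and $F(u_\rho)=m_\rho<0$ alone as in \eqref{abc-syst-lambda}, after which Proposition \ref{reg} gives the regularity and the decay.

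The one genuine gap is at the Thomas--Fermi endpoint, and you only half-acknowledge it. Proposition \ref{tTF} asserts existence, uniqueness and qualitative properties of $z_\infty$; it contains \emph{no} compactness statement for minimizing sequences of $E_\tf|_{\cS_1}$, so ``uniqueness together with the compactness of the Thomas--Fermi minimisation (Proposition \ref{tTF})'' cites a result that does not exist. Since $E_\tf$ has no gradient term, compactness of the family $z_\rho$ is not soft: the paper's proof of Proposition \ref{pTFconv} establishes it by hand, exploiting that the $z_\rho$ are radially nonincreasing with bounded $L^2\cap L^4$ norms, via the Strauss bound \eqref{rhoStrauss}, Helly's selection principle, dominated convergence in $L^s$ for $s\in(2,4)$ (hence convergence of $\cN$ by Lemma \ref{lemmaConvNonLoc}), the Brezis--Lieb lemma, and finally $m^\tf_1<0$ plus uniqueness to force $|\tilde z_\infty|_2=1$ and $|z_{\rho_n}-z_\infty|_4\to0$. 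Your proposal needs this argument (or an adaptation of \cite{Lions-I}) spelled out. The same remark applies, less severely, to the truncation construction for the upper bound $\tilde m_\rho\le m^\tf_1+o(1)$ when $z_\infty\notin H^1$: your sketch (cut off near the free boundary, renormalise, choose $\rho$ diagonally so that $\rho^{-2}|\nabla z_r|_2^2\to0$) is exactly the paper's argument, but it is stated as a plan rather than carried out.
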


\begin{rem}
If \eqref{Ch-01} has a unique solution $w_0$ (see Remark \ref{UC}), then $(i)$ implies that $w_{\rho}\to w_0$, in $H^1$.
\end{rem}

\begin{rem}\label{rGPP}
    The convergence to the limit Thomas--Fermi profile for large masses in $(ii)$
    is precisely the phenomenon that was already observed for $\alpha=2$ in the astrophysical literature \cite{Wang,Bohmer-Harko}.
    In fact, in the case $\alpha=2$ the Thomas--Fermi energy $m^\tf_1$ and the minimizer
    $z_\infty$ are explicitly known,
    $$m^\tf_1=-\tfrac{3}{16\pi^2},\qquad z_\infty(|x|)=\begin{cases}
    \frac{\sqrt{3}}{2\pi}\sqrt{\frac{\sin(|x|)}{|x|}},&|x|<\pi,\smallskip\\
    0,&|x|\ge\pi,
    \end{cases}
    $$
    see \cite[p.92]{Chandrasekhar} or \cite[Remark 3.1]{LZVM} (see also \cite{Wang,Bohmer-Harko}).
    In this context $z_\infty$ (up to the physical constants)
    is the Thomas--Fermi approximate solution for the self--gravitating BEC,
    as observed in \cite{Wang,Bohmer-Harko,Chavanis-11}.
    The support radius $R_\infty=\pi$ provides an approximate radius of the astrophysical object.
    Note that $z_\infty\not\in H^1$!
\end{rem}

\begin{rem}\label{r-massR}
    The {\em mass--radius} relation is an important quantity in astrophysics which for a given $\theta\in(0,1)$ is defined as the radius of the ball $B_R$ which contains $\theta$ of the total mass $\rho^2$ of the astrophysical object (centered at the origin).
    Using the convergence of the rescaled family \eqref{Ch-scale} to a limit Choquard profile $w_0\in \cS_1$, up to a subsequence, it is simple to derive the asymptotic mass--radius relation for small masses $\rho_n^2\to 0$.
    Indeed, let $R_0>0$ be such that
    $$\int_{B_{R_0}}w_0^2\, dx=\theta.$$
    Then
    $$\int_{B_{R_0\rho_n^{-\frac{2}{\alpha-1}}}}u_{\rho_n}^2\, dx\simeq\theta\rho_n^2,$$
    that gives the mass--radius relation $R_{\rho_n}\simeq R_0\rho_n^{-\frac{2}{\alpha-1}}$ as $\rho_n\to 0$.
    For $\alpha=2$ this is consistent with the asymptotic expression $(93)$ in \cite{Chavanis-11}.   
\end{rem}

\begin{rem}\label{r-asy-rho}
The decay estimate \eqref{eq-asy}, combined with asymptotic estimates on the Lagrange multipliers $\lambda_\rho$ in Theorem \ref{T11} allow to derive decay estimates for the minimizers $u_\rho$ as $|x|\to\infty$.
Recall (see  \cite[Remark 6.1]{MvS-JDE} and \cite[Remark 6.1]{MvS}) that if $\alpha < 2$ then \eqref{eq-asy} gives the same standard exponential decay rate as for the Green function of the Schr\"odinger operator $-\Delta+\lambda_\rho$, i.e.
\beq
u_\rho(x)\simeq c_\rho|x|^{-1}e^{-\sqrt{\lambda_\rho}|x|}.
\eeq
On the other hand, when $\alpha \ge 2$, the effect of the nonlocal term is strong enough to modify the asymptotics of the groundstates. When $\alpha = 2$ (and $A_2=\frac1{4\pi}$) the nonlocal term creates a polynomial correction to the standard exponential asymptotics (\cite[Remark 6.1]{MvS-JDE} and \cite[Remark 6.1]{MvS}),
so that
\beq
u_\rho(x)\simeq c_\rho |x|^\frac{\rho^2}{8\pi\sqrt{\lambda_\rho}} \Big\{|x|^{-1}e^{-\sqrt{\lambda_\rho}|x|}\Big\}.
\eeq
When $\alpha \in (2,\frac52)$ the correction is an exponential (\cite[Remark 6.1]{MvS-JDE} and \cite[Remark 6.1]{MvS}),
\beq
u_\rho(x)\simeq  c_\rho e^{\frac{A_\alpha\rho^2} {2 (\alpha-2)\sqrt{\lambda_\rho}}|x|^{\alpha-2}}
\Big\{|x|^{-1}e^{-\sqrt{\lambda_\rho}|x|}\Big\}.
\eeq
Larger values of $\alpha\in[\frac{5}{2},3)$ could be analysed by taking higher-order Taylor expansions of the square root as in \cite[Remark 6.1]{MvS-JDE}.
\end{rem}

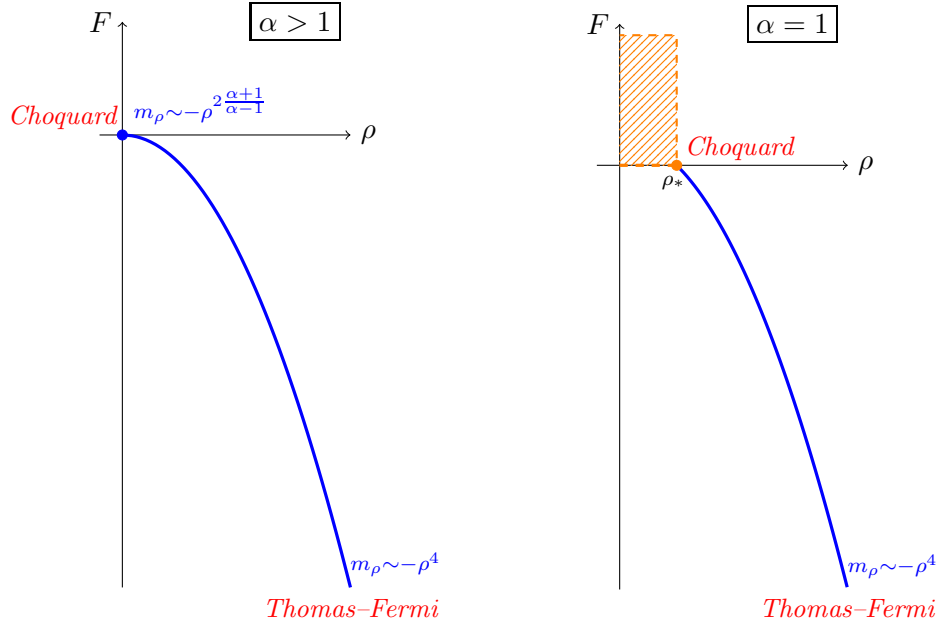
\begin{figure}
    \centering
    \begin{tikzpicture}[scale=1.5]
    \draw[->] (-0.2, 0) -- (2, 0) node[right] {$\rho$};
    \draw[->] (0, -4) -- (0, 1) node[left] {$F$} node[right] {$\qquad\qquad\boxed{\alpha>1}$};
    \draw[scale=1, domain=-0:2, smooth, variable=\x, blue, very thick] plot ({\x}, {-\x*\x})
    node[below] {{\color{red}\small\em Thomas--Fermi}}
    (1.9,-4) node[above right] {${\scriptstyle m_\rho\sim -\rho^{4}}$} (0.05,-0.05) node[above left] {{\color{red}\small\em Choquard}}
    (0,0) node[above right] {${\scriptstyle m_\rho\sim-\rho^{2\frac{\alpha+1}{\alpha-1}}}$};
    \filldraw [blue] (0.0,0.0) circle (1.25pt);
    \end{tikzpicture}
    \qquad\qquad
    \begin{tikzpicture}[scale=1.5]
    \draw[->] (-0.2, 0) -- (2, 0) node[right] {$\rho$};
    \draw[->] (0, -3.75) -- (0, 1.25) node[left] {$F$} node[right] {$\qquad\qquad\boxed{\alpha=1}$};
    \draw[scale=1, domain=0.5:1.995, smooth, variable=\x, blue, very thick] plot ({\x}, {0.25-\x^2})
    node[below] {{\color{red}\small\em Thomas--Fermi}}
    (1.9,-3.75) node[above right] {${\scriptstyle m_\rho\sim -\rho^{4}}$}
    (1.05,-0.05) node[above] {{\color{red}\small\em Choquard}};
    \filldraw [orange] (0.5,0.0) circle (1.25pt) node[below] {{\color{black}${\scriptstyle\rho_*\;}$}};
    \draw[thick,dashed,orange,pattern=north east lines,pattern color=orange] (0,0) -- (0.5,0)--(0.5,1.15)--(0,1.15);
    \draw[scale=1, domain=0:0.5, smooth, variable=\x, orange, very thick, dashed] plot ({\x}, {0*\x});
    \end{tikzpicture}
\caption{Sketch of the global minimum branch $m_\rho$ for $\alpha\in(1,3)$ and $\alpha=1$. Dashed region for $\alpha=1$ indicates nonexistence.}
\end{figure}

The case $\alpha=1$ is $L^2$--critical for the Choquard energy,
that is $E_\ch$ is homogeneous under the mass--preserving scaling $w_t=t^{3/2}w(tx)$:
$$
E_\ch(w_t)=\left(\tfrac{1}{2}|\nabla w|_2^2-\tfrac14\cN(w)\right)\cdot t^2 \qquad \forall t>0.
$$
Let $w_*$ be a $1$-frequency ground state of the Choquard equation, that is a solution of
\beq\label{ChL2}
-\Delta w+w=(I_1*|w|^2)\,w,\quad u\in H^1.
\eeq
The existence of at least one ground state $w_*>0$ of \eqref{ChL2} was proved in \cite{MvS}.
The uniqueness of ground states \eqref{ChL2} is in general open.
Each ground state of \eqref{ChL2} is smooth, positive (or negative), radially symmetric, monotone decreasing and has an exponential decay at infinity \cite{MvS}.
All ground states of \eqref{ChL2} have the same $L^2$-norm (see \cite[Lemma 3.2]{Ye}).
In particular, the quantity
$$\rho_*:=|w_*|_2>0$$
is uniquely defined. The constant $\rho_*$ is related to the optimal constant in the Gagliardo-Nirenberg inequality \eqref{N2.3} with $\alpha=1$, namely
\begin{equation}\label{GNCh}
\frac{\rho_*^{2}}{2}=S_1:=\inf_{0\neq w\in H^1}\frac{|\D w|_2^{2}\,|w|_2^2}{\cN(w)},
\end{equation}
where the infimum is achieved if and only if $w$ is a ground state of \eqref{ChL2}.
The following result is well--known, see \cite[Theorem 1.1]{Ye} for a detailed exposition.

\begin{prop}\label{pCmin}
    If $\alpha=1$ then
    $$m^\ch_\rho=
    \begin{cases} 0,  & 0<\rho\le \rho_*,\\
    -\infty, & \rho>\rho_*,\end{cases}$$
    and each ground state of \eqref{ChL2} is a minimiser for $m^\ch_{\rho_*}$, while $m^\ch_{\rho}$ has no minimisers if $\rho<\rho_*$.
\end{prop}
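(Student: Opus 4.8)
The plan is to combine the sharp Gagliardo--Nirenberg inequality \eqref{GNCh} with the $L^2$--preserving scaling $u_t(x):=t^{3/2}u(tx)$, under which (as recorded in the excerpt) $E_\ch(u_t)=t^2E_\ch(u)$ while $|u_t|_2=|u|_2$. Rewriting \eqref{GNCh} as the pointwise bound $\cN(u)\le \frac{2}{\rho_*^2}|\nabla u|_2^2\,|u|_2^2$, valid for every $u\in H^1$ with equality if and only if $u$ is a ground state of \eqref{ChL2}, gives for $u\in\cS_\rho$ (where $|u|_2^2=\rho^2$) the fundamental estimate
\[
E_\ch(u)=\tfrac12|\nabla u|_2^2-\tfrac14\cN(u)\ \ge\ \tfrac12\Big(1-\tfrac{\rho^2}{\rho_*^2}\Big)|\nabla u|_2^2.
\]
Everything then follows by reading off the sign of the coefficient $1-\rho^2/\rho_*^2$.

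First I would treat the range $0<\rho\le\rho_*$. Here the coefficient is nonnegative, so the estimate yields $E_\ch\ge0$ on $\cS_\rho$ and hence $m^\ch_\rho\ge0$. For the reverse inequality I would fix any $u\in\cS_\rho$ and let $t\to0$ in $E_\ch(u_t)=t^2E_\ch(u)$, so that $\inf_{\cS_\rho}E_\ch\le0$; combining the two bounds gives $m^\ch_\rho=0$. To rule out minimizers when $\rho<\rho_*$, I would note that a minimizer $u$ would satisfy $E_\ch(u)=0$ while the coefficient $1-\rho^2/\rho_*^2$ is strictly positive, forcing $|\nabla u|_2=0$; in $H^1(\R^3)$ this means $u$ is constant, hence $u\equiv0$, contradicting $|u|_2=\rho>0$.

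Next I would verify attainment at the threshold $\rho=\rho_*$. Since $m^\ch_{\rho_*}=0$ by the previous step, it suffices to exhibit a function in $\cS_{\rho_*}$ with zero energy. A ground state $w_*$ of \eqref{ChL2} satisfies $|w_*|_2=\rho_*$, so $w_*\in\cS_{\rho_*}$, and it realises equality in \eqref{GNCh}; therefore $\cN(w_*)=\frac{2}{\rho_*^2}|\nabla w_*|_2^2\rho_*^2=2|\nabla w_*|_2^2$, whence $E_\ch(w_*)=\tfrac12|\nabla w_*|_2^2-\tfrac14\cdot2|\nabla w_*|_2^2=0=m^\ch_{\rho_*}$. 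Thus every ground state of \eqref{ChL2} is a minimizer for $m^\ch_{\rho_*}$.

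Finally, for $\rho>\rho_*$ I would produce a family along which $E_\ch\to-\infty$. Setting $s:=\rho/\rho_*>1$ and $v:=sw_*$, one has $v\in\cS_\rho$ and, using $\cN(w_*)=2|\nabla w_*|_2^2$, $E_\ch(v)=\tfrac{s^2}{2}|\nabla w_*|_2^2(1-s^2)<0$. Applying the mass--preserving scaling to $v$ gives $v_t\in\cS_\rho$ with $E_\ch(v_t)=t^2E_\ch(v)\to-\infty$ as $t\to\infty$, so $m^\ch_\rho=-\infty$. The only genuinely delicate point in the whole argument is the borderline case $\rho=\rho_*$: both the value $m^\ch_{\rho_*}=0$ and its attainment rest entirely on the sharpness of \eqref{GNCh} and on the fact that the optimisers are precisely the ground states of \eqref{ChL2}; once this equality case is granted, the remaining regimes are immediate consequences of the $L^2$--critical scaling.
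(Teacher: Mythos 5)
Your proof is correct. There is, however, no in-paper argument to compare it against: the paper states Proposition \ref{pCmin} as well known and refers to \cite[Theorem 1.1]{Ye} for the proof, and your argument is essentially the standard one underlying that reference. Every ingredient you invoke is recorded in the paper immediately before the proposition: the sharp inequality \eqref{GNCh}, equivalently $\cN(u)\le \frac{2}{\rho_*^2}|\nabla u|_2^2\,|u|_2^2$, together with its equality characterisation; the fact that all ground states of \eqref{ChL2} have the same mass $\rho_*$; and the homogeneity $E_\ch(u_t)=t^2E_\ch(u)$ of the $L^2$--critical case under the mass--preserving scaling $u_t=t^{3/2}u(t\,\cdot)$. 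One minor point of precision: since the quotient in \eqref{GNCh} is invariant under dilations and scalar multiplication, the statement ``equality iff $u$ is a ground state'' should really be read as ``iff $u$ is a rescaled multiple of a ground state''; this does not affect your argument, because for attainment at $\rho=\rho_*$ you only use the direction that ground states achieve equality (which also matches \eqref{1123}), and your non-attainment argument for $\rho<\rho_*$ cleanly avoids the equality case altogether by exploiting the strict positivity of the coefficient $1-\rho^2/\rho_*^2$ to force $|\nabla u|_2=0$.
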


Moreover, if $w_*$ is a $1$-frequency minimizer for $m^\ch_{\rho_*}$ then
\beq
\label{1123}
\cD(w_*)=2|\D w_*|_2^2=2\rho_*^2,
\eeq
$w_{*,t}(x):=t^{3/2}w_*(tx)\in \cS_{\rho_*}$ is also a minimizer for $m^\ch_{\rho_*}$ for every $t>0$, and $w_{*,t}$ satisfies the rescaled equation
\beq\label{ChL2t}
-\Delta w+t^2 w=(I_1*w^2)w,\quad w\in \cS_{\rho_*}.
\eeq
It turns out that the quantity $\rho_*$ controls the behaviour of normalized solutions for \eqref{P}.

\begin{teo}
    \label{T13}
    Assume $\alpha=1$ and $\rho>0$. Let $\rho_*:=|w_*|_2$, where $w_*\in H^1$ is a positive radially symmetric ground state solution of the $L^2$--critical Choquard equation \eqref{ChL2}. Then
    $$m_\rho\;
    \begin{cases} \;=0,  & 0<\rho\le \rho_*,\\
    \;<0, & \rho>\rho_*,\end{cases}$$
    and for $\rho\ge\rho_*$ the map $\rho\mapsto m_\rho$ is strictly decreasing and strictly concave.
    If $\rho\le\rho_*$ then the infimum $m_\rho=\inf_{\cS_\rho}F=0$ is not attained
    and $F$ has no critical points on $\cS_\rho$ such that $\nabla u\in H^1_{loc}$,
    while for every $\rho>\rho_*$ there exists a positive radially symmetric normalised solution
    $u_\rho$ of \eqref{P}, which is a minimizer of $F|_{\cS_\rho}$, so that $F(u_\rho)=m_\rho$.

    In addition, the corresponding to $u_\rho$ Lagrange multiplier $\lambda_\rho>0$, and
    \begin{itemize}
        \item[$(i)$]
        As $\rho\to \rho_*$ and $\rho>\rho_*$,
        $$
        0>m_\rho\ge -C \rho^4\Big(1-\big(\tfrac{\rho_*}{\rho}\big)^2\Big)\gtrsim-(\rho-\rho_*)\to 0,
        \qquad {  0<\lambda_\rho \lesssim} -m_\rho^\frac{2}{3}\to 0,
        $$
        moreover, for every sequence $ \rho_n\to 0$ the rescaled family
        \beq\label{rescaling-lambda}
        v_{\lambda_{\rho_n}}(x)=\lambda_{{\rho_n}}^{-3/4}u_{{\rho_n}}\big(\lambda_{\rho_n}^{-1/2}x\big)
        \eeq
        has a subsequence converging in $H^1$ to $w_*\in \cS_{\rho_*}$ such that $E_\ch(w_*)=0$.
        \smallskip

        \item[$(ii)$] As $\rho\to\infty$, the statement $(ii)$ of Theorem \ref{T11} remains valid.
    \end{itemize}

\end{teo}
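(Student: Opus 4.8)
The plan is to run the argument in four stages: first pin down the sign of $m_\rho$, then establish the shape of the curve $\rho\mapsto m_\rho$ \emph{before} constructing any minimizer, then produce the minimizer, and finally analyse the two endpoints. I would start from the sharp $L^2$-critical Gagliardo--Nirenberg inequality \eqref{GNCh}, which for $u\in\cS_\rho$ reads $\cN(u)\le \tfrac{2\rho^2}{\rho_*^2}|\nabla u|_2^2$, so that
\[
F(u)\ge \tfrac12\bigl(1-\tfrac{\rho^2}{\rho_*^2}\bigr)|\nabla u|_2^2+\tfrac14|u|_4^4 .
\]
For $\rho\le\rho_*$ this gives $F\ge 0$, hence $m_\rho\ge0$; testing with the mass-preserving dilation $u_t(x)=t^{3/2}u(tx)$, for which $F(u_t)=t^2E_\ch(u)+\tfrac{t^3}{4}|u|_4^4\to0$ as $t\to0^+$, yields $m_\rho=0$. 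For $\rho>\rho_*$ I pick $u\in\cS_\rho$ with $E_\ch(u)<0$ (available since $m^\ch_\rho=-\infty$ by Proposition \ref{pCmin}) and note $F(u_t)<0$ for small $t$, so $m_\rho<0$. Combining the critical inequality with the subcritical estimate \eqref{stima-CB} (a convex combination of the two bounds for $\cN(u)$, with the weight chosen to kill the gradient term) produces the lower bound $m_\rho\ge -C\rho^4\bigl(1-(\rho_*/\rho)^2\bigr)$, which shows $m_\rho\to0$ and continuity at $\rho_*$.

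For the shape of the curve, writing $u=\sqrt t\,v$ with $v\in\cS_1$ and $t=\rho^2$ gives
\[
m(t):=m_{\sqrt t}=\inf_{v\in\cS_1}\Bigl[\tfrac t2|\nabla v|_2^2+\tfrac{t^2}{4}\bigl(|v|_4^4-\cN(v)\bigr)\Bigr].
\]
For $t>\rho_*^2$ the infimum is negative, so it is realized only along $v$ with $B_v:=|v|_4^4-\cN(v)<0$; for such $v$ the map $t\mapsto \tfrac t2|\nabla v|_2^2+\tfrac{t^2}4 B_v$ is a strictly concave parabola, and an infimum of concave functions is concave. Using near-minimizers, for which $|B_v|$ is bounded below (their value being $\le m(t)<0$), the elementary concavity defect of a parabola gives a quantitative gap and hence \emph{strict} concavity of $m$ on $(\rho_*^2,\infty)$. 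Strict monotonicity then comes for free: a concave function with $m(\rho_*^2)=0$ and $m<0$ afterwards must have strictly negative slope throughout. Composing the concave decreasing function $m$ with the convex map $\rho\mapsto\rho^2$ transfers strict concavity to $\rho\mapsto m_\rho$. This stage uses no minimizer, which is precisely what will make the compactness argument non-circular.

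For existence and nonexistence: when $\rho>\rho_*$, the estimate $F(u)\ge\tfrac12|\nabla u|_2^2-C(\rho)$ (from \eqref{stima-CB} and Young's inequality) bounds minimizing sequences in $H^1$; Riesz and Pólya--Szegő rearrangements let me take them radial and decreasing without increasing $F$. Radial compactness ($H^1_{\rad}\hookrightarrow L^p$, $2<p<6$) yields $u_n\to u$ in $L^3,L^4$, so $\cN(u_n)\to\cN(u)$ and $|u_n|_4\to|u|_4$, and weak lower semicontinuity of the gradient gives $F(u)\le m_\rho$. Since $m_\rho<0$ the limit is nontrivial (no vanishing), and if $|u|_2=\bar\rho<\rho$ then $m_{\bar\rho}\le F(u)\le m_\rho$, contradicting the strict monotonicity just established; hence $|u|_2=\rho$ and $u$ is a minimizer, positive by the strong maximum principle, with $\lambda_\rho>0$ read off from the Pohozaev/Nehari relation $\lambda_\rho\rho^2=|\nabla u_\rho|_2^2+\tfrac12|u_\rho|_4^4$. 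For $\rho\le\rho_*$, combining the Nehari identity with the Pohozaev identity (valid under $\nabla u\in H^1_{\loc}$) gives $\cN(u)=2|\nabla u|_2^2+\tfrac32|u|_4^4$, which together with the critical inequality forces $|u|_4=0$, i.e.\ $u\equiv0$; this simultaneously rules out critical points and attainment of $m_\rho=0$.

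For the endpoints: as $\rho\to\rho_*^+$ the rescaling \eqref{rescaling-lambda} turns $(P_\lambda)$ into $-\Delta v+v+\lambda_\rho^{1/2}v^3=(I_1*v^2)v$ with $|v|_2=\rho$ and $|\nabla v|_2^2=|\nabla u_\rho|_2^2/\lambda_\rho\le\rho^2$, so $v_{\lambda_\rho}$ is bounded in $H^1$, while $\lambda_\rho\lesssim(-m_\rho)^{2/3}\to0$ follows from the same relations. Extracting a radial weak limit $w$, testing the equation with $v$ excludes vanishing (the left side is $\ge|v|_2^2\to\rho_*^2$, forcing $\cN(w)>0$), and passing to the limit gives a nontrivial solution of the $L^2$-critical Choquard equation \eqref{ChL2}. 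The decisive point is the $L^2$-critical identity: every $H^1$ solution of \eqref{ChL2} satisfies $E_\ch=0$, so $w$ is a zero-energy element of some $\cS_{\bar\rho}$ with $\bar\rho\le\rho_*$, and Proposition \ref{pCmin} then forces $\bar\rho=\rho_*$ and $w=w_*$; matching of the $L^2$ and gradient norms upgrades the convergence to strong $H^1$. As $\rho\to\infty$ the large-mass analysis is exactly that of Theorem \ref{T11}$(ii)$ and carries over verbatim. I expect the main obstacle to be exactly the coupling of the middle two stages, namely securing strict concavity and monotonicity \emph{before} existence (so that mass loss in the minimizing sequence is excluded without circularity), together with the no-mass-loss identification at the Choquard endpoint, where the $L^2$-critical zero-energy identity is what saves the argument.
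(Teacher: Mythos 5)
Your proposal is correct, and in two places it takes a genuinely different route from the paper's. The paper proves existence first (Proposition \ref{Pmin-1}: Ekeland's principle gives a Palais--Smale sequence whose Lagrange multipliers converge to a positive limit because $m_\rho<0$, and this positivity is what restores $L^2$--compactness), and only afterwards obtains strict monotonicity and concavity by testing with rescaled minimizers (Lemma \ref{L6.1} via Proposition \ref{PF}). You invert the order: writing $m_\rho$ as a function of $t=\rho^2$, you observe that on $(\rho_*^2,\infty)$ it is an infimum of parabolas $t\mapsto\tfrac t2|\nabla v|_2^2+\tfrac{t^2}{4}\bigl(|v|_4^4-\cN(v)\bigr)$ whose $t^2$--coefficients are negative (negative values force this), and stay below a negative constant along near-minimizers; this yields strict concavity and monotonicity with no minimizer in hand, and mass loss in a direct minimizing sequence is then excluded by monotonicity rather than by the multiplier. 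Both compactness mechanisms are sound. Likewise, at the Choquard endpoint the paper shows the rescaled minimizers form a minimizing family for the sharp Gagliardo--Nirenberg quotient \eqref{GNCh} and invokes the compactness lemma of \cite[Lemma 2.3]{Ye}, whereas you pass to the limit in the rescaled equation \eqref{001}, use the $L^2$--critical identity $E_\ch\equiv 0$ on solutions of \eqref{ChL2} together with the non-attainment part of Proposition \ref{pCmin} to force the limit mass to equal $\rho_*$, and then match the $L^2$ and gradient norms to get strong $H^1$ convergence; this is more self-contained and silently re-proves the paper's relation $m_\rho/\lambda_\rho\to 0$. A pleasant byproduct: your convex-combination derivation of the lower bound (weight $\theta=\rho_*^2/\rho^2$ chosen to cancel the gradient term) actually yields $m_\rho\ge -C\rho^4\bigl(1-\rho_*^2/\rho^2\bigr)^3$, which is sharper than the stated Lemma \ref{L5.4} and, combined with Lemma \ref{L5.upper}, would give the matching bounds $|m_\rho|\sim(\rho-\rho_*)^3$ that the paper's remark leaves open.

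One small repair: to conclude $\lambda_\rho>0$ you quote $\lambda_\rho\rho^2=|\nabla u_\rho|_2^2+\tfrac12|u_\rho|_4^4$, which comes from the Poho\v zaev identity; but Poho\v zaev requires $\nabla u_\rho\in H^1_{\loc}$, and the paper obtains that regularity (Proposition \ref{reg}) only \emph{after} knowing $\lambda_\rho>0$. Use instead the Nehari-plus-energy relation \eqref{abc-syst-lambda}, namely $\lambda_\rho\rho^2=|\nabla u_\rho|_2^2-4m_\rho>0$, which needs nothing beyond the weak formulation. With that substitution the rest of your argument stands, including the nonexistence step for $\rho\le\rho_*$, where $\nabla u\in H^1_{\loc}$ is a hypothesis of the statement and so your use of Poho\v zaev there (giving $\cN(u)=2|\nabla u|_2^2+\tfrac32|u|_4^4$ and a contradiction with \eqref{GNCh}) is legitimate.
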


Unlike our other results, here the rescaling of the normalised minimizer $u_\rho$ that ensures the convergence to the Choquard limit as $\rho\to\rho_*$ is an implicit function of $\rho$, controlled in terms of the Lagrange multiplier $\lambda_\rho$.
This is natural, since the limit problem $m_{\rho_*}^\ch$ has a continuum branch of minimizers
$w_{*,t}=t^{3/2}w_*(tx)\in \cS_{\rho_*}$ with variable Lagrange multiplier $t^2$. Thus we need to fix a specific minimizer within the family, before we study convergence to the Choquard limit. The rescaling \eqref{rescaling-lambda}
is chosen to ensure convergence to $w_{*,1}=w_*$.

 \begin{figure} \label{F12}
    \centering
    \begin{tikzpicture}[scale=1.5]
        \draw[->] (-0.2, 0) -- (3, 0) node[below] {$\rho$};
        \draw[->] (0, -4) -- (0, 1.25) node[left] {$F$} node[right] {$\qquad\qquad\boxed{\alpha<1}$};
        \draw[scale=1, domain=0.5:2.235, smooth, variable=\x, blue,very thick] plot ({\x}, {1-\x^2})
        node[below] {{\color{red}{\small\it Thomas--Fermi}}}
        (2.1,-3.75) node[above right] {${\scriptstyle m_\rho\sim -\rho^{4}}$}
        (2.9,0.1) node[right] {{\color{red}{\small\it Choquard}}}
        (2.7,0.15) node[above] {{\color{purple}${\scriptstyle M_\rho\sim \rho^{-2\frac{1+\alpha}{1-\alpha}}}$}};
        \draw[scale=1, domain=0.5:3, smooth, variable=\x, purple, very thick] plot ({\x}, {(0.38/\x)});
        \draw[scale=1, domain=2.0:3, smooth, variable=\x, yellow, very thick, dotted] plot ({\x}, {(0.38/\x)});
        \draw[scale=1, domain=0:0.75, smooth, variable=\y, black, thick, dotted] plot ({0.5}, {\y});
        \filldraw [purple] (0.5,0.75) circle (1.25pt) node[above] {{\color{purple}\Large\bf ?}};
        \draw[thick,dashed,orange,pattern=north east lines,pattern color=orange] (0,0) -- (0.3,0)--(0.3,1.2)--(0,1.2)
        (0.3,-0.0224) node[below] {${\color{black}{\scriptstyle\bar\rho}}$}
        (0.6,0.0) node[below] {${\color{black}{\scriptstyle\rho^{**}}}$};
        \draw[scale=1, domain=0:1.0, smooth, variable=\x, orange, very thick, dashed] plot ({\x}, {0*\x});
        \filldraw [blue] (1.0,0) circle (1.25pt) node[below] {${\color{black}{\scriptstyle\rho^*\;\;\;}}$};
        \filldraw [blue] (2.0,0) circle (0.5pt) node[below] {${\color{black}{\scriptstyle\overline{\rho}^{**}}}$};
    \end{tikzpicture}
    \caption{Sketch of the minimum branch $m_\rho$ and mountain pass branch $M_\rho$ for $\alpha\in(0,1)$. Dashed region indicates nonexistence. The behaviour of the branches near $\rho^{**}\ge\bar\rho$ is a conjecture only. The dotted part of the upper branch for $\rho>\overline{\rho}^{**}$ corresponds to the mountain pass solutions of type II.}
\end{figure}
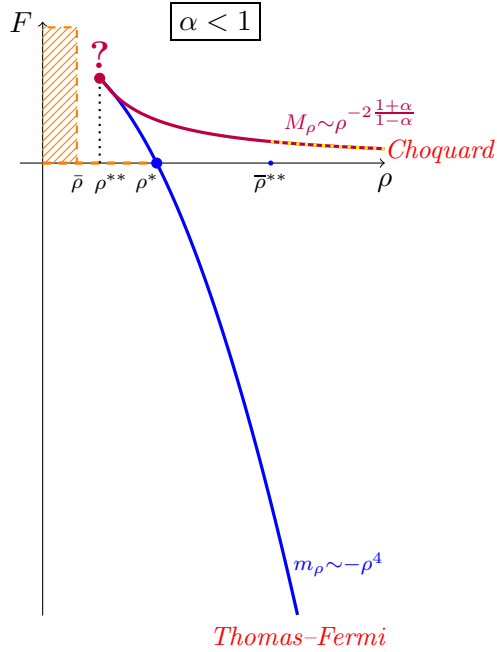

\medskip

The case $\alpha<1$ is $L^2$--supercritical for the Choquard energy $E_\ch$.
The behaviour of the branches of normalised solutions of \eqref{P} is more complicated, see Figure \ref{F12}.
There are no normalised solutions for small $\rho<\overline{\rho}$, while for sufficiently large $\rho>\rho^{**}$ we distinguish the lower branch, which consists of local or global minima of $F|_{\cS_{\rho},\rad}$, and the upper branch which consists of mountain pass type solutions of $F|_{\cS_{\rho},\rad}$. By a mountain pass type solution in the rest of the paper we understand a critical point which is not a local minimum. The precise statement is as follows.

  \begin{teo}
    \label{T12}
    Assume $\alpha\in(0,1)$ and $\rho>0$.

    $(a)$ There exists $\bar\rho>0$ such that  for $\rho\in(0,\bar\rho)$ the functional $F|_{\cS_\rho}$ has no critical points $u$ such that $\D u\in H^1_{\loc}$.

    $(b)$ There exists $\rho^*>\bar \rho$ such that
    $$m_\rho\;
    \begin{cases} \;=0,  & 0<\rho\le \rho^*,\\
    \;<0, & \rho>\rho^*,\end{cases}$$
    and for $\rho\ge\rho^*$ the map $\rho\mapsto m_\rho$ is strictly decreasing and strictly concave.
    If $\rho<\rho^*$ then the infimum $m_\rho=\inf_{\cS_\rho}F=0$ is not attained, while for every $\rho\ge\rho^*$ there exists a positive radially symmetric normalised solution $u_\rho$ of \eqref{P}, which is a minimizer of $F|_{\cS_\rho}$, so that $F(u_\rho)=m_\rho$.

    $(c)$ There exists $\rho^{**}\in[\bar\rho,\rho^*)$ such that
    \begin{itemize}
        \item
        for all $\rho\in(\rho^{**},\rho^*)$ there exists a positive radially symmetric normalised solution $u_\rho$ of \eqref{P}, which is a local minimum of $F|_{\cS_{\rho},\rad}$ and $F(u_\rho)=:\bar m_\rho>0$,\smallskip

        \item
        for all $\rho>\rho^{**}$ there exists a positive radially symmetric normalised solution $v_\rho$ of \eqref{P}, which is a mountain pass type critical point of $F|_{\cS_{\rho,\rad}}$
        and $F(v_\rho)=:M_\rho>\max\{\bar m_\rho,0\}$.

    \end{itemize}
The solutions $u_\rho,v_\rho\in L^1 \cap C^2\cap W^{2,s}$, for every $s>1$.
In addition,  the corresponding to $u_\rho,v_\rho$ Lagrange multipliers $\lambda_\rho,\bar\lambda_\rho>0$, and
    \begin{itemize}

            \item[$(i)$]
            as $\rho\to\infty$,
            \beq\label{asymptMPL-2}
            M_\rho\simeq \rho^{-2\frac{\alpha+1}{1-\alpha}}M^\ch_1,\qquad
            \bar\lambda_\rho \simeq 2\frac{1+\alpha}{1-\alpha} M^\ch_1 \rho^{-\frac{4}{1-\alpha}},
            \eeq
            moreover, for every sequence $\rho_n\to\infty$ the rescaled family
            \beq
            w_{\rho_n}(x):=\rho_n^{\frac{\alpha+2}{1-\alpha}} v_{\rho_n}\left(\rho_n^{\frac{2}{1-\alpha}}x\right)
            \eeq
            has a subsequence converging in $H^1$ to a positive radially symmetric Choquard ground state $w_0\in \cS_1$ of $E_{\ch}\big|_{\cS_1}$, that is $E_\ch(w_0)=M^\ch_1$.
            \smallskip

            \item[$(ii)$]
            As $\rho\to\infty$, the statement $(ii)$ of Theorem \ref{T11} remains valid.

     \end{itemize}

\end{teo}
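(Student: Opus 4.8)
The plan is to reduce everything to two algebraic identities and two interpolation inequalities. Any sufficiently regular critical point $u\in\cS_\rho$ of $F|_{\cS_\rho}$ satisfies the Nehari identity $|\D u|_2^2+\lambda\rho^2+|u|_4^4=\cN(u)$ and the Pohozaev identity $\tfrac12|\D u|_2^2+\tfrac{3\lambda}2\rho^2+\tfrac34|u|_4^4=\tfrac{3+\alpha}4\cN(u)$; eliminating $\lambda$ (compute $\tfrac32\,$Nehari$\,-\,$Pohozaev) gives the scaling relation
\beq\label{eq-plan-star}
|\D u|_2^2+\tfrac34|u|_4^4=\tfrac{3-\alpha}4\cN(u).
\eeq
Combining \eqref{eq-plan-star} with \eqref{stima-CB} first yields $|u|_4\lesssim\rho$ and then $|\D u|_2\lesssim\rho^2$, an \emph{upper} bound; combining \eqref{eq-plan-star} with the Gagliardo--Nirenberg inequality \eqref{N2.3} and using $3-\alpha>2$ yields the \emph{lower} bound $|\D u|_2\gtrsim\rho^{-\frac{1+\alpha}{1-\alpha}}$. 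Since $\rho^2$ and $\rho^{-\frac{1+\alpha}{1-\alpha}}$ are incompatible for small $\rho$, this proves part $(a)$, with $\bar\rho$ the mass at which the two bounds cross.

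\textbf{Part $(b)$.} Writing $z=\rho^{-1}u\in\cS_1$, the Thomas--Fermi rescaling gives $F(u)=\rho^4\big(E_\tf(z)+\tfrac1{2\rho^2}|\D z|_2^2\big)$, so $m_\rho=\rho^4 G(1/2\rho^2)$ where $G(\tau):=\inf_{z\in\cS_1}\big(E_\tf(z)+\tau|\D z|_2^2\big)$. Spreading mass shows $G\le0$ always. For the reverse bound I would prove the two estimates $-E_\tf(z)\le -m^\tf_1$ and, via \eqref{stima-CB} and the inequality $|z|_4\le C|\D z|_2^{3/4}$ on $\cS_1$, $-E_\tf(z)\le C|\D z|_2^{3-\alpha}$; together these give $\tau|\D z|_2^2\ge -E_\tf(z)$ for \emph{all} $z\in\cS_1$ once $\tau$ is large, hence $G(\tau)=0$, i.e. $m_\rho=0$ and not attained for small $\rho$. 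On the other hand $G(\tau)<0$ for small $\tau$ because $E_\tf$ takes negative values on $\cS_1\cap H^1$; since $\tau\mapsto G(\tau)$ is monotone, this produces a single threshold $\rho^*$. For $\rho>\rho^*$ a minimizer is extracted from a radial minimizing sequence, which is bounded in $H^1$ by \eqref{stima-CB}, is precompact in $L^4$ and in $\cN$ by the radial embedding, and does not lose mass thanks to the strict subadditivity coming from $m_\rho<0$; strict monotonicity and concavity of $\rho\mapsto m_\rho$ follow exactly as in Theorem \ref{T11}, and the Thomas--Fermi limit $(ii)$ is identical to Theorem \ref{T11}$(ii)$.

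\textbf{Part $(c)$: the two branches.} For fixed $u\in\cS_{\rho,\rad}$ I would analyse the fibre $t\mapsto F(u_t)$ with $u_t(x)=t^{3/2}u(tx)$, equal to $\tfrac{t^2}2|\D u|_2^2+\tfrac{t^3}4|u|_4^4-\tfrac{t^{3-\alpha}}4\cN(u)$. Since $2<3-\alpha<3$ it is positive and increasing near $t=0$ and tends to $+\infty$ as $t\to\infty$. The reduced derivative $\psi(t):=\tfrac1t\frac{d}{dt}F(u_t)=|\D u|_2^2+\tfrac34 t|u|_4^4-\tfrac{3-\alpha}4t^{1-\alpha}\cN(u)$ is convex with $\psi(0^+)>0$ and $\psi(+\infty)=+\infty$, so it has either no zero or exactly two zeros $t_1<t_2$, corresponding to a monotone fibre or to a local maximum at $t_1$ followed by a local minimum at $t_2$; the threshold $\rho^{**}$ is the infimum of masses admitting this two-critical-point geometry. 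For $\rho>\rho^{**}$ the local minimizer $u_\rho$ is obtained by minimizing $F$ over the region trapped by the barrier (where $|\D u|_2$ is comparable to the fibre maximum at $t_1$) and checking that the minimum is interior, so $\bar m_\rho$ is a genuine constrained local minimum, positive when $\rho<\rho^*$; the mountain pass solution $v_\rho$ at level $M_\rho$ comes from a minimax over radial paths joining the spreading region $\{F\approx0\}$ to the $t_2$-well, the barrier being the fibre maxima at $t_1$. Compactness of the minimax is the delicate point: restricting to $\cS_{\rho,\rad}$ restores compactness of the subcritical and nonlocal terms, and the sign of the Lagrange multiplier together with the strict inequality $M_\rho>\max\{\bar m_\rho,0\}$ rule out vanishing and escape of mass.

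\textbf{Asymptotics $(c)(i)$ and conclusion.} For the Choquard limit I would use \eqref{Ch-scale1}: under $u\mapsto w\in\cS_1$ one has $F(u)=\rho^{-2\frac{\alpha+1}{1-\alpha}}\big(E_\ch(w)+\tfrac14\rho^{-\frac{2\alpha}{1-\alpha}}|w|_4^4\big)$, so as $\rho\to\infty$ the $L^4$ correction carries the vanishing factor $\rho^{-2\alpha/(1-\alpha)}$ and $\rho^{2\frac{\alpha+1}{1-\alpha}}F|_{\cS_\rho}$ becomes a vanishing perturbation of $E_\ch|_{\cS_1}$, whose radial mountain pass level is $M^\ch_1$ by Proposition \ref{tCmin}$(ii)$. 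Constructing near-optimal radial paths through the Choquard mountain pass point $v_0$ gives the upper bound $M_\rho\lesssim\rho^{-2\frac{\alpha+1}{1-\alpha}}M^\ch_1$, and a matching lower bound follows from the sharp energy estimates together with the convergence, along a subsequence, of the rescaled $v_{\rho_n}$ to a radial mountain pass solution $w_0\in\cS_1$ of $E_\ch$; the multiplier asymptotics then come from the Choquard relations \eqref{DR}. The $L^1\cap C^2\cap W^{2,s}$ regularity and $\lambda_\rho,\bar\lambda_\rho>0$ are obtained by a standard elliptic bootstrap as in Theorem \ref{T11} and the nonexistence results of Proposition \ref{abc-rem} and Lemma \ref{l-minus}, while $(c)(ii)$ repeats Theorem \ref{T11}$(ii)$. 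I expect the genuine difficulty to be this last step --- matching the minimax levels across the singular vanishing-$L^4$ limit and proving strong $H^1$ convergence of the mountain pass solutions without loss of compactness --- which is precisely the sharp-asymptotics issue highlighted in the introduction and the origin of the Type I / Type II distinction.
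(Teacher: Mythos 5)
Your parts $(a)$ and $(b)$ are essentially sound and use the same ingredients as the paper: part $(a)$ repackages Lemma \ref{lNE}/Theorem \ref{NE} (your incompatible bounds $|\D u|_2\lesssim\rho^2$ and $|\D u|_2\gtrsim\rho^{-\frac{1+\alpha}{1-\alpha}}$ encode exactly \eqref{1822}--\eqref{1823}), and your reformulation $m_\rho=\rho^4G\big(\tfrac{1}{2\rho^2}\big)$ is a clean alternative to \eqref{1907}. Note only that your argument gives non-attainment just for small $\rho$, so you must add the (easy) monotonicity step to cover all $\rho<\rho^*$; and attainment at $\rho=\rho^*$ itself, claimed in the theorem although $m_{\rho^*}=0$, is not reachable by your subadditivity argument, which needs $m_\rho<0$ --- the paper gets it from the local-minimum branch via $\overline m_{\rho^*}=m_{\rho^*}$ (Proposition \ref{P4.4}, Theorem \ref{Tmin}). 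In part $(c)$ two structural points are missing. First, your definition of $\rho^{**}$ (existence of a fibre with two critical points for some $u$) does not yield mountain-pass geometry: a fibre maximum of one function is not a barrier for $F|_{\cS_{\rho,\rad}}$; one needs a bound uniform over a whole sphere $\{|\D u|_2=R_\rho\}$, which is what \eqref{stima1 F basso eq+}, \eqref{513-1} and the definition \eqref{SE1} supply. Second, compactness for both branches is obtained in the paper from Palais--Smale sequences that almost satisfy the Poho\v zaev identity \eqref{ES3} (via \cite{Je,BMRV,Ghou}); for an arbitrary PS sequence the sign of the limiting multiplier, which you invoke to exclude vanishing and mass loss, is simply not available.

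The genuine gap is in $(c)(i)$. You propose to get the asymptotics \eqref{asymptMPL-2} and the Choquard limit ``from the convergence, along a subsequence, of the rescaled $v_{\rho_n}$'' --- but that convergence is the thing to be proved, and it hinges on an a priori upper bound $\bar\lambda_\rho\lesssim\rho^{-\frac{4}{1-\alpha}}$ on the Lagrange multiplier: only with it does \eqref{abc-syst-lambda} give $|\D\bar v_\rho|_2^2\sim\bar M_\rho$ as in \eqref{fin2}, hence that the rescaled quartic term vanishes (see \eqref{1724}) and that the rescaled solutions form a minimizing family for the Gagliardo--Nirenberg quotient $S_\alpha$, to which \cite[Lemma 2.3]{Ye} applies (Proposition \ref{Pconv0}). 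For minimax (``type I'') solutions this bound is open: it would follow if $t=1$ were a local maximum of the fibre $\varphi_{v_\rho}$ (Claim \ref{lMP}), but the minimax over all paths in \eqref{def} gives no such localization --- exactly the obstruction recorded in Remark \ref{Type-I-discussion}. The paper's way out, absent from your proposal, is the type II construction of Section \ref{sMPL-II}: following \cite{T92,Y_et_Al_arXiv}, one minimizes $F$ over the set $\P^-_\rho$ in \eqref{P-}, where $\varphi'_u(1)=0$ and $\varphi''_u(1)<0$ are imposed by construction; Lemmas \ref{L_Fibre} and \ref{L5.19} show this set is nonempty for large $\rho$ and a natural constraint whose minimizing sequences stay away from its boundary, and Proposition \ref{finalmente} then yields the multiplier bound automatically. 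The mountain-pass branch for which \eqref{asymptMPL-2} is actually proved in Theorem \ref{T12} is this type II branch; your route would only close if one could show that the type I solutions satisfy $\varphi''_{v_\rho}(1)\le 0$, which is conjectural (Remark \ref{I=II}).
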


The critical quantity $\rho^{**}$ in the previous result plays the role of a critical threshold when we can detect mountain pass geometry for every $\rho>\rho^{**}$ (see Remark \ref{ro**}). Theorem \ref{T12}
does not state anything about the existence of normalised solutions at $\rho=\rho^{**}$.
At the critical value $\rho^{**}$ we prove the following.

\begin{prop}
\label{newProp}
Under the assumptions of Theorem \ref{T12}, for every sequence $\{\rho_n\}$ such that $\rho_n>\rho^{**}$ and $\rho_n\to \rho^{**}$, there exist critical points $u_{\rho^{**}}$ and $v_{\rho^{**}}$ of $F|_{\cS_{\rho^{**},\rad}}$ such that, up to a subsequence,
\begin{itemize}
\item[(i)]
$u_{\rho_n}\to u_{\rho^{**}}$ in $H^1$,
\item[(ii)]
$v_{\rho_n}\to v_{\rho^{**}}$ in $H^1$.
\end{itemize}

\end{prop}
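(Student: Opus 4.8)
The statement is a compactness result for the two solution branches at the threshold $\rho^{**}$, and my plan is to show that the families produced by Theorem \ref{T12}$(c)$ are bounded and precompact in $H^1_{\rad}$, with limits that are constrained critical points at mass $\rho^{**}$. I would treat both families in parallel: write $u_n$ for either $u_{\rho_n}$ or $v_{\rho_n}$, let $\lambda_n>0$ be the associated Lagrange multiplier, and let $c_n:=F(u_n)\in\{\bar m_{\rho_n},M_{\rho_n}\}$ be the energy level. Each $u_n$ is a positive radial solution of \eqref{P} with mass $\rho_n\to\rho^{**}$, so testing the equation with $u_n$ together with the Pohozaev identity yields the two relations
\begin{equation*}
|\D u_n|_2^2=\tfrac{2(3-\alpha)}{1-\alpha}\,c_n+\tfrac{\alpha}{2(1-\alpha)}\,|u_n|_4^4,
\qquad
\lambda_n\rho_n^2=|\D u_n|_2^2-4c_n,
\end{equation*}
which will be the workhorse of the argument.

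\emph{Step 1: uniform bounds.} First I would check that the levels $c_n$ stay bounded as $\rho_n\to\rho^{**}$. For the local minima this follows from the (upper semi)continuity of $\rho\mapsto\bar m_\rho$ near $\rho^{**}$, obtained by evaluating $F$ on a fixed comparison function of mass $\rho_n$; for the mountain pass levels $M_{\rho_n}$ one gets a uniform upper bound by inserting a single reference path whose maximal energy depends continuously on $\rho_n$. Granting $0\le c_n\le C_0$, the bound $\cN(u)\le c_\alpha\rho^{\frac43\alpha}|u|_4^{4-\frac43\alpha}$ from \eqref{stima-CB} gives $c_n=F(u_n)\ge\tfrac14|u_n|_4^4-c'|u_n|_4^{4-\frac43\alpha}$, and since the exponent $4-\tfrac43\alpha<4$ this forces $|u_n|_4^4\le C_1$; the first relation above then bounds $|\D u_n|_2^2$, hence $\|u_n\|_{H^1}^2=|\D u_n|_2^2+\rho_n^2$, while the second relation bounds $\lambda_n$. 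Thus $\{u_n\}$ is bounded in $H^1_{\rad}$ and $\{\lambda_n\}$ is bounded.

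\emph{Step 2: weak limits and nontriviality.} Passing to a subsequence, $u_n\rightharpoonup\bar u$ in $H^1_{\rad}$ and $\lambda_n\to\bar\lambda\ge0$. By the compact embedding $H^1_{\rad}(\R^3)\hookrightarrow L^p(\R^3)$ for $p\in(2,6)$ I obtain strong convergence in $L^4$ and in $L^{\frac{12}{3+\alpha}}$, whence $|u_n|_4^4\to|\bar u|_4^4$ and, by continuity of $\cN$ under the HLS inequality, $\cN(u_n)\to\cN(\bar u)$. Passing to the limit in the weak form of \eqref{P} shows that $\bar u$ solves $-\Delta\bar u+\bar\lambda\bar u+|\bar u|^2\bar u=(I_\alpha*|\bar u|^2)\bar u$. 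The crucial point is to exclude $\bar u=0$: if $\bar u=0$ then $|u_n|_4^4\to0$ and $\cN(u_n)\to0$, so $c_n\to\tfrac12\lim|\D u_n|_2^2$, while the Nehari relation $|\D u_n|_2^2+\lambda_n\rho_n^2+|u_n|_4^4=\cN(u_n)$ forces $|\D u_n|_2^2\to0$, i.e. $c_n\to0$. This is ruled out provided $\liminf c_n>0$, which holds because the very construction of $\rho^{**}$ (see Remark \ref{ro**}) provides a mountain pass barrier of uniformly positive height for $\rho$ near $\rho^{**}$: it confines the local minima below a level bounded away from $0$ and forces $M_{\rho_n}$ to exceed that height. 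Hence $\bar u\neq0$, so $|\D\bar u|_2^2>0$; the first relation then gives $\liminf_n\lambda_n\rho_n^2\ge\tfrac{1+\alpha}{3-\alpha}\liminf_n|\D u_n|_2^2>0$, and therefore $\bar\lambda>0$.

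\emph{Step 3: strong convergence.} Finally I would test both the equation for $u_n$ and the limit equation for $\bar u$ with $u_n-\bar u$ and subtract. The local and nonlocal nonlinear contributions tend to $0$ by the strong $L^4$ and $L^{\frac{12}{3+\alpha}}$ convergence and the HLS inequality, while the mass terms reduce, using $\lambda_n\to\bar\lambda$ and $(\bar u,u_n-\bar u)_2\to0$, to
\begin{equation*}
|\D(u_n-\bar u)|_2^2+\lambda_n\,|u_n-\bar u|_2^2\longrightarrow0.
\end{equation*}
Since $\bar\lambda>0$ we have $\lambda_n\ge\bar\lambda/2>0$ for large $n$, so both terms vanish separately and $u_n\to\bar u$ strongly in $H^1$. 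In particular $|\bar u|_2=\lim\rho_n=\rho^{**}$, so $\bar u\in\cS_{\rho^{**},\rad}$ and, solving \eqref{P} with multiplier $\bar\lambda$, it is a critical point of $F|_{\cS_{\rho^{**},\rad}}$; applying this to the two families produces $u_{\rho^{**}}$ and $v_{\rho^{**}}$. The main obstacle is the nontriviality step: everything hinges on the uniform positive lower bound for the energy levels near $\rho^{**}$ (equivalently, on ruling out the radial spreading/vanishing of mass), and it is precisely here that the defining mountain pass geometry of $\rho^{**}$ must be invoked. Once $\bar\lambda>0$ is secured, the upgrade from weak to strong $H^1$ convergence is routine.
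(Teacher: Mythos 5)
Your overall scheme is the same as the paper's: uniform bounds on the energy levels, the Nehari--Poho\v{z}aev algebra to bound $|\D u_n|_2$ and $\lambda_n$, radial compactness, nontriviality of the weak limit, positivity of the limit multiplier, and the routine upgrade to strong $H^1$ convergence (the paper compresses the last stage into ``argue as in the proof of Proposition \ref{Pmin}''). The mountain pass half of your nontriviality step is also exactly the paper's: $F(v_{\rho_n})=\widetilde M_{\rho_n}\ge\cB_{\rho_n}\ge g_2(R_{\rho_n})\ge c>0$ by \eqref{525+1} and \eqref{513-1}, since $\rho_n\to\rho^{**}>0$.

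The gap is in the other half, i.e.\ in branch (i). You justify $\liminf_n F(u_{\rho_n})>0$ by saying the mountain pass barrier ``confines the local minima below a level bounded away from $0$''. But the barrier sits \emph{above} the local minima: the defining inequality is $\bar m_{\rho_n}<\cB_{\rho_n}$, which is an upper bound and cannot preclude $\bar m_{\rho_n}\to 0$ — indeed for $\rho>\rho^*$ the barrier is still present and positive while $\bar m_\rho=m_\rho<0$, so positivity of the barrier says nothing about positivity of $\bar m_\rho$. As written, the vanishing alternative $\bar u=0$ is therefore not excluded for the minimum branch, and without it you get neither $\bar u\neq 0$ nor $\bar\lambda>0$, so Step 3 collapses for that branch. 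The paper's fix is different and is the missing ingredient: by Proposition \ref{PCM} the map $\rho\mapsto\bar m_\rho$ is strictly decreasing, and $\bar m_{\rho}=0$ exactly at $\rho=\rho^*$ with $\rho^{**}<\rho^*$ (Proposition \ref{P4.4}); hence fixing any $\rho'\in(\rho^{**},\rho^*)$, one has $\bar m_{\rho_n}\ge\bar m_{\rho'}>0$ as soon as $\rho_n<\rho'$. Alternatively you could bypass energy levels altogether: by construction the local minimizers satisfy $|\D u_{\rho_n}|_2>R_{\rho_n}\ge c>0$, while in your vanishing scenario the Nehari identity forces $|\D u_n|_2\to 0$, a contradiction; this gradient lower bound also feeds directly into the identity $\lambda_n\rho_n^2=\tfrac{1+\alpha}{3-\alpha}|\D u_n|_2^2+\tfrac{\alpha}{3-\alpha}|u_n|_4^4$, which is precisely how the paper obtains positivity of the limit Lagrange multiplier. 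With either repair, the rest of your argument goes through and coincides with the paper's proof.
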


\smallskip
Critical points $u_{\rho^{**}}$ and $v_{\rho^{**}}$ are constructed via a limit process as $\rho_n\to \rho^{**}$ and we do not have a variational characterization for $u_{\rho^{**}}$ and $v_{\rho^{**}}$. We also do not know weather $F(u_{\rho^{**}})=F(v_{\rho^{**}})$. These questions are related to the precise definition of $\rho^{**}$ in \eqref{SE1} (see also Remark \ref{ro**}).

\begin{rem}\label{rem-MPLI-II}
Mountain pass solutions in Theorem \ref{T12} are constructed from two separate branches, solutions of type I for intermediate $\rho>\rho^{**}$, and of type II for large $\rho\gg\rho^*$.
Type I solutions are ``natural'' mountain pass solutions on $\mathcal S_{\rho,\rad}$ that can be constructed as long as $F$ has a (local) minimum on $\mathcal S_{\rho,\rad}$ (that is for the entire range $\rho>\rho^{**}$) following an adaptation of the classical approach in \cite{Je}, see Section \ref{sMPL-I}.
However, it seems to be difficult to control the asymptotic behaviour of solutions of type I as $\rho\to\infty$, see Remark \ref{Type-I-discussion}. Instead, for all sufficiently large $\rho>\rho^*$ we construct mountain pass solutions of type II using decomposition of the Poho\v zaev manifold approach, which goes back to \cite{T92}, see Section \ref{sMPL-II}. Solutions of type II have the required asymptotic behaviour built-in in the construction.
The asymptotic \eqref{asymptMPL-2} and convergence to the Choquard limit profile is established for the type II solutions. We conjecture that type I and type II solutions actually coincide, see Remark \ref{I=II} for the discussion, and proof of Theorem \ref{T12} for precise definitions and technical details.
\end{rem}

 \begin{rem}
	Similar to Figure \ref{F12} structure of the mass--energy relation curve was observed in the study of
	normalised solutions  of quasilinear Schr\"odinger equations \cite{Je15};  and of the local cubic--quintic NLS
	\beq\label{NLS-35}
	-\Delta u+\lambda u+|u|^4u =|u|^2u,\quad u\in \cS_\rho\,{ \quad \lambda\in\R},
	\eeq
	see \cite{Killip,Je21-2,Lewin}. In this case, Lagrange multipliers are apriori bounded from above and the unique positive solution to \eqref{NLS-35} exists if and only if  $\lambda\in(0,3/16)$.  As a consequence, the asymptotic behaviour of the global minimum branch of normalised solutions of the local problem \eqref{NLS-35} as $\rho\to\infty$ (and $\lambda\to 3/16$, cf. \cite[Theorem 4]{Lewin}), is very different from the Thomas--Fermi limit that we observe in the nonlocal GPP problem \eqref{P}.
\end{rem}

\begin{rem}
	\label{r-noninj}
	As was mentioned earlier, every ground state of the fixed frequency$\eqref{Plambda}$ with $\lambda>0$ is a normalised solution of $(N_{\rho_\lambda})$ with $\rho_\lambda:=|u_\lambda|_2>0$. However, for  $\alpha<1$ and small $\lambda>0$, ground states of $\eqref{Plambda}$ are not normalised minimizers of $(N_{\rho_\lambda})$. Indeed, let $\alpha<1$ and let $u_\lambda$ be a ground--state of $\eqref{Plambda}$ described in Theorem \ref{thmA}. Then as $\lambda\to 0$, the rescaling $w_\lambda(x):=\lambda^{-\frac{2+\alpha}{4}}u_\lambda\big(\lambda^{-1/2}x\big)$
	converges in $H^1$ to a positive radially symmetric ground state $w_*\in H^1\cap C^2$ of the Choquard equation \eqref{C1}, up to a subsequence (see \cite[Theorem 2.5]{LZVM} and Lemma \ref{l-App-L2}). This fact, combined with \eqref{N2.2},
	implies that $F(u_\lambda)\simeq \lambda^\frac{1+\alpha}{2}E_\ch(w_*)>0$ and $\rho_\lambda\simeq\lambda^{-\frac{1-\alpha}{4}}|w_*|_2\to\infty$ as $\lambda\to 0$. However, from \eqref{TFasymp} we know that $m_{\rho_\lambda}=\inf_{\cS_{\rho_\lambda}}F\simeq\rho_\lambda^4 m^\tf_1\to-\infty$ as $\lambda\to\infty$, and hence $u_\lambda$ is not a normalised minimizer of $(N_{\rho_\lambda})$. We conjecture that for small $\lambda$ the fixed frequency ground state $u_\lambda$ coincides with a mountain pass solution of type II (and type I?) at $\rho_\lambda$. However this would be difficult to establish since the uniqueness of the fixed frequency ground states of \eqref{Plambda} is entirely open.
	
	Also recall that as $\lambda\to\infty$, the rescaling $z_\lambda(x):=\lambda^{-1/2} u_\lambda(x)$ converges in $L^2$ and $L^4$ to the unique nonnegative radially symmetric compactly supported positive solution $z_*\in C^{0,1/2}$ of the Thomas--Fermi equation \eqref{eqTF1}.
	This implies that $\rho_\lambda\simeq\lambda^{1/2}|z_*|_2\to\infty$ as $\lambda\to \infty$.
	
	Finally, we know that $\rho_\lambda \ge  \bar K_\alpha>0$ by \eqref{e-barrho}. Thus the mapping $\rho_\lambda:\R_+\to\R_+$ for the GPP equation with $\alpha<1$ is neither injective nor surjective.
	
	The same analysis remains valid for $\alpha\ge 1$ and in this case $\rho_\lambda\simeq\lambda^{\frac{\alpha-1}{4}}|w_*|_2$ as $\lambda\to 0$. Similar shapes of the $\rho_\lambda$ curve were observed in \cite[Theorem 3, 4]{Lewin} for a local double--power NLS, however in the local case admissible values of $\lambda$ are bounded above and $\rho_\lambda:(0,\lambda_*)\to\R^+$.
\end{rem}

 \medskip

 It would be interesting to know if some of the results in this work can be extended to the GPP equations with an external potential $V_{\mathrm{ext}}:\R^3\to\R$,
$$
 -\Delta u+\lambda u+V_{\mathrm{ext}}(x)u+|u|^2u =(I_\alpha*|u|^2)u,\quad u\in \cS_\rho,{ \quad \lambda\in\R},
$$
 for a suitable class of external potentials vanishing at infinity.  The main tools used in our arguments such as system \eqref{abc-syst} and scaling considerations should remain asymptotically valid at least in some regimes. The techniques developed in \cite{BMRV,IM,MRV,LM,ZZ} in the local case can be useful here.

\section{Preliminaries}

Here for completeness we recall several well--known results that were frequently used in this paper.

\medskip
\noindent
{\bf Gagliardo--Nirenberg (GN) inequality} \cite[Theorem 12.83]{L}.  For every $s\in(2,6)$ there exists $\bar c_s>0$ such that ,
\beq\label{GN}
|u|_s\le  \bar c_s |\D u|_2^{3\frac{(s-2)}{2s}}|u|_2^{1-3\frac{(s-2)}{2s}}, \qquad \forall u\in H^1.
\eeq

\begin{rem}
\label{R3.1}
By the Sobolev embedding and by interpolation, it is readily seen that $\bar c_s\le c_*^{3\frac{(s-2)}{2s}}$, where $c_*=\frac{1}{\sqrt{3}}\big(\frac{2}{\pi}\big)^{2/3}$.
See (2.22) in \cite{MRV} for an implicit evaluation of $\bar c_s$.
\end{rem}

\medskip
\noindent
{\bf Hardy--Littlewood--Sobolev (HLS) inequality} \cite[Theorem 4.3]{LL}.
For every $\alpha\in (0,3)$, there exists a constant $c_{\alpha}>0$ such that
\beq\label{HLS}
\int_{\R^3}(I_{\alpha}*|u|^2)|v|^2 dx
\le c_{\alpha}|u|_{\frac{12}{3+\alpha}}^{2}|v|_{\frac{12}{3+\alpha}}^{2},
\qquad \forall u,v\in L^\frac{12}{3+\alpha},
\eeq
where $c_\alpha=\frac{\Gamma((3-\alpha)/2)}{\pi^{2\alpha/3}2^{\alpha/3}\Gamma((3+\alpha)/2)}$, and $c_\alpha\to 1$ as $\alpha\to 0$.

\medskip

As a direct consequence of the HLS inequality, by using suitable triangulations, we obtain the following useful convergence properties of the Riesz potentials (cf. also \cite[Lemma 2.4]{MvS}).
\begin{lemma}
    \label{lemmaConvNonLoc}
    Let $\{u_n\}$ be a sequence in $H^1$ such that $u_n\to u$ in $L^{\frac{12}{3+\alpha}}$. Then $\cN(u_n)\to\cN(u)$ and
    \beq\label{conv2}
    \lim_{n\to\infty} \int_{\R^3}(I_{\alpha}*|u_n|^2)u_n\varphi\, dx=\int_{\R^3}(I_{\alpha}*|u|^2)u\varphi\,  dx,\qquad\forall \varphi\in H^1.
    \eeq
\end{lemma}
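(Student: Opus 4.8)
The plan is to reduce both claims to the HLS inequality \eqref{HLS} in its \emph{polarised} bilinear form, combined with H\"older's inequality and a telescoping (triangulation) decomposition. The guiding observation is that convergence $u_n\to u$ in $L^{12/(3+\alpha)}$ is precisely convergence in the space for which the quadratic densities $|u_n|^2$ belong to the HLS exponent $L^{6/(3+\alpha)}$; throughout I write $p:=\frac{12}{3+\alpha}$, noting the bookkeeping identity $2\cdot\frac{6}{3+\alpha}=p$ and that $f\in L^p$ is equivalent to $|f|^2\in L^{6/(3+\alpha)}$. First I would record that \eqref{HLS} is equivalent to
$$\Big|\int_{\R^3}(I_\alpha*g)\,h\,dx\Big|\le c_\alpha\,|g|_{\frac{6}{3+\alpha}}\,|h|_{\frac{6}{3+\alpha}}\qquad\text{for all }g,h\in L^{\frac{6}{3+\alpha}},$$
which follows by bounding the (nonnegative) kernel integral by its absolute value and applying \eqref{HLS} with $|g|^{1/2}$ and $|h|^{1/2}$ in place of $u,v$, since $\big||g|^{1/2}\big|_p^2=|g|_{\frac{6}{3+\alpha}}$.

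For $\cN(u_n)\to\cN(u)$ I would use the symmetry and bilinearity of $(g,h)\mapsto\int(I_\alpha*g)h\,dx$ to write
$$\cN(u_n)-\cN(u)=\int_{\R^3}\big(I_\alpha*(|u_n|^2-|u|^2)\big)\big(|u_n|^2+|u|^2\big)\,dx,$$
so that the polarised estimate gives $|\cN(u_n)-\cN(u)|\le c_\alpha\,\big||u_n|^2-|u|^2\big|_{\frac{6}{3+\alpha}}\big||u_n|^2+|u|^2\big|_{\frac{6}{3+\alpha}}$. The second factor is bounded because $|u_n|_p$ is bounded along a convergent sequence, and the first factor tends to $0$: factoring $|u_n|^2-|u|^2=(|u_n|-|u|)(|u_n|+|u|)$, applying H\"older with the conjugate pair $(p,p)$, and using the pointwise bound $\big||u_n|-|u|\big|\le|u_n-u|$ yields
$$\big||u_n|^2-|u|^2\big|_{\frac{6}{3+\alpha}}\le |u_n-u|_p\big(|u_n|_p+|u|_p\big)\to 0.$$

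For \eqref{conv2} I would split
$$
\begin{aligned}
&\int_{\R^3}(I_\alpha*|u_n|^2)u_n\varphi\,dx-\int_{\R^3}(I_\alpha*|u|^2)u\varphi\,dx\\
&\qquad=\int_{\R^3}\big(I_\alpha*(|u_n|^2-|u|^2)\big)u_n\varphi\,dx+\int_{\R^3}(I_\alpha*|u|^2)(u_n-u)\varphi\,dx,
\end{aligned}
$$
and estimate each term with the polarised HLS inequality, both densities lying in $L^{\frac{6}{3+\alpha}}$. For the first term H\"older gives $|u_n\varphi|_{\frac{6}{3+\alpha}}\le|u_n|_p|\varphi|_p$, which is bounded, while $\big||u_n|^2-|u|^2\big|_{\frac{6}{3+\alpha}}\to0$ as above; for the second term $\big||u|^2\big|_{\frac{6}{3+\alpha}}=|u|_p^2$ is fixed and $|(u_n-u)\varphi|_{\frac{6}{3+\alpha}}\le|u_n-u|_p|\varphi|_p\to0$. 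Both bounds use $\varphi\in H^1\hookrightarrow L^p$, which is legitimate since for $\alpha\in(0,3)$ one has $p=\frac{12}{3+\alpha}\in(2,4)\subset[2,6]$, whence $|\varphi|_p<\infty$.

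I do not expect a genuine obstacle here: the whole argument is polarised HLS together with H\"older, and the only points needing (routine) care are the exponent bookkeeping $2\cdot\frac{6}{3+\alpha}=\frac{12}{3+\alpha}$, which makes every H\"older splitting land precisely on the convergence norm $L^p$, and the Sobolev embedding guaranteeing $\varphi\in L^p$. The decisive device is the triangulation: each difference of a quadratic or cubic expression is rewritten as one factor that is small in $L^p$ (controlled by $|u_n-u|_p$) multiplied by uniformly bounded factors.
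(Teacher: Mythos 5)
Your proof is correct and follows exactly the route the paper intends: the paper states this lemma as ``a direct consequence of the HLS inequality, by using suitable triangulations'' (citing \cite[Lemma 2.4]{MvS}), and your polarised HLS estimate combined with H\"older and the telescoping decompositions is precisely that argument, carried out in full detail with the exponent bookkeeping done correctly.
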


\medskip
\noindent
{\bf Interpolation inequalities and Energy-Nehari-Poho\v zaev system.}
    The following $L^2$--estimates follow from the GN and HLS inequalities by interpolation,
    \begin{align}\label{stima1}
        |u|_4&\le \bar c \,\rho^{1/4}|\D u|_2^{3/4}, \qquad \forall u\in \cS_\rho,
        \\
    \label{stima-CB}
    \cN(u)
    &\le
    c_\alpha \,\rho^{\frac{4}{3}\alpha}|u|_4^{4-\frac{4}{3}\alpha},\qquad \forall u\in \cS_\rho,
    \\
    \label{stima2}
        \cN(u)
        &\le
        \bar{\bar c}_\alpha \,\rho^{1+\alpha}|\D u|_2^{3-\alpha},\qquad \forall u\in \cS_\rho,
    \end{align}
where $\bar c:=\bar c_4$ and $\bar{\bar c}_\alpha:=c_\alpha\, \bar c^{\left(4-\frac 43\alpha \right)}$.
As a consequence of \eqref{stima1} and \eqref{stima2}, for every $\rho>0$ the energy $F$ is well-defined on $\cS_\rho$.
It is standard to check that the constrained functional $F\big|_{\cS_\rho}$ is of class $\cC^1$ on $\cS_\rho$ and
critical points of $F\big|_{\cS_\rho}$ are weak normalised solutions of \eqref{P},
i.e. if $u\in \cS_\rho$ is a critical point of $F\big|_{\cS_\rho}$ then
$$
\int_{\R^3}\nabla u\cdot\nabla \varphi\, dx+\int_{\R^3}|u|^2 u\varphi \,dx-\int_{\R^3}(I_{\alpha}*|u|^2)u\varphi\, dx
=-\lambda_\rho \int_{\R^3}u\varphi \, dx,\qquad\forall\varphi\in H^1,
$$
for an unknown Lagrange multiplier $\lambda_\rho\in\R$.

In particular, if $u\in \cS_\rho$ is a weak normalised solution of \eqref{P} with a Lagrange multiplier $\lambda_\rho\in\R$ then $u$ satisfies the {\em Nehari identity}
\begin{equation}\label{e-Nehari}
    |\nabla u|_2^2+\lambda_\rho\rho^2+|u|_4^4-\cN(u)=0.
\end{equation}
It is standard to see that under minor regularity assumptions $u$ also satisfies the {\em Poho\v{z}aev identity}.
The proof is an adaptation of \cite[Proposition 3.1]{MvS}.

\begin{prop}[Poho\v{z}aev identity]\label{p-Phz}
    Let $u\in \cS_\rho$ be a weak normalised solution of \eqref{P} with a Lagrange multiplier $\lambda_\rho\in\R$.
    If $\nabla u\in  H^{1}_{loc}$ then
    \begin{equation}\label{e-Phz}
        \frac{1}{2}|\nabla u|_2^2+\frac{3}{2}\lambda_\rho\rho^2 +\frac{3}{4}|u|_4^4-\frac{3+\alpha}{4}\cN(u)=0.
    \end{equation}
\end{prop}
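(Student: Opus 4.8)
The plan is to derive the Poho\v zaev identity \eqref{e-Phz} by exploiting the behaviour of the energy $F_{\lambda_\rho}(u)=F(u)+\tfrac{\lambda_\rho}{2}|u|_2^2$ under the mass--preserving dilation $u_t(x):=t^{3/2}u(tx)$, which keeps $|u_t|_2^2=|u|_2^2=\rho^2$ fixed for every $t>0$. Since $u$ is a critical point of $F$ on $\cS_\rho$ with Lagrange multiplier $\lambda_\rho$, it is an unconstrained critical point of $F_{\lambda_\rho}$, and in particular the scalar map $t\mapsto F_{\lambda_\rho}(u_t)$ must be stationary at $t=1$. First I would compute how each term of $F_{\lambda_\rho}$ rescales: under $u\mapsto u_t$ one has $|\D u_t|_2^2=t^2|\D u|_2^2$, $|u_t|_2^2=|u|_2^2$, $|u_t|_4^4=t^3|u|_4^4$, and $\cN(u_t)=t^{3+\alpha}\cN(u)$, the last exponent coming from the $|x|^{\alpha-3}$ homogeneity of the Riesz kernel together with the $t^{3/2}$ normalisation. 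Differentiating
\[
F_{\lambda_\rho}(u_t)=\tfrac{t^2}{2}|\D u|_2^2+\tfrac{\lambda_\rho}{2}\rho^2+\tfrac{t^3}{4}|u|_4^4-\tfrac{t^{3+\alpha}}{4}\cN(u)
\]
in $t$ and setting $t=1$ yields exactly
\[
|\D u|_2^2+\tfrac{3}{4}|u|_4^4-\tfrac{3+\alpha}{4}\cN(u)=0,
\]
which can then be combined with the Nehari identity \eqref{e-Nehari} to eliminate one term and produce the stated form \eqref{e-Phz}.

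The main obstacle is rigour rather than the formal computation: the dilation argument above gives the identity instantly \emph{if} $t\mapsto F_{\lambda_\rho}(u_t)$ is differentiable, but $F$ is only known to be $\cC^1$ along the dilation path under extra regularity, and the cleaner route is the classical one of multiplying the equation by $x\cdot\nabla u$ and integrating. This is where the hypothesis $\D u\in H^1_{\mathrm{loc}}$ enters, and I would follow the adaptation of \cite[Proposition 3.1]{MvS} indicated by the authors. Concretely, I would test the weak equation against $\eta_R(x)(x\cdot\nabla u)$ with a cut-off $\eta_R$, integrate by parts, and pass to the limit $R\to\infty$. The regularity $\D u\in H^1_{\mathrm{loc}}$ is precisely what makes $x\cdot\nabla u$ an admissible object and justifies the integration by parts on the Laplacian term, turning $\int \Delta u\,(x\cdot\nabla u)$ into $\tfrac12|\D u|_2^2$ after the boundary terms are controlled.

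The delicate step is the nonlocal term: I would need to show
\[
\int_{\R^3}(I_\alpha*|u|^2)\,u\,(x\cdot\nabla u)\,dx=-\tfrac{3+\alpha}{4}\,\cN(u),
\]
which is the source of the $\tfrac{3+\alpha}{4}$ coefficient distinguishing the GPP/Choquard Poho\v zaev identity from the local NLS one. This follows by writing $u\,(x\cdot\nabla u)=\tfrac12\,x\cdot\nabla(u^2)$, using the symmetry of the double integral defining $\cN$, and accounting for the $\alpha-3$ homogeneity of $I_\alpha$ via a change of variables or an explicit differentiation of $\cN$ along the dilation, exactly mirroring the scaling exponent $t^{3+\alpha}$ found above. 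The remaining cubic term contributes $-\tfrac{3}{4}|u|_4^4$ and the linear terms $\lambda_\rho\rho^2$ contribute the factor $\tfrac32$ after integration by parts. Collecting the four contributions gives \eqref{e-Phz}. The technical care needed to control the boundary integrals as $R\to\infty$, using the decay and local regularity of $u$, is the only genuinely nonroutine part; everything else is bookkeeping that can be imported verbatim from the proof of the fixed-frequency Poho\v zaev identity in \cite{MvS}.
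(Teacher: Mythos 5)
Your main route---testing the weak equation against $\eta_R\,(x\cdot\nabla u)$ with a cut-off, integrating by parts, and passing to the limit $R\to\infty$---is exactly the proof the paper intends: the paper gives no argument of its own beyond citing that the proof is an adaptation of \cite[Proposition 3.1]{MvS}, which is precisely this computation. Your key nonlocal identity
\begin{equation*}
\int_{\R^3}(I_\alpha*|u|^2)\,u\,(x\cdot\nabla u)\,dx=-\tfrac{3+\alpha}{4}\,\cN(u)
\end{equation*}
is correct (it follows from symmetrising the double integral and using $(x-y)\cdot\nabla K(x-y)=-(3-\alpha)K(x-y)$ for $K(z)=|z|^{-(3-\alpha)}$), and the remaining contributions $\tfrac12|\nabla u|_2^2$, $-\tfrac32\lambda_\rho\rho^2$, $-\tfrac34|u|_4^4$ are as you state; collecting them gives \eqref{e-Phz}.

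However, your opening dilation heuristic contains a concrete error that you should fix, because as written it does not produce \eqref{e-Phz}. Under the mass-preserving dilation $u_t(x)=t^{3/2}u(tx)$ one has $\cN(u_t)=t^{3-\alpha}\cN(u)$, \emph{not} $t^{3+\alpha}\cN(u)$: the densities and the Jacobian cancel ($t^{6}\cdot t^{-6}=1$) and the kernel $|x-y|^{-(3-\alpha)}$ contributes $t^{3-\alpha}$; this is the exponent the paper itself uses, e.g.\ in \eqref{1817}. Stationarity of $t\mapsto F_{\lambda_\rho}(u_t)$ at $t=1$ therefore gives
\begin{equation*}
|\nabla u|_2^2+\tfrac34|u|_4^4-\tfrac{3-\alpha}{4}\,\cN(u)=0,
\end{equation*}
i.e.\ identity \eqref{ES3}, and it is \emph{this} identity which, combined with the Nehari identity \eqref{e-Nehari} (namely $\tfrac32\cdot$Nehari minus the fiber identity), yields \eqref{e-Phz}. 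The identity you wrote, with $\tfrac{3+\alpha}{4}$ in place of $\tfrac{3-\alpha}{4}$, is false --- it differs from the true one by $\tfrac{\alpha}{2}\cN(u)>0$ --- and no linear combination of it with Nehari gives \eqref{e-Phz} unless $\alpha=0$ (matching the coefficients of $\lambda_\rho\rho^2$, $|\nabla u|_2^2$ and $|u|_4^4$ forces the combination $\tfrac32\cdot$Nehari minus your identity, whose $\cN$-coefficient is then $\tfrac{\alpha-3}{4}\neq-\tfrac{3+\alpha}{4}$). The exponent $t^{3+\alpha}$ you had in mind belongs to the \emph{non}-mass-preserving dilation $u(x/t)$, for which $|u(\cdot/t)|_2^2=t^3\rho^2$, $|\nabla u(\cdot/t)|_2^2=t|\nabla u|_2^2$, $|u(\cdot/t)|_4^4=t^3|u|_4^4$, $\cN(u(\cdot/t))=t^{3+\alpha}\cN(u)$; differentiating $F_{\lambda_\rho}(u(\cdot/t))$ at $t=1$ gives \eqref{e-Phz} directly, with no appeal to Nehari. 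Since your actual proof goes through the test-function argument, where the coefficients come out correctly, this slip does not invalidate the proposal, but the purported cross-check ``mirroring the scaling exponent $t^{3+\alpha}$ found above'' should be corrected as indicated.
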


\medskip

For a normalised solution $u\in\cS_\rho$ that satisfies conditions of Proposition \ref{p-Phz}, it is convenient to denote
\begin{equation}\label{abc}
    A=|\nabla u|_2^2,\quad B=|u|_4^4,\quad C=\cN(u).
\end{equation}
Then for any $\alpha\in(0,3)$ we can solve explicitly the Energy--Nehari--Poho\v zaev system
\begin{equation}\label{abc-syst}
    \left\{
    \begin{aligned}
        \frac12A+\frac14 B-\frac1{4} C&=\mu_\rho\\
        A+B-C&=-\lambda_\rho\rho^2\\
        \frac12 A+\frac34B-\frac{3+\alpha}{4}C&=-\frac32\lambda_\rho\rho^2,
    \end{aligned}
    \right.
\end{equation}
to deduce algebraic relations between various quantities involved. For instance, we obtain
\begin{equation}\label{abc-syst-plus}
    \mu_\rho=\frac 14\,\frac{2(1-\alpha) A-\alpha B}{3-\alpha},\qquad\lambda_\rho\rho^2 =\frac{(1+\alpha) A+\alpha B}{3-\alpha},\qquad C=\frac{4A+3B}{3-\alpha}.
\end{equation}
In particular,  the 2nd relation in \eqref{abc-syst-plus} shows that under a mild regularity assumption we can rule out negative and zero Lagrange multipliers.

\begin{prop}\label{abc-rem}
    Let $u\in \cS_\rho$ be a weak normalised solution of \eqref{P} with a Lagrange multiplier $\lambda_\rho\in\R$. If $\nabla u\in  H^{1}_{\loc}$ then $\lambda_\rho>0$.
\end{prop}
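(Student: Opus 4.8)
The plan is to read off the sign of $\lambda_\rho$ directly from the explicit solution \eqref{abc-syst-plus} of the Energy--Nehari--Poho\v zaev system, which is already available under the stated hypotheses. The regularity assumption $\nabla u\in H^1_{\loc}$ is exactly what lets me invoke Proposition \ref{p-Phz}, so that $u$ satisfies both the Nehari identity \eqref{e-Nehari} and the Poho\v zaev identity \eqref{e-Phz}; combined with the definition of the energy, these furnish the full system \eqref{abc-syst} and hence its solution \eqref{abc-syst-plus}.

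The key step is then to use the second relation in \eqref{abc-syst-plus},
$$\lambda_\rho\rho^2=\frac{(1+\alpha)A+\alpha B}{3-\alpha},\qquad A=|\nabla u|_2^2,\quad B=|u|_4^4.$$
Since $\alpha\in(0,3)$, each of the three coefficients $1+\alpha$, $\alpha$ and $3-\alpha$ is strictly positive; as $A\ge 0$ and $B\ge 0$, the right-hand side is nonnegative, and I immediately obtain $\lambda_\rho\ge 0$.

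To upgrade this to strict positivity I would observe that $B>0$. Indeed, $u\in\cS_\rho$ with $\rho>0$ forces $|u|_2=\rho>0$, so $u\not\equiv 0$; since $u\in H^1\hookrightarrow L^4$, this gives $B=|u|_4^4>0$. Therefore the numerator is strictly positive, whence $\lambda_\rho\rho^2>0$, and dividing by $\rho^2>0$ yields $\lambda_\rho>0$.

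I do not expect any genuine obstacle here: all of the analytic work is already contained in the derivation of \eqref{abc-syst-plus} from the two identities, and the proposition reduces to the elementary positivity of the coefficients in the numerator --- guaranteed precisely by the admissible range $\alpha\in(0,3)$ --- together with the remark that a nonzero $H^1$ function has strictly positive $L^4$ norm.
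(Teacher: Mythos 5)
Your proposal is correct and is precisely the paper's own argument: the paper proves this proposition by pointing to the second relation in \eqref{abc-syst-plus}, $\lambda_\rho\rho^2=\frac{(1+\alpha)A+\alpha B}{3-\alpha}$, which is obtained by combining the Nehari identity \eqref{e-Nehari} with the Poho\v{z}aev identity \eqref{e-Phz} (the latter available exactly because $\nabla u\in H^1_{\loc}$ via Proposition \ref{p-Phz}). Your additional observation that $B=|u|_4^4>0$ since $u\in\cS_\rho$ with $\rho>0$ forces $u\not\equiv 0$ is the (implicit in the paper) detail that upgrades $\lambda_\rho\ge 0$ to the strict inequality $\lambda_\rho>0$.
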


Since every normalised solutions of \eqref{P} is a fixed frequency solution of \eqref{Plambda}, the proofs of the following properties follow from the corresponding properties of the weak solutions of \eqref{Plambda}, see  Theorem \ref{thmA} and \cite[Propositions 4.2 and 4.3]{LZVM} (see \cite[p.3092]{MvS-JDE} for the discussion of the asymptotic \eqref{eq-asy}).

\begin{prop}
    \label{reg}
    Let $u\in \cS_\rho$ be a weak normalised solution of \eqref{P}
    with the corresponding Lagrange multiplier $\lambda_\rho> 0$.
    Then $u\in L^1 \cap C^2\cap W^{2,s}$, for every $s>1$.  In addition, if $u\ge 0$ then $u(x)>0$ for all $x\in\R^3$,
    and $u$ satisfies the exponential decay bound \eqref{eq-asy}.
\end{prop}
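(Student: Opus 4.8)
The plan is to exploit the observation, already noted above, that a normalised solution $u$ of \eqref{P} with Lagrange multiplier $\lambda_\rho>0$ is in particular a fixed--frequency weak solution of \eqref{Plambda} with $\lambda=\lambda_\rho>0$. All three assertions then reduce to the regularity, positivity and decay theory for \eqref{Plambda} recorded in Theorem \ref{thmA} and \cite[Propositions 4.2--4.3]{LZVM}. Below I outline the self--contained argument underlying those facts.

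\emph{Regularity and $L^1$.} First I would control the nonlocal potential $V:=I_\alpha*|u|^2$. From $u\in H^1\subset L^6$ we get $|u|^2\in L^1\cap L^3$, and the HLS inequality \eqref{HLS} together with the mapping properties of $I_\alpha$ (or a direct splitting of the kernel $A_\alpha|x|^{\alpha-3}$ into a locally integrable singular part and a bounded tail) gives $V\in L^q$ for suitable finite $q>2$, with $V\in L^\infty$ already when $\alpha>1$. Writing \eqref{P} as $-\Delta u=Vu-|u|^2u-\lambda_\rho u=:g$, a short bootstrap shows $g\in L^2$ (indeed $|u|^2u,\lambda_\rho u\in L^2$, while $Vu\in L^t$ for some $t>2$), hence $u\in H^2\hookrightarrow L^\infty$. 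Once $u\in L^2\cap L^\infty$ one has $|u|^2\in L^1\cap L^\infty$, so $V\in L^\infty\cap C^{0,\gamma}$, every term of $g$ lies in $L^s$ for all $s\ge 2$, and therefore $u\in W^{2,s}$ for every $s>1$; Morrey's embedding and Schauder estimates applied to the resulting Hölder right--hand side then upgrade $u$ to $C^2$. The inclusion $u\in L^1$ follows from the exponential decay established below.

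\emph{Positivity.} Supposing $u\ge 0$ and $u\not\equiv 0$, I would rewrite \eqref{P} as $-\Delta u+c(x)u=Vu\ge 0$, where $c(x):=\lambda_\rho+|u|^2\ge 0$ is bounded. Since $u\ge 0$ is a supersolution of $-\Delta u+cu=0$ with bounded nonnegative $c$, the strong maximum principle forces $u>0$ everywhere: otherwise $u$ would attain the value $0$ at an interior point and vanish identically, contradicting $u\not\equiv 0$.

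\emph{Decay.} The delicate point, and the main obstacle to a fully independent proof, is the sharp asymptotic \eqref{eq-asy}. The key input is that $|u|^2\in L^1$ carries mass $\rho^2$, so $V(x)=(I_\alpha*|u|^2)(x)\simeq A_\alpha\rho^2|x|^{\alpha-3}\to 0$ as $|x|\to\infty$, and \eqref{P} is asymptotically the linear radial Schrödinger equation $-\Delta u+\big(\lambda_\rho-A_\alpha\rho^2|x|^{\alpha-3}\big)u\approx 0$ with effective potential tending to $\lambda_\rho>0$. The substitution $u=v/|x|$ reduces this to a one--dimensional problem whose Liouville--Green/WKB analysis produces the factor $|x|^{-1}\exp\big(-\!\int^{|x|}\sqrt{\lambda_\rho-A_\alpha\rho^2 s^{\alpha-3}}\,ds\big)$, with the threshold $\nu_\lambda$ marking where the effective potential changes sign; the precise limit constant $C_\lambda$ is then pinned down by a comparison argument with sub-- and super--solutions of exactly this form. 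As this analysis was carried out for \eqref{Plambda} in \cite[Proposition 4.3]{LZVM} and for the Choquard equation in \cite{MvS}, and our $u$ solves \eqref{Plambda} with $\lambda=\lambda_\rho>0$, I would simply invoke those results to obtain \eqref{eq-asy}, which in turn closes the $L^1$ claim above.
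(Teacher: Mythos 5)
Your proposal is correct and follows essentially the same route as the paper: the paper's entire proof consists of observing that a normalised solution with $\lambda_\rho>0$ is a fixed--frequency weak solution of \eqref{Plambda} with $\lambda=\lambda_\rho$, and then citing Theorem \ref{thmA} and \cite[Propositions 4.2 and 4.3]{LZVM} for regularity, positivity and the decay bound \eqref{eq-asy}. Your additional sketches of the bootstrap, the strong maximum principle, and the Liouville--Green analysis are sound expansions of what those cited results contain, but the core reduction is identical.
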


In particular, if $\lambda_\rho> 0$ then {\em every} weak normalised solution of \eqref{P} has sufficient regularity to ensure that Poho\v zaev identity is valid and system \eqref{abc-syst} holds.

If $\lambda_\rho\le 0$ then at least in principle we may have weak normalised solution of \eqref{P} with $\nabla u\not\in  H^{1}_{\loc}$. However this possibility can be ruled out in some special cases.
For instance, using only the Nehari identity and the energy, we deduce a useful relation
\begin{equation}\label{abc-syst-lambda}
    \lambda_\rho\rho^2 =-4\mu_\rho+A.
\end{equation}
This shows that for every weak normalised solution of \eqref{P} at {\em non-positive energy level} we must have $\lambda_\rho>0$. Hence Poho\v zaev identity is valid and system \eqref{abc-syst} holds.

For non-negative weak normalised solution of \eqref{P} we have the following.

\begin{lemma}\label{l-minus}
    Let $u\in \cS_\rho$ be a weak normalised solution of \eqref{P} with a Lagrange multiplier $\lambda_\rho\in\R$.
    If $u\ge 0$ then $\lambda_\rho\ge 0$.
\end{lemma}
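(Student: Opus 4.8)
The plan is to argue by contradiction: assuming $\lambda_\rho<0$, I would show that the nonnegativity of $u$ is incompatible with the behaviour forced by the operator $-\Delta+\lambda_\rho$ far from the origin. The point is to do this \emph{without} the Poho\v zaev identity, since here we cannot assume the regularity $\D u\in H^1_{\loc}$ required by Proposition \ref{abc-rem}; instead the argument will rest on the strong maximum principle together with a Dirichlet eigenvalue comparison on large balls. Note that $u\not\equiv0$ because $u\in\cS_\rho$ with $\rho>0$.

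First I would upgrade the weak solution locally. Writing $-\Delta u=g$ with $g:=(I_\alpha*|u|^2)u-|u|^2u-\lambda_\rho u$, the bound $u\in H^1\subset L^6$ and the HLS/Riesz estimates give $I_\alpha*|u|^2\in L^{3}_{\loc}$ (for all $\alpha\in(0,3)$), hence $g\in L^2_{\loc}$ and, by interior elliptic regularity, $u\in H^2_{\loc}\cap C^0$ with the equation holding a.e.; this step is insensitive to the sign of $\lambda_\rho$. The genuinely technical part is to show, by a Moser/Brezis--Kato iteration on unit balls $B_1(x)$ using $|u|_{L^6(B_1(x))}\to0$ as $|x|\to\infty$, that $u\in L^\infty$ and $u(x)\to0$; the decay $(I_\alpha*|u|^2)(x)\to0$ then follows by splitting the convolution over $\{|y|<|x|/2\}$ and $\{|y|\ge|x|/2\}$ and using $u^2\in L^1\cap L^3$. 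With $u\in L^\infty$ we also get $I_\alpha*|u|^2\in L^\infty_{\loc}$, so the coefficient $c(x):=\lambda_\rho+u(x)^2-(I_\alpha*|u|^2)(x)$ is locally bounded; applying the strong maximum principle to $-\Delta u+c(x)u=0$ with $u\ge0$, $u\not\equiv0$, yields $u(x)>0$ for all $x\in\R^3$.

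Next I would set $W(x):=(I_\alpha*|u|^2)(x)-u(x)^2$, so $W(x)\to0$ as $|x|\to\infty$, and fix $R$ with $W(x)>-\tfrac12|\lambda_\rho|$ for $|x|>R$. Rewriting the equation as $-\Delta u=(W+|\lambda_\rho|)u$ gives the pointwise supersolution inequality $-\Delta u\ge\tfrac12|\lambda_\rho|\,u\ge0$ on $\{|x|>R\}$. Now choose $x_0$ with $|x_0|>R+\rho_0$ and a radius $\rho_0$ so large that the first Dirichlet eigenvalue of $-\Delta$ on $B_{\rho_0}(x_0)$, namely $(\pi/\rho_0)^2$, satisfies $(\pi/\rho_0)^2<\tfrac12|\lambda_\rho|$; let $\phi_1>0$ be the corresponding eigenfunction, so $\phi_1=0$ and $\partial_\nu\phi_1\le0$ on $\partial B_{\rho_0}$. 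Multiplying the inequality by $\phi_1\ge0$, integrating, and applying Green's identity twice gives
\[
\tfrac12|\lambda_\rho|\int_{B_{\rho_0}}u\phi_1\,dx
\le\int_{B_{\rho_0}}(-\Delta u)\phi_1\,dx
=\Big(\tfrac{\pi}{\rho_0}\Big)^{2}\!\int_{B_{\rho_0}}\!u\phi_1\,dx+\int_{\partial B_{\rho_0}}\!u\,\partial_\nu\phi_1\,dS
\le\Big(\tfrac{\pi}{\rho_0}\Big)^{2}\!\int_{B_{\rho_0}}\!u\phi_1\,dx,
\]
the boundary term being $\le0$ since $u\ge0$ and $\partial_\nu\phi_1\le0$. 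As $u>0$ everywhere, $\int_{B_{\rho_0}}u\phi_1\,dx>0$, and dividing yields $\tfrac12|\lambda_\rho|\le(\pi/\rho_0)^2<\tfrac12|\lambda_\rho|$, a contradiction. Hence $\lambda_\rho\ge0$.

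I expect the main obstacle to be precisely the boundedness-and-decay step: establishing $u\in L^\infty$ and $u(x)\to0$ at infinity for a merely weak $H^1$ solution \emph{without} knowing $\lambda_\rho>0$, since Proposition \ref{reg} and the exponential estimate \eqref{eq-asy} are available only after positivity of the multiplier is established. This requires the Moser iteration with locally vanishing $L^6$ norm carried out by hand; once $u$ and $I_\alpha*|u|^2$ are known to vanish at infinity, the strong maximum principle and the eigenvalue comparison are routine.
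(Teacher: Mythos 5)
Your proof is correct, but it follows a genuinely different route from the paper's. The paper stays entirely at the weak $H^1$ level: since $u\ge0$ and $u\neq0$, dropping the nonnegative nonlocal term gives the weak supersolution inequality $-\Delta u+(\lambda_\rho+|u|^2)u\ge0$; the Agmon--Allegretto--Piepenbrink positivity principle then yields $\int|\nabla\varphi|^2+\int(\lambda_\rho+|u|^2)\varphi^2\ge0$ for every $\varphi\in C^\infty_c$, and testing with the mass-preserving dilations $\varphi_t(x)=t^{3/2}\varphi(tx)$ as $t\to0$ sends the gradient term and the $\int|u|^2\varphi_t^2$ term to zero while the $\lambda_\rho$-term stays equal to $\lambda_\rho$, contradicting $\lambda_\rho<0$. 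That argument needs no regularity, no boundedness, no decay, and no strict positivity of $u$. Your proof is in essence a self-contained, localized-at-infinity rendition of the same spectral-positivity mechanism: you pay the technical price of interior regularity plus Brezis--Kato/Moser iteration to get $u\in H^2_{\loc}\cap L^\infty$, $u(x)\to0$ and $(I_\alpha*|u|^2)(x)\to0$, upgrade to $u>0$ by the strong maximum principle, and then run the Dirichlet-eigenvalue comparison $(\pi/\rho_0)^2<\tfrac12|\lambda_\rho|$ on large balls inside the exterior region where $-\Delta u\ge\tfrac12|\lambda_\rho|\,u$. The steps you only sketched do go through: writing $I_\alpha=I_\alpha\chi_{B_1}+I_\alpha\chi_{B_1^c}$ and using Young's inequality with $I_\alpha\chi_{B_1}\in L^1$, $u^2\in L^1\cap L^3$, gives $I_\alpha*|u|^2\in L^3(\R^3)+L^\infty(\R^3)$, hence the right-hand side of the equation lies in $L^2_{\loc}$ and $u\in H^2_{\loc}\cap C^0$ irrespective of the sign of $\lambda_\rho$; the Moser constants are uniform over unit balls because the $L^3$-part of the potential has uniformly controlled local norms; and the two Green identities are legitimate for $u\in H^2$ on the closed ball paired against the smooth first eigenfunction. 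What your route buys is independence from Agmon's theorem as a black box, plus extra by-products (boundedness, decay, strict positivity of $u$); what it costs is the entire iteration step, which the paper's dilation trick bypasses completely. One consequence worth noting: your regularity step shows that every weak normalised solution automatically satisfies $\nabla u\in H^1_{\loc}$, so Proposition \ref{abc-rem} would then apply directly (even without assuming $u\ge0$) and give the stronger conclusion $\lambda_\rho>0$; the paper is deliberately agnostic about whether this regularity is automatic, which is precisely why its proof of the present lemma never leaves $H^1$.
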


\proof
Since $u\ge 0$ and $u\neq 0$, the following linearised inequality holds
\begin{equation}\label{e-APP}
    -\Delta u +(\lambda_\rho+|u|^2)u\ge 0\quad\text{in $H^1$}.
\end{equation}
Then, according to the Agmon-Allegretto-Piepenbrink  positivity principle \cite[Theorem 3.3]{Agmon},
\begin{equation}\label{e-APP+}
    \int|\nabla\varphi|^2+\int(\lambda_\rho+|u|^2)\varphi^2\ge 0\quad\forall\varphi\in C^\infty_c.
\end{equation}
Assume $\lambda_\rho<0$ and for a given $\varphi\in C^\infty_c\cap \cS_1$ and for $t\ge 0$ consider the family  $\varphi_t(x)=t^{3/2}\varphi(tx)$.
Then as $t\to 0$,
\begin{multline*}
    \int|\nabla\varphi_t|^2+\int(\lambda_\rho+|u|^2)\varphi_t^2=t^2\int|\nabla\varphi|^2+\lambda_\rho\int|\varphi|^2
    +\int|u|^2|\varphi_t|^2\\
    \le \lambda_\rho+t^2|\nabla\varphi|_2^2+t^{3/2}\rho^2|\varphi|_\infty^2\to\lambda_\rho<0,
\end{multline*}
which is a contradiction to \eqref{e-APP+}.
\qed

We do not know if the possibility $\lambda_\rho<0$ can be ruled out for all nodal solutions at positive energy levels, without the additional regularity requirement.

\medskip
\noindent
{\bf A connection between normalised minimizers and fixed frequency ground states.}
Given $\lambda\ge 0$, denote
$$\mathcal N_\lambda:=\{u\in H^1\setminus\{0\}: |\nabla u|_2^2+\lambda|u|^2_2+|u|_4^4-\cN(u)=0\}$$
the Nehari manifold of the $\lambda$--frequency problem $\eqref{Plambda}$.
The next two statements follow closely the arguments in \cite[Proposition 2.3 and Theorem 1.3]{Dovetta} (see also \cite{Je21-1} for similar results).

\begin{lemma}
    \label{ground-min}
    Let $\lambda\ge 0$, $v\in \mathcal N_\lambda$ and $\rho>0$. Then
    \beq\label{eq-lambda-rho}
    F_\lambda(v)\ge m_\rho+\frac12\lambda\rho^2.
    \eeq
    Equality holds in \eqref{eq-lambda-rho} if and only if $v\in \cS_\rho$ and it is both a minimizer for $F_{|\cS_\rho}$ and a $\lambda$--frequency ground state of $\eqref{Plambda}$.
\end{lemma}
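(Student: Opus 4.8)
The plan is to prove the inequality \eqref{eq-lambda-rho} by producing a single explicit competitor in $\cS_\rho$, and then to read off the equality case from the same computation. Write $\sigma:=|v|_2$ and abbreviate $A:=|\D v|_2^2$, $B:=|v|_4^4$, $C:=\cN(v)$. Since $v\in\mathcal N_\lambda$ obeys $A+\lambda\sigma^2+B-C=0$, substituting $C=A+\lambda\sigma^2+B$ into $F_\lambda=F+\tfrac12\lambda|\cdot|_2^2$ cancels the quartic contributions and gives the identity $F_\lambda(v)=\tfrac14\big(A+\lambda\sigma^2\big)$, which I will use throughout. Because $m_\rho=\inf_{\cS_\rho}F$, to establish \eqref{eq-lambda-rho} it is enough to find one $u\in\cS_\rho$ with $F(u)\le F_\lambda(v)-\tfrac12\lambda\rho^2$.

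The competitor I would choose is the pure amplitude rescaling $u:=\tfrac{\rho}{\sigma}\,v\in\cS_\rho$ (not a mass--preserving dilation); this is the choice that makes the algebra close. Writing $D:=A+\lambda\sigma^2=C-B>0$ and $r:=\rho^2/\sigma^2$, one finds $F(u)=\tfrac{\rho^2}{2\sigma^2}A-\tfrac{\rho^4}{4\sigma^4}D$, and the gap collapses to a perfect square:
\[
\Big(F_\lambda(v)-\tfrac12\lambda\rho^2\Big)-F(u)=\tfrac14 D\big(1+r^2\big)-\tfrac{r}{2}D=\tfrac14 D\,(1-r)^2\ge 0,
\]
using $A+\lambda\sigma^2=D$ and $D>0$ (here $A>0$ as $v\neq 0$, and $\lambda\ge0$). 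Combined with $m_\rho\le F(u)$ this yields $m_\rho\le F_\lambda(v)-\tfrac12\lambda\rho^2$, i.e.\ \eqref{eq-lambda-rho}, for every $\rho>0$. I expect the identification of this particular competitor and the resulting perfect square to be the crux of the argument.

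For the equality case I would argue as follows. Since $D>0$, the perfect square vanishes only when $r=1$, forcing $\sigma=\rho$, hence $v\in\cS_\rho$ and $u=v$; the residual inequality $m_\rho\le F(u)$ is then an equality precisely when $F(v)=m_\rho$, i.e.\ $v$ minimizes $F|_{\cS_\rho}$. To upgrade this minimizer to a $\lambda$--frequency ground state, I would first note that a constrained minimizer solves $\eqref{Plambda}$ with \emph{some} multiplier $\lambda_v$ and hence satisfies the Nehari identity \eqref{e-Nehari} with $\lambda_v$; comparing this with the identity defining $\mathcal N_\lambda$ at the common mass $\rho$ forces $\lambda_v=\lambda$, so $v$ solves $\eqref{Plambda}$. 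Conversely, any minimizer of $F|_{\cS_\rho}$ clearly gives $F(v)=m_\rho$ and thus equality in \eqref{eq-lambda-rho}.

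The one step I would flag as non--routine is showing that this $v$ is actually a ground state, i.e.\ that it minimizes $F_\lambda$ among \emph{all} nontrivial solutions of $\eqref{Plambda}$. Rather than a fresh minimization, this follows self--referentially: every nontrivial solution $w$ of $\eqref{Plambda}$ lies on $\mathcal N_\lambda$, so applying the already--proved inequality \eqref{eq-lambda-rho} to $w$ at this same $\rho$ gives $F_\lambda(w)\ge m_\rho+\tfrac12\lambda\rho^2=F_\lambda(v)$. Thus the inequality, once established for all Nehari elements, immediately delivers both the sharp bound and its ground--state interpretation.
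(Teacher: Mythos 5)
Your proof is correct and takes essentially the same route as the paper: both arguments use the scalar competitor $\frac{\rho}{|v|_2}v\in\cS_\rho$, and your perfect-square identity $\tfrac14 D(1-r)^2$ is exactly the explicit form of the paper's observation that $t\mapsto F_\lambda(tv)$ is maximized at $t=1$ along the Nehari fiber, with the equality case read off from saturation of the same two inequalities. Your two concluding steps (the multiplier comparison forcing $\lambda_v=\lambda$, and the self-referential application of the inequality to rule out a lower-energy solution) match the paper's argument as well, with the multiplier identification being a detail the paper leaves implicit.
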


\proof
Let $\rho>0$ and $v\in \mathcal N_\lambda$. By the definition of Nehari manifold, for every $t>0$ it holds $F_\lambda(tv)\le F_\lambda(v)$, with strict inequality unless $t=1$. Hence, given any $\rho>0$,
\beq\label{eq-lambda-rho+}
F_\lambda(v)\ge F_\lambda\Big(\frac{\rho}{|v|_2}v\Big)=F\Big(\frac{\rho}{|v|_2}v\Big)+\frac12\lambda\rho^2\ge m_\rho+\frac12\lambda\rho^2,
\eeq
since $\frac{\rho}{|v|_2}v\in\cS_\rho$ and \eqref{eq-lambda-rho} is proved.

To conclude, note that if $v\in S_\rho$ is a minimizer for $F_{|\cS_\rho}$ then
$$F_\lambda(v)= F(v)+\frac12\lambda\rho^2= m_\rho+\frac12\lambda\rho^2,$$
that is equality in \eqref{eq-lambda-rho} holds.

Conversely, if equality in \eqref{eq-lambda-rho} holds for some $v\in \mathcal N_\lambda$ then equality also holds in \eqref{eq-lambda-rho+}. This implies that $|v|_2=\rho$ and hence $v\in \cS_\rho$ and $F(v)=m_\rho$.
Moreover, $v$ must be a minimizer on $\mathcal N_\lambda$ and hence a  $\lambda$--frequency ground state of $\eqref{Plambda}$,
otherwise there exists $w\in \mathcal N_\lambda$, $w\neq v$ and such that $F_\lambda(w)<F_\lambda(v)=m_\rho+\frac12\lambda\rho^2$, which contradicts \eqref{eq-lambda-rho}.
\qed

\smallskip
As a consequence of the last statement of Lemma \ref{ground-min}, it is readily seen that the following corollary holds.

\begin{cor}\label{cor-ground-min}
If $v\in \cS_\rho$ is a minimizer for $F_{|\cS_\rho}$ then $v$ is a $\lambda_v$--frequency ground state of $(P_{\lambda_v})$. Moreover, any other $\lambda_v$--frequency ground state of $(P_{\lambda_v})$ belongs to $\cS_\rho$ and is a minimizer for $F_{|\cS_\rho}$.
\end{cor}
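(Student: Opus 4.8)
The plan is to read off both assertions from the equality case of Lemma~\ref{ground-min}, applied with $\lambda=\lambda_v$. First I would record that, $\cS_\rho$ being a smooth constraint and $F|_{\cS_\rho}$ of class $\cC^1$, a minimizer $v$ is automatically a critical point, hence a weak normalised solution of \eqref{P} with some Lagrange multiplier $\lambda_v\in\R$; the Nehari identity \eqref{e-Nehari} then says precisely that $v\in\mathcal N_{\lambda_v}$, while $F(v)=m_\rho$ by assumption.

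The one point requiring care is the verification that $\lambda_v\ge 0$, since Lemma~\ref{ground-min} is stated only for nonnegative frequencies. I would obtain this in either of two ways. Replacing $v$ by $|v|$ changes neither the mass nor the energy, so $|v|$ is again a minimizer and hence a nonnegative weak normalised solution, to which Lemma~\ref{l-minus} applies to give a nonnegative multiplier. Alternatively, whenever $m_\rho\le 0$ one may argue directly from \eqref{abc-syst-lambda}: with $\mu_\rho=F(v)=m_\rho\le 0$ and $A=|\nabla v|_2^2>0$ one reads off $\lambda_v\rho^2=-4m_\rho+A>0$. Either route secures $\lambda_v\ge 0$, so that Lemma~\ref{ground-min} becomes applicable.

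With $\lambda=\lambda_v\ge 0$ in hand, the first claim is immediate: since $v\in\cS_\rho$ and $v\in\mathcal N_{\lambda_v}$,
\[
F_{\lambda_v}(v)=F(v)+\tfrac12\lambda_v\rho^2=m_\rho+\tfrac12\lambda_v\rho^2,
\]
which is exactly equality in \eqref{eq-lambda-rho}. The equality clause of Lemma~\ref{ground-min} then forces $v$ to be a $\lambda_v$--frequency ground state of $(P_{\lambda_v})$.

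For the second claim I would use that every $\lambda_v$--frequency ground state is a minimizer of $F_{\lambda_v}$ over $\mathcal N_{\lambda_v}$, the identification already invoked inside the proof of Lemma~\ref{ground-min}. Hence any other ground state $w$ of $(P_{\lambda_v})$ satisfies $F_{\lambda_v}(w)=F_{\lambda_v}(v)=m_\rho+\tfrac12\lambda_v\rho^2$, and $w\in\mathcal N_{\lambda_v}$ because every nontrivial solution lies on the Nehari manifold. Thus $w$ again realises equality in \eqref{eq-lambda-rho}, and the same equality clause of Lemma~\ref{ground-min} yields $w\in\cS_\rho$ together with $F(w)=m_\rho$, i.e. $w$ is a minimizer for $F|_{\cS_\rho}$. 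The only genuine obstacle is the verification of $\lambda_v\ge 0$ described above; everything else is a direct transcription of the equality case of Lemma~\ref{ground-min}.
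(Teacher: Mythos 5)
Your proposal is correct and takes essentially the same route as the paper, which proves the corollary simply by invoking the equality case (the last statement) of Lemma~\ref{ground-min} with $\lambda=\lambda_v$. Your additional verification that $\lambda_v\ge 0$ — via Lemma~\ref{l-minus} applied to $|v|$, or more directly via \eqref{abc-syst-lambda} together with $m_\rho\le 0$ — fills in a hypothesis check that the paper leaves implicit in its "it is readily seen" remark, and both of your routes are sound.
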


\section{Global minimizers for $\alpha\in(1,3)$ and proof of Theorem \ref{T11}}

\subsection{Global minimizers for all $\rho>0$}
First we are going to establish  the existence of global minimizers for $F|_{\cS_\rho}$ for each $\rho>0$.

\begin{prop}
\label{Pmin}
Let $\alpha\in(1,3)$ and $\rho>0$.
Then $m_\rho<0$ and there exists a positive normalised solution $u_\rho\in\cS_\rho$ of \eqref{P} such that $F(u_\rho)=m_\rho$.
Moreover, $u_\rho$ is radially symmetric and $u_\rho\in L^1 \cap C^2\cap W^{2,s}$, for every $s>1$.
\end{prop}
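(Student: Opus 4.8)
The plan is to obtain $u_\rho$ as a constrained minimizer of $F$ on $\cS_\rho$ and then to extract its qualitative properties from the general results of Section~3.

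First I would record the behaviour of $F$ under the mass--preserving dilation $u_t(x):=t^{3/2}u(tx)\in\cS_\rho$, namely $|\nabla u_t|_2^2=t^2|\nabla u|_2^2$, $|u_t|_4^4=t^3|u|_4^4$ and $\cN(u_t)=t^{3-\alpha}\cN(u)$. Since $\alpha\in(1,3)$ gives $0<3-\alpha<2<3$, for any fixed $u\in\cS_\rho$ the nonlocal term dominates as $t\to0^+$, so $F(u_t)=-\tfrac14 t^{3-\alpha}\cN(u)+o(t^{3-\alpha})<0$ for small $t$; hence $m_\rho<0$. Boundedness from below and coercivity follow from \eqref{stima2}: discarding the nonnegative $L^4$--term, $F(u)\ge \tfrac12|\nabla u|_2^2-\tfrac14\bar{\bar c}_\alpha\rho^{1+\alpha}|\nabla u|_2^{3-\alpha}$, and since $3-\alpha<2$ the right-hand side tends to $+\infty$ as $|\nabla u|_2\to\infty$. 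Thus $-\infty<m_\rho<0$ and every minimizing sequence is bounded in $H^1$.

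Next I would reduce to radial functions and prove compactness. By the Pol\'ya--Szeg\H{o} inequality and the Riesz rearrangement inequality the symmetric decreasing rearrangement $u^*$ satisfies $|u^*|_2=|u|_2$, $|u^*|_4=|u|_4$, $|\nabla u^*|_2\le|\nabla u|_2$ and $\cN(u^*)\ge\cN(u)$, so $F(u^*)\le F(u)$ and one may choose a minimizing sequence $\{u_n\}\subset\cS_{\rho,\rad}$ of radial, nonnegative, nonincreasing functions. Being bounded in $H^1_{\rad}$, up to a subsequence $u_n\rightharpoonup u$ in $H^1$ and $u_n\to u$ in $L^p$ for every $p\in(2,6)$ by the compact radial embedding. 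Writing $v_n:=u_n-u$, weak $L^2$--convergence of the gradients and Lemma~\ref{lemmaConvNonLoc} (applied with $\tfrac{12}{3+\alpha}\in(2,6)$) give the splitting $F(u_n)=F(u)+\tfrac12|\nabla v_n|_2^2+o(1)$, together with $|u_n|_2^2=|u|_2^2+|v_n|_2^2+o(1)$. To rule out mass loss I would first establish the strict monotonicity of $\rho\mapsto m_\rho$: testing with the dilation $\kappa u$, where $\kappa:=\rho_2/\rho_1>1$, and using $F(u)<0$ one computes $F(\kappa u)=\kappa^4 F(u)-\tfrac12|\nabla u|_2^2\,\kappa^2(\kappa^2-1)\le\kappa^4 F(u)$, whence $m_{\rho_2}\le\kappa^4 m_{\rho_1}<m_{\rho_1}$. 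Since $m_\rho<0$ the limit cannot vanish (otherwise $F(u)=0$ and $m_\rho=\tfrac12\lim|\nabla v_n|_2^2\ge0$), so $\rho':=|u|_2\in(0,\rho]$; and from $m_\rho=F(u)+\tfrac12\lim|\nabla v_n|_2^2\ge F(u)\ge m_{\rho'}$ the assumption $\rho'<\rho$ would give $m_\rho\ge m_{\rho'}>m_\rho$, a contradiction. Hence $\rho'=\rho$, so $|v_n|_2\to0$, the splitting forces $\lim|\nabla v_n|_2^2=0$, and therefore $u_n\to u=:u_\rho$ strongly in $H^1$ with $F(u_\rho)=m_\rho$.

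Finally I would recover the regularity and positivity. As a global minimizer on $\cS_\rho$, the function $u_\rho$ is a weak solution of \eqref{P} with some Lagrange multiplier $\lambda_\rho$; it is radial, and replacing it by $|u_\rho|$ we may assume $u_\rho\ge0$. Because $F(u_\rho)=m_\rho<0$, relation \eqref{abc-syst-lambda}, which uses only the Nehari identity and the energy (hence no extra regularity), yields $\lambda_\rho\rho^2=|\nabla u_\rho|_2^2-4m_\rho>0$, so $\lambda_\rho>0$; Proposition~\ref{reg} then gives $u_\rho\in L^1\cap C^2\cap W^{2,s}$ for all $s>1$ and $u_\rho>0$. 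I expect the genuine difficulty to lie in the compactness step: the radial embedding is not compact into $L^2$, so mass could a priori escape to infinity, and the whole argument hinges on the strict monotonicity estimate $m_{\rho_2}\le(\rho_2/\rho_1)^4 m_{\rho_1}$ to exclude this.
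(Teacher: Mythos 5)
Your proposal is correct, but the key compactness step is handled by a genuinely different mechanism than the one in the paper. The paper, after the same rearrangement reduction and the same compact radial embedding, invokes Ekeland's variational principle to turn the minimizing sequence into a Palais--Smale sequence with Lagrange multipliers $\lambda_n$, shows $\lambda_n\to\lambda>0$ from $\lambda_n\rho^2=-4F(u_n)+|\nabla u_n|_2^2+o(1)$ and $m_\rho<0$, passes to the limit equation, and then deduces $|u_n|_2\to|u_\rho|_2$ by comparing the (approximate) Nehari identities with weak lower semicontinuity --- the positivity of the limiting multiplier is what prevents mass from escaping. You instead exclude mass loss by the classical sub-additivity route: a Brezis--Lieb--type splitting $F(u_n)=F(u)+\tfrac12|\nabla v_n|_2^2+o(1)$ (valid here because the $L^4$ and nonlocal terms pass to the limit by the compact radial embedding and Lemma \ref{lemmaConvNonLoc}) combined with the strict monotonicity $m_{\rho_2}\le(\rho_2/\rho_1)^4 m_{\rho_1}<m_{\rho_1}$, which you correctly prove at the level of infima via the identity $F(\kappa u)=\kappa^4F(u)-\tfrac12\kappa^2(\kappa^2-1)|\nabla u|_2^2$. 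This is a real advantage of your presentation: the paper's own monotonicity statement (Proposition \ref{PF}) is proved \emph{after} existence and uses the minimizer $u_{\rho_1}$, so it could not be quoted inside the existence proof, whereas your infimum-level version is logically self-contained and avoids Ekeland's principle entirely. What the paper's route buys in exchange is that the positive Lagrange multiplier comes out as a by-product of the compactness argument (you recover it afterwards, correctly, from \eqref{abc-syst-lambda}, which only needs the Nehari identity), and the same PS-sequence scheme is reused almost verbatim for the local minimizers and mountain-pass solutions in Sections 5 and 6, where a global sub-additivity argument is not available. Both proofs of positivity/regularity coincide: $\lambda_\rho\rho^2=|\nabla u_\rho|_2^2-4m_\rho>0$ and then Proposition \ref{reg}.
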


\proof
In order to show that $m_\rho<0$, let us fix $\bar u\in \cS_\rho$, for $t>0$ consider $t^{3/2}\bar u(tx)\in \cS_\rho$ and observe that $\alpha>1$ gives
$$
\inf_{t\in (0,+\infty)} F(t^{3/2}\bar u(tx))=\inf_{t\in (0,+\infty)} \frac 1 2|\n \bar u|_2^2\,  t^2
+ \frac 14 |\bar u|_4^4\,   t^3-  \frac{1}{4}\cN(\bar u) \, t^{3-\alpha}<0.
$$
To show that $m_\rho$ is finite, we point out that by \eqref{stima-CB}
\beq
\label{1051}
F(u)\ge
\frac 14 |u|_4^4-\frac{c_\alpha}{4}
\rho^{\frac{4}{3}\alpha}|u|_4^{4-\frac{4}{3}\alpha}
:=g_1(|u|_4^4)\qquad\forall u\in \cS_{\rho},
\eeq
then the desired result comes from $\inf_\R g_1>-\infty$.

\medskip

Now, we are going to prove that $m_\rho$ is achieved.
To this aim, let us first remark that
\beq
\label{ES}
m_\rho=\inf\{F(u)\ :\  u\in \cS_{\rho,\rad}\}.
\eeq
Indeed, for every $u\in H^1$, let  $u^*\in H^1_{\rad}$ be its Schwartz spherical rearrangement.  Then
$$
|\nabla u |_2^2\geq |\nabla u^*|_2^2, \ | u|_2^2=| u^*|_2^2, \ | u|_4^4=| u^*|_4^4, \ \cN(u)\le\cN(u^*)
$$
cf. \cite[Chapter 3]{LL}.

Then we can consider a minimizing sequence $\{u_n\}$ for $m_\rho$ in $\cS_{\rho,\rad}$.
 By \eqref{stima2} we infer
\begin{equation}\label{stima1 F basso eq+}
    F(u)\geq \frac 1 2|\n u|_2^2-\frac{\bar{\bar c}_\alpha}{4}\rho^{1+\alpha}|\n u|_2^{3-\alpha}:=g_2(|\n u|_2 ),\qquad\forall u\in \cS_\rho,
\end{equation}
so that by $\alpha>1$ we get that the minimizing sequence verifies $|\n u_n|_2\leq c$, $\forall n\in\N$.
Moreover, $|u_n|_2\equiv \rho$, so $\{u_n\}$ is bounded in $H^1$ and we can assume
that $u_\rho\in H^1_{\rad}$ exists such that, up to a subsequence,
\beq
\label{eq:convergence}
u_n\to u_\rho \qquad\left\{\begin{array}{l}
\text{strongly in }L^{q},\ \forall q\in(2,6)\\
\text{weakly in } H^1,\ L^2\\
\text{a.e. in }\R^3.
\end{array}\right.
\eeq
Then, by Lemma \ref{lemmaConvNonLoc}, $F(u_\rho)\le \liminf_{n\to\infty}F(u_n)=m_\rho$, so  we only need to prove that $u_\rho \in \cS_\rho$.

By the Ekeland's variational principle (\cite[Proposition 5.1]{E}), we can assume that $\{u_n\}$ is a $(PS)$--sequence for $F$ constrained on
$\cS_{\rho,\rad}$, namely
\beq
\label{eq:psminimo}
\int \D u_n\D v\, dx+\int u_n^3 v\, dx -\int (I_\alpha *u_n^2)u_nv\,dx+
\lambda_n\int u_nv\,dx=o(1)\|v\|\,, \quad  \forall n\in\N,\ \forall v\in \cS_{\rho,\rad}
\eeq
for a suitable sequence of Lagrange multipliers $\{\lambda_n\}$.
Then,
\beq
\label{1809}
\lambda_n \rho^2=-4F(u_n)+\int |\D u_n|^2dx+o(1),
\eeq
and taking into account $F(u_n)\to m_\rho<0$ we get, up to a subsequence,
\beq
\label{eq:lambda>0}
\lambda_n\longrightarrow  \lambda>0.
\eeq
By \eqref{eq:convergence},\eqref{eq:lambda>0} and Lemma \ref{lemmaConvNonLoc} the function $u_\rho$ verifies
\beq
\label{eq:minimo}
\int \D  u_\rho \D v\, dx+\int  u_\rho ^3 v\, dx -\int (I_\alpha * u_\rho^2)u_\rho v\,dx+
 \lambda\int_{\R^3}u_\rho v\,dx=0 \quad  \forall v\in \cS_{\rho,\rad}.
\eeq
By the weak convergence, $\liminf_{n\to\infty} |\D u_n|_2\ge |\D u_\rho|_2$ and $\liminf_{n\to\infty} | u_n|_2\ge  |u_\rho|_2$, hence by \eqref{eq:convergence}, \eqref{eq:psminimo},  \eqref{eq:lambda>0} and \eqref{eq:minimo} we conclude that $| u_n|_2 \to |  u_\rho|_2$, and hence $u_\rho\in \cS_\rho$.

Since the functional $F$ is even, we can choose $u_n\ge 0$, $\forall n\in\N$ and conclude that  $u_\rho \ge 0$.
Further, since $\lambda>0$ by \eqref{eq:lambda>0} the regularity results in Proposition \ref{reg} hold and, in particular, $u_\rho>0$.
\qed

\begin{prop}
\label{PF}
Let $\alpha\in(1,3)$ and $\rho>0$.
The map $\rho\mapsto m_\rho$ is continuous, strictly decreasing and strictly concave.
\end{prop}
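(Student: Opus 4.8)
The plan is to base everything on a one-parameter family of \emph{touching test functions} built from the global minimizers, whose existence and the sign $m_\rho<0$ are guaranteed by Proposition \ref{Pmin}. Fix $\rho>0$, let $u_\rho\in\cS_\rho$ be a minimizer and set $A_\rho:=|\D u_\rho|_2^2$, $B_\rho:=|u_\rho|_4^4$, $C_\rho:=\cN(u_\rho)$, so that $m_\rho=\tfrac12 A_\rho+\tfrac14 B_\rho-\tfrac14 C_\rho<0$; in particular $C_\rho-B_\rho>2A_\rho>0$. For every $s>0$ the scalar rescaling $\tfrac{s}{\rho}u_\rho\in\cS_s$ yields the upper bound $m_s\le\phi_\rho(s)$, where
\[
\phi_\rho(s):=F\Big(\tfrac{s}{\rho}u_\rho\Big)=\frac{A_\rho}{2}\frac{s^2}{\rho^2}-\frac{C_\rho-B_\rho}{4}\frac{s^4}{\rho^4},
\]
with equality at $s=\rho$. \emph{Strict monotonicity} then follows immediately: for $s>\rho$ one computes $\phi_\rho(s)-m_\rho=\big(\tfrac{s^2}{\rho^2}-1\big)\big[\tfrac12 A_\rho-\tfrac14(C_\rho-B_\rho)(\tfrac{s^2}{\rho^2}+1)\big]$, and since $C_\rho-B_\rho>2A_\rho$ the bracket is strictly negative for all $s\ge\rho$, whence $m_s\le\phi_\rho(s)<m_\rho$.

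For \emph{continuity} I would take $\rho_n\to\rho$ and compare from both sides using the touching functions. The bound $m_{\rho_n}\le\phi_\rho(\rho_n)\to\phi_\rho(\rho)=m_\rho$ gives $\limsup_n m_{\rho_n}\le m_\rho$. For the reverse inequality, the coercivity estimate \eqref{stima1 F basso eq+} (valid since $3-\alpha<2$) forces $A_{\rho_n}=|\D u_{\rho_n}|_2^2$ to remain bounded once $m_{\rho_n}$ is bounded above, and then $C_{\rho_n}-B_{\rho_n}=2A_{\rho_n}-4m_{\rho_n}$ is bounded as well; consequently $m_\rho\le\phi_{\rho_n}(\rho)=m_{\rho_n}+o(1)$, so $m_\rho\le\liminf_n m_{\rho_n}$. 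Together these give $m_{\rho_n}\to m_\rho$.

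For \emph{strict concavity} the natural idea is a two-point comparison: given $\rho_1<\rho_2$ and $\theta\in(0,1)$, set $\rho_\theta:=\theta\rho_1+(1-\theta)\rho_2$, use the minimizer $u_{\rho_\theta}$ to get $m_{\rho_i}\le\phi_{\rho_\theta}(\rho_i)$ for $i=1,2$, and conclude
\[
\theta m_{\rho_1}+(1-\theta)m_{\rho_2}\le\theta\phi_{\rho_\theta}(\rho_1)+(1-\theta)\phi_{\rho_\theta}(\rho_2)<\phi_{\rho_\theta}(\rho_\theta)=m_{\rho_\theta},
\]
\emph{provided} $\phi_{\rho_\theta}$ is strictly concave on $[\rho_1,\rho_2]$. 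Here lies the main obstacle: $\phi_{\rho_\theta}(s)=a s^2-b s^4$ with $a,b>0$ is concave only for $s^2>a/(6b)$, i.e. it is convex near the origin, so a single global comparison fails for widely separated masses. The saving point is that $m_{\rho_\theta}<0$ forces $C_{\rho_\theta}-B_{\rho_\theta}>2A_{\rho_\theta}$, which makes $\phi_{\rho_\theta}''(s)<0$ precisely for $s>\rho_\theta/\sqrt6$; hence the comparison does work whenever $\rho_1,\rho_2$ lie in a sufficiently small neighbourhood of a common point, so that $\rho_1>\rho_\theta/\sqrt6$.

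This yields \emph{local} strict concavity, and since $m$ is already known to be continuous, local concavity upgrades to global concavity on $(0,\infty)$, with strictness inherited since affineness on any subinterval would contradict the local strict inequality. The delicate point throughout is that uniqueness of minimizers is unavailable, so $A_\rho$ and $C_\rho-B_\rho$ cannot be treated as smooth functions of $\rho$; every estimate must be phrased through the envelope inequalities $m_s\le\phi_\rho(s)$ together with the uniform bounds coming from \eqref{stima1 F basso eq+}, rather than by differentiating the map $\rho\mapsto m_\rho$ directly.
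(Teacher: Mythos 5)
Your proposal is correct, but the route to strict concavity is genuinely different from the paper's, and it is worth seeing why. The monotonicity step is essentially the paper's own argument (scale the minimizer at the smaller mass up to the larger mass and use that $|u_{\rho_1}|_4^4-\cN(u_{\rho_1})<0$), and your direct continuity proof via the coercivity bound \eqref{stima1 F basso eq+} is sound, whereas the paper gets continuity for free from concavity; note only that bounding $C_{\rho_n}-B_{\rho_n}=2A_{\rho_n}-4m_{\rho_n}$ from above also requires $m_{\rho_n}$ bounded \emph{below}, which follows from the monotonicity you have already established (or from \eqref{1051}). The real divergence is the concavity step. The paper tests with the same functions $\frac{\rho_i}{\bar\rho}u_{\bar\rho}$, but it never invokes concavity of the fiber map $s\mapsto\phi_{\bar\rho}(s)$: setting $\tilde A=\frac12|\D u_{\bar\rho}|_2^2$ and $\tilde D=\frac14\big(\cN(u_{\bar\rho})-|u_{\bar\rho}|_4^4\big)$, it reduces strict concavity to
$$
\Big[\tfrac{t\rho_1^2+(1-t)\rho_2^2}{\bar\rho^2}-1\Big]\tilde A-\Big[\tfrac{t\rho_1^4+(1-t)\rho_2^4}{\bar\rho^4}-1\Big]\tilde D<0 ,
$$
which holds for \emph{arbitrary} $0<\rho_1<\rho_2$ because $0<\tilde A\le\tilde D$ (your inequality $C_{\rho_\theta}-B_{\rho_\theta}>2A_{\rho_\theta}$ in different notation) combines with Lemma \ref{CVXLemma}, i.e.\ the strict monotonicity in $p$ of the ratio $\frac{t\rho_1^{p}+(1-t)\rho_2^{p}}{(t\rho_1+(1-t)\rho_2)^{p}}$. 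This one-shot algebraic comparison is global, so the obstacle you identified (convexity of $\phi_{\rho_\theta}$ near the origin) never arises: it is an artifact of insisting on a Jensen-type argument. Your workaround — concavity of $\phi_{\rho_\theta}$ for $s>\rho_\theta/\sqrt6$, hence the strict inequality for all pairs with $\rho_2<\sqrt6\,\rho_1$, then patching — is correct, but it makes you pay twice: first with a weaker (local) intermediate statement, and second with the local-to-global upgrade, namely that a continuous, locally concave function on an interval is concave, with strictness recovered because equality anywhere would force $m$ to be affine on a subinterval. That patching fact is true and standard, but it is the one step you assert rather than prove, and it is exactly what the paper's monotone-ratio lemma makes unnecessary. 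In short, both proofs run on the same fuel ($m_{\bar\rho}<0$, i.e.\ $\tilde A<\tilde D$); the paper burns it in a single global comparison, while you burn it locally and then need an extra lemma to globalize.
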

\proof
The continuity follows from the concavity.

First, we are going to prove the monotonicity.
Let $0<\rho_1<\rho_2$ and observe that $F(u_{\rho_1})<0$ implies
$$
\int_{\R^3} u^4_{\rho_1}dx-\cN(u_{\rho_1}) <0.
$$
Then
\begin{eqnarray*}
\nonumber m_{\rho_2}&\le& F\left(\frac{\rho_1}{\rho_2}\, u_{\rho_1}\right)\\
&=&\frac 12\left(\frac{\rho_2}{\rho_1}\right)^2\int_{\R^3}|\D u_{\rho_1}|^2dx+
\frac 14\left(\frac{\rho_2}{\rho_1}\right)^4\left(\int_{\R^3} u^4_{\rho_1}dx-
\cN(u_{\rho_1})\right)\\
  &<&\left(\frac{\rho_2}{\rho_1}\right)^2 F(u_{\rho_1})\\
\nonumber &<& m_{\rho_1},
\end{eqnarray*}
that proves the claim.

To establish the concavity, let us fix $t\in(0,1)$ and set $\bar \rho=t\rho_1+(1-t)\rho_2$.
By definition of $m_\rho$ we see
\begin{equation*}
\begin{split}
t\, m_{\rho_1}+(1-t)\, m_{\rho_2}\le &\,  t\, F\left(\frac{\rho_1}{\bar\rho}u_{\bar\rho}\right)+(1-t)\, F\left(\frac{\rho_2}{\bar\rho}u_{\bar\rho}\right) \\
=&\left[t\,\left(\frac{\rho_1}{\bar\rho}\right)^2+(1-t)\,\left(\frac{\rho_2}{\bar\rho}\right)^2\right]  \tilde A-
\left[t\,\left(\frac{\rho_1}{\bar\rho}\right)^4+(1-t)\,\left(\frac{\rho_2}{\bar\rho}\right)^4\right]  \tilde D,
\end{split}
\end{equation*}
where $\tilde A=\frac 12 |\D u_{\bar\rho}|_2^2$ and $\tilde D=\frac 14 \left(|u_{\bar\rho}|_4^4-\cN(u_{\bar\rho})\right)$.
Since $m_{\bar\rho}=\tilde A-\tilde D$, then the strict concavity is equivalent to
\beq
\label{reb}
\left[t\,\left(\frac{\rho_1}{\bar\rho}\right)^2+(1-t)\,\left(\frac{\rho_2}{\bar\rho}\right)^2-1\right]  \tilde A-
\left[t\,\left(\frac{\rho_1}{\bar\rho}\right)^4+(1-t)\,\left(\frac{\rho_2}{\bar\rho}\right)^4-1\right]  \tilde D<0.
\eeq
From $m_{\bar\rho}\le 0$ there follows $0<\tilde A\le \tilde D$, so \eqref{reb} holds  because
$$
1<\frac {t\rho_1^2+(1-t)\rho_2^2} {\bar\rho^2} <
\frac {t\rho_1^4+(1-t)\rho_2^4} {\bar\rho^4},
$$
by the convexity of $\tau\mapsto\tau^2$ and Lemma \ref{CVXLemma} below.
\qed

\smallskip

\begin{lemma}
\label{CVXLemma}
Let $\rho_1,\rho_2>0$ and $t\in(0,1)$, then for every $1\le p_1 <p_2$
$$
\frac{t\rho_1^{p_1 }+(1-t)\rho_2^{p_1 }}{(t\rho_1+(1-t)\rho_2)^{p_1}}<\frac{t\rho_1^{p_2}+(1-t)\rho_2^{p_2}}{(t\rho_1+(1-t)\rho_2)^{p_2}}.
$$
\end{lemma}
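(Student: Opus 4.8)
\emph{Plan.} The quotient on each side is homogeneous of degree $0$ under the scaling $\rho_i\mapsto s\rho_i$, so the first move is to normalise. Setting $\bar\rho:=t\rho_1+(1-t)\rho_2$ and $x_i:=\rho_i/\bar\rho$ for $i=1,2$, the constraint becomes $tx_1+(1-t)x_2=1$ and the assertion reduces to showing
$$
h(p):=t\,x_1^{\,p}+(1-t)\,x_2^{\,p}
$$
satisfies $h(p_1)<h(p_2)$ whenever $1\le p_1<p_2$. (The inequality is to be read with $\rho_1\neq\rho_2$, which is the case in the application in Proposition \ref{PF}, where $0<\rho_1<\rho_2$; if $\rho_1=\rho_2$ then $x_1=x_2=1$ and both sides equal $1$.)

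First I would record a base estimate: since $s\mapsto s^{\,p}$ is convex for $p\ge 1$, Jensen's inequality with weights $t,1-t$ gives
$$
h(p)\ge\big(tx_1+(1-t)x_2\big)^{p}=1,\qquad p\ge 1,
$$
so in particular $h(p_1)\ge 1$. The key step is then a second, nested application of Jensen. Writing $a:=x_1^{\,p_1}$ and $b:=x_2^{\,p_1}$ and using that $s\mapsto s^{\,p_2/p_1}$ is strictly convex because $p_2/p_1>1$, I obtain
$$
h(p_1)^{\,p_2/p_1}=\big(ta+(1-t)b\big)^{p_2/p_1}\le t\,a^{\,p_2/p_1}+(1-t)\,b^{\,p_2/p_1}=h(p_2),
$$
with strict inequality as soon as $a\neq b$, i.e. $x_1\neq x_2$. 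Finally, since $h(p_1)\ge 1$ and $p_2/p_1\ge 1$ we have $h(p_1)^{\,p_2/p_1}\ge h(p_1)$; combining the two displays yields $h(p_2)>h(p_1)$ whenever $x_1\neq x_2$, which is exactly the claim after undoing the normalisation.

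There is no serious obstacle here; the points requiring care are the reduction by homogeneity and the correct orientation of the two Jensen inequalities — the first (for $s\mapsto s^{\,p}$, $p\ge1$) only to guarantee $h(p_1)\ge 1$, and the second (for $s\mapsto s^{\,p_2/p_1}$) to produce the exponent jump $h(p_1)^{\,p_2/p_1}\le h(p_2)$. Strictness then comes for free from the strict convexity of $s\mapsto s^{\,p_2/p_1}$ together with $x_1\neq x_2$, so no separate treatment of the boundary case $p_1=1$ is needed.
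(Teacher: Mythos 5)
Your proof is correct and is essentially the paper's own argument: the paper writes $\gamma=p_2/p_1$ and chains the same two facts — Jensen for $s\mapsto s^{\gamma}$ applied to $\rho_i^{p_1}$ (your ``exponent jump'') and the observation that the $p_1$-ratio is $\ge 1$ so raising it to the power $\gamma$ does not decrease it — only without your normalisation $x_i=\rho_i/\bar\rho$. If anything, your version is slightly more careful: you place the strictness in the second Jensen step (so the case $p_1=1$ needs no separate treatment) and you note explicitly that the inequality degenerates to equality when $\rho_1=\rho_2$, a caveat the paper's statement omits but which is harmless in its application with $\rho_1\neq\rho_2$.
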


\proof Let $\gamma> 1$ such that $p_2=p_1\gamma$.
Then by convexity we obtain
$$
\frac{t\rho_1^{p_1\gamma}+(1-t)\rho_2^{p_1\gamma}}{(t\rho_1+(1-t)\rho_2)^{p_1\gamma}}
>\left[\frac{t\rho_1^{p_1 }+(1-t)\rho_2^{p_1 }}{(t\rho_1+(1-t)\rho_2)^{p_1}}\right]^\gamma
>
\frac{t\rho_1^{p_1 }+(1-t)\rho_2^{p_1 }}{(t\rho_1+(1-t)\rho_2)^{p_1}}.\eqno\qed
$$

\medskip

Our next step is to study the asymptotic behaviour of the branch of the global minima $u_\rho$ as $\rho\to 0$ and $\rho\to\infty$.

\begin{prop}
\label{ThAs>1}
Let $\alpha\in(1,3)$ and $\rho>0$. Then
\begin{align}
\label{eAs>1 infty}
m_\rho&\sim -\rho^4\quad \mbox{ as }\rho\to +\infty,\\
\label{eAs>1 0}
m_\rho&\sim -\rho^{2\frac{\alpha+1}{\alpha-1}}\quad\mbox{ as }\rho\to 0.
\end{align}
\end{prop}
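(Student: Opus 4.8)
The plan is to prove the two-sided asymptotic bounds separately, establishing matching upper and lower bounds for each regime. The key structural fact I would exploit is the interplay between the three scalings introduced in Section 2 (Choquard, Thomas--Fermi) and the explicit estimates \eqref{stima-CB} and \eqref{stima2}.

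\textbf{The $\rho\to\infty$ regime.} For the upper bound I would use the Thomas--Fermi scaling \eqref{eq-betaTF}. Fixing the Thomas--Fermi minimizer $z_\infty\in\cS_1$ from Proposition \ref{tTF}, I set $u=\rho z_\infty$, which lies in $\cS_\rho$, and compute
$$
m_\rho\le F(\rho z_\infty)=\tfrac12\rho^2|\nabla z_\infty|_2^2+\rho^4 E_\tf(z_\infty)=\rho^4\Big(m^\tf_1+\tfrac12\rho^{-2}|\nabla z_\infty|_2^2\Big).
$$
Since $z_\infty\notin H^1$ (Remark \ref{rGPP}), $|\nabla z_\infty|_2^2=\infty$, so this direct substitution fails; the honest upper bound requires a truncation/approximation of $z_\infty$ by $H^1$ functions with controlled gradient, picking a mollification scale that makes $\rho^{-2}|\nabla z_{\infty,\delta}|_2^2\to 0$ relative to $m^\tf_1$. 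For the lower bound I would drop the (nonnegative) gradient term, writing $F(u)\ge E_\tf(u)\ge m^\tf_\rho=\rho^4 m^\tf_1$ for all $u\in\cS_\rho$ via the explicit scaling $m^\tf_\rho=\rho^4 m^\tf_1$. Together these give $m_\rho\sim-\rho^4$.

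\textbf{The $\rho\to 0$ regime.} Here I would use the Choquard scaling \eqref{eq-beta1} together with decomposition \eqref{Ch-scale1}. For the upper bound, take the Choquard minimizer $w_0\in\cS_1$ from Proposition \ref{tCmin}(i) (which exists precisely because $\alpha>1$), rescale to $u_\rho\in\cS_\rho$, and use
$$
m_\rho\le F(u_\rho)=\rho^{2\frac{\alpha+1}{\alpha-1}}\Big(E_\ch(w_0)+\tfrac14\rho^{\frac{2\alpha}{\alpha-1}}|w_0|_4^4\Big).
$$
Since $E_\ch(w_0)=m^\ch_1<0$ and the $L^4$--term carries the positive power $\rho^{2\alpha/(\alpha-1)}$, for small $\rho$ this is $\sim m^\ch_1\rho^{2(\alpha+1)/(\alpha-1)}<0$. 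For the lower bound I would bound $F(u)\ge E_\ch(u)\ge m^\ch_\rho=\rho^{2(\alpha+1)/(\alpha-1)}m^\ch_1$ using the explicit Choquard scaling established just before Proposition \ref{tCmin}, again discarding the nonnegative $L^4$--term. This yields $m_\rho\gtrsim-\rho^{2(\alpha+1)/(\alpha-1)}$.

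\textbf{Main obstacle.} The lower bounds are essentially free since both the $L^4$ term (for $\rho\to\infty$) and the gradient term (for $\rho\to 0$) are nonnegative and may be discarded to reduce to the explicitly-scaling limit functionals. The real difficulty is the upper bound as $\rho\to\infty$: the natural test function $\rho z_\infty$ has infinite Dirichlet energy, so I expect the technical heart to be a careful approximation argument showing that one can replace $z_\infty$ by a nearby $H^1$ profile whose gradient energy, once multiplied by $\rho^{-2}$, is negligible compared to $|m^\tf_1|$ as $\rho\to\infty$. This is the step where the $C^{0,1/2}$ regularity and compact support of $z_\infty$ from Proposition \ref{tTF} must be used to control the cost of the truncation near $\partial B_{R_\infty}$.
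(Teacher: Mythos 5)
Your proposal is correct, but it differs from the paper's proof in two ways worth noting. First, your lower bounds reduce directly to the limit problems: $F(u)\ge E_\ch(u)\ge m^\ch_\rho=\rho^{2\frac{\alpha+1}{\alpha-1}}m^\ch_1$ and $F(u)\ge E_\tf(u)\ge m^\tf_\rho=\rho^{4}m^\tf_1$, which is legitimate since $m^\ch_1$ and $m^\tf_1$ are finite (Propositions \ref{tCmin} and \ref{tTF}); the paper instead re-derives these bounds by minimizing the explicit one-variable functions $g_1,g_2$ coming from \eqref{stima-CB} and \eqref{stima1 F basso eq+}. The content is essentially the same (finiteness of $m^\tf_1$ itself rests on \eqref{stima-CB}), but your reduction is tidier; likewise your $\rho\to0$ test function (the Choquard minimizer $w_0$) is exactly what the paper uses later for the sharp constant in Proposition \ref{pCconv}, whereas the paper's proof of the present proposition rescales the mass-one minimizer $u_1$ of $F|_{\cS_1}$ via \eqref{1202} — either choice works. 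Second, and this is where you made the problem harder than it is: the upper bound as $\rho\to\infty$ needs no truncation of $z_\infty$, because the proposition only asserts $m_\rho\sim-\rho^4$ (two-sided bounds up to constants, in the paper's notation), not the sharp asymptotic $m_\rho\simeq m^\tf_1\rho^4$. It suffices to take any fixed $w\in\cS_1$ with $F(w)<0$ (such $w$ exists since $m_1<0$ by Proposition \ref{Pmin}); then $\tfrac14\big(|w|_4^4-\cN(w)\big)=F(w)-\tfrac12|\nabla w|_2^2<0$, so
\[
m_\rho\le F(\rho w)=\tfrac12|\nabla w|_2^2\,\rho^2+\tfrac14\big(|w|_4^4-\cN(w)\big)\rho^4\lesssim-\rho^4
\]
for large $\rho$, which is the paper's one-line argument. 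Your truncation plan is not wrong — it is precisely the cut-off construction the paper performs in Proposition \ref{pTFconv} (functions $\eta_r$ vanishing near $\partial B_{R_\infty}$, with the scale coupled to $\rho$ so that $\rho^{-2}|\nabla z_r|_2^2\to0$) — but that machinery is what upgrades $\sim$ to the sharp constant in Theorem \ref{T11}(ii), and for the statement at hand it is unnecessary overhead.
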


\proof
Minimizing $g_1$ in \eqref{1051}, we conclude that
\beq
\label{1933+}
{m_\rho\gtrsim -\rho^4}.
\eeq
To obtain a lower bound, take $w\in \cS_1$ be such that $F(w)<0$. Then $\rho w\in \cS_\rho$ and
\beq
\label{1247}
F(w_\rho)=\left(\frac 12  |\nabla w |^2_2\right)  \rho^2+\frac 14\left( |w|_4^4 -  \cN(w)\right)  \rho^4.
\eeq
Since $\frac 14\left(  |w|_4^4-  \cN(w) \right)<F(w)<0$, then \eqref{1247} gives
$$
m_\rho\lesssim -\rho^4\qquad\mbox{ at }+\infty,
$$
that together with \eqref{1933+} completes the proof of \eqref{eAs>1 infty}.

\smallskip

Minimizing $g_2$ in \eqref{stima1 F basso eq+}, we obtain
\beq
\label{1933++}
{m_\rho\gtrsim -\rho^{2\frac{\alpha+1}{\alpha-1}}}.
\eeq
Next, take the minimizing function $u_1\in \cS_1$ and consider the inversion of the rescaling \eqref{eq-beta1},
\beq
\label{1202}
w_\rho(x):=\rho^{\frac{\alpha+2}{\alpha-1}} u_1\big(\rho^{\frac{2}{\alpha-1}} x\big)\in\cS_\rho.
\eeq
We infer
$$
m_\rho\le  F(w_\rho)=\left(\frac12 |\nabla u_1 |^2_2-\frac14 \cN(u_1) \right)\, \rho^{2\frac{\alpha+1}{\alpha-1}}+ \frac 14 |u_1|_4^4 \, \rho^{2\left(\frac{\alpha+1}{\alpha-1}+\frac{\alpha}{\alpha-1}\right)},
$$
where $\frac12 |\nabla u_1 |^2_2-\frac14 \cN(u_1)\le F(u_1)=m_1<0$.
Then
$$
m_\rho\lesssim -\rho^{2\frac{\alpha+1}{\alpha-1}}\qquad\mbox{as $\rho\to 0$ },
$$
that together with \eqref{1933++} completes the proof of \eqref{eAs>1 0}.
\qed

In order to state also the asymptotic behaviour of the ground state solution, we analyse the  behaviour of the related Lagrange multiplier.
\begin{lemma}
\label{lMin}
Let $\alpha\in(1,3)$ and $\rho>0$.
The Lagrange multiplier $\lambda_\rho>0$ that corresponds to the global minimizer $u_\rho$ satisfies
\begin{align}\label{1132}
\lambda_\rho &\sim \rho^2\quad\mbox{ as }\rho\to\infty,\\
\label{1133}
\lambda_\rho &\sim \rho^{\frac {4}{\alpha-1}}\quad\mbox{ as }\rho\to 0.
\end{align}
\end{lemma}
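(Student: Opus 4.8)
The plan is to read off the asymptotics of $\lambda_\rho$ purely algebraically from the Energy--Nehari--Poho\v zaev system, using only the behaviour of $m_\rho$ already established in Proposition \ref{ThAs>1}. Since $u_\rho$ is the global minimizer produced in Proposition \ref{Pmin}, its Lagrange multiplier satisfies $\lambda_\rho>0$, so by Proposition \ref{reg} we have $\nabla u_\rho\in H^1_{\loc}$ and the Poho\v zaev identity holds. Consequently the system \eqref{abc-syst} is valid with $\mu_\rho=F(u_\rho)=m_\rho$, and \eqref{abc-syst-plus} yields the two relations I will exploit, written in terms of $A=|\nabla u_\rho|_2^2$ and $B=|u_\rho|_4^4$ as in \eqref{abc}:
\begin{equation*}
\lambda_\rho\rho^2=A-4m_\rho,\qquad \alpha B=2(1-\alpha)A-4(3-\alpha)m_\rho.
\end{equation*}
The first is \eqref{abc-syst-lambda} with $\mu_\rho=m_\rho$; the second is the rearranged first identity of \eqref{abc-syst-plus}.

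The heart of the argument is the two-sided comparison $\lambda_\rho\rho^2\sim -m_\rho$. For the lower bound I would simply use $A\ge 0$ and $m_\rho<0$ to get $\lambda_\rho\rho^2=A-4m_\rho\ge -4m_\rho>0$. For the upper bound I would use that $B=|u_\rho|_4^4\ge 0$: since $\alpha\in(1,3)$ we have $2(1-\alpha)<0$ and $3-\alpha>0$, so the second identity forces
\begin{equation*}
0\le \alpha B=-2(\alpha-1)A+4(3-\alpha)(-m_\rho),
\end{equation*}
whence $A\le \frac{2(3-\alpha)}{\alpha-1}(-m_\rho)$. Feeding this into the first identity gives $\lambda_\rho\rho^2\le\big(\frac{2(3-\alpha)}{\alpha-1}+4\big)(-m_\rho)$. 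Combining the two bounds yields $\lambda_\rho\rho^2\sim -m_\rho$, with constants depending only on $\alpha$.

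It then remains to substitute the asymptotics of $m_\rho$ from Proposition \ref{ThAs>1}. As $\rho\to\infty$, \eqref{eAs>1 infty} gives $-m_\rho\sim\rho^4$, so $\lambda_\rho\rho^2\sim\rho^4$ and hence $\lambda_\rho\sim\rho^2$, which is \eqref{1132}. As $\rho\to 0$, \eqref{eAs>1 0} gives $-m_\rho\sim\rho^{2\frac{\alpha+1}{\alpha-1}}$, so $\lambda_\rho\sim\rho^{2\frac{\alpha+1}{\alpha-1}-2}=\rho^{\frac{4}{\alpha-1}}$, which is \eqref{1133}. I expect no serious obstacle: the only genuine input is the coercive bound $A\lesssim -m_\rho$ extracted from $B\ge0$, and the rest is the exponent arithmetic $2\frac{\alpha+1}{\alpha-1}-2=\frac{4}{\alpha-1}$. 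The one point to verify carefully is that the algebraic system is legitimately available, i.e. that the minimizer has the regularity required for Poho\v zaev; this is exactly guaranteed by $\lambda_\rho>0$ together with Proposition \ref{reg}.
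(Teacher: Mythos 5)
Your proposal is correct and follows essentially the same route as the paper: both arguments extract the two-sided comparison $\lambda_\rho\rho^2\sim -m_\rho$ (with constants depending only on $\alpha$) from the Energy--Nehari--Poho\v zaev system \eqref{abc-syst}, using that for $\alpha\in(1,3)$ the quantities $A_\rho,B_\rho\ge0$ enter both $\lambda_\rho\rho^2$ and $-m_\rho$ with positive weights, and then substitute the asymptotics of $m_\rho$ from Proposition \ref{ThAs>1}. The paper phrases this by writing $\lambda_\rho\rho^2$ and $-m_\rho$ directly as positive linear combinations of $A_\rho$ and $B_\rho$ via \eqref{abc-syst-plus}, whereas you reach the same comparison through \eqref{abc-syst-lambda} and the sign constraint $B_\rho\ge0$; the difference is purely cosmetic.
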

\proof
From $m_\rho<0$ and \eqref{abc-syst-lambda} it follows that $\lambda_\rho>0$.
Set
$$A_\rho=|\nabla u_\rho|_2^2, \quad B_\rho=|u_\rho|_4^4, \quad C_\rho=\cN(u_\rho).$$
From \eqref{abc-syst} we infer
\begin{align*}
\lambda_{\rho}\rho^2&=\frac{1+\alpha}{3-\alpha}A_\rho+\frac{\alpha}{3-\alpha}B_\rho,\\
-m_\rho&=\frac{\alpha-1}{2(3-\alpha)}A_\rho+\frac{\alpha}{4(3-\alpha)}B_\rho.
\end{align*}
Hence
 $$
\lambda_{\rho}\rho^2\sim-m_\rho\quad\text{ as $\rho\to 0$ and $\rho\to\infty$}.
$$
Then \eqref{1132} and \eqref{1133}  follow from \eqref{eAs>1 infty} and \eqref{eAs>1 0}, respectively.
\qed

\subsection{The Thomas--Fermi limit for $\rho\to \infty$}\label{ssTF}

Let $\rho\to\infty$. Consider the normalized family
\begin{equation*}
    z_\rho(x):=\rho^{-1}u_\rho(x),
\end{equation*}
as defined in \eqref{eq2res}, and recall that
\beq
m_\rho\sim -\rho^4, \qquad \lambda_\rho \sim \rho^2.
\eeq
Then
$|z_\rho|_2^2=1$, and for every $\rho>0$,
$z_\rho$ is a global minimum on $\cS_1$ of the rescaled energy
$$
\tilde{F}_\rho(z)=\frac{\rho^{-2}}{2}|\nabla z|_2^2+\frac{1}{4}|z|_4^4-\frac14\cN(z).
$$
Moreover,
$$
\tilde m_\rho:=\min_{\cS_1}\tilde{F}_\rho=\tilde{F}_\rho(z_\rho)=\frac{m_\rho}{\rho^4}\sim -1\quad\text{as $\rho\to\infty$},
$$
and
$z_\rho$ satisfies the Euler--Lagrange equation
$$
-\rho^{-2}\Delta z_\rho+\lambda_\rho\rho^{-2} z_\rho +z_\rho^3=(I_\alpha*z_\rho^2)z_\rho\quad\text{in $\R^3$},
$$
where $\lambda_\rho\sim \rho^2$.

\begin{prop}\label{pTFconv}
Let $\alpha\in(1,3)$.
Then
$$
\tilde m_{\rho}\to m^\tf_1,\qquad \lambda_{\rho}\rho^{-2}\to-4 m^\tf_1,\quad\mbox{ as }\rho\to\infty,
$$
and the rescaled minimizers
$z_{\rho}$ converge strongly in $L^2$ and $L^4$ to the radially symmetric minimizer
$z_\infty$ of the Thomas--Fermi minimization problem \eqref{eqTF2}, in $ \cS_1$.
\end{prop}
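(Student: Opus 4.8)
\emph{Plan.} The functional splits as $\tilde F_\rho(z)=\tfrac{\rho^{-2}}{2}|\nabla z|_2^2+E_\tf(z)$, so on $\cS_1$ one has $\tilde F_\rho(z)\ge E_\tf(z)\ge m^\tf_1$ and hence the lower bound $\tilde m_\rho\ge m^\tf_1$ is immediate. For the reverse inequality I would use a minimizing sequence for $m^\tf_1$: since $z_\infty\in L^2\cap L^4$ has compact support but $z_\infty\notin H^1$ (see Remark \ref{rGPP}), it cannot serve directly as a test function, so I approximate it by $\zeta_k\in C^\infty_c$ with $\zeta_k\to z_\infty$ in $L^2\cap L^4$ and normalise $\hat\zeta_k:=\zeta_k/|\zeta_k|_2\in\cS_1$. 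By the HLS inequality \eqref{HLS} (continuity of $\cN$ under $L^{12/(3+\alpha)}$ convergence, cf. Lemma \ref{lemmaConvNonLoc}) one gets $E_\tf(\hat\zeta_k)\to m^\tf_1$, while for each fixed $k$ the gradient term $\tfrac{\rho^{-2}}{2}|\nabla\hat\zeta_k|_2^2\to0$ as $\rho\to\infty$; letting $\rho\to\infty$ and then $k\to\infty$ yields $\limsup_{\rho\to\infty}\tilde m_\rho\le m^\tf_1$. Thus $\tilde m_\rho\to m^\tf_1$, and splitting the energy gives the two crucial consequences
\beq
\rho^{-2}|\nabla z_\rho|_2^2\to0\qquad\text{and}\qquad E_\tf(z_\rho)\to m^\tf_1.
\eeq

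Next I would establish compactness of $\{z_\rho\}$ \emph{without} an $H^1$ bound, which is the main difficulty since only $\rho^{-2}|\nabla z_\rho|_2^2\to0$ is available, so $|\nabla z_\rho|_2$ may blow up. The coercivity estimate \eqref{stima-CB}, which on $\cS_1$ reads $E_\tf(z)\ge\tfrac14|z|_4^4-\tfrac{c_\alpha}{4}|z|_4^{4-\frac43\alpha}$, together with $E_\tf(z_\rho)\to m^\tf_1$, shows that $\{z_\rho\}$ is bounded in $L^2\cap L^4$. Because $u_\rho$ is positive, radial and monotone decreasing (Theorem \ref{thmA}), so is $z_\rho$, and the standard radial estimates yield the uniform pointwise bound $z_\rho(r)\le\min\!\big(C r^{-3/4},C r^{-3/2}\big)=:g(r)$. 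A direct computation shows $g\in L^{12/(3+\alpha)}(\R^3)$ for $\alpha\in(1,3)$, where $\tfrac{12}{3+\alpha}\in(2,3)$. By Helly's selection theorem I extract a subsequence $z_{\rho_n}\to z_*$ pointwise a.e., with $z_*$ radial and nonincreasing; since $z_{\rho_n}\le g$ with $g\in L^{12/(3+\alpha)}$, dominated convergence gives $z_{\rho_n}\to z_*$ strongly in $L^{12/(3+\alpha)}$, whence $\cN(z_{\rho_n})\to\cN(z_*)$ by \eqref{HLS}.

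I would then identify $z_*=z_\infty$ and upgrade the convergence. Fatou gives $|z_*|_2=:\sigma\le1$, and passing to the limit in $E_\tf(z_{\rho_n})\to m^\tf_1$ via $\cN(z_{\rho_n})\to\cN(z_*)$ shows $E_\tf(z_*)\le m^\tf_1<0$ (so $\sigma>0$); on the other hand the scaling $m^\tf_\sigma=\sigma^4 m^\tf_1$ from \eqref{eq-betaTF} and $m^\tf_1<0$ force $E_\tf(z_*)\ge\sigma^4 m^\tf_1\ge m^\tf_1$. Hence $\sigma=1$, $z_*\in\cS_1$ and $E_\tf(z_*)=m^\tf_1$, so by the uniqueness in Proposition \ref{tTF} we have $z_*=z_\infty$. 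Moreover $E_\tf(z_*)=m^\tf_1$ together with $\cN(z_{\rho_n})\to\cN(z_*)$ forces $|z_{\rho_n}|_4^4\to|z_\infty|_4^4$; combined with $|z_{\rho_n}|_2\to|z_\infty|_2$, the a.e. convergence and the uniform convexity of $L^2$ and $L^4$ (equivalently a Brezis--Lieb argument) upgrade this to strong convergence in $L^2$ and $L^4$. Since the limit $z_\infty$ does not depend on the chosen subsequence, the whole family satisfies $z_\rho\to z_\infty$ in $L^2\cap L^4$.

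Finally, for the Lagrange multiplier I would rewrite the second identity in \eqref{abc-syst-plus} in terms of $z_\rho=\rho^{-1}u_\rho$, using $|\nabla u_\rho|_2^2=\rho^2|\nabla z_\rho|_2^2$ and $|u_\rho|_4^4=\rho^4|z_\rho|_4^4$, namely
$$
\lambda_\rho\rho^{-2}=\frac{(1+\alpha)\,\rho^{-2}|\nabla z_\rho|_2^2+\alpha\,|z_\rho|_4^4}{3-\alpha}.
$$
Letting $\rho\to\infty$ and using $\rho^{-2}|\nabla z_\rho|_2^2\to0$ together with $|z_\rho|_4^4\to|z_\infty|_4^4=-4\tfrac{3-\alpha}{\alpha}m^\tf_1$ (Proposition \ref{tTF}) gives $\lambda_\rho\rho^{-2}\to-4m^\tf_1$, as claimed. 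The only genuinely delicate point is the compactness step: the absence of a usable gradient bound is compensated by radial monotonicity, which simultaneously produces the dominating function $g$ and forces the Thomas--Fermi nonlocal term to behave continuously along the sequence.
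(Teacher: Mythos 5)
Your proof is correct, and its overall skeleton coincides with the paper's: lower bound by testing $E_\tf$ with $z_\rho$ itself, the observation that $\{z_{\rho_n}\}$ is then a minimizing sequence for $m^\tf_1$, compactness via radial monotonicity (Strauss-type pointwise bound, Helly selection, dominated convergence in $L^{12/(3+\alpha)}$ and Lemma \ref{lemmaConvNonLoc}), identification of the limit through the scaling $m^\tf_\sigma=\sigma^4 m^\tf_1$ and the uniqueness in Proposition \ref{tTF}, and the Energy--Nehari--Poho\v zaev system for $\lambda_\rho$. The one step where you genuinely depart from the paper is the upper bound $\limsup_{\rho\to\infty}\tilde m_\rho\le m^\tf_1$. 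The paper splits into two cases: if $z_\infty\in H^1$ it tests $\tilde F_\rho$ with $z_\infty$ directly; if $z_\infty\notin H^1$ it builds cut-offs $z_r=\tau_r\eta_r z_\infty$ supported away from the boundary of the support (exploiting compact support and interior smoothness of $z_\infty$), shows $A_r=|\nabla z_r|_2^2\to\infty$, evaluates $\tilde m_{\rho}$ along the diagonal $\rho_r=A_r$, and then needs a continuity/monotonicity argument to cover all $\rho\to\infty$. Your density argument --- approximate $z_\infty$ in $L^2\cap L^4$ by normalised $C^\infty_c$ functions and take iterated limits, first $\rho\to\infty$ with $k$ fixed, then $k\to\infty$ --- handles both cases at once, requires nothing about $z_\infty$ beyond $z_\infty\in L^2\cap L^4$, and avoids the diagonal argument entirely; this is simpler and more robust. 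One caveat: you justify the need for approximation by citing Remark \ref{rGPP} for $z_\infty\notin H^1$, but that remark only asserts this for $\alpha=2$; for general $\alpha\in(1,3)$ the membership $z_\infty\in H^1$ is undecided (which is exactly why the paper keeps two cases) --- your argument is unaffected since it never uses this fact. The remaining differences are cosmetic: the paper extracts $|\tilde z_\infty|_2=1$ and strong $L^4$ convergence in one stroke from a Brezis--Lieb expansion of the energy, whereas you first pin down $|z_*|_2=1$ and $E_\tf(z_*)=m^\tf_1$ by lower semicontinuity plus scaling and then upgrade via a.e.\ plus norm convergence; and for the multiplier the paper uses $\lambda_\rho\rho^{-2}=-\tfrac12\big(|z_\rho|_4^4-\cN(z_\rho)\big)-2\tilde m_\rho$ while you use the second identity of \eqref{abc-syst-plus} --- both rest on the same system, both legitimate since $\lambda_\rho>0$ gives the regularity needed for the Poho\v zaev identity, and both yield $-4m^\tf_1$.
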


\proof
Taking $z_\rho\in \cS_1$ as a trial function for $E_\tf$, we see immediately that
\beq
\label{TF1}
m^\tf_1\le\tilde m_\rho-\frac{\rho^{-2}}{2}|\nabla z_\rho|_2^2<\tilde m_\rho\qquad \forall \rho>0.
\eeq

Now, let $z_\infty\in C^{0,1/2}\cap \cS_1$ be the unique compactly supported ground state of the Thomas--Fermi problem \eqref{eqTF2} with $\rho=1$,
see Proposition \ref{tTF}.
Note that
$$
z_\infty\le c(R_*-|x|)^{1/2} \quad\text{in $B_{R_*}$,}
$$
where $R_*>0$ is the support radius of
$z_\infty$; and
$z_\infty$ is smooth inside the support.
Yet we can not conclude that
$z_\infty\in H^1$ because of the possible singularity of the gradient on the boundary of the support
(and we know that
$z_\infty\not\in H^1$ when $\alpha=2$, see Remark \ref{rGPP}).

If
$z_\infty\in H^1$, we can use
$z_\infty$ as a trial function for $\tilde F_\rho$ and, by using \eqref{TF1}, we get the estimate
$$
m_1^{\tf}\le \tilde m_\rho\le \tilde F_\rho(z_\infty)=E_\tf(z_\infty)+\frac{\rho^{-2}}{2}|\nabla z_\infty|_2^2=
m^\tf_1+\frac{\rho^{-2}}{2}|\nabla z_\infty|_2^2
$$
that implies $\tilde m_\rho\to m_1^{\tf}$ as $\rho\to\infty$.

Now, assume
$z_\infty\not\in H^1$.
For $r>0$ we introduce the family of cut-off functions $\eta_{r}(x):=\bar\eta \big(r(|x|-R_*)\big)$,
where $\bar\eta\in C_c^{\infty}(\R,[0,1])$ verifies
$\bar\eta(t)= 1$ for $t\le -1$ and  $\bar\eta(t)= 0$ for $t \geq -\frac 12$.
Then, we set
$$
z_r(x):=\tau_r\eta_r(x)z_\infty(x),
$$
where $\tau_r:=|\eta_r z_\infty|_2^{-1}$, so that $|z_r|_2=1$.
As $r\to \infty$, it is elementary to see that $\tau_r\to 1$ and
\begin{equation}\label{eq1205}
|z_r|_4^4=|z_\infty|_4^4+  o(1) ,\quad \cN(z_r)=
    \cN(z_\infty)+  o(1) .
\end{equation}
Set
$$
A_r:=|\nabla z_r|_2^2.
$$
Since  $z_\infty\not\in H^1$, by Fatou Lemma $A_r\to\infty$.
Set $\rho_r=A_r$. Then as $r\to\infty$,
$$
\tilde m_{\rho_r}\le \tilde F_{\rho_r}(z_r)=E_\tf(z_r)+\frac{\rho_r^{-2}}{2}A_r=m^\tf_1+o(1)+\frac{1}{2A_r}\to m^\tf_1.
$$
Therefore, by the continuity of $r\mapsto A_r$ and by \eqref{TF1} we can conclude that, as $\rho\to\infty$,
$$
m_1^{\tf}\le \tilde m_{\rho}\le  m^\tf_1+o(1),
$$
that implies again  $\tilde m_\rho\to m_1^\tf$ as $\rho\to\infty$.

\smallskip

Now, let $\{\rho_n\}$ any sequence such that $\rho_n\to\infty$, then
$$
m^\tf_1\le E_\tf(z_{\rho_n})\le \tilde F_{\rho_n}(z_{\rho_n})=\tilde m_{\rho_n}\to m^\tf_1,
$$
i.e.
$\{z_{\rho_n}\}$ is a minimizing sequence for $m^\tf_1$.
Using an adaptation of the arguments in \cite[Theorem II.1]{Lions-I} we can conclude that
$z_{\rho_n}$ converges to the
$z_\infty$ strongly in $L^2$ and $L^4$, up to a subsequence.
We outline a direct argument for completeness (cf. \cite[Proposition 6.1]{LZVM}).

Recall that the sequence $\{z_{\rho_n}\}$ verifies $|z_{\rho_n}|_2\equiv 1$ and $z_{\rho_n}$ is monotone radially decreasing.
Moreover,  $|z_{\rho_n}|_4$ is bounded, indeed  by \eqref{stima-CB}
$$
E_\tf(z_{\rho_n})\ge \frac 14 |z_{\rho_n}|_4^4-c\, |z_{\rho_n}|_4^{4-\frac 43\alpha}.
$$
Then, by the Strauss's radial lemma \cite[Lemma A.IV]{BeL83}
\begin{equation}\label{rhoStrauss}
0\le    z_{\rho_n}\leq U(|x|):=C\min\{|x|^{-\frac{3}{2}}, |x|^{-\frac{3}{4}}\}\quad\text{for all $|x|>0$}.
\end{equation}
By Helly's selection principle for monotone functions (cf. \cite[Proof of Theorem 3.7]{LL}),
there exists a nonnegative radially symmetric nonincreasing function $\tilde z_\infty(|x|)\le U(|x|)$ such that, up to a subsequence,
$$
z_{\rho_n}(x)\to\tilde z_\infty(x)\quad\text{a.e. in $\R^3$ as $n\to \infty$}.
$$
Moreover, we can assume $z_{\rho_n} \rightharpoonup \tilde z_\infty$ weakly in $L^2$ and in $L^4$.
Since $U\in L^s$ for all $s\in(2,4)$, by the Lebesgue's dominated convergence we conclude that
$z_{\rho_n}\to \tilde z_\infty$ in $L^s$, for every $s\in (2,4)$.
Since  $s=\frac{12}{3+\alpha}\in (2, 4)$, we get $\cN(z_{\rho_n})\to \cN(\tilde z_\infty)$ by Lemma  \ref{lemmaConvNonLoc}.
Observe that $|\tilde z_\infty|_2\le 1$ and that, by the Brezis-Lieb lemma \cite[Theorem 1.9]{LL},
$$
\lim_{n\to\infty}\big(|\tilde z_\infty|_4^4+|z_{\rho_n}-\tilde z_\infty|_4^4\big)=\lim_{n\to\infty}|z_{\rho_n}|_4^4.
$$
Moreover
$$
E_\tf\left(\frac{\tilde z_\infty}{|\tilde z_\infty|_2}\right)\ge m^\tf_1\ \Longrightarrow\ E_\tf(\tilde z_\infty)\ge |\tilde z_\infty|_2^4\, m^\tf_1.
$$
Then
$$
\aligned
4m^\tf_1=
4\lim_{n\to\infty}
E_\tf(z_{\rho_n})&=\lim_{n\to\infty}\big(|\tilde z_\infty|_4^4+|z_{\rho_n}-\tilde z_\infty|_4^4\big)-\lim_{n\to\infty}\cN(z_{\rho_n})\\
&=4E_\tf(\tilde z_\infty)+\lim_{n\to\infty}|z_{\rho_n}-\tilde z_\infty|_4^4\\
&\geq4|\tilde z_\infty|_2^4\,  m^\tf_1+\lim_{n\to\infty}|z_{\rho_n}-\tilde z_\infty|_4^4,
\endaligned
$$
which implies $|\tilde z_\infty|_2=1$ and $|z_{\rho_n}-\tilde z_\infty|_4\to 0$, since $m^\tf_1<0$.
We conclude $\tilde z_\infty\in \cS_1$, $E_\tf(\tilde z_\infty)=m^\tf_1$, and hence
$\tilde z_\infty=z_\infty$ by the uniqueness, and furthermore $z_{\rho_n}\to  z_\infty$ strongly in $L^2$ and $L^4$.

To prove $\lambda_{\rho}\rho^{-2}\to-4 m^\tf_1$, observe that by \eqref{abc-syst}
$$
\lambda_\rho \rho^2 =-\frac 12 \big(|u_\rho|_4^4-\cN(u_\rho)\big)-2m_\rho,
$$
hence
$$
\frac{\lambda_\rho}{\rho^2}=-\frac 12\big(|z_\rho|_4^4-\cN(z_\rho)\big)-2\tilde m_\rho.
$$
Then the claim follows, as $\rho\to\infty$, taking into account $|z_\infty|_4^4-\cN(z_\infty)=4m^\tf_1$.
\qed

\begin{rem}
    Note that all arguments in this subsection remain valid for all $\alpha\in(0,3)$,
    the restriction $\alpha>1$ was only needed in connection with the existence of the normalized solutions $u_\rho$ of \eqref{P} for every $\rho>0$ in Proposition \ref{Pmin}.
\end{rem}

\subsection{The Choquard limit for $\rho\to 0$}

Let $\rho\to 0$ and recall that
\beq\label{lr-CH}
m_\rho\sim -\rho^{2\frac{\alpha+1}{\alpha-1}},\quad\lambda_\rho \sim \rho^{\frac {4}{\alpha-1}}\quad\text{as $\rho\to 0$}.
\eeq
Given $u\in\cS_\rho$, consider the rescaling \eqref{eq-beta1},
\begin{equation}\label{eq1res}
u(x)\quad\mapsto\quad w(x):=\rho^{-\frac{\alpha+2}{\alpha-1}} u\left(\rho^{-\frac{2}{\alpha-1}}x\right)\in \cS_1
\end{equation}
and (cf.~\eqref{Ch-scale1}) denote
$$\bar{F}_\rho(w):=E_\ch(w)+\tfrac{1}{4}\rho^\frac{2\alpha}{\alpha-1}|w|_4^4=\rho^{-2\frac{\alpha+1}{\alpha-1}}F(u).$$
Therefore,
$$\check m_\rho:=\min_{\cS_1}\bar{F}_\rho=\bar{F}_\rho(w_\rho)=\rho^{-2\frac{\alpha+1}{\alpha-1}} m_\rho\sim -1\quad\text{as $\rho\to 0$},$$
where $u_\rho\mapsto w_\rho$ is the rescaling \eqref{eq1res} of the global minimizer of $F$ on $\cS_\rho$.
Moreover, $w_\rho$ satisfies the Euler--Lagrange equation
$$-\Delta w_\rho+\lambda_\rho \rho^{-\frac {4}{\alpha-1}} w_\rho + \rho^\frac{2\alpha}{\alpha-1}w_\rho^3=(I_\alpha*w_\rho^2)w_\rho\quad\text{in $\R^3$},$$
where $\lambda_\rho\sim \rho^{\frac {4}{\alpha-1}}$ by \eqref{lr-CH}.

\begin{prop}\label{pCconv}
    Let $\alpha\in(1,3)$.
    Then \eqref{luc} holds and for any sequence $\rho_n\to 0$, the sequence of rescaled minimizers $\{w_{\rho_n}\}$ has a subsequence that converges strongly in $H^1$ to a positive radially symmetric minimizer $w_0\in \cS_1$ of the Choquard minimization problem \eqref{eqC}. \end{prop}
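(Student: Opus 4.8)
The plan is to mirror the Thomas--Fermi argument of Proposition \ref{pTFconv}, with the simplification that the limiting Choquard minimizer already lies in $H^1$, so that no cut-off is required. I would first prove $\check m_\rho\to m^\ch_1$, which is equivalent to the first relation in \eqref{luc}. Since the extra term in $\bar F_\rho$ is nonnegative, $\bar F_\rho(w)\ge E_\ch(w)\ge m^\ch_1$ for every $w\in\cS_1$, giving $\check m_\rho\ge m^\ch_1$. For the reverse bound I would test $\bar F_\rho$ against a fixed positive radial Choquard minimizer $w_0\in\cS_1$ furnished by Proposition \ref{tCmin}$(i)$: as $\alpha>1$ gives $\frac{2\alpha}{\alpha-1}>0$, one obtains $\check m_\rho\le\bar F_\rho(w_0)=m^\ch_1+\frac14\rho^{\frac{2\alpha}{\alpha-1}}|w_0|_4^4\to m^\ch_1$, and hence $m_\rho=\rho^{2\frac{\alpha+1}{\alpha-1}}\check m_\rho\simeq m^\ch_1\rho^{2\frac{\alpha+1}{\alpha-1}}$.

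Now fix any $\rho_n\to 0$. From $\bar F_{\rho_n}(w_{\rho_n})=\check m_{\rho_n}\to m^\ch_1$, the bound $E_\ch(w_{\rho_n})\ge m^\ch_1$, and the nonnegativity of the $L^4$ term, I would deduce at once that $E_\ch(w_{\rho_n})\to m^\ch_1$ (so $\{w_{\rho_n}\}$ is a radial minimizing sequence for $m^\ch_1$) and that $\rho_n^{\frac{2\alpha}{\alpha-1}}|w_{\rho_n}|_4^4\to 0$. Boundedness in $H^1$ comes from coercivity: by \eqref{stima2} with $|w_{\rho_n}|_2=1$ one has $E_\ch(w)\ge\frac12|\nabla w|_2^2-\frac{\bar{\bar c}_\alpha}{4}|\nabla w|_2^{3-\alpha}$ with $3-\alpha<2$. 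Along a subsequence $w_{\rho_n}\rightharpoonup w_0$ in $H^1$, and radial compactness gives $w_{\rho_n}\to w_0$ strongly in $L^q$ for every $q\in(2,6)$, in particular in $L^{12/(3+\alpha)}$; Lemma \ref{lemmaConvNonLoc} then yields $\cN(w_{\rho_n})\to\cN(w_0)$.

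The main obstacle is to exclude loss of mass, namely to prove $|w_0|_2=1$. First, $w_0\neq 0$, since otherwise $\cN(w_{\rho_n})\to 0$ and $E_\ch(w_{\rho_n})\ge-\frac14\cN(w_{\rho_n})\to 0$, contradicting $m^\ch_1<0$. Writing $\theta:=|w_0|_2\in(0,1]$, weak lower semicontinuity of $|\nabla\cdot|_2$ together with $\cN(w_{\rho_n})\to\cN(w_0)$ gives $E_\ch(w_0)\le m^\ch_1$, while $w_0\in\cS_\theta$ and the scaling identity $m^\ch_\theta=\theta^{2\frac{\alpha+1}{\alpha-1}}m^\ch_1$ (valid for $\alpha>1$) give $E_\ch(w_0)\ge\theta^{2\frac{\alpha+1}{\alpha-1}}m^\ch_1$. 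As $m^\ch_1<0$ and $2\frac{\alpha+1}{\alpha-1}>1$, these inequalities force $\theta=1$. Thus $w_0\in\cS_1$ is a minimizer; strong $L^2$ convergence follows from weak convergence together with $|w_{\rho_n}|_2=|w_0|_2=1$, and $|\nabla w_{\rho_n}|_2^2=2E_\ch(w_{\rho_n})+\frac12\cN(w_{\rho_n})\to 2m^\ch_1+\frac12\cN(w_0)=|\nabla w_0|_2^2$ upgrades the convergence to strong $H^1$. Nonnegativity and radial symmetry of $w_0$ are inherited from $w_{\rho_n}$, and $w_0>0$ by the strong maximum principle since $w_0$ solves \eqref{Ch-01}.

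It remains to derive the asymptotics of $\lambda_\rho$, for which I would use the second relation in \eqref{abc-syst-plus}. Expressing $A_\rho=|\nabla u_\rho|_2^2$ and $B_\rho=|u_\rho|_4^4$ through the rescaling \eqref{eq1res}, one computes $A_\rho=\rho^{2\frac{\alpha+1}{\alpha-1}}|\nabla w_\rho|_2^2$ and $B_\rho=\rho^{2\frac{2\alpha+1}{\alpha-1}}|w_\rho|_4^4$, whence
\[
\frac{\lambda_{\rho_n}}{\rho_n^{4/(\alpha-1)}}=\frac{1}{3-\alpha}\Big((1+\alpha)|\nabla w_{\rho_n}|_2^2+\alpha\,\rho_n^{\frac{2\alpha}{\alpha-1}}|w_{\rho_n}|_4^4\Big)\;\longrightarrow\;\frac{1+\alpha}{3-\alpha}|\nabla w_0|_2^2,
\]
by the strong $H^1$ convergence and $\frac{2\alpha}{\alpha-1}>0$. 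Since $w_0$ is a Choquard critical point at level $\mu=m^\ch_1$, relation \eqref{DR} gives $|\nabla w_0|_2^2=\frac{2(3-\alpha)}{1-\alpha}m^\ch_1$, so the limit equals $-2\frac{\alpha+1}{\alpha-1}m^\ch_1$, matching the second relation in \eqref{luc}. Finally, because \eqref{DR} forces every Choquard minimizer to share the same value $|\nabla w_0|_2^2$, this limit is independent of the chosen subsequence, and the full $\simeq$ asymptotics for $\lambda_\rho$ follows.
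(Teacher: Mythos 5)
Your proposal is correct, and its outer steps coincide with the paper's: the first relation in \eqref{luc} is obtained in both cases by the two-sided comparison $m^\ch_1\le\check m_\rho\le m^\ch_1+\frac14\rho^{\frac{2\alpha}{\alpha-1}}|w_0|_4^4$, and the multiplier asymptotics in both cases reduce, via strong $H^1$ convergence, to the identities \eqref{DR}. The genuine difference is in the compactness step. The paper, after observing that $\{w_{\rho_n}\}$ is a radial minimizing sequence for \eqref{eqC}, checks the strict subadditivity $m^\ch_1<m^\ch_\delta+m^\ch_{1-\delta}$ and invokes Lions' concentration--compactness theorem \cite[Theorem III.1]{Lions-I}, noting that a ``minimal adaptation'' is needed to pass from $\alpha=2$ to $\alpha\in(1,3)$. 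You instead argue by hand: compactness of the radial embedding $H^1_{\rad}\hookrightarrow L^q$, $q\in(2,6)$, gives $\cN(w_{\rho_n})\to\cN(w_0)$ via Lemma \ref{lemmaConvNonLoc}; weak lower semicontinuity of the gradient then yields $E_\ch(w_0)\le m^\ch_1$, while the scaling law $m^\ch_\theta=\theta^{2\frac{\alpha+1}{\alpha-1}}m^\ch_1$ together with $m^\ch_1<0$ forces $|w_0|_2=1$ (this is the same mechanism that rules out dichotomy/vanishing in Lions' theorem, made explicit); finally, norm convergence of $|w_{\rho_n}|_2$ and of $|\nabla w_{\rho_n}|_2^2=2E_\ch(w_{\rho_n})+\frac12\cN(w_{\rho_n})$ upgrades weak to strong $H^1$ convergence. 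What your route buys is a self-contained proof, valid verbatim for all $\alpha\in(1,3)$ and structurally parallel to the direct argument the paper itself gives for the Thomas--Fermi limit in Proposition \ref{pTFconv}; what the paper's route buys is brevity. Two smaller, harmless deviations: you derive the multiplier asymptotics from the second relation in \eqref{abc-syst-plus} (only the quantities $A$ and $B$, with positive coefficients) rather than from the Nehari identity \eqref{e-Nehari} as the paper does, and your closing observation that \eqref{DR} fixes $|\nabla w_0|_2^2$ for every minimizer, making the limit subsequence-independent, correctly spells out what the paper compresses into ``in view of the arbitrary choice of $\rho_n$''.
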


 \proof
Taking $w_\rho\in \cS_1$ as a trial function for $E_{\ch}\big|_{\cS_1}$, we immediately note that $m^\ch_1<\check m_\rho$.
Let $w_0\in \cS_1$ be a positive minimizer for \eqref{eqC}.
Then
\beq\label{mrhoch}
m_1^\ch<\check m_\rho\le\bar{F}_\rho(w_0)=m^\ch_1+\frac{1}{4}\rho^\frac{2\alpha}{\alpha-1}|w_0|_4^4\to m^\ch_1\quad\text{as $\rho\to 0$}.
\eeq
So, we can conclude  $m_\rho=\rho^{2\frac{\alpha+1}{\alpha-1}}\check m_\rho\simeq \rho^{2\frac{\alpha+1}{\alpha-1}}m^\ch_1$, that proves the first relation in \eqref{luc}.

It follows from \eqref{mrhoch} that for any sequence $\rho_n\to 0$, $\{w_{\rho_n}\}$ is a radially symmetric minimising sequence for \eqref{eqC}.
Note that
$$
m^\ch_\delta:=\inf_{\cS_\delta} E_\ch=\delta^{2\frac{\alpha+1}{\alpha-1}}m^\ch_1,
$$
and since $m^\ch_\delta<0$ and $\alpha>1$,
$$m^\ch_1<m^\ch_\delta+m^\ch_{1-\delta}\qquad\forall\delta\in(0,1).$$
We conclude that $\{w_{\rho_n}\}$ converges strongly in $H^1$ to a positive radially symmetric minimizer $w_0\in \cS_1$ for \eqref{eqC}, up to a subsequence, e.g. by \cite[Theorem III.1]{Lions-I} (minimal adaptation is needed to extend from $\alpha=2$ to $\alpha\in(1,3)$).

Now, by the Nehari identity \eqref{e-Nehari}, the scaling \eqref{eq1res} and the convergence $w_{\rho_n}\to w_0$ in $H^1$, we deduce
\begin{equation*}
\begin{split}
-\lambda_{\rho_n}\rho_n^2&= |\D u_{\rho_n}|_2^2+|u_{\rho_n}|_4^4-\cN(u_{\rho_n})\\
& = \rho_n^{2\frac{\alpha+1}{\alpha-1}}\left(|\D w_0|_2^2-\cN(w_0)+o(1) +\rho_n^{\frac{2\alpha}{\alpha-1}}|w_{\rho_n}|_4^4\right).
\end{split}
\end{equation*}
Hence from \eqref{DR} we infer
$$
\lambda_{\rho_n}=-\left(2\frac{\alpha+1}{\alpha-1}+o(1)\right)m^\ch_1\rho_n^{\frac{4}{\alpha-1}},
$$
that proves the second asymptotic estimate in \eqref{luc}, in view of the arbitrary choice of $\rho_n$.
\qed

\section{Minimum and mountain pass solutions for $\alpha\in(0,1)$ \\ and proof of Theorem \ref{T12}}
\label{S5.1}

\subsection{Existence and nonexistence}
The case $\alpha\in(0,1)$ is quite different from the case $\alpha\in(1,3)$.
A feature of this case is that the energy functional $F$ is coercive on the fibers not only at infinity but also near zero.
Indeed, we will see in Theorem \ref{NE} that now there exists no solution for $\rho$ small.
Nevertheless, we are going to show that for $\rho$ away from zero it is possible to recognize a mountain pass structure and a (local) minimum.

We start with a nonexistence result for small masses $\rho^2>0$.

\begin{teo}
\label{NE}
Let $\alpha\in(0,1)$. Then there exists
\begin{equation}\label{e-barrho}
\bar\rho\, \ge  \bar K_\alpha:= \left(K_\alpha^{\frac 1 2}\, c_\alpha^{\frac{1}{2\alpha} } \, \bar c^{\frac 43}\right)^{-1}
\end{equation}
where $K_\alpha>0$ is defined in \eqref{def C},
such that for every $\rho < \bar\rho$
problem \eqref{P} has no normalised solution $\bar u$  such that    $\D\bar u \in H^1_{\loc}$.
\end{teo}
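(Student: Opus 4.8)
The plan is to prove the contrapositive quantitative statement: \emph{any} normalised solution $\bar u$ of \eqref{P} with $\D\bar u\in H^1_{\loc}$ necessarily satisfies $\rho=|\bar u|_2\ge \bar K_\alpha$. Nonexistence for every $\rho<\bar K_\alpha$ then immediately yields $\bar\rho\ge\bar K_\alpha$. Heuristically this reflects the fact noted above that for $\alpha<1$ the energy is coercive along the fibers $t^{3/2}\bar u(tx)$ near $t=0$ as well as at infinity, so a critical mass is needed before the nonlocal term can dig the fiber below the origin; but the actual argument is purely algebraic. First I would use the regularity hypothesis: since $\D\bar u\in H^1_{\loc}$, Proposition \ref{abc-rem} gives $\lambda_\rho>0$ and the Poho\v zaev identity of Proposition \ref{p-Phz} holds, so the Energy--Nehari--Poho\v zaev system \eqref{abc-syst} is available. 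Writing $A=|\D\bar u|_2^2$, $B=|\bar u|_4^4$, $C=\cN(\bar u)$ as in \eqref{abc}, the crucial output is the third relation of \eqref{abc-syst-plus}, i.e. the algebraic identity
$$(3-\alpha)\,C=4A+3B,$$
which holds for every sufficiently regular solution and carries all the scaling information I need.

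The key idea is to split this identity into its two summands and feed each of them into a different interpolation estimate. Dropping first the $4A$ term gives $3B\le(3-\alpha)C$; combined with the HLS estimate \eqref{stima-CB}, $C\le c_\alpha\rho^{\frac43\alpha}B^{1-\frac\alpha3}$, and cancelling a power of $B$ (legitimate since $\alpha>0$), this produces an upper bound of the form $B\le c_1\,\rho^{4}$ with the explicit constant $c_1=\big(\tfrac{(3-\alpha)c_\alpha}{3}\big)^{3/\alpha}$. Dropping instead the $3B$ term gives $4A\le(3-\alpha)C$; here I would lower-bound $A$ through the Gagliardo--Nirenberg estimate \eqref{stima1}, which in the form $B\le\bar c^{4}\rho A^{3/2}$ reads $A\ge(\bar c^{-4}\rho^{-1}B)^{2/3}$. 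Substituting this lower bound for $A$ together with the HLS bound on $C$ into $4A\le(3-\alpha)C$ leaves an inequality involving only $\rho$ and $B$.

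The final step is to insert $B\le c_1\rho^{4}$ into this last inequality and verify that the exponents of $B$ collapse while those of $\rho$ combine to a clean power. Concretely, one reaches $\tfrac{4\bar c^{-8/3}}{(3-\alpha)c_\alpha}\le c_1^{(1-\alpha)/3}\,\rho^{\frac{4\alpha+2}{3}+\frac{4(1-\alpha)}{3}}$, and since the total exponent of $\rho$ is exactly $2$ this reads $\rho^2\ge\bar K_\alpha^2$, with $\bar K_\alpha$ as in \eqref{e-barrho}; tracking the constants (the powers of $c_\alpha$ combining to $c_\alpha^{1/\alpha}$) reproduces $K_\alpha$ from \eqref{def C}. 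I expect the only delicate point to be precisely this bookkeeping of scaling exponents: one must check that $\tfrac{4\alpha+2}{3}+\tfrac{4(1-\alpha)}{3}=2$ and that the $B$--dependence disappears, since it is this exact cancellation — a manifestation of the $L^2$--supercriticality of the Choquard term for $\alpha<1$ — that turns the chain of inequalities into a genuine lower threshold on the mass rather than a mere relation among $A,B,C$. Notably, no compactness or variational machinery is needed; the whole statement follows from the Poho\v zaev identity \eqref{abc-syst} together with the interpolation inequalities \eqref{stima1} and \eqref{stima-CB}.
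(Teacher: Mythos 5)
Your overall strategy is sound and is essentially the same algebraic mechanism as the paper's proof: the paper's Lemma \ref{lNE} shows that for $\rho$ below the threshold the quantity $A+\frac34 Bt-\frac{3-\alpha}{4}Ct^{1-\alpha}$ is positive for every $t>0$, while system \eqref{abc-syst} forces it to vanish at $t=1$ for any solution with $\D\bar u\in H^1_{\loc}$; your contrapositive route through $(3-\alpha)C=4A+3B$, \eqref{stima1} and \eqref{stima-CB} is the same mechanism read backwards, and it does yield nonexistence below \emph{some} explicit mass. The genuine gap is in your final bookkeeping: the constant you obtain is \emph{not} $\bar K_\alpha$. Splitting the identity $(3-\alpha)C=4A+3B$ into the two separate inequalities $3B\le(3-\alpha)C$ and $4A\le(3-\alpha)C$ uses the full right-hand side twice, and this double-counting of $C$ costs you exactly a factor $\alpha(1-\alpha)^{\frac{1-\alpha}{\alpha}}<1$. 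Carrying out your steps precisely (the powers of $c_\alpha$ and $\bar c$ do combine as you say) gives
\begin{equation*}
\rho^2\;\ge\;\frac{4\,\bar c^{-8/3}}{(3-\alpha)\,c_\alpha\,c_1^{(1-\alpha)/3}}
\;=\;\Bigl[\tfrac34\bigl(\tfrac{3-\alpha}{3}\bigr)^{1/\alpha}\Bigr]^{-1}c_\alpha^{-1/\alpha}\,\bar c^{-8/3},
\end{equation*}
whereas $\bar K_\alpha^2=K_\alpha^{-1}c_\alpha^{-1/\alpha}\bar c^{-8/3}$ with $K_\alpha=\tfrac34\,\alpha\bigl(\tfrac{3-\alpha}{3}\bigr)^{1/\alpha}(1-\alpha)^{\frac{1-\alpha}{\alpha}}$ from \eqref{def C}. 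Since $\alpha(1-\alpha)^{\frac{1-\alpha}{\alpha}}<1$ for $\alpha\in(0,1)$, your threshold is strictly smaller than $\bar K_\alpha$, so the statement $\bar\rho\ge\bar K_\alpha$ as written is not established by your argument; the claim at the end of your proposal that the constants collapse to $K_\alpha$ is an error.

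The gap is repairable inside your own framework, without invoking the fiber map at all: do not discard the identity. Set $\theta:=\frac{4A}{(3-\alpha)C}\in(0,1)$, so that $3B=(1-\theta)(3-\alpha)C$ exactly. Re-running your two chains with these weights gives
\begin{equation*}
\rho^2\;\ge\;\frac43\Bigl(\frac{3}{3-\alpha}\Bigr)^{1/\alpha}c_\alpha^{-1/\alpha}\,\bar c^{-8/3}\,
\Bigl[\theta(1-\theta)^{\frac{1-\alpha}{\alpha}}\Bigr]^{-1}.
\end{equation*}
The value of $\theta$ is dictated by the solution, so the guaranteed lower bound on $\rho^2$ is obtained by maximizing $\theta(1-\theta)^{\frac{1-\alpha}{\alpha}}$ over $\theta\in(0,1)$; the maximum is attained at $\theta=\alpha$ and equals $\alpha(1-\alpha)^{\frac{1-\alpha}{\alpha}}$, which restores exactly $K_\alpha$ and hence $\rho\ge\bar K_\alpha$ for every admissible solution. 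This optimization over the split is precisely what the paper's proof performs automatically when it minimizes $t\mapsto\frac34 Bt-\frac{3-\alpha}{4}Ct^{1-\alpha}$ in \eqref{1822}: the optimal $t$ there balances the $A$ and $B$ contributions against $C$ in the ratio $\theta=\alpha$, and that balancing is the entire source of the factor $\alpha(1-\alpha)^{\frac{1-\alpha}{\alpha}}$ in \eqref{def C}.
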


\begin{rem}
Note that $  \bar K_\alpha \to+\infty$ as $\alpha\to 0$ and $ \to K\in[\sqrt{3}\pi,\infty)$ as $\alpha\to 1$, by Remark \ref{R3.1}, according to the behaviour of the problem in the limit cases.
Moreover,  $\bar K_\alpha$ is monotone decreasing w.r.t. $\alpha\in(0,1)$.
\end{rem}

First we establish the following result.

\begin{lemma}
\label{lNE}
Let $\alpha\in(0,1)$.
There exists $\bar\rho>0$ such that for every $u\in \cS_\rho$, $\rho\in(0,\bar\rho)$ the map
$$
t\mapsto \Psi(t):=F(t^{3/2}u(tx))
$$
has strictly positive derivative.
\end{lemma}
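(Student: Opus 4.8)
The plan is to make the fiber scaling explicit and reduce the claim to a one-variable convex minimisation. First I would set $A=|\D u|_2^2$, $B=|u|_4^4$, $C=\cN(u)$ and record the scaling identities $|\D(t^{3/2}u(t\cdot))|_2^2=t^2A$, $|t^{3/2}u(t\cdot)|_4^4=t^3B$ and $\cN(t^{3/2}u(t\cdot))=t^{3-\alpha}C$ (exactly as in the proof of Proposition \ref{Pmin}), which give
$$\Psi(t)=\tfrac12 A t^2+\tfrac14 B t^3-\tfrac14 C t^{3-\alpha},\qquad \Psi'(t)=t\Big(A+\tfrac34 B t-\tfrac{3-\alpha}{4}C t^{1-\alpha}\Big).$$
Since $t>0$, proving $\Psi'(t)>0$ for all $t>0$ is equivalent to showing
$$g(t):=A+\tfrac34 B t-\tfrac{3-\alpha}{4}C t^{1-\alpha}>0\qquad\text{for all }t>0,$$
where $u\neq 0$ forces $A,B,C>0$.

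Next I would exploit that $\alpha\in(0,1)$ makes $t\mapsto t^{1-\alpha}$ concave, so that $g$ is strictly convex on $(0,\infty)$, with $g(0^+)=A>0$ and $g(t)\to+\infty$. Its derivative $g'(t)=\tfrac34 B-\tfrac{(3-\alpha)(1-\alpha)}{4}C t^{-\alpha}$ increases from $-\infty$ to $\tfrac34 B$, so $g$ has a unique interior minimiser $t_*$ with $t_*^{\alpha}=\tfrac{(3-\alpha)(1-\alpha)}{3}\,\tfrac{C}{B}$. Substituting $t_*^{-\alpha}$ back collapses the nonlocal term and leaves the clean expression
$$g(t_*)=A-\frac{3\alpha}{4(1-\alpha)}\,B\,t_*,\qquad B\,t_*=\Big(\tfrac{(3-\alpha)(1-\alpha)}{3}\Big)^{1/\alpha}C^{1/\alpha}B^{1-1/\alpha}.$$
Strict convexity means controlling the single value $g(t_*)$ controls $g$ on all of $(0,\infty)$, so no further case analysis in $t$ is needed.

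The heart of the matter is then to bound $B\,t_*$ by $\rho^2 A$, uniformly in $u\in\cS_\rho$. Here I would feed in the two interpolation estimates from the Preliminaries: \eqref{stima-CB} in the form $C\le c_\alpha\rho^{4\alpha/3}B^{1-\alpha/3}$ and \eqref{stima1} in the form $B\le \bar c^4\rho A^{3/2}$. The key cancellation — which is precisely where $\alpha<1$ enters, since it makes $1-\tfrac1\alpha<0$ and lets the $B$-powers combine — is
$$C^{1/\alpha}B^{1-1/\alpha}\le c_\alpha^{1/\alpha}\rho^{4/3}B^{2/3}\le c_\alpha^{1/\alpha}\bar c^{8/3}\rho^{2}A.$$
Plugging this in gives $g(t_*)\ge A\big(1-K_\alpha c_\alpha^{1/\alpha}\bar c^{8/3}\rho^{2}\big)$ with $K_\alpha=\tfrac{3\alpha}{4(1-\alpha)}\big(\tfrac{(3-\alpha)(1-\alpha)}{3}\big)^{1/\alpha}$, which is strictly positive as soon as $\rho<\bar\rho:=\big(K_\alpha^{1/2}c_\alpha^{1/(2\alpha)}\bar c^{4/3}\big)^{-1}$. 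Since $A=|\D u|_2^2>0$, this yields $g>0$ on $(0,\infty)$ and hence $\Psi'>0$, and it incidentally identifies the explicit threshold $\bar K_\alpha$ appearing in Theorem \ref{NE}.

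The one genuinely delicate point is the bookkeeping in the exponents of the last display: one must verify that the powers of $B$ telescope to exactly $B^{2/3}$ and that the two factors of $\rho$ add up to $\rho^{2}$, so that the whole $u$-dependence is absorbed into a single universal constant and the threshold $\bar\rho$ is independent of $u$. Everything else is routine differentiation and scaling, and the crucial conceptual input is simply the strict convexity of $g$, which is a direct consequence of the hypothesis $\alpha<1$.
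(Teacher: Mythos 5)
Your proposal is correct and follows essentially the same route as the paper's proof: the paper likewise reduces to the fiber expression $\Psi'(t)=t\big(A+\tfrac34 Bt-\tfrac{3-\alpha}{4}Ct^{1-\alpha}\big)$, computes the explicit minimum of the $t$-dependent part (your strict-convexity remark merely justifies that this interior critical point is the global minimum, and your constant agrees with $K_\alpha$ in \eqref{def C}), and then applies \eqref{stima-CB} and \eqref{stima1} in exactly the same order to absorb the $u$-dependence into $\rho^2|\D u|_2^2$. Your exponent bookkeeping checks out, and you recover the paper's threshold $\bar\rho=\big(K_\alpha^{1/2}c_\alpha^{1/(2\alpha)}\bar c^{4/3}\big)^{-1}$ from \eqref{e-barrho}.
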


\proof
Let us write
\beq
\label{1817}
\frac{d\,}{dt}\Psi(t)= t \left(|\D u|^2_2+\frac34 |u|_4^4\, t-\frac{3-\alpha}{4}\cN(u)\, t^{1-\alpha} \right).
\eeq
A direct computation shows
\beq
\label{1822}
\min_{t\ge 0}\left(\frac34 |u|_4^4\, t-\frac{3-\alpha}{4}\cN(u)\, t^{1-\alpha} \right)=-\left[\frac{\cN(u)}{|u|_4^{4(1-\alpha)}}\right]^{\frac1\alpha} K_\alpha,
\eeq
where
\begin{equation}\label{def C}
K_\alpha=\frac 3 4\,  \alpha\left(\frac{3-\alpha}{3}\right)^{\frac 1 \alpha}(1-\alpha)^{ \frac{1-\alpha}{\alpha} }.
\end{equation}
By  \eqref{stima1} and \eqref{stima-CB} we get
\beq
\label{1823}
\left[\frac{\cN(u)}{|u|_4^{4(1-\alpha)}}\right]^{\frac1\alpha}=
\left[\frac{\cN(u)}{|u|_4^{4 -\frac 43 \alpha}}\right]^{\frac1\alpha}|u|_4^{\frac 83}\le
c_\alpha^{\frac 1 \alpha}\bar c^{\frac 83}\rho^2|\D u|^2_2.
\eeq
Combining the estimates \eqref{1822} and \eqref{1823} in \eqref{1817} we obtain
\begin{equation}\label{derivata psi}
\frac{d\,}{dt}\Psi(t)\ge t \left[1-K_\alpha c_\alpha^{\frac 1 \alpha}\bar c ^{\frac 83}\,\rho^2\right]|\D u|_2^2
\end{equation}
and for
\begin{equation}\label{stima rho}
\rho<
\left(K_\alpha^{\frac 1 2} c_\alpha^{\frac{1}{2\alpha} }\bar c^{\frac 43}\right)^{-1}
\end{equation}
the right-hand side in \eqref{derivata psi} remains positive.
\qed

\begin{proof}[Proof of Theorem \ref{NE}]
Let $\bar u$ be a solution of problem \eqref{P} such that  $\D\bar u \in H^1_{\loc}$.
Then system \eqref{abc-syst} provides
$$
 \frac{d\,}{dt}\Psi(t)_{|_{t=1}}=
 |\D u|^2_2+\frac34 |u|_4^4 -\frac{3-\alpha}{4}\cN(u)=0,
 $$
that implies $\rho\ge \bar \rho$ by Lemma \ref{lNE}.
\end{proof}

A key point for the existence proofs is the following lemma.

\begin{lemma}
\label{<0}
Let $\alpha\in(0,1)$ and set
\beq
\label{1907}
\rho^*=\inf_{\rho>0}\{m_\rho<0\}.
\eeq
Then $\rho^*\in(0,\infty)$.

\end{lemma}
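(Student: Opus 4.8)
The plan is to split the statement into the two bounds $\rho^*>0$ and $\rho^*<\infty$, proving the first with the fiber analysis of Lemma \ref{lNE} and the second with an explicit spread-out test function.

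For $\rho^*>0$ I would show that $m_\rho\ge 0$ for every $\rho<\bar\rho$, where $\bar\rho$ is the threshold of Lemma \ref{lNE}. Fix such a $\rho$ and an arbitrary $u\in\cS_\rho$, and consider the fiber $\Psi(t)=F(t^{3/2}u(tx))$, for which the scaling relations give
$$\Psi(t)=\tfrac12 t^2|\D u|_2^2+\tfrac14 t^3|u|_4^4-\tfrac14 t^{3-\alpha}\cN(u).$$
Since all exponents are positive, $\Psi(t)\to 0$ as $t\to 0^+$; and Lemma \ref{lNE} gives $\Psi'(t)>0$ for all $t>0$, so $\Psi$ is strictly increasing on $(0,\infty)$. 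Hence $F(u)=\Psi(1)>\lim_{t\to 0^+}\Psi(t)=0$. As $u\in\cS_\rho$ is arbitrary this yields $m_\rho\ge 0$, so $\{\rho>0:m_\rho<0\}\subseteq[\bar\rho,\infty)$ and $\rho^*\ge\bar\rho>0$.

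For $\rho^*<\infty$ I would exhibit, for all large $\rho$, an element of $\cS_\rho$ with negative energy. The guiding idea is that spreading a fixed profile out makes the nonlocal term dominate the local $L^4$ term, since its scaling exponent $3-\alpha$ is strictly smaller than $3$. Concretely, fix $u\in\cS_1$, put $u_\sigma(x)=\sigma^{3/2}u(\sigma x)\in\cS_1$, and note $|u_\sigma|_4^4=\sigma^3|u|_4^4$ and $\cN(u_\sigma)=\sigma^{3-\alpha}\cN(u)$. Choosing $\sigma$ small enough that $\sigma^\alpha<\cN(u)/|u|_4^4$ makes $|u_\sigma|_4^4<\cN(u_\sigma)$; for this fixed $\sigma$ the function $v_\rho:=\rho u_\sigma\in\cS_\rho$ satisfies
$$F(v_\rho)=\tfrac12\rho^2\sigma^2|\D u|_2^2+\tfrac{\rho^4}{4}\big(\sigma^3|u|_4^4-\sigma^{3-\alpha}\cN(u)\big),$$
whose $\rho^4$-coefficient is strictly negative. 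Thus $F(v_\rho)<0$ for all $\rho$ large, giving $m_\rho<0$ and hence $\rho^*<\infty$.

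Neither bound is technically heavy, as the derivative estimate of Lemma \ref{lNE} does the main work; the only genuinely new ingredient is the scaling heuristic behind the upper bound. The step I would watch most carefully is turning the fiberwise inequality $\Psi'>0$ into the global statement $F>0$ on $\cS_\rho$: this relies on $\Psi(0^+)=0$ together with strict monotonicity, and the same two facts in fact upgrade the conclusion to $m_\rho=0$ for $\rho<\bar\rho$ (by letting $t\to 0$ in $\Psi$), consistent with Theorem \ref{T12}.
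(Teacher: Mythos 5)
Your proposal is correct, and it is a hybrid: the bound $\rho^*<\infty$ is essentially the paper's own argument, while the bound $\rho^*>0$ takes a genuinely different route. For $\rho^*<\infty$, the paper likewise dilates a fixed profile so that the nonlocal term dominates the $L^4$ term (using that $\cN$ scales like $t^{-(3+\alpha)}$ against $t^{-3}$ under $x\mapsto tx$) and then multiplies by $\rho$; your version, which keeps the dilation mass-preserving so that $v_\rho=\rho u_\sigma$ lies exactly in $\cS_\rho$, is if anything slightly tidier than the paper's, where the test function $\bar u$ is not normalised. For $\rho^*>0$, the paper argues directly from the interpolation bounds \eqref{956} and \eqref{stima1 F basso eq+}: assuming $\bar{\bar c}_\alpha\rho^{1+\alpha}<1$ it splits into the cases $|\nabla u|_2\le 1$ (where $F(u)\ge\tfrac14|\nabla u|_2^2$ since $3-\alpha>2$) and $|\nabla u|_2\ge 1$ (where $F(u)\ge\tfrac12-c_1\rho^4>0$ for small $\rho$). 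You instead reuse Lemma \ref{lNE}: since $\Psi'(t)>0$ on $(0,\infty)$ for $\rho<\bar\rho$ and $\Psi(t)\to 0$ as $t\to 0^+$ (all three fiber exponents $2$, $3$, $3-\alpha$ being positive), every $u\in\cS_\rho$ satisfies $F(u)=\Psi(1)>0$. Both arguments rest on the same underlying inequalities — the proof of Lemma \ref{lNE} uses \eqref{stima1} and \eqref{stima-CB} — but your route avoids the case analysis, cleanly recycles an already-established lemma (which does precede Lemma \ref{<0} in the paper, so there is no circularity), and yields the sharper conclusion $m_\rho=0$ for $\rho<\bar\rho$, consistent with Theorem \ref{T12}$(b)$ and with \eqref{1905}. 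The one point to state carefully, which you do, is the passage from fiberwise monotonicity to strict positivity of $F$: one needs $\Psi(1)>\Psi(t_0)\ge\lim_{t\to 0^+}\Psi(t)=0$ for some fixed $t_0\in(0,1)$, not merely $\Psi'>0$ at a point.
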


\proof
 {\em  $\rho^*<\infty$}:\quad
In order to show that $m_\rho<0$ for $\rho$ large, we first claim that there exists $\bar u\in H^1_{\rad}$ such that
\beq
\label{1118}
\int_{\R^3}\bar u^4\, dx -\cN(\bar u) < 0.
\eeq
Indeed, let us fix $\phi\in \cC^{\infty}_0$ and for $t>0$ let us compute
\beq
\label{1119}
\int_{\R^3}\bar \phi^4(tx)\, dx=\frac{1}{t^3} \int_{\R^3}\bar \phi^4(x)\, dx,
\qquad
\cN(\phi(tx))=\frac{1}{t^{3+\alpha}}\cN(\phi ).
\eeq
Putting \eqref{1119} in \eqref{1118}, we see that there exists a small $\bar t$ such that $\bar u(x):=\phi(\bar t x)$ verifies the claim.

Now, from
$$
F(\rho \bar u)=\frac 12 \left( \int_{\R^3} |\D \bar u|^2dx\right)\, \rho^2 +\frac 14\left(\int_{\R^3}\bar u^4dx-\cN(\bar u)\right)\, \rho^4
$$
it follows that $F(\rho \bar u) < 0$ for large $\rho$, so $\rho^*<\infty$ holds.

\bigskip

{\em $\rho^*>0$}:\quad By \eqref{stima-CB} and \eqref{stima2},
\beq
\label{956}
F(u)\ge\frac 1 2|\n u|_2^2+
\frac 14 |u|_4^4-\frac{c_\alpha}{4}
\rho^{\frac{4}{3}\alpha}|u|_4^{4-\frac{4}{3}\alpha}
\qquad\forall u\in \cS_\rho,
\eeq
and
$$
F(u)\geq \frac 1 2|\n u|_2^2-\frac{\bar{\bar c}_\alpha}{4}\rho^{1+\alpha}|\n u|_2^{3-\alpha}\qquad\forall u\in \cS_\rho.
$$
Assume that $\bar{\bar c}_\alpha \rho^{1+\alpha}<1$. Since $\alpha<1$, for all $u\in \cS_\rho$ such that $|\n u|_2\le 1$ we have
$$
F(u)\geq \frac 1 4|\n u|_2^2,\qquad\forall u\in \cS_\rho.
$$
Next for $u\in \cS_\rho$ such that $|\n u|_2\ge 1$ we estimate
$$
F(u)\ge
\frac12+\frac 14 |u|_4^4-\frac{c_\alpha}{4}
\rho^{\frac{4}{3}\alpha}|u|_4^{4-\frac{4}{3}\alpha}\ge \frac12-c_1\rho^4 > 0
\qquad\forall u\in \cS_\rho
$$
for $\rho^4 < \frac{1}{2c_1}$ and hence $\rho_*>0$.
\qed

\medskip

Let us now identify the topological structure that ensures the existence of a mountain pass and of a local minimum.
For $\alpha\in(0,1)$, inequality \eqref{stima1 F basso eq+} implies
\beq
\label{513-1}
\cB_\rho:=\inf\{F(u)\ :\ u\in \cS_{\rho,\rad},\ |\D u|_2=R_\rho\}\ge g_2(R_\rho)>0,
\eeq
where $R_\rho$ is the maximum point of the function $g_2$ in \eqref{stima1 F basso eq+}, namely
\beq
\label{714}
R_\rho=c\rho^{-\frac{1+\alpha}{1-\alpha}},\qquad c>0.
\eeq
Let us set
$$
\overline m_\rho=\inf\{F(u)\ :\ u\in \cS_{\rho,\rad},\ |\D u|_2> R_\rho\}
$$
and
\beq
\label{SE1}
\rho^{**}=\inf\{\rho>0\ :\  \cB_\rho>\overline m_\rho\}.
\eeq

\begin{rem}\label{stima rho** basso}
By \eqref{stima rho} and $|\D (t^{3/2}u(tx))|_2=t|\D u|_2$, $\forall t>0$,  it immediately follows that
\begin{equation}\label{stima rho** basso eq}
\rho^{**}\ge\left( K_\alpha^{\frac 1 2}c_\alpha^{\frac{1}{2\alpha} }\bar c^{\frac 43}\right)^{-1}.
\end{equation}
\end{rem}

\begin{prop}\label{P4.4}
    Let $\alpha\in(0,1)$. Then $\overline m_\rho=m_\rho$ for all $\rho\ge\rho^*$, and
$$0<\rho^{**}<\rho^*.$$
\end{prop}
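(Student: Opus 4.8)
The plan is to prove the two assertions separately, reducing everything to the splitting of $\cS_{\rho,\rad}$ determined by the threshold $R_\rho$ in \eqref{714}. Since $F(u)\ge F(u^*)$ under Schwarz symmetrisation (exactly as in \eqref{ES}), all three quantities $m_\rho$, $\cB_\rho$ and $\overline m_\rho$ may be computed over radial functions, and $\cS_{\rho,\rad}$ is the disjoint union of the \emph{inner} set $\{|\D u|_2<R_\rho\}$, the \emph{sphere} $\{|\D u|_2=R_\rho\}$ and the \emph{outer} set $\{|\D u|_2>R_\rho\}$. On the closed inner set $F(u)\ge g_2(|\D u|_2)\ge 0$, because the function $g_2$ in \eqref{stima1 F basso eq+} vanishes at $0$ and increases up to its maximiser $R_\rho$; on the sphere $F\ge\cB_\rho>0$ by \eqref{513-1}. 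Hence for every $\rho$ one has $m_\rho=\min\{\inf_{\mathrm{inner}}F,\ \cB_\rho,\ \overline m_\rho\}$ with the first two terms nonnegative. I would also record two soft facts. First, $\{\rho:m_\rho<0\}$ is upward closed: from $F(u)<0$ one gets $|u|_4^4-\cN(u)<0$ and then $F(\tfrac{\rho}{\rho_0}u)<(\tfrac{\rho}{\rho_0})^2F(u)<0$ for $\rho>\rho_0$, by the scaling computation in the proof of Proposition \ref{PF}; consequently $m_\rho<0$ for all $\rho>\rho^*$. Second, $\rho\mapsto m_\rho$ is continuous (upper semicontinuity by rescaling near-minimizers, lower semicontinuity because $F(u)\ge\tfrac12|\D u|_2^2-C(\rho)$ by \eqref{956} keeps near-minimizers bounded in $H^1$), so that $m_{\rho^*}=0$.

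For $\rho>\rho^*$ the identity is immediate: $m_\rho<0$ while the inner set and the sphere contribute nonnegative values, so the infimum is realised along the outer set and $m_\rho=\overline m_\rho$. The delicate case is $\rho=\rho^*$, where $m_{\rho^*}=0$ and a minimizing family could a priori spread out into the inner region. I would handle it by compactness. Take $\rho_n\downarrow\rho^*$ and radial $u_n\in\cS_{\rho_n,\rad}$ with $m_{\rho_n}\le F(u_n)<0$, so that $F(u_n)\to 0$. Since $F(u_n)<0$, $u_n$ lies in the outer set ($|\D u_n|_2>R_{\rho_n}$) and $\cN(u_n)>2|\D u_n|_2^2\ge 2R_{\rho_n}^2\ge c>0$; moreover $A_n:=|\D u_n|_2^2$ is bounded above by \eqref{956}. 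Thus $\{u_n\}$ is bounded in $H^1_{\rad}$; by radial compactness (Strauss, \cite{BeL83}) $u_n\to u_*$ in $L^4$ and in $L^{12/(3+\alpha)}$, and $\cN(u_n)\to\cN(u_*)$ by Lemma \ref{lemmaConvNonLoc}, whence $\cN(u_*)\ge c>0$ and $u_*\ne 0$. Passing to a subsequence $A_n\to A_*\ge R_{\rho^*}^2$, and by weak lower semicontinuity $F(u_*)\le\liminf F(u_n)=0$ and $|u_*|_2\le\rho^*$.

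The crux is to exclude the borderline $A_*=R_{\rho^*}^2$. If it held, set $\rho'=|u_*|_2\in(0,\rho^*]$. If $\rho'<\rho^*$ then $m_{\rho'}\ge 0$, so $F(u_*)=0=m_{\rho'}$, yet $|\D u_*|_2^2\le A_*=R_{\rho^*}^2<R_{\rho'}^2$ places $u_*\ne 0$ strictly inside the inner set of $\cS_{\rho'}$, where $F>0$ — a contradiction. If $\rho'=\rho^*$ then $F(u_*)=0=m_{\rho^*}$ while $|\D u_*|_2\le R_{\rho^*}$ places $u_*$ in the closed inner set of $\cS_{\rho^*}$, where $F>0$ strictly inside and $F\ge\cB_{\rho^*}>0$ on the sphere — again impossible. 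Hence $A_*>R_{\rho^*}^2$, and the rescalings $v_n:=\tfrac{\rho^*}{\rho_n}u_n\in\cS_{\rho^*,\rad}$ satisfy $|\D v_n|_2^2=(\rho^*/\rho_n)^2A_n\to A_*>R_{\rho^*}^2$ and $F(v_n)\to 0$; so for large $n$ the $v_n$ lie in the outer set and $\overline m_{\rho^*}\le F(v_n)\to 0$. With $\overline m_{\rho^*}\ge m_{\rho^*}=0$ this gives $\overline m_{\rho^*}=m_{\rho^*}$. This boundary identity is the main obstacle: away from $\rho^*$ the strict sign of $m_\rho$ confines minimizing sequences to the outer set, but at $\rho=\rho^*$ one must quantitatively prevent the minimizing family from collapsing onto or inside the threshold sphere $|\D u|_2=R_{\rho^*}$, which is exactly the role of the exclusion of $A_*=R_{\rho^*}^2$ and the only step needing genuine compactness rather than scaling.

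For the last statement, $\rho^{**}>0$ is \eqref{stima rho** basso eq}. Since $\cB_\rho>0>m_\rho=\overline m_\rho$ for every $\rho>\rho^*$, the set $\{\rho:\cB_\rho>\overline m_\rho\}$ contains $(\rho^*,\infty)$ and so $\rho^{**}\le\rho^*$. To obtain strictness I would use that $\overline m_\rho$ is upper semicontinuous at $\rho^*$: an outer near-minimizer $u$ at $\rho^*$ with $F(u)<\e$ and $|\D u|_2>R_{\rho^*}$ rescales to $\tfrac{\rho}{\rho^*}u$, which stays in the outer set of $\cS_\rho$ for $\rho$ close to $\rho^*$ and has $F\to F(u)<\e$, so $\limsup_{\rho\to\rho^*}\overline m_\rho\le\overline m_{\rho^*}=0$. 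On the other hand $\cB_\rho\ge g_2(R_\rho)\ge g_2(R_{\rho^*})=:\delta>0$ for all $\rho\le\rho^*$. Therefore on a left neighbourhood of $\rho^*$ we have $\overline m_\rho<\delta\le\cB_\rho$, so this neighbourhood lies in $\{\cB_\rho>\overline m_\rho\}$ and $\rho^{**}<\rho^*$.
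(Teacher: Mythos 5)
Your proof is correct. Its skeleton coincides with the paper's (which is only a three-line sketch): decompose $\cS_{\rho,\rad}$ by the threshold $R_\rho$, use $F\ge g_2\ge 0$ on the closed inner region, use $m_\rho<0$ for $\rho>\rho^*$ to force $m_\rho=\overline m_\rho$ there, get strictness of $\rho^{**}<\rho^*$ from rescaling and continuity, and quote \eqref{stima rho** basso eq} for $\rho^{**}>0$. The genuine divergence is at the boundary mass $\rho=\rho^*$, which the paper dismisses as a ``direct consequence of the definitions'': there you run a compactness argument (Strauss embedding, Lemma \ref{lemmaConvNonLoc}, weak limits, and a case analysis on $|u_*|_2$ to exclude $A_*=R_{\rho^*}^2$). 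This works, but your closing claim that this step requires ``genuine compactness rather than scaling'' is not accurate, and the compactness machinery is heavier than needed. The bound \eqref{stima1 F basso eq+} already gives a quantitative gap for free: $g_2$ vanishes only at $\kappa R_\rho$ with $\kappa:=\left(\frac{3-\alpha}{2}\right)^{\frac{1}{1-\alpha}}>1$ and is strictly positive on $(0,\kappa R_\rho)$, so \emph{every} $u\in\cS_{\rho,\rad}$ with $F(u)\le 0$ automatically satisfies $|\D u|_2\ge\kappa R_\rho$, i.e.\ it lies a fixed multiplicative factor beyond the threshold. Then, for $\rho_n\downarrow\rho^*$ and $u_n\in\cS_{\rho_n,\rad}$ with $F(u_n)<0$ (whose gradients are bounded by \eqref{956}), the rescalings $v_n=\frac{\rho^*}{\rho_n}u_n$ satisfy $F(v_n)<\frac{s_n^2(1-s_n^2)}{2}|\D u_n|_2^2\to 0$ and $|\D v_n|_2\ge\frac{\rho^*}{\rho_n}\kappa R_{\rho_n}\to\kappa R_{\rho^*}>R_{\rho^*}$, so they stay in the outer set with no borderline case to exclude and no continuity of $m_\rho$ needed. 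This pure-scaling route is evidently what the paper intends, both here and in its one-line continuity argument for $\rho^{**}<\rho^*$. What your route buys is robustness: it uses only the soft facts that $F\ge 0$ on the closed inner ball and $F>0$ there away from the origin, never the explicit constants behind $g_2$ and $R_\rho$. Your derivation of $\rho^{**}<\rho^*$ from $\overline m_{\rho^*}=0$ plus upper semicontinuity of $\overline m_\rho$ at $\rho^*$ is a clean repackaging of the paper's hint (continuity of $\rho\mapsto g_2(R_\rho)$ and $\rho\mapsto F(\rho\bar u)$) and is fine as stated.
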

\proof
Equality $\overline  m_\rho=m_\rho$, $\forall \rho\ge\rho^*$, is a direct consequence of the definition of $m_\rho$, $R_\rho$ and $\overline m_\rho$, taking into account  \eqref{ES}.
Inequality $\rho^{**}<\rho^*$ follows from the continuity of the maps $\rho\mapsto g_2(R_\rho)$
and  $\rho\mapsto F(\rho \bar u)$, for every $\bar u\in H^1\setminus\{0\}$.
Inequality $\rho^{**}>0$ comes from \eqref{stima rho** basso eq}
\qed

\medskip

We are going to show that a local minimizer $\overline m_\rho$ exists for all $\rho>\rho^{**}$.
Since  $\overline m_\rho=m_\rho$ for all $\rho\ge\rho^*$, in this case the solution is a global minimum.

\begin{teo}
\label{Tmin}
Let $\alpha\in(0,1)$ and $\rho>\rho^{**}$.
Then problem \eqref{P} has a normalised solution $u_\rho\in \cS_\rho$, that is a local minimum for
$F_{|_{\cS_{\rho,\rad}}}$.
Moreover,
the corresponding Lagrange multiplier $\lambda_\rho$ is strictly positive,
\beq
\label{reb2}
F(u_\rho)=\overline m_\rho,
\eeq
$u_\rho \in L^1 \cap C^2\cap W^{2,s}$ for all $s>1$ and it is positive and radially symmetric.
\end{teo}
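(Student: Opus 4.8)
The plan is to realise $u_\rho$ as a constrained minimiser of $F$ over the \emph{interior region} $\mathcal A_\rho:=\{u\in\cS_{\rho,\rad}:|\nabla u|_2>R_\rho\}$, whose infimum is $\overline m_\rho$, and to exploit the strict gap $\overline m_\rho<\cB_\rho$ (valid precisely because $\rho>\rho^{**}$, by \eqref{SE1}) to keep minimising sequences away from the barrier $\{|\nabla u|_2=R_\rho\}$. First I would record that $F$ is coercive on $\cS_\rho$: viewing the right-hand side of \eqref{956} as $\tfrac12|\nabla u|_2^2$ plus a function of $|u|_4^4$ that is bounded below (the linear term dominates the sublinear one since $4-\tfrac43\alpha<4$), one gets $F(u)\ge\tfrac12|\nabla u|_2^2-C(\rho)$, so every minimising sequence for $\overline m_\rho$ is bounded in $H^1$. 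Working directly in the radial class (the definitions of $R_\rho,\cB_\rho,\overline m_\rho$ are already radial), I would apply Ekeland's variational principle (as in \cite[Proposition 5.1]{E}) on the complete metric space $\{u\in\cS_{\rho,\rad}:|\nabla u|_2\ge R_\rho\}$. Since $\overline m_\rho<\cB_\rho\le\inf\{F(u):|\nabla u|_2=R_\rho\}$, the Ekeland sequence eventually satisfies $|\nabla u_n|_2>R_\rho$ strictly, the gradient constraint is inactive, and $\{u_n\}$ is a genuine Palais--Smale sequence for $F|_{\cS_{\rho,\rad}}$ at level $\overline m_\rho$, with Lagrange multipliers $\lambda_n$ obtained by testing against radial directions. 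Replacing $u_n$ by $|u_n|$ (which preserves $F$, $|\nabla u_n|_2$ and the radial class) I may assume $u_n\ge 0$.

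Next I would extract the weak limit. Up to a subsequence $u_n\rightharpoonup u_\rho$ in $H^1$ and in $L^2$, a.e., and strongly in $L^q$ for $q\in(2,6)$ by the compact radial embedding; in particular $|u_n|_4^4\to|u_\rho|_4^4$ and, since $\tfrac{12}{3+\alpha}\in(2,6)$, $\cN(u_n)\to\cN(u_\rho)$ by Lemma \ref{lemmaConvNonLoc}. Testing the Palais--Smale relation against $u_n$ gives $|\nabla u_n|_2^2+|u_n|_4^4-\cN(u_n)+\lambda_n\rho^2=o(1)$, so $\{\lambda_n\}$ is bounded and $\lambda_n\to\lambda$ along a subsequence. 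Passing to the limit (the nonlocal and cubic terms converge by the above) shows that $u_\rho\ge 0$ is a weak solution of $-\Delta u_\rho+\lambda u_\rho+|u_\rho|^2u_\rho=(I_\alpha*|u_\rho|^2)u_\rho$ with multiplier $\lambda$.

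The main obstacle is the lack of compactness of $H^1_{\rad}\hookrightarrow L^2$: a priori $|u_\rho|_2\le\rho$ and mass could be lost at infinity. I would resolve this through the sign of $\lambda$. Testing the limit equation against $u_\rho$ and subtracting it from the limit of the tested Palais--Smale relation yields
\[
\big(\lim_n|\nabla u_n|_2^2-|\nabla u_\rho|_2^2\big)+\lambda\big(\rho^2-|u_\rho|_2^2\big)=0,
\]
where both brackets are nonnegative by weak lower semicontinuity and $|u_\rho|_2\le\rho$. Hence it suffices to prove $\lambda>0$: then both brackets vanish, giving $|u_\rho|_2=\rho$ and $|\nabla u_n|_2\to|\nabla u_\rho|_2$, i.e.\ $u_n\to u_\rho$ strongly in $H^1$. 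To get $\lambda>0$ I argue by dichotomy. If $u_\rho\not\equiv 0$, then $u_\rho\ge 0$ is a nontrivial weak solution, so Lemma \ref{l-minus} forces $\lambda\ge0$; moreover $\lambda=0$ is impossible, since then $u_\rho$ would be a nontrivial (locally regular, hence $\D u_\rho\in H^1_{\loc}$) solution of \eqref{Plambda} with $\lambda=0$, contradicting Theorem \ref{thmA}; thus $\lambda>0$. It remains to exclude vanishing $u_\rho\equiv0$: in that case $|u_n|_4\to0$ and $\cN(u_n)\to0$, so $\overline m_\rho=\lim_nF(u_n)=\tfrac12\lim_n|\nabla u_n|_2^2\ge\tfrac12R_\rho^2$, which forces $|\nabla u_n|_2\to R_\rho^+$ and hence $\liminf_nF(u_n)\ge\cB_\rho$, contradicting $\overline m_\rho<\cB_\rho$. (Equivalently, scaling a fixed competitor $\phi\in\cS_{\rho,\rad}$ down towards the barrier produces admissible functions with energy tending to $\tfrac12R_\rho^2+\tfrac14 t_0^{3-\alpha}\big(t_0^{\alpha}|\phi|_4^4-\cN(\phi)\big)$.) This is the delicate point, tied to the precise value of $\rho^{**}$ in \eqref{SE1}.

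Finally I would assemble the conclusions. Strong $H^1$ convergence gives $u_\rho\in\cS_\rho$ with $F(u_\rho)=\overline m_\rho$ and $|\nabla u_\rho|_2=\lim_n|\nabla u_n|_2\ge R_\rho$; equality is impossible, since it would force $F(u_\rho)\ge\cB_\rho>\overline m_\rho$, so $|\nabla u_\rho|_2>R_\rho$ and $u_\rho$ lies in the open set $\mathcal A_\rho$, making it a local minimiser of $F|_{\cS_{\rho,\rad}}$ and establishing \eqref{reb2}. By the principle of symmetric criticality, $u_\rho$ is a critical point of $F|_{\cS_\rho}$, i.e.\ a normalised solution of \eqref{P}. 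Since $u_\rho\ge0$ and $\lambda_\rho:=\lambda>0$, Proposition \ref{reg} yields $u_\rho\in L^1\cap C^2\cap W^{2,s}$ for every $s>1$ and $u_\rho>0$, while its radial symmetry is built into the construction; this completes the proof of all the stated properties.
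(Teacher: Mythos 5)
Your overall strategy is sound and it is genuinely different from the paper's. The paper never argues by a vanishing/nonvanishing dichotomy: it first replaces each element of the minimizing sequence by the minimizer of $F$ along its own dilation fiber $t\mapsto t^{3/2}u_n(tx)$ (possible because both relevant endpoints of the $t$-interval carry energy at least $\cB_\rho>\overline m_\rho$), so that the fiber-derivative identity \eqref{ES3} holds exactly; then, after Ekeland, it passes to the limit in the approximate Energy--Nehari--Poho\v{z}aev system \eqref{abc-syst_n} and reads off
$$
\lambda_\rho=\frac{1}{\rho^2}\Big(\frac{1+\alpha}{3-\alpha}A+\frac{\alpha}{3-\alpha}B\Big)>0,
\qquad A\ge R_\rho^2>0,
$$
after which compactness follows as in Proposition \ref{Pmin}. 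You instead extract the sign of the multiplier from the limit object itself: in the nonvanishing case via Lemma \ref{l-minus} together with the $\lambda=0$ nonexistence of Theorem \ref{thmA} (equivalently Proposition \ref{abc-rem}), and you then recover the full mass through the identity $\big(\lim_n|\nabla u_n|_2^2-|\nabla u_\rho|_2^2\big)+\lambda\big(\rho^2-|u_\rho|_2^2\big)=0$. This part is correct (Lemma \ref{l-minus} does apply to $u_\rho$ even though a priori $|u_\rho|_2\le\rho$, since its proof only uses $u\ge0$, $u\not\equiv0$, and finiteness of the mass), and it buys a softer argument that requires no engineering of the minimizing sequence; the paper's fiber trick, in exchange, is the device that later carries over to the mountain pass construction.

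There is, however, one genuine gap: the exclusion of vanishing. From $u_\rho\equiv0$ you correctly get $\overline m_\rho=\frac12\lim_n|\nabla u_n|_2^2\ge\frac12R_\rho^2$, but the next step, ``which forces $|\nabla u_n|_2\to R_\rho^+$'', is a non sequitur: nothing in the vanishing scenario pins $|\nabla u_n|_2$ near $R_\rho$, and even if it did, $\cB_\rho$ is an infimum over the sphere $\{|\D u|_2=R_\rho\}$ exactly, so $\liminf_n F(u_n)\ge\cB_\rho$ would still need justification. The fix is short: set $t_n:=R_\rho/|\nabla u_n|_2\le1$ and $\tilde u_n:=t_n^{3/2}u_n(t_nx)\in\cS_{\rho,\rad}$; then $|\nabla\tilde u_n|_2=R_\rho$, while $|\tilde u_n|_4^4=t_n^3|u_n|_4^4\to0$ and $\cN(\tilde u_n)=t_n^{3-\alpha}\cN(u_n)\to0$, whence
$$
\cB_\rho\le F(\tilde u_n)=\tfrac12R_\rho^2+o(1)\le\overline m_\rho+o(1),
$$
contradicting $\overline m_\rho<\cB_\rho$. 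Two minor repairs are also needed: (i) take absolute values at the level of the minimizing sequence, \emph{before} applying Ekeland --- the absolute value of a Palais--Smale sequence need not be Palais--Smale; nonnegativity of the limit then follows from $\|v_n-u_n\|\to0$, which is how the paper handles it in Proposition \ref{Pmin}; (ii) invoke Palais' symmetric criticality before applying Lemma \ref{l-minus}, since that lemma's proof tests the equation against arbitrary, not necessarily radial, $C^\infty_c$ functions, whereas your limit equation initially holds only against radial test functions.
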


\proof
We only need to verify the existence of $u_\rho$ that satisfies \eqref{reb2}.
Indeed, $u_\rho$ turns out to be a free critical point for $F$ on $ \cS_{\rho,\rad}$  by \eqref{SE1} and it is a critical point for $F$ on $\cS_\rho$ by the  principle of symmetric criticality \cite{Pal}.
Hence it is a solution of  \eqref{P} and it can be proved to be positive in the same way as as in Proposition \ref{Pmin}.

Let $\{u_n\}$ in $\cS_{\rho,\rad}$ be a minimizing sequence for $\overline m_\rho$, with $|\D u_n|_2 > R_\rho$ and $F(u_n)<\cB_{\rho}$.
The sequence $\{u_n\}$ is bounded in $H^1$ because by \eqref{956} there exists a constant $c(\rho)>0$ such that
\beq
\label{Sinn}
\cB_{\rho}>F(u_n)\ge \frac12\int_{\R^3}|\D u_n|^2dx -c(\rho)\qquad \forall n\in\N.
\eeq

We claim that $u_n$ can be assumed to verify the  identity
\beq
\label{ES3}
|\D u_n|_2^2+\frac{3}{4}|u_n|_4^4-\frac{3-\alpha}{4}\cN(u_n)=0.
\eeq
Indeed, again by  \eqref{956},  we can choose $u_n$ in such a way that
$$
F(u_n)=\min\{F(t^{3/2}(u_n(tx))\ :\ t\in(t_1,t_2)\}
$$
where $0<t_1<1<t_2$ are such that $|\D t_1^{3/2}(u_n(t_1 x))|_2=R_\rho$  and $F(t_2^{3/2}(u_n(t_2x))>\cB_\rho$ (the value $t_2$ exists by \eqref{Sinn}).
Then \eqref{ES3} follows from $\frac{d\,}{dt}F(t^{3/2}u_n(tx))_{|_{t=1}}=0$.

Finally, by the Ekeland principle  we obtain a bounded minimizing sequence $\{v_n\}$ such that
$$
\|v_n-u_n\|=o(1),\qquad F'(v_n)_{|_{\cS_{\rho,\rad}}}=o(1).
$$
Then,  we obtain an approximation of a system similar  to   \eqref{abc-syst}, with \eqref{ES3} in place of the Poho\v{z}aev identity \eqref{e-Phz},
\begin{equation}\label{abc-syst_n}
    \left\{
    \begin{aligned}
        \frac12A_n+\frac14 B_n-\frac1{4} C_n&=m_\rho+o(1)\\
        A_n+B_n-C_n&=-\lambda_n\rho^2+o(1)\\
        A_n+\frac{3}{4}B_n-\frac{3-\alpha}{4}C_n& =o(1)
    \end{aligned}
    \right.
\end{equation}
for a suitable sequence $\{\lambda_n\}$ in $\R$, where
$$
A_n:=|\nabla v_n|_2^2, \quad B_n:=|v_n|_4^4, \quad C_n:= \cN(v_n).
$$
By the boundedness of $\{v_n\}$ in $H^1$ and $|\D  v_n(x)|_2>R_\rho+o(1)$, up to a subsequence we have
$$
A_n\to A\ge R_\rho^2>0,\qquad
B_n\to B \ge 0,\qquad
C_n\to C \ge 0,\qquad
\lambda_n\to\lambda_\rho\in\R.
$$
Going to the limit in \eqref{abc-syst_n} we obtain
$$
\lambda_\rho=\frac{1}{\rho^2}\left(\frac{1+\alpha}{3-\alpha}A+\frac{\alpha}{3-\alpha}B\right)>0.
$$
Now, up to a subsequence, assume that $v_n\to u_\rho$ as in \eqref{eq:convergence}.
Then, arguing as in the proof of Proposition \ref{Pmin} we conclude that $u_\rho$ is a local minimum  point  for $F$ constrained on $ \cS_{\rho,\rad}$ and $u_\rho>0$ has the desired regularity properties.
\qed

\medskip

\begin{rem}
From the nonexistence result in Theorem \ref{NE} and the existence result in Theorem \ref{Tmin} we can again deduce $\rho^*>0$ in Lemma \ref{<0}.
\end{rem}

The following proposition describes several properties of the map $\overline m_\rho$.

\begin{prop}
\label{PCM}
Let $\alpha\in(0,1)$ and $\rho>\rho^{**}$.
The map $\rho\mapsto \overline m_\rho$ is  strictly decreasing and continuous.
Moreover, for $\rho\ge \rho^*$ it is strictly concave.
\end{prop}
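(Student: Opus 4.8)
The plan is to prove the three assertions by elementary rescalings, reducing the concavity to the case already handled in Proposition \ref{PF} and obtaining monotonicity and continuity from a two--parameter scaling designed to respect the gradient constraint $|\D u|_2>R_\rho$. \emph{Strict monotonicity.} For every $\rho>\rho^{**}$ let $u_\rho$ be the minimizer from Theorem \ref{Tmin}; it is an interior critical point (so $|\D u_\rho|_2>R_\rho$ strictly) and a genuine normalised solution with $\lambda_\rho>0$, hence it satisfies the Nehari identity \eqref{e-Nehari} and system \eqref{abc-syst}. Setting $A=|\D u_\rho|_2^2$, $B=|u_\rho|_4^4$, $C=\cN(u_\rho)$, the third relation in \eqref{abc-syst-plus} gives $C=\frac{4A+3B}{3-\alpha}$, so $B-C=-\frac{4A+\alpha B}{3-\alpha}<0$, while Nehari gives $A+B-C=-\lambda_\rho\rho^2<0$. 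For $\rho_1<\rho_2$ put $c=\rho_2/\rho_1>1$; since $R_\rho$ is decreasing, $|\D(cu_{\rho_1})|_2=c|\D u_{\rho_1}|_2>|\D u_{\rho_1}|_2>R_{\rho_1}>R_{\rho_2}$, so $cu_{\rho_1}\in\cS_{\rho_2}$ is admissible for $\overline m_{\rho_2}$. The fibre $\phi(c):=F(cu_{\rho_1})=\frac{c^2}{2}A+\frac{c^4}{4}(B-C)$ satisfies $\phi'(c)=c\big(A+c^2(B-C)\big)\le c\big(A+(B-C)\big)=c\,\phi'(1)<0$ for $c\ge1$, because $B-C<0$; thus $\phi$ is strictly decreasing on $[1,\infty)$ and $\overline m_{\rho_2}\le F(cu_{\rho_1})<F(u_{\rho_1})=\overline m_{\rho_1}$.

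\emph{Strict concavity for $\rho\ge\rho^*$.} On this range $\overline m_\rho=m_\rho<0$ by Proposition \ref{P4.4}, and competitors for $m_\rho$ carry no gradient constraint. For $\rho_1,\rho_2\ge\rho^*$ and $t\in(0,1)$ one has $\bar\rho=t\rho_1+(1-t)\rho_2\ge\rho^*$, so the global minimizer $u_{\bar\rho}$ exists and testing $m_{\rho_i}$ with $\frac{\rho_i}{\bar\rho}u_{\bar\rho}$ reproduces verbatim the computation of Proposition \ref{PF}: using $m_{\bar\rho}\le0$ to get $0<\tilde A\le\tilde D$ and Lemma \ref{CVXLemma}, one arrives at the strict concavity inequality \eqref{reb}.

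\emph{Continuity.} I would establish both semicontinuities through a single comparison combining the mass rescaling $s=\sigma/\rho$ with the mass--preserving fibre scaling $u\mapsto t^{3/2}u(t\cdot)$, which multiplies $|\D u|_2^2$, $|u|_4^4$, $\cN(u)$ by $s^2t^2$, $s^4t^3$, $s^4t^{3-\alpha}$ respectively and rescales $|\D\cdot|_2$ by $st$. Given $u\in\cS_\rho$ admissible for $\overline m_\rho$ and $\sigma$ near $\rho$, the choice $t=(\sigma/\rho)^{2/(1-\alpha)}$ (or $t=1$ whenever $st|\D u|_2>R_\sigma$ already holds) yields $v:=st^{3/2}u(t\cdot)\in\cS_\sigma$ with $|\D v|_2>R_\sigma$, and $s,t\to1$ as $\sigma\to\rho$. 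Applying this to $u=u_\rho$ gives $\overline m_\sigma\le F(v)=\overline m_\rho+o(1)$; applying it in the reverse direction to the (uniformly $H^1$--bounded) minimizers $u_\sigma$, whose quantities $A,B,C$ are therefore bounded, gives $\overline m_\rho\le F(v)=\overline m_\sigma+o(1)$. Hence $\overline m_\sigma\to\overline m_\rho$.

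\emph{Main obstacle.} The only real difficulty is the lower semicontinuity from the right: in contrast to the unconstrained functional $m_\rho$, rescaling a minimizer from a larger mass $\sigma>\rho$ down to mass $\rho$ by a scalar alone forces $|\D v|_2<R_\rho$ and violates the very constraint defining $\overline m_\rho$. The extra fibre parameter $t\to1^{+}$, tuned exactly to the scaling law of $R_\rho$, restores admissibility while perturbing the energy only by $o(1)$, and simultaneously bypasses any delicate compactness or mass--loss analysis of the minimizing family.
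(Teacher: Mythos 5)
Your overall strategy follows the same skeleton as the paper's proof --- scaled copies of the minimizers as competitors, sign information from the Nehari/Poho\v zaev relations, and reduction of the concavity on $[\rho^*,\infty)$ to the computation of Proposition \ref{PF} --- and two of your three steps are correct as written. In fact your monotonicity argument is more robust than the paper's: the paper passes through the chain $F(cu_{\rho_1})<c^2F(u_{\rho_1})\le\overline m_{\rho_1}$ with $c=\rho_2/\rho_1>1$, whose last inequality requires $F(u_{\rho_1})\le 0$ and is therefore only justified for $\rho_1\ge\rho^*$, not on the interval $(\rho^{**},\rho^*)$ where $\overline m_{\rho_1}>0$. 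Your fibre argument, $\phi'(c)=c\bigl(A+c^2(B-C)\bigr)\le c\bigl(A+B-C\bigr)=-c\,\lambda_{\rho_1}\rho_1^2<0$ for $c\ge 1$, uses only $B-C<0$ (from \eqref{abc-syst-plus}) and the Nehari identity \eqref{e-Nehari}, and hence covers the whole range $\rho_1>\rho^{**}$. Your appeal to system \eqref{abc-syst} is legitimate, since by Theorem \ref{Tmin} the local minimizer is a normalised solution with $\lambda_\rho>0$ and $W^{2,s}$ regularity. The concavity step coincides with the paper's.

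The continuity step, however, contains a concrete error at exactly the point you single out as the main obstacle. With $s=\sigma/\rho$ and your choice $t=(\sigma/\rho)^{2/(1-\alpha)}$, the scaling law $R_\rho=c\,\rho^{-\frac{1+\alpha}{1-\alpha}}$ of \eqref{714} gives $s\,t\,R_\rho=(\sigma/\rho)^{\frac{4}{1-\alpha}}R_\sigma$. In the only direction where the fibre parameter is needed, namely $\sigma<\rho$ (for $\sigma>\rho$ the choice $t=1$ is already admissible because $s>1$ and $R$ is decreasing), your formula yields $t<1$ and $s\,t\,R_\rho<R_\sigma$, so from $|\D u|_2>R_\rho$ one cannot conclude $|\D v|_2>R_\sigma$: admissibility can fail precisely where the obstacle lies, contradicting your own assertion that $t\to 1^{+}$. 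The correct tuning is the reciprocal, $t=(\rho/\sigma)^{2/(1-\alpha)}$, for which $s\,t\,R_\rho=R_\sigma$ identically; then $|\D u|_2>R_\rho$ does imply $|\D v|_2=s\,t\,|\D u|_2>R_\sigma$ in both directions, with $s,t\to 1$ as $\sigma\to\rho$. With this one-line correction your argument is complete --- the uniform $H^1$-bound on the minimizers $u_\sigma$, needed so that $F(v)=\overline m_\sigma+o(1)$, follows from \eqref{956} together with the monotonicity already proved --- and it is then actually more careful than the paper's own proof, which asserts admissibility of $\frac{\bar\rho}{\rho_n}u_{\rho_n}$ without addressing the case $\rho_n>\bar\rho$, where $|\D u_{\rho_n}|_2$ may be arbitrarily close to $R_{\rho_n}$ and a scalar rescaling alone may leave the constrained region.
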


\proof
To verify the monotonicity, let us fix $\rho^{**}<\rho_1<\rho_2$ and let $u_{\rho_1}$ be a minimizing function as in \eqref{reb2}.
Since $u_{\rho_1}$ is a local minimum, as in \eqref{ES3}  we get
$$
|\D u_{\rho_1}|_2^2+\frac 34 |u_{\rho_1}|_4^4- \frac{3-\alpha}{4}\cN(u_{\rho_1})
=0,
$$
so that
$$
\frac 34 \left(|u_{\rho_1}|_4^4-\cN(u_{\rho_1}) \right)
=-\left(|\D u_{\rho_1}|_2^2+\frac{\alpha}{4}\cN(u_{\rho_1})\right)<0.
$$
Now, let us set $u=\frac{\rho_2}{\rho_1}u_{\rho_1}\in \cS_{\rho_2}$, then
$$
|\D u|_2>|\D u_{\rho_1}|_2>R_{\rho_1}>R_{\rho_2}
$$
(see \eqref{714}) and
\begin{eqnarray}
\overline m_{\rho_2}\le F(u)&=&\frac 12\left(\frac{\rho_2}{\rho_1}\right)^2\int_{\R^3}|\D u_{\rho_1}|^2dx+
\frac 14\left(\frac{\rho_2}{\rho_1}\right)^4\left(\int_{\R^3} u^4_{\rho_1}dx-\cN(u_{\rho_1})\right)
\nonumber\\
&<&\left(\frac{\rho_2}{\rho_1}\right)^2 F(u_{\rho_1})
\label{nf}
\\
&\le & \overline m_{\rho_1}.
\nonumber
\end{eqnarray}
In the last line the equal sign holds when $F(u_{\rho_1})=0$.

\smallskip

To prove the continuity, let $\bar\rho>\rho^{**}$ and $\{\rho_n\}$ in $(\rho^{**},\infty)$ any sequence such that $\rho_n\to\bar\rho$.
Then $\left|\D \left(\frac{\rho_n}{\bar\rho}u_{\bar\rho}\right)\right|_2>R_{\rho_n}$ for large $n$, by continuity, and
$$
\overline m_{\rho_n}\le F\left(\frac{\rho_n}{\bar\rho}u_{\bar\rho}\right)=(1+o(1))F(u_{\bar\rho})=(1+o(1))\overline m_{\bar\rho},
$$
so that
\beq
\label{941}
\limsup_{n\to\infty}\overline  m_{\rho_n}\le\overline  m_{\bar\rho}.
\eeq
On the other side, $\left|\D \left(\frac{\bar\rho}{\rho_n}u_{\rho_n}\right)\right|_2>R_{\rho_n}$, for large $n$, and
$$
\overline m_{\bar \rho}\le F\left(\frac{\bar \rho}{\rho_n}u_{\rho_n}\right)=(1+o(1))F(u_{\rho_n})=(1+o(1))\overline m_{\rho_n},
$$
so that
\beq
\label{942}
\overline  m_{\bar\rho}\le \liminf_{n\to\infty}\overline m_{\rho_n}.
\eeq
From \eqref{941} and \eqref{942} the continuity of the map $\rho\mapsto\overline  m_\rho$ follows.

\smallskip

The concavity for $\rho\ge \rho^*$ can be proved as in Proposition \ref{PF}, taking the advantage of $\overline m_{\rho}\le 0$, $\forall \rho\ge\rho^*$.
\qed

\begin{cor}
Let $\alpha\in (0,1)$. Then
\begin{itemize}
\item[(i)]
$\overline m_\rho=0$ if and only if $\rho=\rho^*$,
\item[(ii)]
$m_\rho=0$ for all $\rho\in(0,m^*)$.
\end{itemize}

\end{cor}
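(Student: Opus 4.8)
The plan is to reduce both assertions to a single structural identity,
\[
m_\rho=\min\{0,\overline m_\rho\}\qquad\text{for every }\rho>0,
\]
and then to extract the conclusions from the continuity and strict monotonicity of $\overline m_\rho$ already recorded in Proposition \ref{PCM}. (Here the interval $(0,m^*)$ in $(ii)$ is read as $(0,\rho^*)$.)

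First I would prove the identity by decomposing $\cS_{\rho,\rad}$ according to the size of $|\D u|_2$ relative to the threshold $R_\rho$ from \eqref{714}. On the outer piece $\{|\D u|_2>R_\rho\}$ the infimum of $F$ equals $\overline m_\rho$ by definition, while on the sphere $\{|\D u|_2=R_\rho\}$ we have $F\ge\cB_\rho>0$ by \eqref{513-1}. On the inner piece $\{|\D u|_2<R_\rho\}$ the lower bound \eqref{stima1 F basso eq+} gives $F(u)\ge g_2(|\D u|_2)$; since $R_\rho$ is the maximiser of $g_2$ and $g_2(0)=0$, the function $g_2$ is nonnegative on $[0,R_\rho]$, so $F\ge0$ there. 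The matching upper bound comes from scaling: for any fixed $u_0\in\cS_{\rho,\rad}$ the family $u_t(x)=t^{3/2}u_0(tx)$ stays in $\cS_{\rho,\rad}$, satisfies $|\D u_t|_2=t|\D u_0|_2\to0$, and has $F(u_t)=\tfrac{t^2}{2}|\D u_0|_2^2+\tfrac{t^3}{4}|u_0|_4^4-\tfrac{t^{3-\alpha}}{4}\cN(u_0)\to0$ as $t\to0$. Hence the infimum of $F$ over the inner piece is exactly $0$. Since $m_\rho=\inf_{\cS_{\rho,\rad}}F$ by Schwarz rearrangement (as in \eqref{ES}), taking the minimum of the three regional infima and discarding the positive value $\cB_\rho$ yields the identity.

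Granting the identity, part $(ii)$ is immediate: for $\rho\in(0,\rho^*)$ the definition \eqref{1907} of $\rho^*$ forces $m_\rho\ge0$, whereas the identity gives $m_\rho\le0$, so $m_\rho=0$. For part $(i)$ the identity shows $m_\rho<0\Leftrightarrow\overline m_\rho<0$, whence $\rho^*=\inf\{\rho:\overline m_\rho<0\}$. By Proposition \ref{P4.4} one has $\rho^{**}<\rho^*$ and $\overline m_\rho=m_\rho$ for $\rho\ge\rho^*$, and by the $\rho^*<\infty$ part of Lemma \ref{<0} we know $m_\rho<0$, hence $\overline m_\rho<0$, for all large $\rho$. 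Since $\rho\mapsto\overline m_\rho$ is continuous and strictly decreasing on $(\rho^{**},\infty)$ by Proposition \ref{PCM}, while part $(ii)$ together with the identity gives $\overline m_\rho\ge0$ on $(\rho^{**},\rho^*)$ and $\overline m_\rho<0$ on $(\rho^*,\infty)$, continuity forces $\overline m_{\rho^*}=0$ and strict monotonicity makes $\rho^*$ the only zero; this is exactly the claimed equivalence.

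The one genuinely delicate point is the identity itself, and inside it the assertion that the infimum of $F$ over the inner region equals precisely $0$: the sign condition $g_2\ge0$ on $[0,R_\rho]$ supplies the lower bound and the vanishing scaling limit supplies the upper bound. Once the identity is in place, everything reduces to combining the definition of $\rho^*$ with the continuity and strict monotonicity of $\overline m_\rho$; the only remaining subtlety, the value at $\rho=\rho^*$ itself, is settled by continuity.
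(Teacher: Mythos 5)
Your proposal is correct, and it rests on the same ingredients as the paper's (very short) proof: the mass--preserving scaling $t^{3/2}u(tx)$, the definition \eqref{1907} of $\rho^*$, and the continuity and strict monotonicity of $\overline m_\rho$ from Proposition \ref{PCM}. The difference is organizational. The paper proves $(ii)$ by noting $m_\rho\le 0$ for all $\rho$ (relation \eqref{1905}, obtained from exactly the scaling limit you use) and combining this with the definition of $\rho^*$; it then disposes of $(i)$ in one line by citing \eqref{1907} and Proposition \ref{PCM}, implicitly relying on Proposition \ref{P4.4} (namely $\overline m_\rho=m_\rho$ for $\rho\ge\rho^*$) and on the trivial inequality $\overline m_\rho\ge m_\rho$ to transfer sign information between $m_\rho$ and $\overline m_\rho$. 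Your identity $m_\rho=\min\{0,\overline m_\rho\}$ makes this transfer explicit and in fact subsumes both \eqref{1905} and the equality in Proposition \ref{P4.4}, so your write-up is more self-contained, at the price of re-proving material the paper already has. Two small points to tighten. First, the inference that $g_2\ge0$ on $[0,R_\rho]$ \emph{because} $R_\rho$ is the maximiser and $g_2(0)=0$ is not valid for a general function (it could dip below zero before the maximum); here it holds because $g_2'(r)=r\bigl(1-\tfrac{(3-\alpha)\bar{\bar c}_\alpha}{4}\rho^{1+\alpha}r^{1-\alpha}\bigr)>0$ on $(0,R_\rho)$, which uses $\alpha<1$, so $g_2$ is increasing up to $R_\rho$. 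Second, the claim $\overline m_\rho<0$ on $(\rho^*,\infty)$ does not follow from $(ii)$ together with the identity alone: one must pick, by definition of the infimum in \eqref{1907}, some $\rho_1\in[\rho^*,\rho)$ with $m_{\rho_1}<0$, deduce $\overline m_{\rho_1}<0$ from the identity, and then invoke the strict decrease of $\overline m$; since you appeal to monotonicity in the same sentence, this is a matter of attribution rather than a gap. Finally, your reading of $(0,m^*)$ as $(0,\rho^*)$ is indeed the intended one.
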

\proof
{\em (i)} is a direct consequence of the definition of $\rho^*$ in \eqref{1907} and the strict monotonicity stated in Proposition \ref{PCM}.

{\em (ii)} follows from the definition of $\rho^*$ and from
\beq
\label{1905}
m_\rho\le 0,\qquad
\forall\rho>0.
\eeq
To verify \eqref{1905}, let $\bar u\in \cS_\rho$, then $t^{3/2}\bar u(tx)\in \cS_\rho$,   $\forall t>0$, and $\lim\limits_{t\to 0}F(t^{3/2}\bar u(tx))=0$.
\qed

\subsection{A Mountain Pass solution of type I}\label{sMPL-I}
We are going to show that for all $\rho>\rho^{**}$ the problem \eqref{P} admits a normalised mountain pass solution $v_\rho$ at a positive energy level, that we will call {\sl solutions of type I} . These type I solutions are ``natural'' mountain pass solutions on $\mathcal S_\rho$ that can be constructed as long as $F$ has a (local) minimum on $\mathcal S_\rho$ (that is for the entire range $\rho>\rho^{**}$) following an adaptation of the classical approach in \cite{Je}.

\begin{teo}
\label{TMP}
Let $\alpha\in(0,1)$ and $\rho>\rho^{**}$. Then problem \eqref{P} has a normalised solution $v_\rho$, that is a mountain pass critical point for $F_{|_{\cS_{\rho,\rad}}}$.
Moreover, the corresponding Lagrange multiplier $\widetilde\lambda_\rho>0$,
\beq
\label{525+1}
F(v_\rho)\ge\cB_{\rho},
\eeq
and $v_\rho \in L^1 \cap C^2\cap W^{2,s}$ for all $s>1$, and $v_\rho$ is positive.
\end{teo}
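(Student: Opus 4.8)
The plan is to realise $v_\rho$ as a mountain pass critical point of $F|_{\cS_{\rho,\rad}}$ between the local minimum $u_\rho$ produced in Theorem~\ref{Tmin} and a low--energy endpoint, and to recover compactness by lifting the minimax to the augmented space of $L^2$--preserving dilations in the spirit of \cite{Je}. First I would set up the geometry. The local minimizer $u_\rho$ lies in the region $\{|\D u|_2>R_\rho\}$ at level $\overline m_\rho<\cB_\rho$, the strict inequality being exactly what $\rho>\rho^{**}$ guarantees through \eqref{SE1}. Along the fiber $t\mapsto t^{3/2}u_\rho(tx)\in\cS_{\rho,\rad}$ one has $F\to 0^+$ and $|\D(t^{3/2}u_\rho(t\cdot))|_2=t|\D u_\rho|_2\to 0$ as $t\to 0$, so for $t_0$ small the endpoint $u_0:=t_0^{3/2}u_\rho(t_0\cdot)$ satisfies $|\D u_0|_2<R_\rho$ and $F(u_0)<\cB_\rho$. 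Every continuous path in $\cS_{\rho,\rad}$ from $u_\rho$ to $u_0$ must cross the sphere $\{|\D u|_2=R_\rho\}$ by the intermediate value theorem applied to $\tau\mapsto|\D\gamma(\tau)|_2$, and there $F\ge\cB_\rho$ by \eqref{513-1}; hence the minimax level $M_\rho:=\inf_{\gamma\in\Gamma}\max_{\tau\in[0,1]}F(\gamma(\tau))$ over paths $\gamma$ joining $u_\rho$ and $u_0$ satisfies $M_\rho\ge\cB_\rho$, which already yields \eqref{525+1}.

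The main obstacle is that for $\alpha<1$ the functional is $L^2$--supercritical, so a Palais--Smale sequence for $F|_{\cS_{\rho,\rad}}$ need be neither bounded nor free of mass loss, and the Lagrange multiplier could vanish. To control all of this at once I would follow \cite{Je} and introduce the augmented functional $\widetilde F(s,u):=F(t^{3/2}u(tx))$ with $t=e^s$ on $\R\times\cS_{\rho,\rad}$, whose minimax value over the lifted paths joining $(0,u_\rho)$ and $(0,u_0)$ again equals $M_\rho$. Ekeland's principle then produces a Palais--Smale sequence $(s_n,w_n)$ for $\widetilde F$; writing $v_n:=t_n^{3/2}w_n(t_n\cdot)\in\cS_{\rho,\rad}$ (with $t_n=e^{s_n}$) gives $F(v_n)\to M_\rho$, $(F|_{\cS_{\rho,\rad}})'(v_n)\to 0$, and, crucially, the vanishing of the $s$--component of the derivative, which by \eqref{1817} reads $|\D v_n|_2^2+\tfrac34|v_n|_4^4-\tfrac{3-\alpha}{4}\cN(v_n)\to 0$. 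This is precisely the asymptotic Poho\v zaev relation \eqref{ES3}.

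Combining $F(v_n)\to M_\rho$, the asymptotic relation just obtained, and the Nehari relation coming from the constrained Palais--Smale condition, one arrives at the approximate system \eqref{abc-syst_n} with $A_n=|\D v_n|_2^2$, $B_n=|v_n|_4^4$, $C_n=\cN(v_n)$. From it $A_n$ is bounded and bounded below by $R_\rho^2+o(1)>0$, so $\{v_n\}$ is bounded in $H^1_{\rad}$, and passing to the limit gives $\widetilde\lambda_\rho=\frac{1}{\rho^2}\big(\frac{1+\alpha}{3-\alpha}A+\frac{\alpha}{3-\alpha}B\big)>0$ because $A\ge R_\rho^2>0$, exactly as in Theorem~\ref{Tmin}. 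Using the compact radial embeddings $H^1_{\rad}\hookrightarrow L^p$ for $p\in(2,6)$ (note $\tfrac{12}{3+\alpha}\in(2,6)$ for $\alpha\in(0,1)$), up to a subsequence $v_n\rightharpoonup v_\rho$ in $H^1$ with $\cN(v_n)\to\cN(v_\rho)$ and $|v_n|_4\to|v_\rho|_4$ by Lemma~\ref{lemmaConvNonLoc}.

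Finally I would upgrade to strong convergence and conclude. The weak limit $v_\rho$ solves \eqref{P} with the multiplier $\widetilde\lambda_\rho>0$; testing the equations for $v_n$ and for $v_\rho$ and using the weak lower semicontinuity of $|\D\cdot|_2$ and $|\cdot|_2$ together with $\widetilde\lambda_\rho>0$, one argues exactly as in Proposition~\ref{Pmin} that $|v_n|_2\to|v_\rho|_2=\rho$, whence $v_n\to v_\rho$ strongly in $H^1$ and $F(v_\rho)=M_\rho\ge\cB_\rho$. Thus $v_\rho$ is a genuine critical point of $F|_{\cS_{\rho,\rad}}$ at the mountain pass level, and by the principle of symmetric criticality \cite{Pal} it solves \eqref{P} on $\cS_\rho$. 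Nonnegativity of $v_\rho$ is obtained as in Proposition~\ref{Pmin} (the endpoints $u_\rho,u_0$ are nonnegative and $F$ is even, so the minimax may be run within nonnegative functions), and since $\widetilde\lambda_\rho>0$ the regularity statement of Proposition~\ref{reg} applies, giving $v_\rho\in L^1\cap C^2\cap W^{2,s}$ for all $s>1$ and $v_\rho>0$. The delicate point throughout is the second paragraph: producing a bounded Palais--Smale sequence that carries the asymptotic Poho\v zaev identity, which is what simultaneously rescues boundedness, the strict sign of $\widetilde\lambda_\rho$, and hence compactness.
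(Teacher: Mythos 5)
Your proposal follows essentially the same route as the paper's proof: the mountain pass geometry across the sphere $\{|\D u|_2=R_\rho\}$ created by $\rho>\rho^{**}$, with the fiber through the local minimizer $u_\rho$ providing admissible paths; a Jeanjean-type augmented functional producing a Palais--Smale sequence that carries the asymptotic Poho\v zaev relation \eqref{ES3} (the paper obtains exactly this, together with the almost-positivity property \eqref{quasipos}, by invoking \cite[Proposition 3.11]{BMRV}); then the approximate system \eqref{abc-syst_n} to force a positive limit Lagrange multiplier; and strong convergence as in Proposition \ref{Pmin}, followed by Proposition \ref{reg} and the maximum principle.

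Two steps, however, are asserted rather than proved, and the first is false as stated. (i) The lower bound $A_n\ge R_\rho^2+o(1)$ does not follow from \eqref{abc-syst_n} or from the minimax construction: in Theorem \ref{Tmin} that bound holds only because the minimizing sequence is confined by definition to $\{|\D u|_2>R_\rho\}$, whereas a mountain-pass Palais--Smale sequence need not stay outside that ball. What is true, and is all you need for $\widetilde\lambda_\rho>0$ and for nontriviality of the weak limit, is a bound $A_n\ge c>0$: either via the paper's observation that $F|_{\cS_\rho}(u)\to 0$ as $|\D u|_2\to 0$ while $F(v_n)\to M_\rho\ge\cB_\rho>0$, or directly from the limit of \eqref{abc-syst_n}, since $A=0$ would give $-\frac{\alpha}{4(3-\alpha)}B=M_\rho>0$, i.e. $B<0$. (ii) The remark that ``the minimax may be run within nonnegative functions'' (true, since $u\mapsto|u|$ is $H^1$-continuous and preserves both $F$ and $\cS_{\rho,\rad}$) does not by itself make the limit nonnegative: a critical point produced from an arbitrary Palais--Smale sequence at that level could change sign. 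One needs the sequence to satisfy $|v_n^-|_2\to 0$, i.e. \eqref{quasipos}, which requires generating it at vanishing distance from near-optimal nonnegative paths (Ghoussoub's localized min--max principle, as used in \cite{BMRV}); the analogy with Proposition \ref{Pmin}, where one simply replaces a minimizing sequence by its absolute values, does not transfer to a minimax level without this localization.
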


\proof
Let  $\rho>\rho^{**}$ and define
\beq\label{def}
\begin{split}
\Gamma=\bigg\{\gamma:[0,1]\to \cS_{\rho,\rad}\ :\, &\ |\D \gamma(0)|_2<R_\rho,\, |\D \gamma(1)|_2>R_\rho, \\
& \ F(  \gamma(0))< \frac{g_2(R_\rho)}{2},\, F(\gamma(1))<\frac{\overline m_\rho+ g_2(R_\rho)}{2}\bigg\},
\end{split}
\eeq
where $R_\rho$ is from \eqref{714}.
Observe that, by taking the local minimum $u_\rho$ provided by Theorem \ref{Tmin} and considering
$$
\tau\mapsto u_{\rho,\tau}(x):=\tau^{3/2} u_\rho (\tau x)\in \cS_{\rho,\rad},\qquad \tau>0,
$$
we conclude that $\Gamma\neq\emptyset$. Indeed, by direct computations,
$$
\lim_{\tau \to 0^+}|\D u_{\rho,\tau}|_2=0,\qquad \lim_{\tau\to 0^+}F(u_{\rho,\tau})=0.
$$
Hence, for $\rho>\rho^{**}$ we can define the mountain pass value by
\beq \label{MPv}
\widetilde M_{\rho}:=\inf_{\gamma\in\Gamma}\max_{s\in [0,1]}F(\gamma(s)).
\eeq

Now, arguing as in \cite[Proposition 3.11]{BMRV} (see also \cite{Je}) we can find a bounded Palais-Smale sequence $\{u_n\}$  at the level $\widetilde M_\rho$ that almost verifies \eqref{ES3}, namely  $\{u_n\}$ satisfies system \eqref{abc-syst_n} with $\widetilde M_\rho$ in place of $m_\rho$, and moreover
\beq
\label{quasipos}
\lim_{n\to\infty}|u_n^-|_2=0.
\eeq
Since $\widetilde M_\rho\ge\cB_{\rho}>0$ and, by \eqref{stima1} and \eqref{stima2}, $F_{|_{\cS_\rho}}(u)\to 0$ as $|\D u|_2\to 0$,  we see that $|\D u_n|_2\ge c>0$.
This point ensures that passing to the limit in \eqref{abc-syst_n} we obtain a nontrivial system, that is equivalent to \eqref{abc-syst}. Then by the 2nd relation in \eqref{abc-syst-plus} we conclude that there exists a limit Lagrange multiplier $\widetilde \lambda_\rho>0$.

As a consequence, we can argue as in the proof of Proposition  \ref{Pmin} and  show that $u_n$ strongly converges to a critical point $v_\rho$ at the level $\widetilde M_\rho$.
By \eqref{quasipos}, $v_\rho\ge0$.
Since $\widetilde\lambda_\rho>0$ we can apply Proposition \ref{reg}, and the maximum principle, so the proof is complete.
\qed

\begin{prop}
\label{PCMP}
Let $\alpha\in(0,1)$ and $\rho>\rho^{**}$.
The map $\rho\mapsto \widetilde M_\rho$ is  continuous.
\end{prop}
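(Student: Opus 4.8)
The plan is to prove continuity by constructing, for masses $\rho$ and $\rho'$ close together, a mass--changing rescaling that carries admissible paths for $\widetilde M_\rho$ to admissible paths for $\widetilde M_{\rho'}$ with only a small, controllable change in the energy along them, and then to combine the two resulting one--sided estimates. Throughout I write $\Gamma_\rho$ for the class \eqref{def} to emphasise its dependence on the mass.

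First I would introduce the two--parameter scaling
$$T_{\rho'}u(x):=\sigma^{3/2}\,\tfrac{\rho'}{\rho}\,u(\sigma x),\qquad \sigma:=\Big(\tfrac{\rho}{\rho'}\Big)^{\frac{2}{1-\alpha}},\qquad c:=\tfrac{\rho'}{\rho},$$
which maps $\cS_{\rho,\rad}$ onto $\cS_{\rho',\rad}$. The exponent is chosen so that $T_{\rho'}$ multiplies the gradient norm by $\sigma c=(\rho/\rho')^{(1+\alpha)/(1-\alpha)}$, which is exactly the ratio $R_{\rho'}/R_\rho$ (recall $R_\rho=c\rho^{-(1+\alpha)/(1-\alpha)}$ from \eqref{714}); hence
$$\frac{|\D T_{\rho'}u|_2}{R_{\rho'}}=\frac{|\D u|_2}{R_\rho}\qquad\text{for every }u.$$
Thus $T_{\rho'}$ preserves \emph{exactly} the threshold conditions $|\D\gamma(0)|_2<R_\rho$ and $|\D\gamma(1)|_2>R_\rho$ appearing in \eqref{def}. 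A direct computation gives the ``twisted'' energy
$$F(T_{\rho'}u)=\tfrac12\sigma^2c^2|\D u|_2^2+\tfrac14\sigma^3c^4|u|_4^4-\tfrac14 c^4\sigma^{3-\alpha}\cN(u),$$
whose three coefficients tend to $\tfrac12,\tfrac14,\tfrac14$ as $\rho'\to\rho$. Taking $\gamma(0)$ and $\gamma(1)$ to be fixed bounded functions (a small $L^2$--preserving dilation of $u_\rho$ and $u_\rho$ itself, which satisfy the strict endpoint energy inequalities since $\Gamma_\rho\neq\emptyset$ by Theorem \ref{TMP}), and invoking the already established continuity of $\rho\mapsto\overline m_\rho$ (Proposition \ref{PCM}) together with the continuity of $\rho\mapsto R_\rho$ and $\rho\mapsto g_2(R_\rho)$, I would verify that $T_{\rho'}\gamma\in\Gamma_{\rho'}$ once $\rho'$ is close enough to $\rho$: the threshold conditions transfer identically, and the strict energy endpoint conditions survive because $F(T_{\rho'}\gamma(i))\to F(\gamma(i))$ while the thresholds converge.

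It then remains to compare $\max_s F(T_{\rho'}\gamma(s))$ with $\max_s F(\gamma(s))$. On any portion of a path where $|\D\gamma(s)|_2$ stays bounded by a fixed $\Lambda$, the displayed formula yields $|F(T_{\rho'}u)-F(u)|\le\omega_\Lambda(|\rho'-\rho|)\to 0$ uniformly, so the two maxima differ by $o(1)$. Running this with a near--optimal $\gamma$ for $\widetilde M_\rho$ gives $\widetilde M_{\rho'}\le\widetilde M_\rho+o(1)$, and applying the inverse scaling (which is $T$ with the roles of $\rho$ and $\rho'$ swapped) to a near--optimal path for $\widetilde M_{\rho'}$ gives $\widetilde M_\rho\le\widetilde M_{\rho'}+o(1)$; together these force $\widetilde M_{\rho'}\to\widetilde M_\rho$.

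The hard part will be the phrase \emph{``where $|\D\gamma(s)|_2$ stays bounded''}: since $F$ is not coercive along fibers for $\alpha<1$, a near--optimal path may run off to arbitrarily large gradient in regions where $F$ is very negative, and there the scaling error $\tfrac12(\sigma^2c^2-1)|\D u|_2^2-\tfrac14(c^4\sigma^{3-\alpha}-1)\cN(u)$ need not be small. I would therefore first replace any competitor (at the fixed mass $\rho$, where the full geometry is available) by one whose image lies in a fixed set $\{|\D u|_2\le\Lambda_\rho\}$ without raising $\max_s F$. The key geometric facts are that the maximum of $F$ along any admissible path is $\ge\cB_\rho>0$ with its high--energy portion located near the barrier sphere $|\D u|_2=R_\rho$ (by \eqref{513-1}), and that beyond a large radius $\Lambda_\rho$ the energy is lowered by contracting along the $L^2$--preserving fibers $t\mapsto t^{3/2}u(tx)$, whose derivative \eqref{1817} is positive for large $t$ when $\alpha<1$. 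Excising the high--gradient excursions in this way --- using the connectedness of the gradient--spheres in $\cS_{\rho,\rad}$ and the strict separation between $\overline m_\rho$, $g_2(R_\rho)$ and $\cB_\rho$ --- produces a uniformly bounded near--optimal family to which the scaling estimate of the previous paragraph applies. This truncation step, rather than the scaling computation itself, is the technical heart of the argument.
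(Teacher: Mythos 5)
Your scaling computation for $T_{\rho'}$ is correct, and choosing the exponents so that $|\D T_{\rho'}u|_2/R_{\rho'}=|\D u|_2/R_\rho$ is a genuinely nice device; but the proof has a real gap exactly where you locate ``the technical heart'': the truncation of near-optimal paths into a fixed gradient ball. First, the mechanism you propose for it does not work. Positivity of \eqref{1817} \emph{for large $t$} does not let you contract a high-gradient point of a path into $\{|\D u|_2\le\Lambda_\rho\}$ without raising the energy: the sign of $\Psi'$ at the intermediate scales you must pass through is governed by the competition between $|u|_4^4$ and $\cN(u)$, not by $|\D u|_2$ alone. By \eqref{stima2} one only knows $\cN(u)\lesssim \rho^{1+\alpha}|\D u|_2^{3-\alpha}$, and when $\cN(u)$ is comparable to this bound the fiber map $\Psi$ has a local maximum strictly between the contracted point and $u$, so contraction increases $\max_s F$ along the path. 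Second, even granting a pointwise retraction, excising a high-gradient excursion means replacing a sub-path by another sub-path with the same endpoints lying in $\{|\D u|_2\le\Lambda_\rho\}$ without raising the maximum of $F$; this is a nontrivial connectivity/deformation statement about sublevel sets of a non-coercive functional that you do not prove. The same objection applies to your reduction to paths with \emph{fixed} endpoints: the class \eqref{def} has variable endpoints, and restricting them could a priori change the infimum in \eqref{MPv}.

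The paper's proof shows the whole truncation problem can be avoided, which is the key idea your symmetric scheme misses. It uses the cruder map $u\mapsto cu$, $c=\rho'/\rho$, but \emph{asymmetrically}: whenever the varying near-optimal paths are transported, the mass is scaled \emph{up} ($c\ge1$), where one has the pointwise inequality
$$F(cu)=c^4F(u)+\tfrac{c^2-c^4}{2}\,|\D u|_2^2\le c^4 F(u),$$
valid for every $u$ with no bound whatsoever on $|\D u|_2$ (this is \eqref{wcd}); whenever the mass must be scaled \emph{down} ($c<1$), the paper transports a single \emph{fixed} near-optimal path at the target level, for which $\max_{t\in[0,1]}|\D\gamma(t)|_2^2$ is a finite number simply because $\gamma$ is a continuous image of a compact interval in $H^1$, so the error term appearing in \eqref{wcd1} vanishes as $c\to1$. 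Since each of the two one-sided inequalities can be arranged to use either the unconditional estimate or a fixed path, no uniform gradient control over families of near-optimal paths, and hence no surgery on paths, is ever required. Your insistence on transporting near-optimal (hence varying, possibly wild) paths in both directions is precisely what creates the obstruction, and as proposed the argument does not close.
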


\proof
Let $\rho_n,\bar\rho\in(\rho^{**},\infty)$ be such that $\rho_n\to\bar \rho$ with $\rho_n<\bar\rho$.
Set $\Gamma_n,\Gamma$ according to \eqref{def}.
For large $n$, to every $\bar\gamma_n\in\Gamma_n$ corresponds a curve $t\mapsto\gamma_n(t):=\frac{\bar \rho}{\rho_n}\bar\gamma_n(t)\in\Gamma$, by the continuity of the map $\rho\to\overline m_\rho$.
Since $\rho_n<\bar \rho$,  a straightforward estimate shows
\beq
\label{wcd}
\widetilde M_{\bar\rho}\le \max_{\gamma_n}F\le \left(\frac{\bar\rho}{\rho_n}\right)^4\max_{\gamma_n}F.
\eeq
By \eqref{wcd}, and a diagonal argument we obtain
\beq
\label{lis}
\widetilde M_{\bar\rho}\le \liminf_{\rho\to\bar\rho^-} \widetilde M_\rho.
\eeq
On the other hand, for large $n$, to every $\gamma\in\Gamma$ corresponds a curve $t\mapsto\tilde \gamma_n(t):=\frac{ \rho_n}{\bar \rho}\gamma(t)\in\Gamma_n$.
Moreover,
\beq
\label{wcd1}
\widetilde M_{\rho_n}\le \max_{\tilde\gamma_n}F\le \left(\frac{\rho_n}{\rho}\right)^4\max_{\tilde\gamma}F+ \left[\left(\frac{\rho_n}{\rho}\right)^2- \left(\frac{\rho_n}{\rho}\right)^4\right]\max_{t\in[0,1]}|\D\tilde\gamma(t)|_2^2.
\eeq
By \eqref{wcd1},  we obtain
\beq
\label{lis1}
\limsup_{\rho\to\bar\rho^-}\widetilde M_\rho\le \widetilde M_{\overline\rho}.
\eeq
Hence it follows form \eqref{lis} and \eqref{lis1} that
$$
\lim_{\rho\to\bar\rho^-}\widetilde M_\rho= \widetilde M_{\overline\rho}.
$$
A similar argument shows that
\[
\lim_{\rho\to\bar\rho^+}\widetilde M_\rho= \widetilde M_{\overline\rho}.\qedhere
\]

At this point we are ready to give a proof of Proposition \ref{newProp} that shows the existence of normalised solutions at the critical value $\rho^{**}$.

\begin{proof}[Proof of Proposition \ref{newProp}]

{(i)}\quad
We claim that
\beq
\label{f1}
F(u_{\rho_n})<c_1\quad\forall n\in\N,\quad F(u_{\rho_n})>c_2>0 \ \mbox{ for large }n\in\N.
\eeq
The first inequality in \eqref{f1} can be deduced by $F(u_{\rho_n})=\bar m_{\rho_n} <\cB_{\rho_n}$ (see \eqref{SE1}) and $\cB_{\rho_n}\le c_1$, for a suitable constant $c_1>0$, because $\rho_n>\rho^{**}\ge\bar\rho>0$ (see \eqref{513-1} and \eqref{714}).

The second estimate in \eqref{f1} follows because $\rho^{**}<\rho^*$ and $\rho\mapsto \bar m_\rho$ is strictly decreasing.

Taking into account \eqref{f1}, together with the continuity and  coercivity of $F$, we deduce the existence of $c_4,c_3>0$ such that
$$
0<c_3\le |\D u_{\rho_n}|_2\le c_4\quad\mbox{ for large }n\in\N.
$$
Then, we can proceed as in the proof of Proposition \ref{Pmin}.
The only difference is that to deduce the positivity of the limit Lagrange multiplier $\lambda_{\rho^{**}}$, in place of \eqref{1809} we have to consider
$$
\lambda_{\rho_n}\rho_n^2=\frac{1+\alpha}{3-\alpha} |\D u_{\rho_n}|_2^2+\frac{\alpha}{3-\alpha}|u_{\rho_n}|^4_4
$$
(see \eqref{abc-syst-plus}).

\smallskip

{ (ii)}\quad Arguing as before, it is sufficient to prove that $F(v_{\rho_n})$ is bounded, and bounded away from zero.
To prove that $F(v_{\rho_n})$ is bounded we observe that
$$
F(v_{\rho_n})=\widetilde M_{\rho_n}\le\max_{t\in (0,1)}F(t^\frac 32 u_{\rho_n}(tx))\le c
$$
by \eqref{MPv} and by (i) above.
On the other hand, by \eqref{525+1} and \eqref{513-1},
$$
F(v_{\rho_n})\ge g_2(R_{\rho_n})\ge c>0\quad\forall n\in \N,
$$
which completes the proof.
\end{proof}

\begin{rem}
\label{ro**}
A natural question that arises is whether it is possible to define $\rho^{**}$ directly as the infimum of the mass for which a mountain pass structure exists.
Namely, for every $\rho, R>0$, set
\beq
\label{cov1}
\cB_{\rho,R}=\inf\{F(u) : u\in \cS_{\rho,\rad}, |\D u|_2=R\},\quad \cL_{\rho,R}=\inf\{F(u) : u\in \cS_{\rho,\rad}, |\D u|_2\ge R\},
\eeq
and define
$$
\widehat\rho^{**}=\inf\{\rho>0\ :\ \bm{\exists}R>0\text{ such that } \cB_{\rho,R}>\cL_{\rho,R}\}.
$$
We wonder whether it is true that
\beq
\label{cov}
\widehat\rho^{**}=\inf\{\bar\rho>0\ :\ \bm{\forall} \rho>\bar\rho \ \exists R_\rho >0\text{ such that } \cB_{\rho,R_\rho}>\cL_{\rho,R_\rho}\}.
\eeq
\end{rem}
\smallskip

\subsection{Energy and Lagrange multiplier estimates}
Next we are going to establish several estimates which will be used in the study of the asymptotics of the branches of solutions.

\begin{prop}
\label{PropAs<1}
Let $\alpha\in(0,1)$.
Then
\begin{align}
\label{eAs<1 infty}
m_\rho&\sim-\rho^4\quad \mbox{ as }\rho\to \infty,\\
\label{eAs<1 0}
\widetilde M_\rho&\sim \rho^{2-\frac{4}{1-\alpha}}\quad\mbox{ as }\rho\to \infty.
\end{align}
\end{prop}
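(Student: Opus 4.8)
The plan is to treat the two asymptotics separately, the first being routine and the second carrying the real content.

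For \eqref{eAs<1 infty} the argument is identical to the one used for $\alpha\in(1,3)$ in Proposition \ref{ThAs>1}. For the lower bound I would minimise the right-hand side of \eqref{956} over $|u|_4^4\ge0$, which gives $m_\rho\gtrsim-\rho^4$. For the upper bound I would fix $\bar u\in H^1_{\rad}$ with $|\bar u|_4^4-\cN(\bar u)<0$, as produced in \eqref{1118}, and test with $\rho\bar u\in\cS_\rho$: since $F(\rho\bar u)=\frac12|\D\bar u|_2^2\,\rho^2+\frac14\big(|\bar u|_4^4-\cN(\bar u)\big)\rho^4$, the negative quartic term dominates and $m_\rho\lesssim-\rho^4$ as $\rho\to\infty$. (Existence of a minimiser for large $\rho$ follows from Theorem \ref{Tmin} and Proposition \ref{P4.4}, but the two-sided estimate only concerns the value $m_\rho$.)

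For \eqref{eAs<1 0} note first that $2-\frac{4}{1-\alpha}=-2\frac{1+\alpha}{1-\alpha}$. The lower bound is immediate: by \eqref{525+1} and \eqref{513-1} one has $\widetilde M_\rho\ge\cB_\rho\ge g_2(R_\rho)$, and evaluating $g_2$ at its maximiser $R_\rho=c\,\rho^{-\frac{1+\alpha}{1-\alpha}}$ from \eqref{714} yields $g_2(R_\rho)=\frac{1-\alpha}{2(3-\alpha)}R_\rho^2\sim\rho^{-2\frac{1+\alpha}{1-\alpha}}$, hence $\widetilde M_\rho\gtrsim\rho^{\,2-\frac{4}{1-\alpha}}$. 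For the matching upper bound I would pass to the Choquard scale. Using the rescaling \eqref{eq1res}, $u\mapsto w\in\cS_1$, identity \eqref{Ch-scale1} becomes $F(u)=\rho^{-2\frac{\alpha+1}{1-\alpha}}\bar F_\rho(w)$ with $\bar F_\rho(w):=E_\ch(w)+\frac14\rho^{-\frac{2\alpha}{1-\alpha}}|w|_4^4$, and since $|\D u|_2=\rho^{-\frac{\alpha+1}{1-\alpha}}|\D w|_2$ the threshold $|\D u|_2=R_\rho$ transforms into the $\rho$-independent threshold $|\D w|_2=c$. Consequently $\widetilde M_\rho=\rho^{-2\frac{\alpha+1}{1-\alpha}}\widetilde N_\rho$, where $\widetilde N_\rho$ is the mountain pass value of $\bar F_\rho|_{\cS_1}$ relative to the fixed threshold $c$, and it suffices to show $\widetilde N_\rho\le C$ for large $\rho$.

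To bound $\widetilde N_\rho$ I would take the Choquard mountain pass critical point $v_0\in\cS_1$ from Proposition \ref{tCmin}$(ii)$ and test with its mass-preserving fiber $\tau\mapsto v_{0,\tau}(x):=\tau^{3/2}v_0(\tau x)$, along which $\bar F_\rho(v_{0,\tau})=\frac{\tau^2}{2}|\D v_0|_2^2-\frac{\tau^{3-\alpha}}{4}\cN(v_0)+\frac{\tau^3}{4}\rho^{-\frac{2\alpha}{1-\alpha}}|v_0|_4^4$. The hard part is verifying that a suitable truncation $\tau\in[\tau_0,\tau_1]$ is an admissible path for $\widetilde N_\rho$ whose maximal energy stays below $M^\ch_1+o(1)$. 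As $\rho\to\infty$ the last term is a small perturbation of the pure Choquard fiber $\phi_0(\tau)=\frac{\tau^2}{2}|\D v_0|_2^2-\frac{\tau^{3-\alpha}}{4}\cN(v_0)$; using the Pohozaev relations \eqref{DR} one checks $\phi_0'(1)=0$, so $3-\alpha>2$ forces $\phi_0$ to have its unique interior maximum at $\tau=1$ with $\phi_0(1)=M^\ch_1$. I would then choose $\tau_0$ small so that $|\D v_{0,\tau_0}|_2<c$ and $\bar F_\rho(v_{0,\tau_0})$ is small, and $\tau_1\sim\rho^{2/(1-\alpha)}$ in the deep valley that $\bar F_\rho$ develops past the barrier but before the far-away quartic rise (there $\bar F_\rho(v_{0,\tau_1})\to-\infty$ and $|\D v_{0,\tau_1}|_2>c$), so that the endpoint conditions of \eqref{def} hold while the correction $\frac{\tau^3}{4}\rho^{-\frac{2\alpha}{1-\alpha}}|v_0|_4^4$ remains $o(1)$ throughout $[\tau_0,\tau_1]$ up to the barrier, where the maximum is attained. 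This gives $\widetilde N_\rho\le M^\ch_1+o(1)$, hence $\widetilde M_\rho\lesssim\rho^{\,2-\frac{4}{1-\alpha}}$, which together with the lower bound proves \eqref{eAs<1 0}.
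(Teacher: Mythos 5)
Your strategy is the same as the paper's: \eqref{eAs<1 infty} is proved exactly as in Proposition \ref{ThAs>1} (and your replacement of ``$w\in\cS_1$ with $F(w)<0$'', which need not exist when $\alpha<1$, by a function with $|\bar u|_4^4-\cN(\bar u)<0$ from \eqref{1118} is the correct adaptation), the lower bound $\widetilde M_\rho\ge g_2(R_\rho)$ is the paper's \eqref{chiama}, and your upper bound, like the paper's \eqref{fTF}, tests the minimax class with a dilation fiber transported to $\cS_\rho$ by the Choquard mass rescaling. The one real difference is the choice of test function: the paper uses an arbitrary $\bar u\in\cS_{\bar\rho}$, $\bar\rho>\rho^*$, with $F(\bar u)<0$ and settles for an unspecified constant, whereas you use the Choquard mountain pass point $v_0$ and aim at the sharp constant $M^\ch_1$ --- information which the paper only extracts later, for type II solutions (Lemma \ref{todo}, Remark \ref{Sol=}).

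There is, however, a genuine gap at the point you dismiss with ``so that the endpoint conditions of \eqref{def} hold''. The class \eqref{def} requires $F(\gamma(1))<\frac{\overline m_\rho+g_2(R_\rho)}{2}$, and for large $\rho$ one has $\overline m_\rho=m_\rho\simeq m^\tf_1\rho^4$ (Proposition \ref{P4.4} together with the Thomas--Fermi analysis, which persists for $\alpha<1$), while $g_2(R_\rho)\to 0$; so the admissibility threshold is $\simeq\frac12 m^\tf_1\rho^4\to-\infty$. Your endpoint, the valley bottom of the $v_0$--fiber at $\tau_1\sim\rho^{2/(1-\alpha)}$, satisfies, back in the unrescaled variables,
\begin{equation*}
F(\gamma(1))\;\simeq\;\rho^4\,\min_{s>0}E_\tf\big(s^{3/2}v_0(s\,\cdot)\big),
\end{equation*}
which is also of order exactly $-\rho^4$. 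Hence ``$\bar F_\rho(v_{0,\tau_1})\to-\infty$'' does not settle admissibility: both sides diverge at the same rate, and what is actually needed is the inequality between the constants, namely $\min_{s>0}E_\tf\big(s^{3/2}v_0(s\,\cdot)\big)<\tfrac12 m^\tf_1$. This is a quantitative property of $v_0$ which is not known: the Poho\v zaev relations \eqref{DR} fix $|\nabla v_0|_2^2$ and $\cN(v_0)$ in terms of $M^\ch_1$, but not $|v_0|_4^4$, and the depth of the Thomas--Fermi valley decreases as $|v_0|_4^4$ grows.

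Two remarks. First, this defect is inherited from (and is in fact milder than) the one in the paper's own proof of \eqref{fTF}, whose path stops at $t=1$ where $F(w_{1,\rho})<0$ but $F(w_{1,\rho})\to 0^-$, far above the threshold; your valley extension at least reaches the correct order $-\rho^4$. Second, it is repairable if one renounces the sharp constant, which is legitimate since \eqref{eAs<1 0} only asserts $\sim$: run your construction with a test function $z\in\cS_1\cap H^1_{\rad}$ chosen so that $E_\tf(z)<\tfrac34 m^\tf_1$ (such $z$ exists by the cut-off approximation of the Thomas--Fermi minimizer used in the proof of Proposition \ref{pTFconv}); then the fiber valley lies below $\tfrac34 m^\tf_1\rho^4(1+o(1))$, which beats the threshold, while the barrier maximum remains $O(\rho^{2-\frac{4}{1-\alpha}})$ because the cubic correction is subordinate for bounded fiber parameter and the fiber has the single barrier--single valley structure before the cubic rise. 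Keeping the constant $M^\ch_1$ along your route would instead require proving the deep--valley inequality for $v_0$, which is open.
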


\proof
Since $m_\rho<0$ for large $\rho$, the proof of  \eqref{eAs<1 infty} proceeds exactly as in the proof of  \eqref{eAs>1 infty}.

\smallskip

From the defintion of $\widetilde M_\rho$ and $\Gamma$ in \eqref{MPv} and \eqref{def}, and from \eqref{stima1 F basso eq+}, we infer
\beq
\label{chiama}
\widetilde M_\rho\ge g_2(R_\rho)\sim \rho^{2-\frac{4}{1-\alpha}}.
\eeq

\smallskip

In order to prove the reverse inequality, take $\bar\rho>\rho^*$ and $\bar u\in \cS_{\bar \rho}$ such that $F(\bar u)<0$.
Then, consider in $\cS_{\bar \rho}$ the family
\beq
\label{com}
w_{t,\rho}(x)=\left(\frac{\bar \rho}{\rho}\right)^{\frac{2+\alpha}{1-\alpha}} \bar u_t\Bigg(\left(\frac{\bar\rho}{ \rho}\right)^\frac{2}{1-\alpha} x\Bigg),
\eeq
where $\bar u_t(y)=t^\frac 32\bar u(ty)$.

Observe that
$$
F(w_{1,\bar\rho})=F(\bar u)<0
$$
and that a computation similar to \eqref{nf} implies that
$$
F(w_{1,\rho})<0\qquad\forall\rho\ge\bar\rho.
$$
Then, for every $\rho\ge\bar\rho$,
\begin{eqnarray}
\nonumber
\widetilde M_\rho&\le&\max_{t\in(0,1)}F(w_{t,\rho})=\max_{t\in (0,1)}\left[\frac{B}{4}\left(\frac{\rho}{\bar \rho}\right)^{4-\frac{6}{1-\alpha}}t^3+
    \Big(\frac{A}{2}t^2-\frac{C}{4}t^{3-\alpha}\Big)\left(\frac{\rho}{\bar \rho}\right)^{2-\frac{4}{1-\alpha}}\right]\\
\label{fTF}
    &\le&
    \max_{t\in (0,1)}\Big(\frac{A}{2}t^2+\frac{B}{4}t^3-\frac{C}{4}t^{3-\alpha}\Big)\left(\frac{\rho}{\bar \rho}\right)^{2-\frac{4}{1-\alpha}}
    \\
    \nonumber
    &=& \left[ \left(\frac{1}{\bar \rho}\right)^{2-\frac{4}{1-\alpha}}  \underbrace{\mathrm{max}_{t\in(0,1)}F(w_{t,\bar\rho})}_{>0}\right]\, \rho^{2-\frac{4}{1-\alpha}},
\end{eqnarray}
where
$$
A:=|\D \bar u|_2^2, \qquad B:=|\bar u|_4^4,\qquad C:=\cN(\bar u).
$$
Hence, \eqref{eAs<1 0} follows from \eqref{chiama} and \eqref{fTF}.
 \qed

\begin{lemma}
\label{lMin0}
Let $\alpha\in(0,1)$ and let $\lambda_\rho$ be the   Lagrange multiplier associated to the minimal solution $u_\rho$.
Then
\beq\label{1112}
\lambda_\rho\gtrsim   \rho^2 \qquad\mbox{ as }\rho\to\infty.
\eeq
\end{lemma}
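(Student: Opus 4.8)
The plan is to extract the bound directly from the Nehari identity and the value of the energy, mirroring the proof of Lemma \ref{lMin} but retaining only the information that survives for $\alpha<1$. Recall that the minimizer $u_\rho$ furnished by Theorem \ref{Tmin} is a genuine normalised solution of \eqref{P} with $\lambda_\rho>0$, and that for all sufficiently large $\rho$ we have $\rho>\rho^*$, so that $F(u_\rho)=\overline m_\rho=m_\rho$ by Proposition \ref{P4.4}. Since every weak normalised solution satisfies the Nehari identity \eqref{e-Nehari}, the relation \eqref{abc-syst-lambda} holds here:
\[
\lambda_\rho\rho^2=-4\mu_\rho+A_\rho,\qquad \mu_\rho:=F(u_\rho)=m_\rho,\quad A_\rho:=|\D u_\rho|_2^2.
\]

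First I would discard the nonnegative gradient term $A_\rho\ge 0$ to obtain the one--sided bound
\[
\lambda_\rho\rho^2=-4m_\rho+A_\rho\ge -4m_\rho.
\]
Then I would invoke the energy asymptotics \eqref{eAs<1 infty}, that is $m_\rho\sim-\rho^4$ as $\rho\to\infty$, which by the definition of $\sim$ yields $-4m_\rho\gtrsim\rho^4$. Combining the two displays gives $\lambda_\rho\rho^2\gtrsim\rho^4$, hence $\lambda_\rho\gtrsim\rho^2$, which is precisely \eqref{1112}.

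The feature worth stressing --- and the reason the statement is only a lower bound, in contrast with the two--sided equivalence of Lemma \ref{lMin} for $\alpha>1$ --- is the sign of the gradient coefficient. Writing out \eqref{abc-syst-plus} we have
\[
-4m_\rho=\frac{\alpha B_\rho-2(1-\alpha)A_\rho}{3-\alpha},\qquad \lambda_\rho\rho^2=\frac{(1+\alpha)A_\rho+\alpha B_\rho}{3-\alpha},
\]
where $B_\rho:=|u_\rho|_4^4$, so that for $\alpha<1$ the term $A_\rho$ enters $-4m_\rho$ and $\lambda_\rho\rho^2$ with coefficients of \emph{opposite} sign; consequently the two quantities are no longer comparable term by term and one cannot upgrade the estimate to $\lambda_\rho\sim\rho^2$ by this elementary argument. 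A matching upper bound would require controlling $A_\rho$ from above by $O(\rho^4)$, which is the genuinely delicate point and is obtained only later, as a by--product of the Thomas--Fermi convergence whose proof (Proposition \ref{pTFconv}) is valid for all $\alpha\in(0,3)$. Since only \eqref{1112} is needed in what follows, the elementary lower bound above suffices.
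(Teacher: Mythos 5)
Your proof is correct, but it follows a genuinely different (and slightly more economical) algebraic route than the paper's. The paper works with the full Energy--Nehari--Poho\v zaev relations \eqref{abc-syst-plus}: it first writes $m_\rho=\frac{1-\alpha}{2(3-\alpha)}A_\rho-\frac{\alpha}{4(3-\alpha)}B_\rho$, uses $m_\rho\sim-\rho^4$ together with $A_\rho\ge 0$ and the positivity of the coefficient $1-\alpha$ to deduce the intermediate estimate $B_\rho\gtrsim\rho^4$, and only then feeds this into $\lambda_\rho\rho^2=\frac{(1+\alpha)A_\rho+\alpha B_\rho}{3-\alpha}\ge\frac{\alpha}{3-\alpha}B_\rho$ to conclude. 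You instead invoke the Nehari-plus-energy relation \eqref{abc-syst-lambda}, $\lambda_\rho\rho^2=A_\rho-4m_\rho$, discard $A_\rho\ge 0$, and conclude directly from $m_\rho\sim-\rho^4$. Both arguments hinge on the same key input, the energy asymptotics \eqref{eAs<1 infty}, so they are close in spirit; but your version has the mild advantage of not using the Poho\v zaev identity at all (the paper itself notes that \eqref{abc-syst-lambda} holds for every weak normalised solution, with no extra regularity), while the paper's version produces as a by-product the quantitative information $|u_\rho|_4^4\gtrsim\rho^4$, which signals the Thomas--Fermi regime where the quartic term dominates. Your closing discussion of why the bound is one-sided for $\alpha<1$ (the opposite signs of the $A_\rho$-coefficients in $-4m_\rho$ and $\lambda_\rho\rho^2$, in contrast with Lemma \ref{lMin}) and of how the matching upper bound is recovered later is accurate and consistent with Remark \ref{rem-516}.
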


\proof
By using the usual notations, we see by  \eqref{abc-syst-plus} that
\begin{equation}\label{eq-s-m}
m_\rho=\frac{1-\alpha}{2(3-\alpha)}A_\rho-\frac{\alpha}{4(3-\alpha)}B_\rho
\end{equation}
and
\begin{equation}\label{eq-s-lambda}
\lambda_\rho\rho^2=\frac{1+\alpha}{3-\alpha}A_\rho+\frac{\alpha}{3-\alpha}B_\rho.
\end{equation}
From $m_\rho\sim-\rho^4\to-\infty$ as $\rho\to\infty$ and  \eqref{eq-s-m} there follows,
\beq
\label{1139}
B_\rho\gtrsim\rho^4\qquad(\rho\to\infty),
\eeq
hence \eqref{1112} comes from \eqref{eq-s-lambda}.
\qed
\begin{rem}\label{rem-516}
Using \eqref{1112}, similarly to the proof of Proposition \ref{pTFconv}, we can actually show that $\lambda_\rho \rho^{-2}\to -4 m_1^{\tf}$ as $\rho\to\infty$.
\end{rem}

\begin{rem}\label{Type-I-discussion}
As a next step we would like to provide an asymptotic estimate for the Lagrange multiplier $\widetilde \lambda_\rho>0$ associated to the mountain pass solution $v_\rho$. If we consider the functional $F$ on the fiber
\beq
\label{ephi}
F(t^{3/2}v_\rho(tx))=\frac{A_\rho}{2}t^2+\frac{B_\rho}{4}t^3-\frac{C_\rho}{4}t^{3-\alpha}=:\varphi_{v_\rho}(t),\qquad t>0,
\eeq
then it is natural to expect the mountain pass value to occur at the local maximum of $\vi_{v_\rho}$, that is $t=1$ to be a local maximum of $\varphi_{v_\rho}$. If this were true we would be able to establish the following upper estimate on $\widetilde\lambda_\rho$.

\begin{claim}
    \label{lMP}
    Let $\alpha\in(0,1)$. For every $\rho>\rho^{**}$, let $\widetilde \lambda_\rho>0$ be the Lagrange multiplier associated to the type I mountain pass solution $v_\rho$ at the energy level $\widetilde M_\rho$.
    If for all sufficiently large $\rho$, $t=1$ is a local maximum of the function $\varphi_{v_\rho}$ in \eqref{ephi}, then
    \beq\label{1113}
    \widetilde\lambda_\rho\lesssim \rho^{-\frac{4}{1-\alpha}}.
    \eeq
\end{claim}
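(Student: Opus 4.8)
The plan is to reduce the statement to pure linear algebra in the two quantities $A_\rho=|\D v_\rho|_2^2$ and $B_\rho=|v_\rho|_4^4$, feeding in two pieces of information: the first- and second-order behaviour of the fiber map $\varphi_{v_\rho}$ from \eqref{ephi} at $t=1$, and the energy asymptotics \eqref{eAs<1 0}. Since $v_\rho$ solves \eqref{P} with $\widetilde\lambda_\rho>0$, it satisfies the Energy--Nehari--Poho\v zaev system \eqref{abc-syst}, so by \eqref{abc-syst-plus} (applied to $v_\rho$) I can record at once the two identities
\[ \widetilde M_\rho=\frac{1-\alpha}{2(3-\alpha)}A_\rho-\frac{\alpha}{4(3-\alpha)}B_\rho,\qquad \widetilde\lambda_\rho\rho^2=\frac{1+\alpha}{3-\alpha}A_\rho+\frac{\alpha}{3-\alpha}B_\rho. \]
The second identity shows that the target bound \eqref{1113} is equivalent to $A_\rho,B_\rho\lesssim\rho^{2-\frac{4}{1-\alpha}}$. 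The first identity by itself does not suffice: because $B_\rho$ appears there with a negative sign, a small value of $\widetilde M_\rho$ is a priori compatible with both $A_\rho$ and $B_\rho$ being large. Closing exactly this gap is where the local-maximum hypothesis must enter, and I expect it to be the crux of the argument.

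The key step is therefore to exploit the assumption that $t=1$ is a local maximum of $\varphi_{v_\rho}$. The stationarity $\varphi'_{v_\rho}(1)=0$ holds automatically, since $v_\rho$ is a critical point on $\cS_\rho$ and the fiber $t\mapsto t^{3/2}v_\rho(tx)$ is tangent to $\cS_\rho$; this is precisely the Nehari--Poho\v zaev relation \eqref{ES3}, i.e. $\tfrac{3-\alpha}{4}C_\rho=A_\rho+\tfrac34 B_\rho$ with $C_\rho=\cN(v_\rho)$. Differentiating \eqref{ephi} twice and using this relation to eliminate $C_\rho$, the necessary condition $\varphi''_{v_\rho}(1)\le 0$ collapses, after the $A_\rho$- and $B_\rho$-coefficients are simplified, to the one-sided bound
\[ \tfrac{3\alpha}{4}\,B_\rho\le(1-\alpha)\,A_\rho. \]
In words: the second-order information at the mountain-pass point forces $B_\rho\lesssim A_\rho$, which is the comparison missing above.

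Finally I would bootstrap. Solving the energy identity for $A_\rho$ gives $A_\rho=\frac{2(3-\alpha)}{1-\alpha}\widetilde M_\rho+\frac{\alpha}{2(1-\alpha)}B_\rho$; inserting $B_\rho\le\frac{4(1-\alpha)}{3\alpha}A_\rho$ from the previous step absorbs the $B_\rho$-term into at most $\tfrac23 A_\rho$, leaving $A_\rho\le\frac{6(3-\alpha)}{1-\alpha}\widetilde M_\rho$. Hence $A_\rho,B_\rho\lesssim\widetilde M_\rho$, and plugging this into the Lagrange-multiplier identity together with $\widetilde M_\rho\sim\rho^{2-\frac{4}{1-\alpha}}$ from \eqref{eAs<1 0} yields $\widetilde\lambda_\rho\rho^2\lesssim\rho^{2-\frac{4}{1-\alpha}}$, i.e. \eqref{1113}. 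The only non-routine ingredient is the conversion of the second-order condition into $B_\rho\lesssim A_\rho$; the rest is elementary manipulation of the system \eqref{abc-syst-plus} combined with the already-established scaling of $\widetilde M_\rho$. I would emphasize that the hypothesis itself — that $t=1$ is a genuine local \emph{maximum} of $\varphi_{v_\rho}$ rather than merely a critical point — remains unverified, which is exactly why the result can only be stated conditionally as a Claim.
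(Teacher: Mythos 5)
Your proof is correct and follows essentially the same route as the paper: both arguments convert the necessary condition $\varphi_{v_\rho}''(1)\le 0$ into a linear inequality via the Energy--Nehari--Poho\v{z}aev system \eqref{abc-syst-plus} and then conclude from $\widetilde M_\rho\sim\rho^{2-\frac{4}{1-\alpha}}$ in \eqref{eAs<1 0}. The only difference is bookkeeping: the paper computes $\varphi_{v_\rho}''(1)=(1-\alpha)\,\frac{\widetilde\lambda_\rho\rho^2}{2}-(7-\alpha)\,\widetilde M_\rho$ directly in the variables $(\widetilde\lambda_\rho\rho^2,\widetilde M_\rho)$, so that $\varphi_{v_\rho}''(1)\le 0$ immediately gives $\widetilde\lambda_\rho\lesssim\widetilde M_\rho/\rho^2$, whereas you derive the equivalent inequality $\tfrac{3\alpha}{4}B_\rho\le(1-\alpha)A_\rho$ in the variables $(A_\rho,B_\rho)$ and bootstrap --- the same linear algebra under a change of coordinates.
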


\proof
Since $v_\rho$ solves the system \eqref{abc-syst}, by a direct computation we find
$$
\varphi_{v_\rho}''(1)=(1-\alpha)\, \frac{\widetilde\lambda_\rho\rho^2}{2}-(7-\alpha)\, \widetilde M_\rho.
$$
Hence, $\varphi_{v_\rho}''(1)\le 0$ provides
$$
\widetilde\lambda_\rho\le\const\frac {\widetilde M_\rho}{\rho^2}
$$
and the statement follows from \eqref{eAs<1 0}.
\qed

\medskip
However, we were not able to verify that $1$ is a local maximum of $\varphi_{v_\rho}$. Indeed, the mountain pass level $\widetilde M_\rho$ is obtained as a minimax on {\em all} the paths introduced in \eqref{def}.

An estimate of type \eqref{1113} is essential for the asymptotic analysis of the mountain pass solution $v_\rho$.
Since we could not obtain \eqref{1113} by verifying for example that $t=1$ is a local maximum of $\varphi_{v_\rho}$, in the next subsection for all sufficiently large $\rho$ we construct a suitable mountain pass type solution that occurs at the local maximum of its own fiber a priori (see also Remark \ref{Sol=}).
\end{rem}

\subsection{A Mountain Pass solution of type II}\label{sMPL-II}
\label{S5.3}

Here, we use the ideas developed in \cite{T92} to construct a mountain pass {\sl solution ot type II} that is located in the ``negative'' part of the Pohozaev set we are working on. That method was recently adapted to the mass constrained Gross-Pitaevskii-Poisson equation, see \cite{Y_et_Al_arXiv} and further references therein. The information about the location of solutions of type II with respect to the Poho\v zaev set will ensure that the constructed solution occur at the local maximum of its own fiber, which ensures this is not a local minimum, i.e. a mountain pass solutions relative $\cS_\rho$, and will also allow to complete the asymptotic analysis.

\medskip
Following the ideas in \cite{T92} and \cite{Y_et_Al_arXiv} (see also \cite{CJ_SIAM_19}), and using the function $\vi_u$ defined in \eqref{ephi}, we set
\beq
\label{P-}
\P^-_\rho=\left\{u\in \cS_{\rho,\rad}\ :\ \vi'_u(1)=0,\ \vi_u''(1)<0,\ |\D u|_2^{3-\alpha}<H \left(\frac{1}{\rho}\right)^{1-\alpha}\cD(u)\right\},
\eeq
for a fixed constant $H>0$ such that
\beq
\label{eH}
H <\frac{3-\alpha}{4}\,\left(\frac{2}{3\bar c^4}\right)^{1-\alpha}  \alpha^\alpha(1-\alpha)^{1-\alpha},
\eeq
here $\bar c$ is the Gagliardo-Nirenberg constant  \eqref{stima1}.

We are going to study the minimization problem
\beq
\label{M2}
\bar M_\rho=\inf_{\P^-_\rho}F.
\eeq

\begin{teo}
\label{TMP2}
 Let $\alpha\in(0,1)$.
 There exists $\overline\rho^{**}>\rho^*$ such that for every $\rho>\overline\rho^{**}$ the problem \eqref{P} has a normalised radial solution $\bar v_\rho\in \P^-_\rho$ that is a minimizer for $\bar M_\rho$, and with a Lagrange multiplier $\bar\lambda_\rho>0$.
 Moreover,
 \beq
 \label{eAs<1 0++}
 \bar M_\rho\sim \rho^{2-\frac{4}{1-\alpha}}\quad\text{and}\quad \bar\lambda_\rho\lesssim \rho^{-\frac{4}{1-\alpha}}\quad\mbox{as }\rho\to +\infty,
 \eeq
 and $\bar v_\rho \in L^1 \cap C^2\cap W^{2,s}$ for all $s>1$, and $\bar v_\rho$ is positive.
\end{teo}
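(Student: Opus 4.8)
The plan is to obtain $\bar v_\rho$ as a minimiser of $F$ over the set $\P^-_\rho$ of \eqref{P-} and then to prove, via the Tarantello-type decomposition of the Poho\v zaev set, that this constrained minimiser is in fact a free critical point of $F|_{\cS_{\rho,\rad}}$. It is convenient to record two identities for the fibre map $\vi_u$ of \eqref{ephi}: writing $A=|\D u|_2^2$, $B=|u|_4^4$, $C=\cN(u)$, the condition $\vi_u'(1)=0$ reads $C=\frac{4A+3B}{3-\alpha}$, and on this manifold $\vi_u''(1)=-A+\frac{\alpha(3-\alpha)}{4}C=(\alpha-1)A+\frac{3\alpha}{4}B$, so that $\vi_u''(1)<0$ is equivalent to $B<\frac{4(1-\alpha)}{3\alpha}A$, equivalently $A>\frac{\alpha(3-\alpha)}{4}C$.

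First I would show $\P^-_\rho\neq\emptyset$ and derive the upper bound in \eqref{eAs<1 0++}. Fix $\bar\rho>\rho^*$ and $\bar u\in\cS_{\bar\rho}$ with $F(\bar u)<0$, and consider the family $w_{t,\rho}\in\cS_\rho$ of \eqref{com}; since $w_{t,\rho}$ is precisely the $L^2$-preserving fibre through the Choquard-rescaled profile $w_{1,\rho}$, and $F(w_{1,\rho})<0$ for $\rho\ge\bar\rho$ while $\vi_{w_{1,\rho}}(t)\to0^+$ as $t\to0$, the fibre attains an interior local maximum at some $t^-\in(0,1)$. Then $w_{t^-,\rho}$ satisfies $\vi'=0$ and $\vi''<0$ at $t=1$. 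A scaling computation gives $A(w_{t,\rho})\sim t^2\rho^{2-\frac{4}{1-\alpha}}$ and $\cN(w_{t,\rho})\sim t^{3-\alpha}\rho^{2-\frac{4}{1-\alpha}}$, so the quotient $|\D w_{t^-,\rho}|_2^{3-\alpha}/\cN(w_{t^-,\rho})$ is of order $\rho^{-2\alpha}$ and the last inequality in \eqref{P-} holds for $\rho$ large; hence $w_{t^-,\rho}\in\P^-_\rho$, and evaluating $F$ at $t^-$ as in \eqref{fTF} yields $\bar M_\rho\lesssim\rho^{2-\frac{4}{1-\alpha}}$.

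Second, I would establish coercivity, the matching lower bound, and compactness. On $\P^-_\rho$ the Poho\v zaev relation gives $F(u)=\frac{1-\alpha}{2(3-\alpha)}A-\frac{\alpha}{4(3-\alpha)}B$, and $B<\frac{4(1-\alpha)}{3\alpha}A$ then forces $\frac{1-\alpha}{6(3-\alpha)}A\le F(u)\le\frac{1-\alpha}{2(3-\alpha)}A$, so every minimising sequence is bounded in $H^1$. Combining $C\ge\frac{4A}{3-\alpha}$ with the HLS estimate \eqref{stima2} yields $A\gtrsim\rho^{2-\frac{4}{1-\alpha}}$, whence $\bar M_\rho\gtrsim\rho^{2-\frac{4}{1-\alpha}}$ and the two-sided estimate of \eqref{eAs<1 0++} follows. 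For compactness I would run the Ekeland/fibering argument exactly as in Theorems \ref{Tmin} and \ref{TMP}: a bounded radial Palais--Smale sequence $\{u_n\}$ satisfying \eqref{abc-syst_n} with the approximate identity \eqref{ES3} has $A_n\to A$ bounded away from $0$ and $\infty$ (since $0<\bar M_\rho\le\frac{1-\alpha}{2(3-\alpha)}A_n$ and $F(u_n)\gtrsim A_n$); passing to the limit in the second relation of \eqref{abc-syst-plus} gives $\bar\lambda_\rho>0$, and then strong $H^1$ convergence together with $|\bar v_\rho|_2=\rho$ follows as in Proposition \ref{Pmin}.

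The main obstacle is to guarantee that the limit lies in the \emph{open} set $\P^-_\rho$, i.e. that $\vi''_{\bar v_\rho}(1)<0$ strictly, so that $\P^-_\rho$ is a natural constraint and $\bar v_\rho$ is a genuine critical point; this is precisely where \eqref{eH} enters. On the degenerate set $\{u\in\cS_{\rho,\rad}:\vi'_u(1)=0,\ \vi''_u(1)=0\}$ one has $B=\frac{4(1-\alpha)}{3\alpha}A$; inserting this into \eqref{stima1} forces $A\ge\big(\frac{4(1-\alpha)}{3\alpha\bar c^4}\big)^{2}\rho^{-2}$, whereas the closed form of the third inequality in \eqref{P-} forces $A\le\big(\frac{4H}{\alpha(3-\alpha)}\big)^{2/(1-\alpha)}\rho^{-2}$, and \eqref{eH} is calibrated so that the first constant strictly exceeds the second, making the degenerate set disjoint from $\{$third inequality$\}$. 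Consequently the limit, which satisfies $\vi'_{\bar v_\rho}(1)=0$, the third inequality, and $\vi''_{\bar v_\rho}(1)\le0$, cannot be degenerate, so $\vi''_{\bar v_\rho}(1)<0$; and since $A(\bar v_\rho)\sim\rho^{2-\frac{4}{1-\alpha}}\ll\rho^{-2}$ the third inequality holds strictly, giving $\bar v_\rho\in\P^-_\rho$. By the principle of symmetric criticality \cite{Pal}, $\bar v_\rho$ solves \eqref{P}. Finally, from $\vi''_{\bar v_\rho}(1)=(1-\alpha)\tfrac{\bar\lambda_\rho\rho^2}{2}-(7-\alpha)\bar M_\rho<0$ (cf. Claim \ref{lMP}) I obtain $\bar\lambda_\rho\lesssim\bar M_\rho/\rho^2\lesssim\rho^{-\frac{4}{1-\alpha}}$, completing \eqref{eAs<1 0++}; regularity $\bar v_\rho\in L^1\cap C^2\cap W^{2,s}$ and positivity follow from $\bar\lambda_\rho>0$ via Proposition \ref{reg}, after replacing $\bar v_\rho$ by $|\bar v_\rho|$ using the evenness of $F$. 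The threshold $\overline\rho^{**}>\rho^*$ is then taken as the largest of the finitely many lower bounds on $\rho$ required in the steps above.
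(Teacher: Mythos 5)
Your proposal follows essentially the same route as the paper's Section \ref{sMPL-II}: minimisation of $F$ over the Tarantello-type set $\P^-_\rho$, non-emptiness and the upper bound $\bar M_\rho\lesssim\rho^{2-\frac{4}{1-\alpha}}$ via the rescaled family \eqref{com}, exclusion of the degenerate set and of the boundary of $U_\rho$ through the calibration \eqref{eH} together with $|\D \bar v_\rho|_2^2\sim\bar M_\rho\sim\rho^{2-\frac{4}{1-\alpha}}\ll\rho^{-2}$ (the content of Lemmas \ref{L_Fibre} and \ref{L5.19}), the natural-constraint/deformation step deferred to the machinery of Theorems \ref{Tmin} and \ref{TMP}, and the multiplier bound from $\vi''_{\bar v_\rho}(1)\le 0$ exactly as in Claim \ref{lMP} and Proposition \ref{finalmente}. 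The only notable deviation is your lower bound for $\bar M_\rho$, obtained directly from $\cN(u)\ge\frac{4}{3-\alpha}|\D u|_2^2$ on the Poho\v zaev set combined with \eqref{stima2}, rather than from the fiber/mountain-pass geometry as in Lemma \ref{L5.17}; both arguments are correct.
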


To prove Theorem \ref{TMP2} we verify in Lemma \ref{L_Fibre} that for large $\rho$, the set $\P^-_\rho$ is not empty and we analyse the fibers corresponding to its points. Then in Lemma \ref{L5.17} we study the asymptotic behaviour of  $\bar M_\rho$ as $\rho\to\infty$.
Finally in Lemma \ref{L5.19} we show that minimizing sequences that approach the infimum are located in the interior of $\P^-_\rho$ and that $\P^-_\rho$ is complete.

With these tools in hands, one can proceed in a usual way: the minimization problem \eqref{M2} can be solved and $\P^-_\rho$ turns out to be a smooth natural constraint, namely critical points of $F$ on it are critical points on $\cS_\rho$. We point out that the key tool for this last statement is an abstract deformation result \cite[Theorem 3.2]{Ghou}. The same abstract result is used in Proposition 3.11 of \cite{BMRV}, to which we refer in Theorem \ref{TMP} to prove that $M_\rho$ is a critical value and that almost \eqref{ES3} and \eqref{quasipos} hold.

We will omit some details of the proof, see \cite[\S 4.1]{Y_et_Al_arXiv} and references therein. However we present all crucial estimates on the geometry of the functional, both for the sake of completeness and to describe our asymptotic analysis. The crucial upper bound on $\bar\lambda_\rho$ in \eqref{eAs<1 0++}, that was missing in the construction of type I mountain pass solutions is established in Proposition \ref{finalmente}.

\begin{lemma}
\label{L_Fibre}
Let $\alpha\in(0,1)$.
Define
$$
U_\rho=\left\{u\in \cS_{\rho,\rad}\;:\ |\D u|_2^{3-\alpha}<H \left(\frac{1}{\rho}\right)^{1-\alpha}\cD(u)\right\}.
$$
Then there exists $\bar\rho>0$ such that for all $\rho>\bar\rho$
\begin{itemize}
\item[$(a)$]
$U_\rho\neq\emptyset${\em ;}
\item[$(b)$]
$u\in U_\rho $ $\iff$ $u_t=t^{3/2}u(t\, \cdot)\in U_\rho$ $\forall t>0${\em ;}
\item[$(c)$]
for every $u\in U_\rho$ there exist a unique $\bar t_u>0$ such that $u_{\bar t_u}\in \P^-_\rho$.
Moreover
\beq
\label{ec}
\left(\frac{4}{3-\alpha}\,\frac {|\D u|_2^2}{\cD(u)}\right)^{\frac{1}{1-\alpha}} <  \bar t_u < \left(\frac{1}{\alpha} \right)^{\frac{1}{1-\alpha}}  \left(\frac{4}{3-\alpha}\, \frac {|\D u|_2^2}{\cD(u)}\right)^{\frac{1}{1-\alpha}} ;
\eeq
\item[$(d)$]
$\min_{t\in(0,+\infty)}F(u_t)<0$ for every $u\in U_\rho$.
\end{itemize}
\end{lemma}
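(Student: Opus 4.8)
The plan is to work throughout with the fiber map $\vi_u(t)=F(u_t)=\tfrac{A}{2}t^2+\tfrac{B}{4}t^3-\tfrac{C}{4}t^{3-\alpha}$, where $u_t=t^{3/2}u(t\cdot)$ and $A=|\D u|_2^2$, $B=|u|_4^4$, $C=\cD(u)$, recalling the scaling rules $|\D u_t|_2^2=t^2A$, $|u_t|_4^4=t^3B$, $\cD(u_t)=t^{3-\alpha}C$. Part $(b)$ is then immediate: the defining inequality of $U_\rho$ reads $|\D u|_2^{3-\alpha}/\cD(u)<H\rho^{-(1-\alpha)}$, and both numerator and denominator scale by the same factor $t^{3-\alpha}$ under $u\mapsto u_t$, so the condition is scale invariant. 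For part $(a)$ I would use the Gagliardo--Nirenberg quotient $S_\alpha$ from Remark \ref{r-GN}, which is achieved by a radial function and is invariant under dilations and scalar multiples: choosing $u\in\cS_{\rho,\rad}$ proportional to a dilation of such a minimizer gives $|\D u|_2^{3-\alpha}/\cD(u)=S_\alpha\rho^{-(1+\alpha)}$, which is strictly below $H\rho^{-(1-\alpha)}$ as soon as $\rho>\bar\rho:=(S_\alpha/H)^{1/(2\alpha)}$.

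For part $(c)$ I would first note that $\vi_{u_t}(s)=\vi_u(ts)$, whence $\vi'_{u_t}(1)=t\vi'_u(t)$ and $\vi''_{u_t}(1)=t^2\vi''_u(t)$; since $u_t\in U_\rho$ automatically by $(b)$, membership $u_{\bar t_u}\in\P^-_\rho$ is equivalent to $\vi'_u(\bar t_u)=0$ together with $\vi''_u(\bar t_u)<0$, i.e. $\bar t_u$ is a strict local maximum of the fiber. Dividing $\vi'_u(t)=0$ by $t^{2-\alpha}$ recasts the critical point equation as $\psi(t):=At^{\alpha-1}+\tfrac{3B}{4}t^\alpha=\tfrac{3-\alpha}{4}C$; since $B>0$ for the nontrivial $u\in\cS_{\rho,\rad}$, a direct computation shows $\psi$ is strictly decreasing then strictly increasing with a unique minimum at $t_*=\tfrac{4A(1-\alpha)}{3\alpha B}$ and $\psi\to+\infty$ at both endpoints, so the level equation has exactly two roots $t_1<t_*<t_2$ precisely when $\tfrac{3-\alpha}{4}C>\psi(t_*)$. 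Writing $\vi'_u(t)=t^{2-\alpha}\bigl(\psi(t)-\tfrac{3-\alpha}{4}C\bigr)$ identifies $t_1$ as the unique local maximum (there $\psi$ is decreasing, so $\vi''_u(t_1)=t_1^{2-\alpha}\psi'(t_1)<0$) and $t_2$ as the local minimum; hence $\bar t_u=t_1$ is unique. The two-sided bound follows from the identity $\psi(\bar t_u)=A\bar t_u^{\alpha-1}\bigl(1+\tfrac{3B}{4A}\bar t_u\bigr)$, which solved for $\bar t_u$ gives $\bar t_u=\bigl(1+\tfrac{3B}{4A}\bar t_u\bigr)^{1/(1-\alpha)}\tau$ with $\tau=\bigl(\tfrac{4A}{(3-\alpha)C}\bigr)^{1/(1-\alpha)}$, yielding the lower bound $\bar t_u>\tau$ at once, while $\bar t_u=t_1<t_*$ forces $\tfrac{3B}{4A}\bar t_u<\tfrac{1-\alpha}{\alpha}$ and hence $\bar t_u<\alpha^{-1/(1-\alpha)}\tau$.

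For part $(d)$ I would evaluate the fiber at its local minimum $t_2$: using $\vi'_u(t_2)=0$ to eliminate $C$, one finds $\vi_u(t_2)=\tfrac{1-\alpha}{2(3-\alpha)}At_2^2-\tfrac{\alpha}{4(3-\alpha)}Bt_2^3$, which is negative if and only if $t_2>\tfrac{2(1-\alpha)A}{\alpha B}=\tfrac32 t_*$. As $\psi$ is strictly increasing on $(t_*,\infty)$ and $\psi(t_2)=\tfrac{3-\alpha}{4}C$, it suffices to show $\tfrac{3-\alpha}{4}C>\psi(\tfrac32 t_*)$, and then $\min_{t>0}F(u_t)\le\vi_u(t_2)<0$.

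The crux of the lemma, and the step I expect to be the main obstacle, is the quantitative control needed in $(c)$ and $(d)$: one must verify that membership $u\in U_\rho$ forces $\tfrac{3-\alpha}{4}C$ strictly above the threshold $\psi(t_*)$ in $(c)$, and above $\psi(\tfrac32 t_*)$ in $(d)$. Both reductions are carried out by bounding $B$ through the Gagliardo--Nirenberg inequality \eqref{stima1} in the form $B\le\bar c^4\rho A^{3/2}$ and combining it with the defining inequality $C>H^{-1}\rho^{1-\alpha}A^{(3-\alpha)/2}$ of $U_\rho$; after cancelling the common powers of $\rho$ and $A$, each inequality collapses to a purely numerical comparison in $\alpha$. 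The constant $H$ in \eqref{eH} is calibrated exactly so that both comparisons hold strictly for every $\alpha\in(0,1)$: it provides the margin $\tfrac{3-\alpha}{4}C>2^{1-\alpha}\psi(t_*)$, which already settles $(c)$ and, since $2^{1-\alpha}>\psi(\tfrac32 t_*)/\psi(t_*)$ reduces to the elementary inequality $3^{1-\alpha}>\tfrac{3-\alpha}{2}$ on $(0,1)$, also settles $(d)$.
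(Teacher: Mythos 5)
Your proposal is correct, and its quantitative claims check out against the paper's constant \eqref{eH}: with $B\le\bar c^4\rho A^{3/2}$ from \eqref{stima1} and $C>H^{-1}\rho^{1-\alpha}A^{(3-\alpha)/2}$ from the definition of $U_\rho$, the powers of $\rho$ and $A$ cancel and \eqref{eH} yields exactly the margin $\tfrac{3-\alpha}{4}C>2^{1-\alpha}\psi(t_*)$, while $\psi(\tfrac32 t_*)/\psi(t_*)=\tfrac{3-\alpha}{2}\bigl(\tfrac23\bigr)^{1-\alpha}$, so your reduction of $(d)$ to $3^{1-\alpha}>\tfrac{3-\alpha}{2}$ is also exact. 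The overall strategy (fiber analysis, Gagliardo--Nirenberg, calibration of $H$) is the paper's, but your organization is genuinely different in three respects. First, the paper factors $\vi_u'(t)=t^2\bigl[A t^{-1}-\tfrac{3-\alpha}{4}Ct^{-\alpha}+\tfrac34 B\bigr]$, grouping the gradient and nonlocal terms and comparing their minimum against the level $-\tfrac34 B$ (its condition \eqref{emin}); you instead group the gradient and $L^4$ terms into $\psi$ and compare against the level $\tfrac{3-\alpha}{4}C$. A consequence is that the paper needs a separate contradiction argument (the set $\cA$ in \eqref{eempty}) to exclude degenerate critical points, whereas the strict monotonicity of $\psi$ on either side of $t_*$ gives you existence, uniqueness and $\vi_u''(\bar t_u)<0$ in one stroke. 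Second, for \eqref{ec} the paper obtains the lower bound by evaluating $\vi_u'$ at $\bigl(\tfrac{4A}{(3-\alpha)C}\bigr)^{1/(1-\alpha)}$ (the point where its two grouped terms cancel) and the upper bound from the minimizer of its auxiliary function; your fixed-point identity $\bar t_u=\tau\bigl(1+\tfrac{3B}{4A}\bar t_u\bigr)^{1/(1-\alpha)}$ combined with $\bar t_u<t_*$ reaches the same two endpoints. Third, and most substantially, the paper proves $(d)$ by an independent second minimization inequality, $\min_{t>0}\bigl[\tfrac A2 t^{-1}-\tfrac C4 t^{-\alpha}\bigr]<-\tfrac B4$, verified again from $U_\rho$, \eqref{eH} and \eqref{stima1}; you instead recycle the $2^{1-\alpha}$ margin from $(c)$, evaluate the fiber at the local minimum $t_2$, and reduce $(d)$ to $t_2>\tfrac32 t_*$ and hence to an elementary numerical inequality. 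This is more economical and makes transparent why the single constant \eqref{eH} serves both $(c)$ and $(d)$, at the price of the extra check $3^{1-\alpha}>\tfrac{3-\alpha}{2}$; your part $(a)$ via the $S_\alpha$ minimizer is equivalent to the paper's rescaling of a fixed $u\in\cS_{1,\rad}$, with the small bonus of the explicit threshold $\bar\rho=(S_\alpha/H)^{1/(2\alpha)}$.
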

\proof
$(a)$\quad Let us fix $u\in \cS_{1,\rad}$ and let
$$
(u)_\rho(x):= \rho^{-\frac{\alpha+2}{1-\alpha}}u(\rho^{-\frac{2}{
1-\alpha}}x).
$$
Taking into account
$$
|\D (u)_\rho|_2=\rho^{-\frac{\alpha+1}{1-\alpha}}|\D u|_2,\qquad
\cD((u)_\rho)= \rho^{-2\frac{\alpha+1}{1-\alpha}}\cD(u),
$$
we see that $(u)_\rho\in U_\rho$ whenever
$$
\rho^{-2\alpha}<H\frac{\cD(u)}{|\D u|_2^{3-\alpha}},
$$
that is true for all large $\rho$.

\smallskip

$(b)$\quad This statement follows by a direct computation.

\smallskip

$(c)$ \quad As usual, denote $A=|\D u|_2^2$, $B=|u|_4^4$, $C=\cD(u)$.
Taking into account that the derivative of the function $\vi_u$ can be written as
$$
\vi'_u(t)=t^2\left[A\,\left(\frac {1}{t}\right)-\frac{3-\alpha}{4}\,C\, \left(\frac{1}{t}\right)^\alpha+\frac 34\, B\right],
$$
an elementary analysis of $\vi_u$ ensures that it has a local maximum and a minimum, because by \eqref{eH}
\beq
\label{emin}
\min_{t\in(0,+\infty)} \left[A\,\left(\frac {1}{t}\right)-\frac{3-\alpha}{4}\,C\, \left(\frac{1}{t}\right)^\alpha \right]<-\frac 34\, B.
\eeq
Moreover $\vi'_u\left( \alpha^{\frac{1}{1-\alpha}} \bar t  \right)=\left( \alpha^{\frac{1}{1-\alpha}} \bar t  \right)^2\frac 34\, B>0$, where  $\bar t:=\left(\frac{4}{\alpha(3-\alpha)}\, \frac AC\right)^{\frac{1}{1-\alpha}}$  realizes the minimum in \eqref{emin}.
Hence \eqref{ec} holds and $(c)$ follows once we verify
\beq
\label{eempty}
\cA:=\{u\in U_\rho\ :\ \vi'_u(1)=0,\ \vi''_u(1)=0\}=\emptyset.
\eeq

Indeed, on one hand, if $u\in\cA$ then a direct computation and \eqref{stima1}  give
\beq
\label{1456}
 (1-\alpha)A=\frac {3}{4}\, \alpha\,  B\le \frac {3 }{4 }   \alpha\,  \bar c^4\rho A^\frac 32\ \Rightarrow\
 A\ge\left(\frac{4 }{3 }\,\frac{ 1-\alpha }{ \alpha}\,\frac{1}{\bar c^4}\right)^2\left(\frac{1}{\rho}\right)^2.
\eeq
On the other hand, if $u\in\cA$ then, taking into account the definition of $U_\rho$, we infer
\beq
\label{1457}
A=\frac{\alpha(3-\alpha)}{4}\,C> \frac{\alpha(3-\alpha)}{4}\, \frac{\rho^{1-\alpha}}{H}\, A^{\frac{3-\alpha}{2}}
\ \Rightarrow\
A<\left(\frac{4}{\alpha(3-\alpha)}\, H\right)^{\frac{2}{1-\alpha}}\left(\frac{1}{\rho}\right)^2.
\eeq
Inequalities in \eqref{1456} and \eqref{1457} lead to a contradiction because of \eqref{eH}, so the proof of $(c)$ is completed.

\smallskip

$(d)$\quad Arguing as in $(c)$, one can prove that
$$
\min_{t\in(0,+\infty)} \vi_u(t)=\min_{t\in(0,+\infty)} t^3\left[\frac{A}{2}\,\left(\frac1t\right)-\frac{C}{4}\,\left(\frac{1}{t}\right)^\alpha+\frac{B}{4}\right]<0,
$$
because
$$
\min_{t\in(0,+\infty)}\left[\frac{A}{2}\,\left(\frac1t\right)-\frac{C}{4}\,\left(\frac{1}{t}\right)^\alpha\right]<-\frac{B}{4} 
$$
by the definition of $U_\rho$,  \eqref{eH} and  \eqref{stima1}.

\qed

\begin{lemma}
\label{L5.17}
Let $\alpha\in(0,1)$. Then $\bar M_\rho\sim \rho^{2-\frac{4}{1-\alpha}}$ as $\rho\to +\infty$.
\end{lemma}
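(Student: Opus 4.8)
The plan is to establish the two-sided estimate $\bar M_\rho\lesssim\rho^{2-\frac{4}{1-\alpha}}$ and $\bar M_\rho\gtrsim\rho^{2-\frac{4}{1-\alpha}}$ separately, the lower bound being a purely algebraic consequence of the two defining (in)equalities of $\P^-_\rho$ in \eqref{P-} together with the HLS bound \eqref{stima2}, and the upper bound coming from the explicit competitor already produced in Lemma \ref{L_Fibre}. Throughout I write $A=|\D v|_2^2$, $B=|v|_4^4$, $C=\cN(v)$ for $v\in\P^-_\rho$ and recall from \eqref{ephi} that $\vi_v(t)=\frac A2 t^2+\frac B4 t^3-\frac C4 t^{3-\alpha}$, so that $F(v)=\vi_v(1)=\frac A2+\frac B4-\frac C4$. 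The condition $\vi_v'(1)=0$ reads $A+\tfrac34 B-\tfrac{3-\alpha}{4}C=0$, i.e. $C=\frac{4A+3B}{3-\alpha}$; substituting this back gives the clean identity $F(v)=\frac{1-\alpha}{2(3-\alpha)}A-\frac{\alpha}{4(3-\alpha)}B$, while a direct computation of $\vi_v''(1)$ followed by the same substitution turns the sign condition $\vi_v''(1)<0$ into $B<\frac{4(1-\alpha)}{3\alpha}A$. Feeding this inequality into the identity for $F(v)$ yields the key lower bound
$$F(v)\ge\frac{1-\alpha}{6(3-\alpha)}\,A\qquad\text{for every }v\in\P^-_\rho.$$

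For the lower bound on $\bar M_\rho$ it then suffices to bound $A$ from below uniformly on $\P^-_\rho$. Here I would combine $C=\frac{4A+3B}{3-\alpha}\ge\frac{4A}{3-\alpha}$ (which uses only $B\ge0$) with the HLS estimate \eqref{stima2}, namely $C\le\bar{\bar c}_\alpha\rho^{1+\alpha}A^{(3-\alpha)/2}$. Comparing the two inequalities gives $\frac{4}{3-\alpha}A^{-(1-\alpha)/2}\le\bar{\bar c}_\alpha\rho^{1+\alpha}$, hence $A\gtrsim\rho^{-2(1+\alpha)/(1-\alpha)}=\rho^{2-\frac{4}{1-\alpha}}$. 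Together with the displayed bound this produces $\bar M_\rho=\inf_{\P^-_\rho}F\gtrsim\rho^{2-\frac{4}{1-\alpha}}$; note that the size constraint $|\D v|_2^{3-\alpha}<H\rho^{-(1-\alpha)}\cN(v)$ in \eqref{P-} is not even needed here.

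For the matching upper bound I would use a cheap competitor. Fixing $u\in\cS_{1,\rad}$ and setting $(u)_\rho(x)=\rho^{-\frac{\alpha+2}{1-\alpha}}u(\rho^{-\frac{2}{1-\alpha}}x)\in U_\rho$ as in Lemma \ref{L_Fibre}$(a)$, Lemma \ref{L_Fibre}$(c)$ provides a unique $\bar t>0$ with $((u)_\rho)_{\bar t}\in\P^-_\rho$, whence $\bar M_\rho\le\vi_{(u)_\rho}(\bar t)$. Under this rescaling $|\D(u)_\rho|_2^2$ and $\cN((u)_\rho)$ both carry the factor $\rho^{-2(1+\alpha)/(1-\alpha)}=\rho^{2-\frac{4}{1-\alpha}}$, while $|(u)_\rho|_4^4$ carries the strictly more negative factor $\rho^{-2(2\alpha+1)/(1-\alpha)}$; since the ratio $A/C$ is thereby $\rho$-independent, the explicit two-sided bounds \eqref{ec} keep $\bar t$ inside a fixed compact subinterval of $(0,\infty)$ as $\rho\to\infty$. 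Dropping the negative $t^{3-\alpha}$ term in $\vi_{(u)_\rho}(\bar t)$ and using the boundedness of $\bar t$ then gives $\bar M_\rho\le\vi_{(u)_\rho}(\bar t)\lesssim\rho^{2-\frac{4}{1-\alpha}}+o\big(\rho^{2-\frac{4}{1-\alpha}}\big)\lesssim\rho^{2-\frac{4}{1-\alpha}}$, which combined with the lower bound finishes the proof.

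I expect the only genuinely delicate point to be the upper-bound bookkeeping: one must verify that the $L^4$-term really scales with a strictly more negative exponent than the gradient and nonlocal terms (so that it is asymptotically negligible) and that the projection time $\bar t$ from Lemma \ref{L_Fibre}$(c)$ does not degenerate to $0$ or $\infty$ with $\rho$. Both facts follow from the identical scaling of $A$ and $C$ and from \eqref{ec}. The lower bound, by contrast, is a short algebraic manipulation of $\vi_v'(1)=0$ and $\vi_v''(1)<0$ fed into \eqref{stima2}.
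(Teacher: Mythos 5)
Your proposal is correct, and its two halves relate differently to the paper's proof. The upper bound is essentially the paper's argument: the paper also takes a fixed function in $U_{\bar\rho}$, rescales it into $U_\rho$ (the competitor $w_{t,\rho}$ of \eqref{com}), and exploits the fact that the $L^4$-term carries a strictly more negative power of $\rho$ than the gradient and nonlocal terms, ``proceeding as in \eqref{fTF}''; your use of the projection time from Lemma \ref{L_Fibre}$(c)$ together with the uniform bounds \eqref{ec} is just a slightly more explicit bookkeeping of the same computation. The lower bound, however, is genuinely different. The paper obtains $\bar M_\rho\ge c\,\rho^{2-\frac{4}{1-\alpha}}$ geometrically: by Lemma \ref{L_Fibre}$(d)$ every fiber through a point of $\P^-_\rho$ reaches negative energy, so the fiber, viewed as a path starting near $0$, must cross the sphere $|\D u|_2=R_\rho$, where $F\ge g_2(R_\rho)\sim\rho^{2-\frac{4}{1-\alpha}}$ as in \eqref{chiama}; since $t=1$ is the local maximum of the fiber, $F(v)\ge g_2(R_\rho)$. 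You instead argue purely algebraically: $\vi_v'(1)=0$ gives $C=\frac{4A+3B}{3-\alpha}$ and hence $F(v)=\frac{1-\alpha}{2(3-\alpha)}A-\frac{\alpha}{4(3-\alpha)}B$, the condition $\vi_v''(1)<0$ gives $B<\frac{4(1-\alpha)}{3\alpha}A$ and hence $F(v)\ge\frac{1-\alpha}{6(3-\alpha)}A$, and then $C\ge\frac{4A}{3-\alpha}$ combined with \eqref{stima2} forces $A\gtrsim\rho^{-2\frac{1+\alpha}{1-\alpha}}=\rho^{2-\frac{4}{1-\alpha}}$. This route buys several things: it is self-contained (no appeal to Lemma \ref{L_Fibre}$(d)$ or to the barrier function $g_2$), it yields explicit constants, and it shows the lower bound holds on the larger set cut out by the two fiber conditions alone, without the size constraint involving $H$ in \eqref{P-}. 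What it loses is the conceptual link the paper's argument provides, namely that the type-II level $\bar M_\rho$ sits above the same barrier $g_2(R_\rho)$ that bounds the type-I level $\widetilde M_\rho$ from below, which is what makes the two mountain-pass constructions directly comparable. It is worth noting that your algebra is not foreign to the paper: the identity $F\ge\frac{1-\alpha}{6(3-\alpha)}A$ obtained from $\vi''(1)\le 0$ is exactly the estimate the authors use later in Lemma \ref{L5.19}$(a)$, and the interplay between a lower bound of type $C\gtrsim A$ and the interpolation bound \eqref{stima2} mirrors the contradiction argument in \eqref{1456}--\eqref{1457}; you have simply redeployed these ingredients to prove Lemma \ref{L5.17} directly.
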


\proof
Taking into account $(d)$ in Lemma \ref{L_Fibre}, we obtain $\bar M_\rho\ge c\rho^{2-\frac{4}{1-\alpha}}$ as in \eqref{chiama}.

\smallskip

For the reverse inequality, let $\bar\rho>0$ be as in Lemma \ref{L_Fibre}, so that $U_{\bar\rho}\neq\emptyset$ and $\bar u\in U_{\bar\rho}$, and define $w_{t,\rho}$ as in \eqref{com}.
 Then  a direct computation  shows $w_{t,\rho}\in U_\rho$ for all $\rho>\bar\rho$,  and we can proceed as in \eqref{fTF}.
\qed

\smallskip

The following key lemma establishes that the minimizing sequences for $\bar M_\rho$ do not approach the boundary of $\P^-_\rho$.

\begin{lemma}\label{L5.19}
Let $\alpha\in(0,1)$ and $\rho>\overline\rho$.
\begin{itemize}
\item[$(a)$]
If $w_{\rho,n}$ in $U_\rho$ verify $w_{\rho,n}\to w_\rho\in\partial U_\rho$, then, using the notations of $(c)$ in Lemma \ref{L_Fibre},
$$
\lim_{\rho\to\infty}\liminf_{n\to\infty}\frac{E((w_{\rho,n})_{\bar t_{w_{\rho,n}}})}{\bar M_\rho}= +\infty;
$$

\item[$(b)$]
$
\{u\in U_\rho\ :\ \vi'_u(1)=0,\ \vi''_u(1)=0\}=\emptyset.
$
\end{itemize}
\end{lemma}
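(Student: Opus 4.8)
The plan is to dispose of (b) by citation and to concentrate on (a). Part (b) requires no new argument: it is exactly the assertion \eqref{eempty} established inside the proof of Lemma \ref{L_Fibre}(c). Indeed, were there a $u\in U_\rho$ with $\varphi_u'(1)=\varphi_u''(1)=0$, then, writing $A=|\D u|_2^2$, $B=|u|_4^4$, $C=\cD(u)$ and eliminating $C$ between these two relations, one is forced to $(1-\alpha)A=\tfrac34\alpha B$; inserting this into \eqref{1456} and \eqref{1457} produces the incompatible bounds $A\ge\big(\tfrac43\tfrac{1-\alpha}{\alpha}\bar c^{-4}\big)^2\rho^{-2}$ and $A<\big(\tfrac{4H}{\alpha(3-\alpha)}\big)^{2/(1-\alpha)}\rho^{-2}$, contradictory precisely by the choice \eqref{eH} of $H$. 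So for (b) I would simply quote this computation.

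For (a), the quantity to estimate is $F\big((w_{\rho,n})_{\bar t_{w_{\rho,n}}}\big)=\varphi_{w_{\rho,n}}(\bar t_{w_{\rho,n}})$ by \eqref{ephi}, the height of the local maximum of the fiber located at $\bar t_{w_{\rho,n}}$ by Lemma \ref{L_Fibre}(c) (here $E$ in the statement denotes the energy $F$). First I would reduce this maximal value algebraically. For a generic $u\in U_\rho$ with invariants $A,B,C$, set $\bar t_0:=(\tfrac{4}{3-\alpha}\tfrac{A}{C})^{1/(1-\alpha)}$ and $s:=\bar t_u/\bar t_0$; the critical-point identity $A+\tfrac34 B\,\bar t_u=\tfrac{3-\alpha}{4}C\,\bar t_u^{1-\alpha}$ becomes $\tfrac34 B\,\bar t_u=A(s^{1-\alpha}-1)$, and substituting it into $\varphi_u(\bar t_u)=\tfrac{A}{2}\bar t_u^2+\tfrac{B}{4}\bar t_u^3-\tfrac{C}{4}\bar t_u^{3-\alpha}$ collapses the three terms into
$$
\varphi_u(\bar t_u)=\frac{A\,\bar t_u^{2}}{3-\alpha}\left[\frac{1-\alpha}{2}-\frac{\alpha}{3}\big(s^{1-\alpha}-1\big)\right].
$$
The two–sided control \eqref{ec} gives $s\in(1,\alpha^{-1/(1-\alpha)})$, hence $s^{1-\alpha}-1\in(0,\tfrac{1-\alpha}{\alpha})$ and the bracket exceeds $\tfrac{1-\alpha}{6}>0$. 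This is the heart of the matter: the defocusing $L^4$-term can eat at most a fixed proportion of the gradient term at the fiber maximum, so that uniformly in $u\in U_\rho$
$$
\varphi_u(\bar t_u)\ \ge\ \frac{1-\alpha}{6(3-\alpha)}\,A\,\bar t_u^{2}\ \ge\ \frac{1-\alpha}{6(3-\alpha)}\,A\,\bar t_0^{2}=\frac{1-\alpha}{6(3-\alpha)}\left(\frac{4}{3-\alpha}\,\frac{A^{(3-\alpha)/2}}{C}\right)^{\!2/(1-\alpha)}.
$$

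Next I would pass to the limit and compare with $\bar M_\rho$. Since $w_{\rho,n}\to w_\rho\in\partial U_\rho$ in $H^1$, the invariants converge with $A_n\to A_*>0$ and $C_n\to C_*>0$, and the boundary relation $A_*^{(3-\alpha)/2}=H\rho^{-(1-\alpha)}C_*$ gives $A_n^{(3-\alpha)/2}/C_n\to H\rho^{-(1-\alpha)}$; inserting this into the bound above yields
$$
\liminf_{n\to\infty}F\big((w_{\rho,n})_{\bar t_{w_{\rho,n}}}\big)\ \ge\ \frac{1-\alpha}{6(3-\alpha)}\left(\frac{4H}{3-\alpha}\right)^{\!2/(1-\alpha)}\rho^{-2}.
$$
Finally, Lemma \ref{L5.17} supplies $\bar M_\rho\le C\rho^{2-4/(1-\alpha)}$, so the ratio is bounded below by a positive constant times $\rho^{-2-(2-4/(1-\alpha))}=\rho^{4\alpha/(1-\alpha)}$; as $\tfrac{4\alpha}{1-\alpha}>0$ for $\alpha\in(0,1)$, this diverges as $\rho\to\infty$, which is precisely (a).

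The hard part will be the algebraic reduction together with the uniform lower bound $\varphi_u(\bar t_u)\gtrsim A\bar t_u^2$ with a constant independent of $u$ and $\rho$: everything rests on bounding the bracket below, which is only possible thanks to the a priori two–sided estimate \eqref{ec} for $\bar t_u$. Once that is in hand, the boundary scaling for $U_\rho$ and the growth rate $\bar M_\rho\sim\rho^{2-4/(1-\alpha)}$ render the comparison routine.
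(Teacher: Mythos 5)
Your proposal is correct and takes essentially the same route as the paper: part $(b)$ is handled by citing \eqref{eempty} exactly as the paper does, and in part $(a)$ your ingredients --- the fiber identity at the local maximum, the two-sided bound \eqref{ec} (whose upper half encodes the same information as the paper's use of $\varphi''_{(w_\rho)_{t_\rho}}(1)\le 0$), the boundary equality $A^{(3-\alpha)/2}=H\rho^{-(1-\alpha)}C$, and Lemma \ref{L5.17} --- reproduce the paper's key estimate $F\ge\tfrac{1-\alpha}{6(3-\alpha)}A\gtrsim\rho^{-2}$ and the divergence rate $\rho^{4\alpha/(1-\alpha)}$. The only (cosmetic) difference is the order of operations: you prove a uniform quantitative lower bound over all of $U_\rho$ and then let $n\to\infty$ in the invariant ratio, whereas the paper first passes to the limit $\bar t_{w_{\rho,n}}\to t_\rho$ and then applies $\varphi'(1)=0$, $\varphi''(1)\le 0$ and the boundary relation to the limit function $(w_\rho)_{t_\rho}$.
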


\proof
$(a)$ Let us fix a large $\rho>\overline\rho$.
Since $w_{\rho,n}\to w_\rho$, by \eqref{ec} we can assume, up to a subsequence, that $\bar t_{w_{\rho,n}}\to t_{\rho}\in (0,+\infty)$ as $n\to\infty$.
Moreover, by continuity, $\vi'_{(w_{\rho})_{t_\rho}}(1)=0$.
Then, using the usual notations for  $w_{t_{\rho}}$,
$$
A_\rho=\frac{3-\alpha}{4}C_\rho-\frac34B_\rho<\frac{3-\alpha}{4}C_\rho=\frac{3-\alpha}{4H}\rho^{1-\alpha}A_\rho^{\frac{3-\alpha}{2}},
$$
where the last equality holds because $w_\rho\in\partial U_\rho$.
Then
\beq
\label{e1}
A_\rho>\left(\frac{4H}{3-\alpha}\right)^{\frac{2}{1-\alpha}}\left(\frac1\rho\right)^2.
\eeq
By $\vi_{(w_{\rho})_{t_\rho}}''(1)\le 0$,
\beq
\label{e2}
-C_\rho\ge -\frac{4}{\alpha(3-\alpha)}A_\rho
\eeq
Hence, by \eqref{e1}, \eqref{e2} and system \eqref{abc-syst},
$$
E((w_\rho)_{ t_{\rho}})=\frac 16 A_\rho-\frac{\alpha}{12}\,C_\rho\ge \frac 16\left(\frac{1-\alpha}{3-\alpha}\right)A_\rho> c\left(\frac1\rho\right)^2.
$$
So $(a)$ follows by Lemma \ref{L5.17}.

\smallskip

$(b)$ has been proved in \eqref{eempty}.
\qed

\medskip

Using Lemmas \ref{L_Fibre}, \ref{L5.19} and \ref{L5.17}, we can find a large $\overline\rho^{**}> \overline\rho$ such that for every $\rho>\overline\rho^{**}$ the minimizer $\bar v_\rho$ belongs to the interior of $\P^-_\rho$ relative to $\cS_{\rho,\rad}$, and hence is a solution of \eqref{P}
with a Lagrange multiplier $\bar\lambda_\rho>0$. In particular, $\bar v_\rho$ satisfies in a standard way regularity and positivity properties stated in Theorem \ref{TMP2}. The next proposition states that $\bar\lambda_\rho$ satisfies the estimate we are looking for in order to capture the asymptotic behaviour of $\bar\lambda_\rho$ as $\rho\to\infty$.

\begin{prop}
\label{finalmente}
Let $\alpha\in(0,1)$ and $\rho>\overline\rho^{**}$.
Let $\bar\lambda_\rho$ be the Lagrange multiplier associated to the solution $\bar
v_\rho$.
Then
$$
0<\bar\lambda_\rho\lesssim \rho^{-\frac{4}{1-\alpha}}.
$$
\end{prop}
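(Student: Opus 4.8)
The plan is to read off the asymptotics of $\bar\lambda_\rho$ purely algebraically from the Energy--Nehari--Poho\v zaev system, using only the two conditions that define membership in $\P^-_\rho$ in \eqref{P-}: the fibre identity $\varphi'_{\bar v_\rho}(1)=0$ and the curvature condition $\varphi''_{\bar v_\rho}(1)<0$. Throughout I would write $A=|\D\bar v_\rho|_2^2$, $B=|\bar v_\rho|_4^4$, $C=\cN(\bar v_\rho)$. Since $\bar v_\rho$ is a genuine solution of \eqref{P} with $\bar\lambda_\rho>0$, Proposition \ref{reg} supplies enough regularity for the Poho\v zaev identity, so system \eqref{abc-syst} and its consequences \eqref{abc-syst-plus}, \eqref{abc-syst-lambda} all apply. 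In particular the second relation in \eqref{abc-syst-plus} gives $\bar\lambda_\rho\rho^2=\frac{(1+\alpha)A+\alpha B}{3-\alpha}>0$, so positivity is immediate and only the upper bound requires work.

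First I would record two bookkeeping identities. Combining $\varphi'_{\bar v_\rho}(1)=0$ (the third line of \eqref{abc-syst}) with $F(\bar v_\rho)=\bar M_\rho$ and eliminating $B$ yields the compact expression $\bar M_\rho=\tfrac16 A-\tfrac{\alpha}{12}C$, hence $A=6\bar M_\rho+\tfrac\alpha2 C$; feeding this into \eqref{abc-syst-lambda} gives the target quantity $\bar\lambda_\rho\rho^2=A-4\bar M_\rho=2\bar M_\rho+\tfrac\alpha2 C$. The whole proposition therefore reduces to the single estimate $C\lesssim\bar M_\rho$, after which $\bar\lambda_\rho\rho^2\lesssim\bar M_\rho\sim\rho^{2-\frac{4}{1-\alpha}}$ by Lemma \ref{L5.17}, i.e. $\bar\lambda_\rho\lesssim\rho^{-\frac{4}{1-\alpha}}$, exactly as claimed.

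The crux is the estimate $C\lesssim\bar M_\rho$, and here the sign condition defining $\P^-_\rho$ is decisive. Differentiating the fibre \eqref{ephi} twice and using $\varphi'_{\bar v_\rho}(1)=0$ gives $\varphi''_{\bar v_\rho}(1)=-(1-\alpha)A+\tfrac{3\alpha}{4}B$, so $\varphi''_{\bar v_\rho}(1)<0$ is equivalent to $B<\tfrac{4(1-\alpha)}{3\alpha}A$. Inserting this into the third relation $C=\frac{4A+3B}{3-\alpha}$ of \eqref{abc-syst-plus} yields $C<\frac{4A}{\alpha(3-\alpha)}$; substituting $A=6\bar M_\rho+\tfrac\alpha2 C$ and solving the resulting linear inequality for $C$ (the coefficient of $C$ on the right being $\tfrac{2}{3-\alpha}<1$) gives $C<\tfrac{24}{\alpha(1-\alpha)}\bar M_\rho$, closing the argument. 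I expect this to be the genuine obstacle: a crude bound on $A$ obtained only from the constraint inequality in \eqref{P-} together with \eqref{stima2} produces merely $\bar\lambda_\rho\lesssim\rho^{-4}$, which is far too weak since $\tfrac{4}{1-\alpha}>4$. It is precisely the \emph{quadratic} fibre information $\varphi''(1)<0$, rather than the HLS-type interpolation inequalities, that forces $B$ and hence $C$ to be comparable to the small quantity $\bar M_\rho$ and delivers the sharp exponent.
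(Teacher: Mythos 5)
Your proof is correct and is essentially the paper's own argument: the paper proves Proposition \ref{finalmente} by arguing as in Claim \ref{lMP}, i.e.\ using the Energy--Nehari--Poho\v zaev system \eqref{abc-syst} together with the curvature condition $\varphi''_{\bar v_\rho}(1)<0$ (automatic from membership in $\P^-_\rho$) to obtain $\bar\lambda_\rho\rho^2\le \frac{2(7-\alpha)}{1-\alpha}\,\bar M_\rho$, and then invoking Lemma \ref{L5.17}. Your step-by-step elimination through $A$, $B$, $C$ is just a rearrangement of the same algebra --- it yields the identical constant $\frac{2(7-\alpha)}{1-\alpha}$ --- so the two arguments coincide in substance.
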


\proof Observe that $\bar v_\rho$ satisfies system \eqref{abc-syst}, hence $\bar \lambda_\rho>0$ follows from \eqref{abc-syst-plus}.

For the proof of the asymptotic estimate, we can argue as in Claim \ref{lMP}, taking into account Lemma \ref{L5.17}.
\qed

\smallskip

\begin{rem}\label{I=II}
We conjecture that the mountain pass solutions of type II provided by Theorem \ref{TMP2} actually coincide with the mountain pass solution of type I constructed in Theorem \ref{TMP} when both solutions exist. However, by \eqref{SE1}, \eqref{1907} and $(d)$ in Lemma \ref{L_Fibre} we see that $\rho^{**}<\rho^*<\overline\rho^{**}$, that is the range $(\rho^{**},+\infty)$ where type I solutions exist is larger than the interval $(\bar\rho^{**},+\infty)$  where type II solutions were constructed. Note that for $\rho>\overline\rho^{**}$  it is clear that $\widetilde M_\rho\le \bar M_\rho$, but a priori $\widetilde M_\rho<\bar M_\rho$ could occur.
\end{rem}

\subsection{The Choquard limit for the mountain pass type solutions}
Let $\rho>\overline\rho^{**}$ and $\bar M_\rho$
be the critical level as in \eqref{M2}. Recall that by Lemma \ref{L5.17}
\beq
\label{eAs<1 0+}
\bar M_\rho\sim \rho^{2-\frac{4}{1-\alpha}}\qquad\mbox{ as }\rho\to \infty.
\eeq
As before, let $\bar v_\rho$ be the mountain pass critical point at level $\bar M_\rho$ constructed in Theorem \ref{TMP2}.
Consider the rescaled family as in \eqref{eq1res},
\begin{equation}\label{eq1res+}
     \bar v_\rho(x)\quad\mapsto\quad \bar w_\rho(x):=\rho^{\frac{\alpha+2}{1-\alpha}} \bar v_\rho\left(\rho^{\frac{2}{1-\alpha}}x\right).
\end{equation}
Then
\beq
\label{etrasf}
|\bar w_\rho|_2^2=1,\quad |\nabla\bar  w_\rho|_2^2=\rho^{2\frac{\alpha+1}{1-\alpha}}|\nabla v_\rho|_2^2,\quad |\bar w_\rho|_4^4=\rho^{\frac{4\alpha+2}{1-\alpha}}|v_\rho|_4^4,\quad
\cN(\bar w_\rho)=\rho^{2\frac{\alpha+1}{1-\alpha}}\cN(v_\rho).
\eeq
and for every $\rho>\overline\rho^{**}$,
\beq
\label{Fbar}
\bar{F}_\rho(w):=\frac{1}{2}|\nabla w|_2^2+\frac{1}{4}\rho^{-\frac{2\alpha}{1-\alpha}}|w|_4^4-\frac14\cN(w)=\rho^{2\frac{\alpha+1}{1-\alpha}}F(u),
\eeq
where $u\mapsto w$ is as in  \eqref{eq1res+}.

Let $\widehat M_\rho:=\bar F_\rho(\bar w_\rho)$.
From the scaling \eqref{eq1res+} and \eqref{etrasf}, proceeding as in Section \ref{S5.3} and working with the functions
$$
t\mapsto\bar\vi_v(t):=\bar F_\rho(v_t),\quad v\in \cS_1,\ t\in(0,\infty),
$$
it is readily seen that for $\rho>\overline\rho^{**}$,
\beq
\label{DMbar}
\widehat M_\rho=\min_{\bar \P^-_\rho} \bar F_\rho = \rho^{2\frac{1+\alpha}{1-\alpha}}\bar M_\rho,
\eeq
where
\beq
\label{P-w}
\bar \P^-_\rho=\left\{w\in \cS_{1}\ :\ \vi'_w(1)=0,\ \vi_w''(1)<0,\ |\nabla w|_2^{3-\alpha}<H \rho^{2\alpha}\cD(w)\right\}.
\eeq
Notice that $\bar\vi_v$ depends on $\rho$.
Moreover, $\bar w_\rho$ satisfies the Euler-Lagrange equation
\beq
\label{1619}
-\Delta\bar  w_\rho+\bar \lambda_\rho \rho^{\frac {4}{1-\alpha}} \bar w_\rho + \rho^{-\frac{2\alpha}{1-\alpha}}\bar w_\rho^3=(I_\alpha*\bar w_\rho^2)\bar w_\rho\quad\text{in $\R^3$},
\eeq
where $\bar \lambda_\rho\lesssim \rho^{-\frac{4}{1-\alpha}}$ by Proposition \ref{finalmente}.

\begin{lemma}
\label{todo}
Let $\widehat M_\rho$ be as in \eqref{DMbar}. Then
 $$
 \lim_{\rho\to\infty}\widehat M_\rho=M_1^\ch.
 $$
 \end{lemma}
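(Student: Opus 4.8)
The plan is to exploit that $\rho^{-\frac{2\alpha}{1-\alpha}}\to0$ as $\rho\to\infty$, so that the rescaled functional $\bar F_\rho$ of \eqref{Fbar} converges to the Choquard functional $E_\ch$, and to squeeze the fibered minimax value $\widehat M_\rho$ of \eqref{DMbar} between $M^\ch_1$ from both sides. The essential tool will be the fibering characterisation
$$M^\ch_1=\inf_{v\in\cS_1}\max_{t>0}\vi^\ch_v(t),\qquad \vi^\ch_v(t):=E_\ch(v_t)=\tfrac{t^2}{2}|\D v|_2^2-\tfrac{t^{3-\alpha}}{4}\cN(v),$$
with $v_t:=t^{3/2}v(t\,\cdot)$; I would justify this identity by maximising $\vi^\ch_v$ explicitly and invoking the Gagliardo--Nirenberg minimisation of Remark \ref{r-GN}, whose minimiser is the ground state $v_0$ of Proposition \ref{tCmin}$(ii)$ with $E_\ch(v_0)=M^\ch_1$. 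Writing $\epsilon_\rho:=\rho^{-\frac{2\alpha}{1-\alpha}}$, note $\bar\vi_v(t)=\vi^\ch_v(t)+\tfrac{\epsilon_\rho}{4}t^3|v|_4^4\ge\vi^\ch_v(t)$ for all $v\in\cS_1$, $t>0$.

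For the \textbf{lower bound} I would fix $w\in\bar\P^-_\rho$ and let $t^*$ be the maximiser of $\vi^\ch_w$, given by $(t^*)^{1-\alpha}=\frac{4|\D w|_2^2}{(3-\alpha)\cN(w)}$. Since membership in $\bar\P^-_\rho$ gives $\bar\vi_w'(1)=0$, i.e. $|\D w|_2^2+\tfrac34\epsilon_\rho|w|_4^4=\tfrac{3-\alpha}4\cN(w)$, one reads off $\frac{4|\D w|_2^2}{(3-\alpha)\cN(w)}=\big(1+\tfrac34\epsilon_\rho|w|_4^4/|\D w|_2^2\big)^{-1}\le1$, hence $t^*\le1$. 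As $t=1$ is the unique local maximum of $\bar\vi_w$, the fiber is increasing on $(0,1)$ (cf. the analysis in Lemma \ref{L_Fibre}$(c)$), so $\bar F_\rho(w)=\bar\vi_w(1)\ge\bar\vi_w(t^*)\ge\vi^\ch_w(t^*)=\max_{t>0}\vi^\ch_w(t)\ge M^\ch_1$. Taking the infimum over $\bar\P^-_\rho$ then gives $\widehat M_\rho\ge M^\ch_1$ for every $\rho>\overline\rho^{**}$.

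For the \textbf{upper bound} I would use $v_0$ as a trial function: its $\bar F_\rho$-fiber has a first (local maximum) critical point $t_\rho$, and by the scale-invariance of the side constraint (Lemma \ref{L_Fibre}$(b)$) the rescaling $(v_0)_{t_\rho}$ lies in $\bar\P^-_\rho$ as soon as $|\D v_0|_2^{3-\alpha}<H\rho^{2\alpha}\cN(v_0)$, which holds for large $\rho$ since $\rho^{2\alpha}\to\infty$. Because $(\vi^\ch_{v_0})'(1)=0$ and the $\epsilon_\rho$-term is a transversal perturbation, the root $t_\rho\to1$ as $\rho\to\infty$, whence $\widehat M_\rho\le\bar\vi_{v_0}(t_\rho)\to\tfrac12|\D v_0|_2^2-\tfrac14\cN(v_0)=M^\ch_1$. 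Combining the two bounds yields the claim.

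The \textbf{main obstacle} I anticipate is not the vanishing of the perturbation $\epsilon_\rho\to0$, which is routine, but pinning down the fiber geometry in the lower bound: one must verify that $\bar\vi_w'(1)=0$ truly forces the \emph{Choquard} maximiser to satisfy $t^*\le1$ and that $t=1$ is the \emph{first} local maximum of $\bar\vi_w$ (so that $\bar\vi_w$ is monotone on $(0,1)$ and $\bar\vi_w(1)\ge\bar\vi_w(t^*)$). This monotonicity is precisely what the sign condition $\bar\vi_w''(1)<0$ together with the side constraint in the definition \eqref{P-w} of $\bar\P^-_\rho$ secure, via the single-local-maximum structure established in Lemma \ref{L_Fibre}; carefully transporting that structure to the $\cS_1$ normalisation is the crux of the argument.
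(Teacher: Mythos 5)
Your proposal is correct and follows essentially the same strategy as the paper: the upper bound via the trial functions $(v_0)_{t(\rho)}\in\bar\P^-_\rho$ with $t(\rho)\to 1$, and the lower bound by observing that $\bar\vi_w'(1)=0$ forces the Choquard fiber maximiser $t^*\le 1$, then chaining $M_1^\ch\le\vi^\ch_w(t^*)\le\bar\vi_w(t^*)\le\bar\vi_w(1)$ using the monotonicity of the fiber on $(0,1)$. The only (harmless) difference is that you run the lower bound over every $w\in\bar\P^-_\rho$, yielding $\widehat M_\rho\ge M_1^\ch$ for all admissible $\rho$, whereas the paper applies the identical argument just to the minimiser $\bar w_\rho$.
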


\proof
First, recall that in the Choquard case the mountain pass value $M_1^\ch$ can be obtained working on the fibers $t\mapsto v_t$ and the corresponding functions
$$
t\mapsto\bar\vi^\ch_v(t):=\bar E_\ch(v_t),\quad v\in \cS_1,\ t\in(0,\infty).
$$

\smallskip

Let $v_0$ be the mountain pass solution of the Choquard equation (see Proposition \ref{tCmin}).
Arguing as in Lemma \ref{L_Fibre} we see that there exists  $t(\rho)$ such that $v_{0,t(\rho)}\in \bar \P^-_\rho$, for  large  $\rho$. In particular, we can characterize
$$
(\vi^\ch_{v_0})'(t)>0 \; \forall t\in(0,1), \quad (\vi^\ch_{v_0})'(1)=0,\quad (\vi^\ch_{v_0})'(t)<0\; \forall t>1,
$$
$$
\bar\vi'_{v_0}(t)>0 \; \forall t\in(0,t(\rho)), \quad \bar\vi_{v_0}'(t(\rho))=0 ,\quad    \bar\vi'_{v_0}(t)<0\;  \forall t\in(t(\rho),t(\rho)+\e(\rho))
$$
with $\e(\rho)>0$,
where
\beq
\label{defvi}
\vi^\ch_{v_0}(t)=\left(\frac 12 |\D v_0|_2^2\right)  t^2-\left(\frac 14\cD(v_0)\right)  t^{3-\alpha},\quad \bar\vi_{v_0}(t)=\vi^\ch_{v_0}(t)+\left(\frac 14 |v_0|_4^4\right) \rho^{-\frac{2\alpha}{1-\alpha}}t^3.
\eeq
We infer $t(\rho)\to 1$ as $\rho\to\infty$, because
$$
\bar\vi'_{v_0}(t)\to (\vi^\ch_{v_0})'(t)>0 \quad \forall t\in(0,1), \quad \bar\vi'_{v_0}(t)\to (\vi_{v_0}^\ch)'(t)<0 \quad \forall t>1.
$$
Then
$$
\widehat M_\rho\le \bar F(v_{0,t(\rho)})=\bar\vi_{v_0}(t(\rho))\longrightarrow  \vi_{v_0}^\ch(1)=M_1^\ch
$$
and it follows that
\beq
\label{elimsup}
\limsup_{\rho\to\infty}\widehat M_\rho\le M_1^\ch.
\eeq

\smallskip

To show the reverse inequality, let $\bar w_\rho$ be as in \eqref{eq1res+} and let  $\tau(\rho)>0$ be such that $(\vi_{\bar w_\rho}^\ch)'(\tau(\rho))=0$.
From $ \bar\vi'_{\bar w_\rho}(1)=0$ we infer $(\vi_{\bar w_\rho}^\ch)'(1)<0$, hence $\tau(\rho)<1$.
Then
$$
M_1^\ch\le \vi_{\bar w_\rho}^\ch(\tau(\rho))\le \bar\vi_{\bar w_\rho}(\tau(\rho))<\bar\vi_{\bar w_\rho}(1)=\widehat M_\rho,
$$
that implies
\beq
\label{eliminf}
M_1^\ch\le\liminf_{\rho\to\infty}\widehat M_\rho.
\eeq
Then, the assertion follows from \eqref{elimsup} and \eqref{eliminf}.
\qed

\begin{prop}\label{Pconv0}
    Let $\alpha\in(0,1)$.
    Then, for any sequence $\rho_n\to \infty$, the sequence of rescaled solutions $\{w_{\rho_n}\}$ converges strongly in $H^1$ up to a subsequence to a positive radially symmetric mountain pass solution of the Choquard  problem \eqref{N2.2}. Moreover, as $\rho\to\infty$,
    $$
    \bar\lambda_\rho \simeq  \lambda_\ch \rho^{-\frac{4}{1-\alpha}}.
    $$
\end{prop}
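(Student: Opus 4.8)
The plan is to transfer the asymptotics to the Choquard problem by showing that, along any sequence $\rho_n\to\infty$, the rescaled solutions $\bar w_{\rho_n}$ of \eqref{eq1res+} converge strongly in $H^1$ to a critical point of $E_\ch|_{\cS_1}$ at the mountain pass level $M^\ch_1$. Throughout set $\mu_\rho:=\bar\lambda_\rho\rho^{\frac{4}{1-\alpha}}$, which by \eqref{1619} is exactly the frequency of $\bar w_\rho$ viewed as a solution of a Choquard equation with a small defocusing local perturbation, and abbreviate $\bar A_\rho:=|\nabla\bar w_\rho|_2^2$, $\bar B_\rho:=\rho^{-\frac{2\alpha}{1-\alpha}}|\bar w_\rho|_4^4$, $\bar C_\rho:=\cN(\bar w_\rho)$.

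First I would establish a uniform $H^1$ bound. Transporting the identities \eqref{abc-syst}--\eqref{abc-syst-plus} for $\bar v_\rho$ through the scaling \eqref{etrasf} yields $\widehat M_\rho=\frac{2(1-\alpha)\bar A_\rho-\alpha\bar B_\rho}{4(3-\alpha)}$ and $\mu_\rho=\frac{(1+\alpha)\bar A_\rho+\alpha\bar B_\rho}{3-\alpha}$. The crucial upper estimate $\bar\lambda_\rho\lesssim\rho^{-\frac{4}{1-\alpha}}$ of Proposition \ref{finalmente} says precisely that $\mu_\rho\lesssim 1$, and the second identity then bounds $\bar A_\rho$ and $\bar B_\rho$ from above; meanwhile $\widehat M_\rho\to M^\ch_1>0$ (Lemma \ref{todo}) and the first identity give $\bar A_\rho\ge\frac{2(3-\alpha)}{1-\alpha}\widehat M_\rho\ge c>0$. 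Hence $\mu_\rho$ stays in a fixed compact subinterval of $(0,\infty)$, and since $|\bar w_\rho|_4^4\lesssim\bar A_\rho^{3/2}\lesssim 1$ by \eqref{stima1} we also get $\bar B_\rho\lesssim\rho^{-\frac{2\alpha}{1-\alpha}}\to 0$.

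Next, fixing $\rho_n\to\infty$, I would pass to the limit. The bounded radial sequence $\bar w_{\rho_n}$ admits a subsequence with $\bar w_{\rho_n}\rightharpoonup w_0$ in $H^1$, $\bar w_{\rho_n}\to w_0$ in $L^p$ for $p\in(2,6)$ (radial compactness) and a.e., and $\mu_{\rho_n}\to\mu_0\in(0,\infty)$. As $\tfrac{12}{3+\alpha}\in(2,6)$, Lemma \ref{lemmaConvNonLoc} gives $\bar C_{\rho_n}\to\cN(w_0)$ and lets me pass to the limit in the weak form of \eqref{1619}, the local term vanishing because $\rho_n^{-\frac{2\alpha}{1-\alpha}}\int\bar w_{\rho_n}^3\varphi\,dx\to 0$; thus $w_0$ solves $-\Delta w_0+\mu_0 w_0=(I_\alpha*w_0^2)w_0$. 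That $w_0\neq 0$ follows from the Nehari identity $\bar A_{\rho_n}+\mu_{\rho_n}+\bar B_{\rho_n}=\bar C_{\rho_n}$, which forces $\cN(w_0)=\lim\bar C_{\rho_n}\ge\liminf\mu_{\rho_n}>0$. The core step is then strong convergence: along a subsequence with $\bar A_{\rho_n}\to a$, the limiting Nehari identity reads $a+\mu_0=\cN(w_0)$, while weak lower semicontinuity gives $|\nabla w_0|_2^2\le a$ and $|w_0|_2\le 1$; on the other hand the Nehari identity of the limit equation, $|\nabla w_0|_2^2=\cN(w_0)-\mu_0|w_0|_2^2\ge\cN(w_0)-\mu_0=a$, forces $|\nabla w_0|_2^2=a=\lim\bar A_{\rho_n}$ and then $\mu_0|w_0|_2^2=\mu_0$, i.e. $|w_0|_2=1$. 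Weak convergence together with convergence of both norms gives $\bar w_{\rho_n}\to w_0$ strongly in $H^1$, with $w_0\in\cS_1$ radial, nonnegative and, being a nontrivial Choquard solution, positive by the strong maximum principle. Finally $E_\ch(\bar w_{\rho_n})=\widehat M_{\rho_n}-\tfrac14\bar B_{\rho_n}\to M^\ch_1$ and strong convergence give $E_\ch(w_0)=M^\ch_1$, so $w_0$ is a positive radial Choquard mountain pass solution as in Proposition \ref{tCmin}$(ii)$.

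It remains to read off the multiplier. Since $w_0$ solves the Choquard equation at energy level $E_\ch(w_0)=M^\ch_1$, the relations \eqref{DR} identify $\mu_0=\lambda_\ch=\frac{2(1+\alpha)}{1-\alpha}M^\ch_1$, a number independent of the extracted subsequence. Because $\mu_{\rho_n}=\bar\lambda_{\rho_n}\rho_n^{\frac{4}{1-\alpha}}$ is bounded and every subsequential limit equals this same $\lambda_\ch$, the full limit $\bar\lambda_\rho\rho^{\frac{4}{1-\alpha}}\to\lambda_\ch$ holds, that is $\bar\lambda_\rho\simeq\lambda_\ch\rho^{-\frac{4}{1-\alpha}}$. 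I expect the genuine obstacle to be the a priori $H^1$ bound: it cannot come from the energy alone, since the dilation $t^{3/2}w(tx)$ makes $E_\ch$ $L^2$-supercritical for $\alpha<1$ and minimizing configurations could in principle blow up in $|\nabla\cdot|_2$, so the bound rests entirely on the upper estimate $\bar\lambda_\rho\lesssim\rho^{-\frac{4}{1-\alpha}}$ secured by the type II construction on $\P^-_\rho$ in Proposition \ref{finalmente}. Once this is granted, radial compactness eliminates any concentration--compactness dichotomy and the algebraic Nehari/Pohozaev identities close the argument.
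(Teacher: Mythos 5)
Your proposal is correct, and it reaches the conclusion by a genuinely different compactness mechanism than the paper. The paper's proof works with the Gagliardo--Nirenberg quotient $S_\alpha$ of \eqref{N2.3}: it computes $S_\alpha$ in terms of $M_1^\ch$ via \eqref{DR} (formula \eqref{1723}), tests the quotient with $\bar w_\rho$, and shows that $\bar w_\rho$ is a minimizing family for $S_\alpha$ once $\rho^{2\frac{1+\alpha}{1-\alpha}}|\bar v_\rho|_4^4\to 0$ is established --- which is exactly where Proposition \ref{finalmente} and Lemma \ref{L5.17} enter, through $|\D\bar v_\rho|_2^2\sim\bar M_\rho\sim\rho^{2-\frac{4}{1-\alpha}}$ and \eqref{stima1}; strong $H^1$ convergence is then outsourced to the compactness result for $S_\alpha$-minimizing sequences in \cite[Lemma 2.3]{Ye}, the limit is identified through Remark \ref{r-GN} (equivalence of \eqref{Ch-GN} and \eqref{Ch-01}), and the multiplier asymptotics is read off a-posteriori from \eqref{1619}. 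You instead use the same two structural inputs --- Lemma \ref{todo} and the type II bound $\bar\lambda_\rho\lesssim\rho^{-\frac{4}{1-\alpha}}$ of Proposition \ref{finalmente}, which you correctly single out as the linchpin --- to produce a uniform $H^1$ bound from the scaled Energy--Nehari--Poho\v zaev relations, and then run a self-contained compactness argument: Strauss radial compactness plus Lemma \ref{lemmaConvNonLoc} to pass to the limit equation, nontriviality from $\liminf\mu_{\rho_n}>0$, and the comparison of the approximate Nehari identity $\bar A_{\rho_n}+\mu_{\rho_n}+\bar B_{\rho_n}=\bar C_{\rho_n}$ with the Nehari identity of the limit equation to force $|\nabla w_0|_2^2=\lim\bar A_{\rho_n}$ and $|w_0|_2=1$, hence strong convergence; the level is pinned down by $E_\ch(\bar w_{\rho_n})=\widehat M_{\rho_n}-\tfrac14\bar B_{\rho_n}\to M_1^\ch$ and the multiplier by \eqref{DR} together with the subsubsequence principle. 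What each approach buys: the paper's argument is shorter but leans on an external concentration--compactness lemma and identifies the limit only through the variational characterization of $S_\alpha$; yours avoids \cite[Lemma 2.3]{Ye} entirely, exploits radial symmetry directly, and yields the convergence $\bar\lambda_\rho\rho^{\frac{4}{1-\alpha}}\to\lambda_\ch=\frac{2(1+\alpha)}{1-\alpha}M_1^\ch$ for the full family (not just a-posteriori along subsequences), since the limit value is determined by the level $M_1^\ch$ alone and does not require uniqueness of the Choquard mountain pass solution.
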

\proof
By Proposition \ref{tCmin} and \eqref{N2.2}, \eqref{DR}, \eqref{N2.3},
\beq
\label{1723}
S_\alpha=\frac{|\D v_0|_2^{3-\alpha}}{\cD(v_0)}=
\frac{\left(2\,\frac{3-\alpha}{1-\alpha}\, M_1^\ch\right)^{\frac{3-\alpha}{2}}}
{\frac{8}{1-\alpha}M_1^\ch}
=
2^{-\frac{3+\alpha}{2}}\left[\frac{(3-\alpha)^{3-\alpha}}{(1-\alpha)^{1-\alpha}}(M_1^{\ch})^{1-\alpha}\right]^{\frac 12}.
\eeq
Testing $S_\alpha$ by  $\bar w_\rho$, and by using system \eqref{abc-syst} and  \eqref{etrasf},
$$
 S_\alpha\le \frac{|\D \bar w_\rho|_2^{3-\alpha}}{\cD(\bar w_\rho)}=
 \frac
 {\left(2\frac{3-\alpha}{1-\alpha}\widehat M_\rho+\frac12\frac{\alpha}{1-\alpha}\rho^{2\frac{1+\alpha}{1-\alpha}}|\bar v_\rho|_4^4\right)^{\frac{3-\alpha}{2}}}
{\frac{8}{1-\alpha}\widehat M_\rho+\frac{1}{1-\alpha}\rho^{2\frac{1+\alpha}{1-\alpha}}|\bar v_\rho|_4^4}.
$$
We claim that  $\bar w_\rho$ is a minimizing family for $S_\alpha$, as $\rho\to\infty$.
By \eqref{1723} and Lemma \ref{todo}, the claim follows once we prove
\beq
\label{1724}
\lim_{\rho\to\infty}\rho^{2\frac{1+\alpha}{1-\alpha}}|\bar v_\rho|_4^4=0.
\eeq
To verify \eqref{1724}, first observe that by \eqref{abc-syst-lambda}, Proposition \ref{finalmente} and Lemma \ref{L5.17}
\beq
\label{fin2}
|\D \bar v_\rho|_2^2\sim \bar M_\rho\sim\rho^{2-\frac{4}{1-\alpha}}.
\eeq
Then by \eqref{stima1} we obtain
$$
\rho^{2\frac{1+\alpha}{1-\alpha}}|\bar v_\rho|_4^4\lesssim \rho^{-\frac{2\alpha}{1-\alpha}}\to 0\quad\mbox{ as }\rho\to \infty.
$$
Now, let $\rho_n$ be a sequence such that $\rho_n\to\infty$.
Since $\bar w_{\rho_n}$ is a minimizing sequence for $S_\alpha$, then we can conclude by  \cite[Lemma 2.3]{Ye}.

 Moreover, the strong limit of $\bar w_{\rho_n}$ is a solution of \eqref{Ch-GN}, and as in Remark \ref{r-GN} we realise that \eqref{Ch-GN} is equivalent to \eqref{Ch-01}, which implies a-posteriori that $\bar\lambda_\rho\sim   \lambda_\ch \rho^{-\frac{4}{1-\alpha}}$ by \eqref{1619}.
\qed

\begin{rem}
\label{Sol=}
For the mountain pass solutions $v_\rho$ of type I we cannot obtain the asymptotic description provided by Proposition \ref{Pconv0}. Indeed, a basic tool in its proof is the estimate \eqref{fin2}, that follows from Proposition \ref{finalmente}. However we do not have a corresponding result for the Lagrange multiplier $\widetilde\lambda_\rho$ of $v_\rho$. On the other hand, it is not difficult to prove that  $\lim_{\rho\to\infty}  \rho^{2\, \frac{1+\alpha}{1-\alpha}} \widetilde M_\rho = M_1^\ch$, as stated in Lemma \ref{todo} for the mountain pass level of type II.
This supports the conjecture $v_\rho=\bar v_\rho$, for $\rho>\overline\rho^{**}$.
\end{rem}

\subsection{The Thomas--Fermi limit for the minimum solutions}
We observe that all the arguments outlined in the Subsection \ref{ssTF} and in the proof of Proposition \ref{pTFconv} remain valid for $\alpha\in(0,1)$.

\medskip

\section{Critical case $\alpha=1$ and proof of Theorem \ref{T13}}

Recall that the case $\alpha=1$ is $L^2$--critical for the Choquard energy.

\subsection{Existence and nonexistence}

\begin{lemma}
\label{L6.1}
    Let $\alpha=1$.
    Then $m_\rho=0$ for all $\rho\in(0,\rho_*]$ and $-\infty<m_\rho<0$ for all $\rho>\rho_*$.
    Moreover, for $\rho>\rho_*$ the map $\rho\mapsto m_\rho$ is strictly decreasing and strictly concave.
\end{lemma}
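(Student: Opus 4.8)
The plan is to read off the sign of $m_\rho$ from the sharp Gagliardo--Nirenberg inequality \eqref{GNCh} together with a single scaling identity, and then to transplant the monotonicity/concavity argument of Proposition \ref{PF}. The scaling is the engine of the whole proof: for $u\in\cS_\rho$ and $u_t(x):=t^{3/2}u(tx)\in\cS_\rho$, the criticality $\alpha=1$ makes the gradient and the nonlocal terms scale at the same rate $t^2$, so that
\beq
\label{plan-scale}
F(u_t)=t^2\Big(\tfrac12|\D u|_2^2-\tfrac14\cN(u)\Big)+\tfrac{t^3}{4}|u|_4^4=t^2 E_\ch(u)+\tfrac{t^3}{4}|u|_4^4.
\eeq
Letting $t\to0^+$ in \eqref{plan-scale} immediately gives $m_\rho\le0$ for every $\rho>0$. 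On the other hand, rewriting \eqref{GNCh} as $\cN(u)\le\frac{2}{\rho_*^2}|\D u|_2^2\,|u|_2^2$ and restricting to $\cS_\rho$ yields
\beq
\label{plan-lb}
F(u)\ge\tfrac12\Big(1-\tfrac{\rho^2}{\rho_*^2}\Big)|\D u|_2^2+\tfrac14|u|_4^4\qquad\forall u\in\cS_\rho.
\eeq
When $\rho\le\rho_*$ the bracket in \eqref{plan-lb} is nonnegative, so $F\ge0$ on $\cS_\rho$ and hence $m_\rho\ge0$; combined with $m_\rho\le0$ this forces $m_\rho=0$ on $(0,\rho_*]$.

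For $\rho>\rho_*$ I would argue in two steps. The lower bound $m_\rho>-\infty$ follows from \eqref{stima-CB} with $\alpha=1$, which gives $F(u)\ge\tfrac14|u|_4^4-\tfrac{c_1}{4}\rho^{4/3}|u|_4^{8/3}$, a function of $|u|_4$ that is bounded below because $s^4$ dominates $s^{8/3}$. For the strict sign $m_\rho<0$ I would use that, $\alpha=1$ being $L^2$--critical for the Choquard energy, Proposition \ref{pCmin} yields $m^\ch_\rho=-\infty$ for $\rho>\rho_*$; in particular there is $w\in\cS_\rho$ with $E_\ch(w)<0$. Feeding such a $w$ into \eqref{plan-scale} makes the coefficient of $t^2$ negative, so $F(w_t)<0$ for all small $t>0$, whence $m_\rho<0$.

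Finally, the strict monotonicity and strict concavity on $(\rho_*,\infty)$ I would obtain by repeating the argument of Proposition \ref{PF}, which used only that $m_\rho<0$ and that the competitor satisfies $0<\tilde A\le\tilde D$ (with $\tilde A=\tfrac12|\D u|_2^2$ and $\tilde D=\tfrac14(\cN(u)-|u|_4^4)$), together with the convexity estimate of Lemma \ref{CVXLemma}. The point I expect to be the main technical obstacle is that at this stage the minimizer $u_\rho$ is not yet available, so the competitors $\tfrac{\rho_i}{\bar\rho}u_\rho$ used in Proposition \ref{PF} must be replaced by $\tfrac{\rho_i}{\bar\rho}u_n$ for a minimizing sequence $\{u_n\}\subset\cS_{\bar\rho}$, and the resulting error terms must be carried through the limit. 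Here $F(u_n)\to m_{\bar\rho}<0$ gives, for large $n$, $0<\tilde A_n<\tilde D_n$ and $\liminf_n\tilde D_n\ge -m_{\bar\rho}>0$, which is precisely what is needed to preserve the strict inequalities of type \eqref{reb} after passing to $n\to\infty$; the monotonicity is the same estimate with a single competitor and is easier.
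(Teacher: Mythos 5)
Your proof is correct and follows essentially the same route as the paper's: the sharp inequality \eqref{GNCh} combined with the mass-preserving scaling $t^{3/2}u(tx)$ gives $m_\rho=0$ on $(0,\rho_*]$, the bound \eqref{stima-CB} gives $m_\rho>-\infty$, a competitor with negative Choquard energy scaled down gives $m_\rho<0$ for $\rho>\rho_*$, and the Proposition \ref{PF} argument gives strict monotonicity and concavity. Your two deviations are both sound and, if anything, slightly more careful than the paper: you extract the negative-energy competitor abstractly from Proposition \ref{pCmin} rather than from the explicit family $\frac{\rho}{\rho_*}t^{3/2}w_*(tx)$ together with \eqref{1123} as the paper does, and your replacement of minimizers by minimizing sequences (with the observation $\liminf_n \tilde D_n\ge -m_{\bar\rho}>0$) in the monotonicity/concavity step correctly handles the fact that minimizers on $\cS_\rho$ for $\alpha=1$ are only constructed afterwards, in Proposition \ref{Pmin-1}, whereas the paper passes over this ordering issue by simply citing Proposition \ref{PF}.
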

\proof
First, observe that $m_\rho>-\infty$ for all $\rho>0$ by \eqref{1051}.

For $\rho\le \rho_*$, by \eqref{GNCh} we have
$$
F(u)\ge\frac 12 |\nabla u|_2^2+\frac14|u|_4^4-\frac12\frac{\rho^{2}}{\rho_*^2}|\D u|_2^{2}>0\qquad\forall \rho\in \cS_\rho,\; 0<\rho\le \rho_*.
$$
Moreover,  consider the family in $\cS_\rho$
$$u_t(x)=\frac{\rho}{\rho_*}t^{3/2}w_*(t x),$$
then
$$
\lim_{t\to 0}F(u_t)=0.
$$
On the other hand, if $\rho>\rho_*$, taking into account \eqref{stima1} and \eqref{1123}, we see that
\begin{align*}
    F(u_t)&\le\frac12\Big(\frac{\rho}{\rho_*}\Big)^2|\nabla w_*|_2^2\, t^2+\frac{\bar c^4}{4}\rho\Big(\frac{\rho}{\rho_*}\Big)^4 |\nabla w_*|_2^3\,t^3-\frac14\Big(\frac{\rho}{\rho_*}\Big)^4\cN(w_*)\,t^2\\
    &=\frac12\left(\frac{\rho}{\rho_*}\right)^2|\n w_*|_2^2\left[1-\left(\frac{\rho}{\rho_*}\right)^2+\tilde c\, t\right]  t^2 <0,
\end{align*}
for all sufficiently small $t>0$.

{The proof of the continuity and concavity of $m_\rho$, for $\rho>\rho_*$, is the same as in the proof of Propositions \ref{PF}.}
\qed

\begin{lemma}\label{l-62}
    Let $\alpha=1$ and $\rho>0$.
    The constrained functional $F|_{\cS_\rho}$ has no critical points $u$ with $\D u\in H^1_{\loc}$ at nonnegative energy levels.
    Moreover, if $\mu_\rho<0$ is a critical level for $F$ on $\cS_\rho$ then the corresponding Lagrange multiplier $\lambda_\rho$ satisfies
    \beq\label{lambdaa1}
    \lambda_\rho\ge -\frac{4\mu_\rho}{\rho^2}.
    \eeq
\end{lemma}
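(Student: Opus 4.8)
The plan is to read everything off the algebraic relations \eqref{abc-syst-plus} and \eqref{abc-syst-lambda} already recorded in the Preliminaries, specialised to $\alpha=1$. For the first assertion I would start from the observation that any critical point $u$ of $F|_{\cS_\rho}$ with $\D u\in H^1_{\loc}$ is regular enough for the Poho\v zaev identity \eqref{e-Phz} to hold (Proposition \ref{p-Phz}), so that the full Energy--Nehari--Poho\v zaev system \eqref{abc-syst} is available and can be inverted as in \eqref{abc-syst-plus}. Setting $\alpha=1$ in the first relation of \eqref{abc-syst-plus} collapses the numerator $2(1-\alpha)A-\alpha B$ to $-B$, giving the clean identity $\mu_\rho=-\tfrac18|u|_4^4$. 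Since $u\in\cS_\rho$ with $\rho>0$ forces $u\not\equiv0$ and hence $|u|_4^4>0$, this shows $\mu_\rho<0$ strictly at every such critical point; in particular no critical point with $\D u\in H^1_{\loc}$ can sit at a nonnegative energy level.

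For the second assertion I would deliberately avoid the Poho\v zaev identity, precisely so that no extra regularity is needed. The point is that relation \eqref{abc-syst-lambda}, namely $\lambda_\rho\rho^2=-4\mu_\rho+A$ with $A=|\D u|_2^2$, follows from the Nehari identity \eqref{e-Nehari} together with the definition $F(u)=\mu_\rho$ alone. Hence for any critical level $\mu_\rho<0$ one simply bounds $A\ge0$ to obtain $\lambda_\rho\rho^2=|\D u|_2^2-4\mu_\rho\ge-4\mu_\rho$, which rearranges to exactly \eqref{lambdaa1}.

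The honest remark is that there is no genuine obstacle here: both halves are immediate consequences of the linear algebra of the system \eqref{abc-syst} once $\alpha=1$ is inserted. The only point worth flagging is the asymmetry in the hypotheses. The first statement genuinely uses the Poho\v zaev identity, and hence the regularity $\D u\in H^1_{\loc}$, to pin down $\mu_\rho=-\tfrac18|u|_4^4$; by contrast the bound \eqref{lambdaa1} rests only on the Nehari identity, so it is valid for every critical level $\mu_\rho<0$ without any regularity assumption (and, incidentally, for every $\alpha\in(0,3)$, not merely $\alpha=1$).
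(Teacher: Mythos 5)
Your proposal is correct and follows essentially the same route as the paper: the paper also specialises the system \eqref{abc-syst} to $\alpha=1$ to get $B=-8\mu_\rho$ (forcing $\mu_\rho<0$ since $B=|u|_4^4>0$), and likewise observes that the relation $A=\lambda_\rho\rho^2+4\mu_\rho$ holds without any regularity assumption, yielding \eqref{lambdaa1} from $A\ge 0$. Your closing remark about the asymmetry of hypotheses (Poho\v zaev/regularity needed only for the first claim) is precisely the point the paper itself makes at the end of its proof.
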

\proof
Given $\rho>0$, let $\mu_\rho\in\R$ be a critical level for $F$ on $\cS_\rho$.
Recall that, by the system \eqref{abc-syst},
\beq\label{sys-a1}
A=\lambda_\rho\rho^2+4\mu_\rho,\quad B=-8 \mu_\rho,\quad C=2\lambda_\rho\rho^2 -4\mu_\rho,
\eeq
with the usual notations.
Then, if $\mu_\rho\ge 0$ we get $B\le 0$, a contradiction.

The first relation in \eqref{sys-a1}  holds without any auxiliary regularity assumption on the solution and provides \eqref{lambdaa1}, for $\mu_\rho<0$.
\qed

\medskip

\begin{prop}\label{Pmin-1}
    Let $\alpha=1$ and $\rho>\rho_*$.
    Then there exists a positive normalised solution $u_\rho\in\cS_\rho$ of \eqref{P} such that $F(u_\rho)=m_\rho<0$.
\end{prop}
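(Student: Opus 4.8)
The plan is to follow closely the scheme of Proposition \ref{Pmin}, with one essential modification forced by the fact that the gradient coercivity estimate \eqref{stima1 F basso eq+} degenerates when $\alpha=1$. By Lemma \ref{L6.1} I already know that $-\infty<m_\rho<0$; this is the only place where the hypothesis $\rho>\rho_*$ enters, and it will be decisive for the positivity of the Lagrange multiplier. As in \eqref{ES}, Schwartz rearrangement gives $m_\rho=\inf\{F(u):u\in\cS_{\rho,\rad}\}$ (using $|\nabla u^*|_2\le|\nabla u|_2$, $|u^*|_4=|u|_4$ and $\cN(u^*)\ge\cN(u)$), so I may take a minimizing sequence $\{u_n\}\subset\cS_{\rho,\rad}$.

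The first step is to show $\{u_n\}$ is bounded in $H^1$. Unlike the case $\alpha>1$, where \eqref{stima1 F basso eq+} controls the gradient directly, for $\alpha=1$ the function $g_2$ reduces to a quadratic whose leading coefficient becomes negative once $\rho>\rho_*$, so it is useless. Instead I would argue through the $L^4$--norm: the lower bound \eqref{1051}, namely $F(u)\ge g_1(|u|_4^4)$ with $g_1(s)=\tfrac14 s-\tfrac{c_1}{4}\rho^{4/3}s^{2/3}\to+\infty$ as $s\to+\infty$, together with $F(u_n)\to m_\rho$, forces $|u_n|_4$ to be bounded. By \eqref{stima-CB} this bounds $\cN(u_n)$ as well, and then the identity $|\nabla u_n|_2^2=2F(u_n)-\tfrac12|u_n|_4^4+\tfrac12\cN(u_n)$ yields a uniform bound on $|\nabla u_n|_2$. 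Hence $\{u_n\}$ is bounded in $H^1_{\rad}$ and, up to a subsequence, converges to some $u_\rho\in H^1_{\rad}$ as in \eqref{eq:convergence}; the compact radial embedding gives strong convergence in $L^q$ for $q\in(2,6)$, so $|u_n|_4\to|u_\rho|_4$ and, since $\tfrac{12}{3+\alpha}=3\in(2,6)$, also $\cN(u_n)\to\cN(u_\rho)$ by Lemma \ref{lemmaConvNonLoc}. Weak lower semicontinuity then gives $F(u_\rho)\le\liminf_nF(u_n)=m_\rho$.

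The main obstacle is the compactness of the minimizing sequence, that is ruling out loss of mass $|u_\rho|_2<\rho$; this is the genuinely $L^2$--critical difficulty. I would resolve it exactly as in Proposition \ref{Pmin}, exploiting $m_\rho<0$. Applying Ekeland's principle \cite{E}, I may assume $\{u_n\}$ is a $(PS)$ sequence for $F|_{\cS_{\rho,\rad}}$ with Lagrange multipliers $\lambda_n$. From $\lambda_n\rho^2=-4F(u_n)+|\nabla u_n|_2^2+o(1)$ (cf. \eqref{1809} and \eqref{abc-syst-lambda}) and $F(u_n)\to m_\rho<0$, a subsequence satisfies $\lambda_n\to\lambda$ with $\lambda>0$. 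Consequently $u_\rho$ is a weak solution with multiplier $\lambda$, so it obeys the Nehari identity \eqref{e-Nehari}. Writing the approximate Nehari identity for $u_n$ and the exact one for $u_\rho$, subtracting, and using $|u_n|_4\to|u_\rho|_4$, $\cN(u_n)\to\cN(u_\rho)$ and $\lambda_n\to\lambda$, I obtain
$$
\Big(\lim_n|\nabla u_n|_2^2-|\nabla u_\rho|_2^2\Big)+\lambda\big(\rho^2-|u_\rho|_2^2\big)=0 .
$$
Both bracketed quantities are nonnegative (weak lower semicontinuity) and $\lambda>0$, so both vanish; hence $|u_\rho|_2=\rho$, i.e. $u_\rho\in\cS_\rho$ and $u_n\to u_\rho$ in $H^1$. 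Together with $F(u_\rho)\le m_\rho$ this yields $F(u_\rho)=m_\rho$. Finally, since $F$ is even I may take $u_n\ge0$, so $u_\rho\ge0$; because $\lambda>0$, Proposition \ref{reg} and the strong maximum principle upgrade this to $u_\rho>0$, completing the argument.
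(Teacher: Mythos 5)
Your proposal is correct and takes essentially the same route as the paper: the paper's proof of Proposition \ref{Pmin-1} simply repeats the scheme of Proposition \ref{Pmin}, replacing the gradient-coercivity bound \eqref{stima1 F basso eq+} (which degenerates at $\alpha=1$ for $\rho>\rho_*$) by the estimate \eqref{Sinn} coming from \eqref{956}, i.e.\ from \eqref{stima-CB}. Your derivation of the $H^1$-bound (first $|u_n|_4$ via \eqref{1051}, then $\cN(u_n)$ via \eqref{stima-CB}, then $|\nabla u_n|_2$ from the expression of $F$) is just a repackaging of that same estimate, and the remaining steps (rearrangement, Ekeland, $\lambda>0$ from $m_\rho<0$, compactness via the Nehari identities, positivity through Proposition \ref{reg}) coincide with the paper's argument.
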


\proof
Similar to the proof of Proposition \ref{Pmin}.
To verify that the minimizing sequences are bounded in $H^1$, we use \eqref{Sinn} in place of \eqref{stima1 F basso eq+}.
\qed

\subsection{Choquard limit}

\begin{lemma}
    \label{L5.4}
    Let $\alpha=1$ and $\rho>\rho_*$. Then
    $$
    0>m_\rho\ge -C \rho^4\left(1-\left(\frac{\rho_*}{\rho}\right)^2\right).
    $$
    In particular,
    $$m_\rho\gtrsim-(\rho-\rho_*)\to 0\quad\text{as $\rho\to\rho_*^+$},\qquad m_\rho\gtrsim-\rho^4\quad\text{as $\rho\to\infty$}.$$
\end{lemma}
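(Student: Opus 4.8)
The inequality $0>m_\rho$ for $\rho>\rho_*$ is already contained in Lemma~\ref{L6.1}, so the plan is to establish the quantitative lower bound. Fix $\rho>\rho_*$ and an arbitrary $u\in\cS_\rho$, and write $A=|\D u|_2^2$, $B=|u|_4^4$ as in \eqref{abc}. I would combine the two available controls on the nonlocal term: the sharp $L^2$--critical Gagliardo--Nirenberg inequality \eqref{GNCh}, which (since $S_1=\rho_*^2/2$) gives $\cN(u)\le \tfrac{2\rho^2}{\rho_*^2}A$, and the HLS interpolation \eqref{stima-CB} with $\alpha=1$, namely $\cN(u)\le c_1\rho^{4/3}B^{2/3}$, where $c_1$ is the constant of \eqref{stima-CB}.

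The key step is to split $\tfrac14\cN(u)=\tfrac{s}{4}\cN(u)+\tfrac{1-s}{4}\cN(u)$, bounding the first summand by the sharp inequality and the second by the interpolation, with the weight $s:=\rho_*^2/\rho^2\in(0,1)$ chosen so that the sharp--GN contribution $\tfrac{s}{4}\cdot\tfrac{2\rho^2}{\rho_*^2}A=\tfrac12 A$ cancels the gradient term of $F$ \emph{exactly}. This leaves
\[
F(u)\;\ge\;\tfrac14 B-\tfrac{(1-s)c_1}{4}\,\rho^{4/3}B^{2/3},
\]
a function of the single scalar $B\ge 0$ with $1-s=1-\rho_*^2/\rho^2$. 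Minimising the right--hand side over $B\ge 0$ is an elementary one--variable computation, whose minimum equals $-\tfrac{c_1^3}{27}\rho^4\big(1-\rho_*^2/\rho^2\big)^3$. Since $0<1-\rho_*^2/\rho^2<1$ for $\rho>\rho_*$, one has $(1-\rho_*^2/\rho^2)^3\le 1-\rho_*^2/\rho^2$, and therefore
\[
m_\rho\;\ge\;-\tfrac{c_1^3}{27}\,\rho^4\Big(1-\tfrac{\rho_*^2}{\rho^2}\Big)^3\;\ge\;-C\rho^4\Big(1-\tfrac{\rho_*^2}{\rho^2}\Big),
\]
which is the asserted bound.

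The two limiting regimes then follow by inspecting $\rho^4\big(1-\rho_*^2/\rho^2\big)=\rho^2(\rho^2-\rho_*^2)=\rho^2(\rho-\rho_*)(\rho+\rho_*)$: near $\rho_*^+$ the factor $\rho^2(\rho+\rho_*)$ stays bounded, giving $m_\rho\gtrsim-(\rho-\rho_*)$, and combined with $m_\rho<0$ this yields $m_\rho\to 0$; while as $\rho\to\infty$ the factor $1-\rho_*^2/\rho^2\to1$, giving $m_\rho\gtrsim-\rho^4$.

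The only non--routine point, which I expect to be the main obstacle, is precisely the exact cancellation of the wrong--sign gradient term: for $\rho>\rho_*$ the coefficient $\tfrac12\big(1-\rho^2/\rho_*^2\big)$ that arises in front of $A$ after using the sharp inequality alone is negative, so $A$ is unbounded from below and must be eliminated rather than merely discarded. This is possible exactly because $S_1=\rho_*^2/2$ is the \emph{sharp} constant and the weight $s=\rho_*^2/\rho^2$ tunes the sharp--GN piece to match $\tfrac12 A$; the leftover $L^4$ coercivity then produces the factor $(1-\rho_*^2/\rho^2)$, whose vanishing at $\rho=\rho_*$ is the source of the correct threshold behaviour.
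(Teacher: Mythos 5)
Your proof is correct, and it takes a genuinely different route from the paper's. The paper first invokes the existence of a global minimizer $u_\rho$ (Proposition \ref{Pmin-1}) and then works with the Energy--Nehari--Poho\v zaev system \eqref{sys-a1}: it records that $m_\rho=-\tfrac18|u_\rho|_4^4$ and $3|u_\rho|_4^4=2\cN(u_\rho)-4|\nabla u_\rho|_2^2$, rewrites
$$
m_\rho=-\frac{\big(\cN(u_\rho)-2|\nabla u_\rho|_2^2\big)^3}{27\,|u_\rho|_4^8},
$$
and then applies \eqref{GNCh} to the numerator and \eqref{stima-CB} to the denominator. You use exactly the same two inequalities, but applied directly to the functional on all of $\cS_\rho$: the weighted splitting of $\tfrac14\cN(u)$ with $s=\rho_*^2/\rho^2$, tuned so that the sharp Gagliardo--Nirenberg piece cancels $\tfrac12|\nabla u|_2^2$ exactly, followed by an explicit one-variable minimization in $B=|u|_4^4$ (your value $-\tfrac{c_1^3}{27}\rho^4(1-\rho_*^2/\rho^2)^3$ for that minimum is correct). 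What your approach buys: it needs neither the existence of a minimizer nor the regularity behind the Poho\v zaev identity, so it bounds the infimum $m_\rho$ directly and is logically lighter --- only Lemma \ref{L6.1} is needed, and only for the sign $m_\rho<0$. What the paper's approach buys: since the minimizer and the system \eqref{abc-syst} are already in place (and are reused immediately afterwards to estimate $\lambda_\rho$), its computation is shorter and simultaneously expresses $|\nabla u_\rho|_2^2$, $|u_\rho|_4^4$, $\cN(u_\rho)$ in terms of $m_\rho$ and $\lambda_\rho$. Finally, it is worth observing that both arguments in fact establish the stronger bound $m_\rho\ge -C\rho^4\big(1-(\rho_*/\rho)^2\big)^3$, i.e. $m_\rho\gtrsim-(\rho-\rho_*)^3$ as $\rho\to\rho_*^+$, before the common weakening $(1-s)^3\le 1-s$; since this cube rate matches the upper bound of Lemma \ref{L5.upper}, keeping it would pin down the rate $|m_\rho|\sim(\rho-\rho_*)^3$, which is relevant to the question on matching bounds for $m_\rho$ raised at the end of Section 6.
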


\proof
Let $\rho>\rho_*$ and $u_\rho\in \cS_\rho$ be a minimizing function for $F_{|_{\cS_\rho}}$.

Taking into account \eqref{sys-a1}, and  $3|u_\rho|_4^4=2\cN(u_\rho)-4|\D u_\rho|_2^2$ by system \eqref{abc-syst}, then
$$
m_\rho=-\frac18 |u_\rho|_4^4=-\frac{(\cN(  u_\rho)-2|\n   u_\rho|_2^2)^3}{27|  u_\rho|_4^8}
\ge
-\frac{\left(1-\frac{\rho_*^2}{\rho^2}\right)\cN (u_\rho)^3}{c\frac{1}{\rho^4} \cN (u_\rho)^3}=
-C \rho^4\left(1-\left(\frac{\rho_*}{\rho}\right)^2\right),
$$
by \eqref{GNCh} and \eqref{stima-CB}.
\qed

\medskip

\begin{rem}
Additionally,  for $\alpha=1$ we have exactly $m_\rho\sim-\rho^4$ as $\rho\to\infty$, by the same arguments used to prove \eqref{eAs>1 infty}.
\end{rem}

\begin{lemma}\label{l-upper1}
    $\alpha=1$ and  $\rho>\rho_*$.
    Then $\lambda_\rho\lesssim |m_\rho|^\frac{2}{3}\lesssim(\rho-\rho_*)^{2/3}\to 0$, as $\rho\to\rho_*$.
\end{lemma}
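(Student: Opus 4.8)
The plan is to reduce the whole estimate to an upper bound on the kinetic energy $A_\rho:=|\D u_\rho|_2^2$. Writing also $B_\rho:=|u_\rho|_4^4$ and $C_\rho:=\cN(u_\rho)$, and noting that for the minimiser the energy level $\mu_\rho$ appearing in \eqref{abc-syst} equals $F(u_\rho)=m_\rho<0$, the relations \eqref{sys-a1} (equivalently the last two identities in \eqref{abc-syst-plus} with $\alpha=1$) give
$$\lambda_\rho\rho^2=A_\rho+\tfrac12 B_\rho,\qquad C_\rho=2A_\rho+\tfrac32 B_\rho,\qquad B_\rho=-8m_\rho=8|m_\rho|.$$
Since $\rho>\rho_*$, these yield $\lambda_\rho\le\rho_*^{-2}\big(A_\rho+4|m_\rho|\big)$, so it suffices to prove $A_\rho\lesssim|m_\rho|^{2/3}$.

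The key step is to isolate $A_\rho$ in the middle identity and then control the nonlocal term $C_\rho$ through the $L^4$--norm rather than through the gradient. From $C_\rho=2A_\rho+\tfrac32 B_\rho\ge 2A_\rho$ I get $2A_\rho\le C_\rho$, while the interpolation estimate \eqref{stima-CB} with $\alpha=1$ reads $C_\rho=\cN(u_\rho)\le c_1\rho^{4/3}|u_\rho|_4^{8/3}=c_1\rho^{4/3}B_\rho^{2/3}$. Substituting $B_\rho=8|m_\rho|$ then gives
$$A_\rho\le\tfrac12 c_1\rho^{4/3}(8|m_\rho|)^{2/3}\lesssim|m_\rho|^{2/3},$$
where the last inequality uses that $\rho$ stays bounded as $\rho\to\rho_*^+$.

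Combining the two bounds, $\lambda_\rho\le\rho_*^{-2}\big(A_\rho+4|m_\rho|\big)\lesssim|m_\rho|^{2/3}+|m_\rho|\lesssim|m_\rho|^{2/3}$, since $|m_\rho|\to0$ as $\rho\to\rho_*^+$ forces $|m_\rho|\le|m_\rho|^{2/3}$ for $\rho$ near $\rho_*$; this is the first inequality. The second inequality $|m_\rho|^{2/3}\lesssim(\rho-\rho_*)^{2/3}$, together with the convergence to $0$, then follows at once from Lemma \ref{L5.4}, which yields $|m_\rho|\lesssim\rho-\rho_*$ as $\rho\to\rho_*^+$.

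I do not expect a genuine obstacle here: once $u_\rho$ is known to satisfy the system \eqref{abc-syst} (guaranteed by $\lambda_\rho>0$ and the regularity of the minimiser from Proposition \ref{reg}), the argument is purely algebraic. The only point requiring care is the choice of interpolation inequality -- using \eqref{stima-CB}, which bounds $\cN$ by $|u|_4$, is essential, whereas the alternative bound \eqref{stima2} through $|\D u|_2$ would reintroduce $A_\rho$ on the right-hand side and produce a circular estimate.
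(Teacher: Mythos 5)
Your proof is correct and follows essentially the same route as the paper: both rest on the algebraic relations \eqref{sys-a1} (in particular $B_\rho=8|m_\rho|$), the interpolation bound \eqref{stima-CB} with $\alpha=1$ to control $\cN(u_\rho)$ by $|u_\rho|_4^{8/3}$, and Lemma \ref{L5.4} for the $(\rho-\rho_*)^{2/3}$ bound. Your detour through $A_\rho$ (via $2A_\rho\le C_\rho$ and $\lambda_\rho\rho^2=A_\rho+4|m_\rho|$) is just a rearrangement of the paper's direct estimate $2\lambda_\rho\rho^2-4m_\rho=C_\rho\lesssim|m_\rho|^{2/3}$.
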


\proof
Let $\rho>\rho_*$, $\rho\to\rho_*>0$,  then $m_\rho\to 0$ by Lemma \ref{L5.4}.
Let $u_\rho$ be the minimizer for $F$ on $\cS_\rho$.
From \eqref{stima-CB} and \eqref{sys-a1} we conclude that
$$
2\lambda_\rho\rho^2 -4m_\rho=\cN(u_\rho)
\le
 {c}_{1} \,\rho^{\frac{4}{3}}|u_\rho|_4^{\frac{8}{3}}=8^\frac23  {c}_{1} \,\rho^{\frac{4}{3}}|m_\rho|^\frac{2}{3}
$$
and
$$
0\le \lambda_\rho\rho^2
\le 2m_\rho+2 {c}_{1} \,\rho^{\frac{4}{3}}|m_\rho|^\frac{2}{3}\simeq 2 {c}_{1} \,\rho_*^{\frac{4}{3}}|m_\rho|^\frac{2}{3}\to 0,\quad\mbox{ as }\rho\to\rho_*^+
$$
as required.
\qed

\bigskip
Using the upper bound from Lemma \ref{l-upper1}, we deduce the convergence to the Choquard limit after a rescaling. Let $\alpha=1$ and $\rho\to\rho_*$.
For $u\in\cS_\rho$, consider the rescaling
\begin{equation}\label{resc-1}
    u(x)\quad\mapsto\quad v_{\lambda_\rho}(x):=\lambda_{\rho}^{-3/4}u\Big(\frac{x}{\sqrt{\lambda_\rho}}\Big)\in \cS_{\rho},
\end{equation}
and recall that $\lambda_\rho\lesssim(\rho-\rho_*)^{2/3}\to 0$.
Set
$$
\bar{F}_\rho(v):=E_\ch(v)+\tfrac{1}{4}\lambda_\rho^{1/2}|v|_4^4,\qquad v\in \cS_\rho.
$$
Notice that by \eqref{resc-1},
$$
F_\rho(v_{\lambda_\rho})=\lambda_\rho^{-1}F(u),\qquad u\in \cS_\rho.
$$
Therefore,
\beq
\label{UN}
\bar m_\rho:=\min_{\cS_\rho}\bar{F}_\rho=\lambda_\rho^{-1}m_\rho .
\eeq
Moreover, if $u_\rho\mapsto\bar v_\rho\in \cS_\rho$ is the rescaling \eqref{resc-1} of the global minimizer of $F$ on $\cS_\rho$, then $\bar v_\rho$ satisfies the Euler--Lagrange equation
\beq
\label{001}
-\Delta \bar v_\rho+\bar v_\rho + \lambda_\rho^{1/2}\bar v_\rho^3=(I_1*\bar v_\rho^2)\bar v_\rho\quad\text{in $\R^3$}.
\eeq

\begin{prop}
    Let $\alpha=1$ and $\rho>\rho_*$.
    Then $\lambda_\rho\to 0$ as $\rho\to\rho_*$, and  the rescaling
    $$\bar v_{\lambda_\rho}(x)=\lambda_{\rho}^{-3/4}u_{\rho}\Big(\frac{x}{\sqrt{\lambda_\rho}}\Big)$$
    of the minimizer $u_\rho$ converges (up to a subsequence) in $H^1$ and $L^4$ to a ground state $w_*>0$ of the Choquard equation \eqref{ChL2}.
\end{prop}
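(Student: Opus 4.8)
The plan is to run the argument on the rescaled minimizers $\bar v_\rho:=\bar v_{\lambda_\rho}$ from \eqref{resc-1}, which solve the Euler--Lagrange equation \eqref{001}, using the explicit algebraic system \eqref{sys-a1} to track every norm. The convergence $\lambda_\rho\to0$ as $\rho\to\rho_*^+$ is Lemma \ref{l-upper1}. Since the rescaling preserves the $L^2$ mass, $|\bar v_\rho|_2^2=\rho^2$, and the scaling law of each term combined with \eqref{sys-a1} (recall $\mu_\rho=m_\rho$ for the minimizer) gives
\begin{equation*}
|\D\bar v_\rho|_2^2=\rho^2+4\lambda_\rho^{-1}m_\rho,\qquad \cN(\bar v_\rho)=2\rho^2-4\lambda_\rho^{-1}m_\rho,\qquad \lambda_\rho^{1/2}|\bar v_\rho|_4^4=-8\lambda_\rho^{-1}m_\rho.
\end{equation*}
Because $m_\rho<0$, nonnegativity of $|\D\bar v_\rho|_2^2$ forces $\lambda_\rho^{-1}|m_\rho|\le\rho^2/4$; hence $|\D\bar v_\rho|_2^2\le\rho^2$ and $\{\bar v_\rho\}$ is bounded in $H^1$.

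First I would extract a subsequence with $\bar v_\rho\rightharpoonup w_*$ weakly in $H^1$; by the radial compactness $H^1_{\rad}\hookrightarrow L^p$ for $p\in(2,6)$ the convergence is strong in $L^3$ and $L^4$, so by Lemma \ref{lemmaConvNonLoc} $\cN(\bar v_\rho)\to\cN(w_*)$. The displayed identities give $\cN(\bar v_\rho)\ge2\rho^2$, whence $\cN(w_*)\ge2\rho_*^2>0$ and $w_*\neq0$. In the weak form of \eqref{001} the local term is negligible, $\lambda_\rho^{1/2}\int\bar v_\rho^3\varphi\to0$, since $\lambda_\rho^{1/2}\to0$ and $\{\bar v_\rho^3\}$ is bounded in $L^2$ by the $L^6$ bound; passing to the limit in the remaining terms (Lemma \ref{lemmaConvNonLoc} for the nonlocal one) shows that $w_*$ is a nontrivial weak solution of the Choquard equation \eqref{ChL2}.

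Along the subsequence set $\ell:=\lim\lambda_\rho^{-1}|m_\rho|\in[0,\rho_*^2/4]$, so that $\cN(w_*)=2\rho_*^2+4\ell$ and $|\D\bar v_\rho|_2^2\to\rho_*^2-4\ell$. The Nehari and Pohozaev identities for \eqref{ChL2} (which for its ground states reduce to \eqref{1123}) give, for the limit solution, $|\D w_*|_2^2=|w_*|_2^2$ and $\cN(w_*)=2|w_*|_2^2$; comparing yields $|w_*|_2^2=\rho_*^2+2\ell$. Weak lower semicontinuity forces $|w_*|_2^2\le\lim|\bar v_\rho|_2^2=\rho_*^2$, hence $\ell\le0$ and thus $\ell=0$. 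Therefore $|w_*|_2=\rho_*$, so $w_*\in\cS_{\rho_*}$ with $E_\ch(w_*)=\tfrac12\rho_*^2-\tfrac14\cdot2\rho_*^2=0=m^\ch_{\rho_*}$, and by Proposition \ref{pCmin} $w_*$ is a ground state of \eqref{ChL2}. Finally $\ell=0$ also yields $|\bar v_\rho|_2^2\to|w_*|_2^2$ and $|\D\bar v_\rho|_2^2\to|\D w_*|_2^2$, which upgrade the weak convergence to strong convergence in $H^1$, hence in $L^4$; positivity $w_*>0$ follows from $\bar v_\rho>0$ and the maximum principle.

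The crux of the argument is excluding loss of $L^2$ mass at infinity in the passage to the weak limit: a priori one could have $|w_*|_2<\rho_*$, in which case $w_*$ would fail to be a minimizer on $\cS_{\rho_*}$. This is precisely what the rigidity above rules out: the Pohozaev constraint $\cN(w_*)=2|w_*|_2^2$ for the limit, together with the value $\cN(w_*)=2\rho_*^2+4\ell\ge2\rho_*^2$ inherited from the uniform bounds, is incompatible with any strict mass loss and forces $\ell=0$, i.e.\ full mass retention and strong convergence. A subordinate technical point, already handled, is the asymptotic vanishing of the defocusing local term $\lambda_\rho^{1/2}\bar v_\rho^3$, which is immediate from the $H^1$ bound and $\lambda_\rho\to0$.
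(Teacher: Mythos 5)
Your proposal is correct, but it takes a genuinely different route from the paper. The paper's proof is a squeeze-plus-citation argument: using the same identities from \eqref{sys-a1} it observes that, since $m_\rho<0$,
$$\frac{\rho_*^{2}}{2}\;\le\;\frac{|\bar v_{\lambda_\rho}|_2^2\,|\D \bar v_{\lambda_\rho}|_2^{2}}{\cN(\bar v_{\lambda_\rho})}=\frac{\rho^2\big(\rho^2+4\tfrac{m_\rho}{\lambda_\rho}\big)}{2\rho^2 -4\tfrac{m_\rho}{\lambda_\rho}}\;\le\;\frac{\rho^2}{2}\;\to\;\frac{\rho_*^{2}}{2},$$
so that $m_\rho/\lambda_\rho\to 0$ and $\bar v_{\lambda_\rho}$ is a minimizing family for the Gagliardo--Nirenberg quotient \eqref{GNCh}; compactness and convergence to a ground state are then delegated entirely to \cite[Lemma 2.3]{Ye}. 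You instead never form the quotient: you extract a weak $H^1$ limit, use the radial compact embedding and Lemma \ref{lemmaConvNonLoc} to pass to the limit in \eqref{001} (the local term vanishing since $\lambda_\rho^{1/2}\to0$), identify $w_*$ as a nontrivial solution of \eqref{ChL2}, and then run a rigidity argument: the Nehari--Poho\v zaev relations for $w_*$ give $\cN(w_*)=2|w_*|_2^2$, which against $\cN(w_*)=2\rho_*^2+4\ell$ and weak lower semicontinuity of the mass forces $\ell=\lim\lambda_\rho^{-1}|m_\rho|=0$; this simultaneously recovers the paper's \eqref{e6.7}, excludes mass loss, and upgrades to strong $H^1$ convergence. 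Your version is self-contained where the paper leans on Ye's concentration-compactness lemma, and it makes explicit \emph{why} no mass escapes; the price is that it uses radial symmetry of $u_\rho$ essentially (for the compact embedding), whereas the paper's quotient argument would not need it. Two minor points to tighten: applying the Poho\v zaev identity to the weak limit $w_*$ requires the standard regularity of $H^1$ solutions of the Choquard equation (as in Proposition \ref{p-Phz} and \cite{MvS}), which you should state; and the final appeal should be to the "if and only if" characterization of ground states via achievement of $S_1$ in \eqref{GNCh} (or to the fact that every solution of \eqref{ChL2} has action $\tfrac12|w|_2^2\ge\tfrac12\rho_*^2$), rather than to Proposition \ref{pCmin}, which only states the converse implication.
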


\proof
The claim $\lambda_\rho\to 0$ as $\rho\to\rho_*$ follows from Lemma \ref{l-upper1}.
We shall study the properties of $\bar v_{\lambda_\rho}$.
By construction,
$$|\bar v_{\lambda_\rho}|_2=|u_{\rho}|_2=\rho\to\rho_*.$$
Note also that (cf. \eqref{sys-a1})
$$|\nabla\bar v_{\lambda_\rho}|_2^2=\rho^2+4\frac{m_\rho}{\lambda_\rho},\quad \cN(\bar v_{\lambda_\rho})=2\rho^2 -4\frac{m_\rho}{\lambda_\rho}.$$
Then, taking into account that $m_\rho<0$,
\begin{equation}\label{min-GN}
    \frac{\rho_*^{2}}{2}\le
    \frac{|\bar v_{\lambda_\rho}|_2^2|\D \bar v_{\lambda_\rho}|_2^{2}}{\cN(\bar v_{\lambda_\rho})}=\frac{\rho^2\left(\rho^2+4\frac{m_\rho}{\lambda_\rho}\right)}{2\rho^2 -4\frac{m_\rho}{\lambda_\rho}}\le \frac{\rho^2}{2} \to \frac{\rho_*^{2}}{2}.
\end{equation}
In particular,
\beq
\label{e6.7}
\frac{m_\rho}{\lambda_\rho}=o(1)\quad\mbox{ as }\rho\to\rho_*^+,
\eeq
otherwise we would get a contradiction with the lower bound in \eqref{min-GN}.

Thus \eqref{min-GN} implies that $\bar v_{\lambda_\rho}$ is a minimizing sequence for the Gagliardo--Nirenberg quotient \eqref{GNCh}. By \cite[Lemma 2.3]{Ye},  $\bar v_{\lambda_\rho}$ has a subsequence that converges to a ground state $w_*$ of \eqref{ChL2} in $H^1$, and hence also in $L^4$.
Moreover, $w_*$ is a  solution of  \eqref{ChL2}, by \eqref{001}.
\qed

\begin{rem}
By \eqref{e6.7} and \eqref{UN} we can deduce that $\bar m_\rho\to 0$ as $\rho\to\rho_*$.
\end{rem}

Next we deduce an upper bound on $m_\rho$ and lower bound on $\lambda_\rho$, which however are weaker than the opposite bounds we already have.

\begin{lemma}
    \label{L5.upper}
    Let $\alpha=1$ and $\rho>\rho_*$.
    Then $m_\rho\lesssim-(\rho-\rho_*)^3$ and $\lambda_\rho\gtrsim -m_\rho\gtrsim(\rho-\rho_*)^3$ as $\rho\to\rho_*^+$.
\end{lemma}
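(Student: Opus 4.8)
The plan is to obtain the upper bound on $m_\rho$ by testing $F$ against a rescaled copy of the $L^2$--critical Choquard ground state $w_*$, and then to read off the lower bound on $\lambda_\rho$ directly from the algebraic system \eqref{sys-a1}.

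First I would fix the trial function
\beq
u_t(x):=\frac{\rho}{\rho_*}\,t^{3/2}w_*(tx)\in\cS_\rho,\qquad t>0,
\eeq
already employed in the proof of Lemma \ref{L6.1}. Using the scaling identities $|\D u_t|_2^2=(\rho/\rho_*)^2t^2|\D w_*|_2^2$, $|u_t|_4^4=(\rho/\rho_*)^4t^3|w_*|_4^4$ and, since $\alpha=1$, $\cN(u_t)=(\rho/\rho_*)^4t^2\cN(w_*)$, together with the critical relations $\cN(w_*)=2|\D w_*|_2^2=2\rho_*^2$ from \eqref{1123}, the two $t^2$--terms combine into
\beq
F(u_t)=\frac14\cN(w_*)\Big(\tfrac{\rho}{\rho_*}\Big)^2\Big[1-\Big(\tfrac{\rho}{\rho_*}\Big)^2\Big]\,t^2+\frac14\Big(\tfrac{\rho}{\rho_*}\Big)^4|w_*|_4^4\,t^3=:P\,t^2+Q\,t^3.
\eeq
The decisive point is the exact cancellation forced by the $L^2$--critical scaling at $\alpha=1$: for $\rho>\rho_*$ the coefficient $P$ is negative and, as $\rho\to\rho_*^+$, behaves like $P\simeq-\rho_*(\rho-\rho_*)$, whereas $Q\to\tfrac14|w_*|_4^4>0$ stays bounded away from $0$.

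Next I would minimise the cubic $P\,t^2+Q\,t^3$ explicitly over $t>0$. The minimiser is $t_*=-2P/(3Q)>0$ and the minimal value is $F(u_{t_*})=\tfrac{4P^3}{27Q^2}$. Substituting the asymptotics of $P$ and $Q$ gives
\beq
m_\rho\le F(u_{t_*})=\frac{4P^3}{27Q^2}\simeq -C(\rho-\rho_*)^3,\qquad C>0,
\eeq
which is precisely $m_\rho\lesssim-(\rho-\rho_*)^3$. One should note that $t_*\to0$ causes no difficulty: the trial function lies in $\cS_\rho$ for every $t>0$ and the evaluation of $F(u_{t_*})$ is an exact identity, so no uniformity issue arises.

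Finally, for the Lagrange multiplier I would invoke the first relation in \eqref{sys-a1}, namely $|\D u_\rho|_2^2=\lambda_\rho\rho^2+4m_\rho\ge0$, equivalently the bound \eqref{lambdaa1}: since $m_\rho<0$ for $\rho>\rho_*$,
\beq
\lambda_\rho\ge-\frac{4m_\rho}{\rho^2}\gtrsim -m_\rho .
\eeq
Combining this with the just-established $-m_\rho\gtrsim(\rho-\rho_*)^3$ yields $\lambda_\rho\gtrsim-m_\rho\gtrsim(\rho-\rho_*)^3$, as claimed. The only genuine subtlety is the first step, namely recognising that the $L^2$--critical cancellation leaves in the $t^2$--coefficient a residual of exact order $(\rho-\rho_*)$; after that, the one--variable optimisation and the algebraic identity \eqref{sys-a1} finish the argument.
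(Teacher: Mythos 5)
Your proposal is correct and follows essentially the same route as the paper: the same trial family $\frac{\rho}{\rho_*}t^{3/2}w_*(tx)$, the same reduction via \eqref{1123} to a cubic $Pt^2+Qt^3$ with $P\simeq-\rho_*(\rho-\rho_*)$, the same explicit one-variable minimisation giving $m_\rho\lesssim-(\rho-\rho_*)^3$, and the same appeal to \eqref{sys-a1}/\eqref{lambdaa1} (Lemma \ref{l-62}) for $\lambda_\rho\ge-4m_\rho/\rho^2$. No gaps; this matches the paper's argument in all essentials.
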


\proof
For $\rho>\rho_*$, set
$$w_{\rho,t}:=\frac{\rho}{\rho_*}t^{3/2}w_*(t x)\in\cS_{\rho},$$
where $w_*\in\cS_{\rho_*}$ is a $1$-frequency minimizer for $m_{\rho_*}^\ch$.
Taking into account \eqref{1123}, we have
$$
F_\rho(w_{\rho,t})=-\frac{\rho^2_*}{2}\left(\frac{\rho}{\rho_*}\right)^2\left(\left(\frac{\rho}{\rho_*}\right)^2-1\right)\, t^2+\frac{|w_*|_4^4}{4} \left(\frac{\rho}{\rho_*}\right)^4\, t^3.
$$
Note that $F_\rho(w_{\rho,t})<0$ as $\rho>\rho_*$ and $t\to 0$.
Minimizing with respect to $t$ for a fixed $\rho>\rho_*$, we find that the minimum occurs at $t_\rho\sim \frac{\rho}{\rho_*}-1$, as $\rho\to\rho_*^+$,  and
$$m_\rho\le F_\rho(w_{\rho,{t_\rho}})\lesssim -\frac{\left(\Big(\frac{\rho}{\rho_*}\Big)^2-1\right)^3}{\Big(\frac{\rho}{\rho_*}\Big)^2}\sim -\left(\rho-\rho_*\right)^3,$$
as $\rho\to\rho_*$. Finally, we note that according to Lemma \ref{l-62},
\begin{equation*}
\lambda_\rho\ge -\frac{4m_\rho}{\rho^2}\gtrsim(\rho-\rho_*)^3,\quad\mbox{ as }\rho\to\rho_*^+.\qedhere
\end{equation*}

\begin{rem}
    Combining previous estimates together  we see that
    $$(\rho-\rho_*)^3\lesssim|m_\rho|\lesssim\lambda_\rho\lesssim|m_\rho|^{2/3}\lesssim (\rho-\rho_*)^{2/3}.$$
    It remains an open question to find matching upper and lower bounds on $m_\rho$.
\end{rem}

\subsection{Thomas--Fermi limit}
We simply observe that all arguments in the Subsection \ref{ssTF} and in the proof of Proposition \ref{pTFconv} remain valid for $\alpha=1$.

\appendix
\section{$L^2$--convergence of the fixed frequency ground states}

Recall that in  \cite[Theorem 2.5]{LZVM} it was proved that
as $\lambda\to 0$, the rescaled family of ground states $u_\lambda$ of the fixed frequency problem \eqref{Plambda},
$$
w_\lambda(x):=\lambda^{-\frac{2+\alpha}{4}}u_\lambda\big(\lambda^{-1/2}x\big),
$$
converges in $D^1\cap L^4$, up to a subsequence, to a positive radially symmetric ground state $w_*\in H^1\cap C^2$ of the {\em Choquard equation} \eqref{C1}. 
We can also assume that the convergence is almost everywhere.
We are going to show that these facts also imply the $L^2$--convergence.

\begin{lemma}\label{l-App-L2}
In the above notations, $|w_\lambda-w_*|_2\to 0$ as $\lambda\to 0$, up to a subsequence.
\end{lemma}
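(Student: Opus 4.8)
The plan is to upgrade the known $D^1\cap L^4$ and a.e.\ convergence $w_\lambda\to w_*$ to convergence in $L^2$ by isolating the single missing ingredient: a bound on the masses $|w_\lambda|_2$ that is uniform as $\lambda\to0$. First I would record the equation satisfied by the rescaled functions. Inserting the scaling $w_\lambda(x)=\lambda^{-\frac{2+\alpha}{4}}u_\lambda(\lambda^{-1/2}x)$ into $(P_\lambda)$ shows that $w_\lambda$ solves
\begin{equation*}
-\Delta w_\lambda+w_\lambda+\lambda^{\alpha/2}w_\lambda^3=(I_\alpha*w_\lambda^2)w_\lambda\quad\text{in }\R^3,
\end{equation*}
with the associated Nehari identity $|\D w_\lambda|_2^2+|w_\lambda|_2^2+\lambda^{\alpha/2}|w_\lambda|_4^4=\cN(w_\lambda)$, while the Choquard limit $w_*$ solving \eqref{C1} satisfies $|\D w_*|_2^2+|w_*|_2^2=\cN(w_*)$. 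Note that after this rescaling the linear part $-\Delta+1$ is independent of $\lambda$ and the coefficient $\lambda^{\alpha/2}\to0$.

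Granting for the moment that $\sup_\lambda|w_\lambda|_2<\infty$, the conclusion follows cleanly and uniformly in $\alpha$. Indeed, boundedness in $L^2$ together with the strong convergence $w_\lambda\to w_*$ in $L^4$ gives, through the interpolation $|w_\lambda-w_*|_p\le|w_\lambda-w_*|_2^\theta|w_\lambda-w_*|_4^{1-\theta}$, strong convergence in every $L^p$ with $p\in(2,4)$; in particular in $L^{12/(3+\alpha)}$, since $\tfrac{12}{3+\alpha}\in(2,4)$ for $\alpha\in(0,3)$. Lemma \ref{lemmaConvNonLoc} then yields $\cN(w_\lambda)\to\cN(w_*)$. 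Feeding this into the Nehari identities and using $|\D w_\lambda|_2\to|\D w_*|_2$ and $\lambda^{\alpha/2}|w_\lambda|_4^4\to0$, I obtain the norm convergence $|w_\lambda|_2^2=\cN(w_\lambda)-|\D w_\lambda|_2^2-\lambda^{\alpha/2}|w_\lambda|_4^4\to\cN(w_*)-|\D w_*|_2^2=|w_*|_2^2$. The $L^2$ bound and a.e.\ convergence identify $w_*$ as the weak $L^2$-limit of $w_\lambda$, so $\langle w_\lambda,w_*\rangle\to|w_*|_2^2$, and the Hilbert-space identity $|w_\lambda-w_*|_2^2=|w_\lambda|_2^2-2\langle w_\lambda,w_*\rangle+|w_*|_2^2\to0$ finishes the proof.

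Everything therefore reduces to the a priori mass bound, which is the heart of the matter. Combining the Nehari identity with \eqref{stima-CB} on $\cS_{|w_\lambda|_2}$ gives $|w_\lambda|_2^2\le\cN(w_\lambda)\le c_\alpha|w_\lambda|_2^{\frac43\alpha}|w_\lambda|_4^{4-\frac43\alpha}$, hence $|w_\lambda|_2^{2-\frac43\alpha}\le c_\alpha|w_\lambda|_4^{4-\frac43\alpha}$. When $\alpha<\tfrac32$ the exponent $2-\tfrac43\alpha$ is positive, and since $|w_\lambda|_4$ is bounded this immediately produces $\sup_\lambda|w_\lambda|_2<\infty$, completing the argument in that range.

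The main obstacle is to secure the same mass bound for $\alpha\in[\tfrac32,3)$, where the cheap Nehari/HLS estimate degenerates and only yields a \emph{lower} bound on $|w_\lambda|_2$. Here I would exploit the fact that the rescaled linear operator $-\Delta+1$ carries a $\lambda$-independent mass, so the decay estimate \eqref{eq-asy} transferred to $w_\lambda$ gives exponential decay $w_\lambda(x)\lesssim e^{-|x|/2}$ beyond a radius governed by $A_\alpha|w_\lambda|_2^2$. To convert this into a genuine bound I would use a cutoff energy estimate: testing the equation against $\chi_R^2w_\lambda$, with $\chi_R$ vanishing on $B_R$, gives $\int\chi_R^2w_\lambda^2\le\big(\sup_{|x|\ge R}I_\alpha*w_\lambda^2\big)\int\chi_R^2w_\lambda^2+\int|\D\chi_R|^2w_\lambda^2$, where the last term is $O(R^{-1/2})$ by the uniform $L^4$ bound. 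The potential $I_\alpha*w_\lambda^2$ splits into the contribution of the $L^4$-controlled core $B_{R_1}$, which decays uniformly at infinity, and a tail where $w_\lambda\le2\e$ by radial monotonicity and a.e.\ convergence; establishing that the latter is $\le\tfrac12$ uniformly for $|x|\ge R_0$ is the delicate point, since a naive estimate reintroduces the unknown total mass. Making this potential bound uniform in $\lambda$ — equivalently, proving tightness of $\{w_\lambda^2\}$ directly — is the crux on which the whole statement for large $\alpha$ rests.
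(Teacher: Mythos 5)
Your conditional argument (uniform mass bound $\Rightarrow$ $L^2$ convergence, via interpolation, Lemma \ref{lemmaConvNonLoc}, the Nehari identities, and weak-plus-norm convergence) is correct, and your Nehari/HLS estimate does settle the range $\alpha<\tfrac32$. But for $\alpha\in[\tfrac32,3)$ there is a genuine gap, which you yourself acknowledge: the decay/cutoff/tightness programme you sketch is not carried out, and it is delicate for exactly the reason you identify --- the bound on $I_\alpha*w_\lambda^2$ reintroduces the unknown total mass, so the argument is circular as it stands.

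The missing idea is not tightness; it is the Poho\v{z}aev identity, which you never invoke. The rescaled ground states $w_\lambda$ are regular enough ($C^2\cap W^{2,2}$) that, besides the Nehari identity you wrote, they satisfy the Poho\v{z}aev identity for the rescaled equation (cf.\ \eqref{e-Phz} and \cite[Proposition 4.1]{LZVM}),
\[
\tfrac12|\nabla w_\lambda|_2^2+\tfrac32|w_\lambda|_2^2+\tfrac34\lambda^{\alpha/2}|w_\lambda|_4^4-\tfrac{3+\alpha}{4}\cN(w_\lambda)=0 .
\]
Eliminating $\cN(w_\lambda)$ between this and the Nehari identity yields the exact formula
\[
|w_\lambda|_2^2=\frac{1+\alpha}{3-\alpha}\,|\nabla w_\lambda|_2^2+\frac{\alpha}{3-\alpha}\,\lambda^{\alpha/2}|w_\lambda|_4^4 ,
\]
valid for \emph{every} $\alpha\in(0,3)$. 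The known $D^1\cap L^4$ convergence gives $|\nabla w_\lambda|_2^2\to|\nabla w_*|_2^2$ and $\lambda^{\alpha/2}|w_\lambda|_4^4\to 0$, so the right-hand side converges to $\frac{1+\alpha}{3-\alpha}|\nabla w_*|_2^2$, which --- by the same two identities for $w_*$, where the quartic term is absent --- equals $|w_*|_2^2$. This produces in one stroke the uniform mass bound you were missing and the norm convergence $|w_\lambda|_2\to|w_*|_2$; weak $L^2$ convergence (from boundedness and a.e.\ convergence, as in your last display) then upgrades to strong $L^2$ convergence. This is precisely the paper's proof: no case distinction on $\alpha$, no decay estimates, no cutoff argument.
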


\proof
The rescaled ground states $w_\lambda\in H^1\cap W^{2,2}\cap C^2$ solve
\begin{equation}\label{C1+}
	-\Delta w+w+\lambda^{\alpha/2}|w|^2w=(I_{\alpha}*|w|^2)w\quad\text{in $\R^3$},
\end{equation}
and the rescaled energy takes the form
$$
 \bar F_\lambda(w):=\frac{1}{2}\int_{\R^3} |\D w|^2dx+\frac{1}{2}\int_{\R^3} | w|^2dx-\frac{1}{4}\cN(w)+\frac{\lambda^{\alpha/2}}{4}\int_{\R^3} |w|^4dx.
$$
Denote
\begin{equation*}\label{abc+lambda}
	A_\lambda=|\nabla w_\lambda|_2^2,\quad B_\lambda=|w_\lambda|_4^4,\quad C_\lambda=\cN(w_\lambda),\quad \bar\rho_\lambda^2=|w_\lambda|_2^2,\quad \bar\mu_\lambda=\bar F_\lambda(w_\lambda)
\end{equation*}
and
\begin{equation*}\label{abc*}
	A_*=|\nabla w_*|_2^2,\quad  \quad C_*=\cN(w_*),\quad \bar\rho_*^2=|w_\lambda|_2^2,\quad \bar\mu_*=\bar E_\ch (w_*).
\end{equation*}
From the Energy--Nehari--Poho\v zaev relations \cite[Proposition 4.1]{LZVM} we deduce for $w_\lambda$ the algebraic system
\begin{equation*}\label{abc-syst-lambda+}
	\left\{
	\begin{aligned}
		\frac12A_\lambda+\frac12\bar\rho_\lambda^2+\frac14\lambda^{\alpha/2} B_\lambda-\frac1{4} C_\lambda&=\bar\mu_\lambda\\
		A_\lambda+\bar\rho_\lambda^2+\lambda^{\alpha/2}B_\lambda-C_\lambda&=0,\\
		\frac12 A_\lambda+\frac32\bar\rho_\lambda^2+\frac34\lambda^{\alpha/2}B_\lambda-\frac{3+\alpha}{4}C_\lambda&=0,
	\end{aligned}
	\right.
\end{equation*}
and an analogous system for $w_*$, without the $B_*$-term.
In particular, we conclude that
\begin{equation}\label{abc-syst-plus-lambda_T}
	\bar\rho_\lambda^2=\frac{1+\alpha}{3-\alpha}A_\lambda+\frac{\alpha}{3-\alpha}\lambda^{\alpha/2}B_\lambda,\qquad
	 \bar\rho_*^2=\frac{1+\alpha}{3-\alpha}A_*.
	 \end{equation}	 
Since
\beq
\label{TO}
A_\lambda\simeq|\nabla w_*|_2^2,\qquad B_\lambda=|w_*|_4^4,
\eeq
by \cite[Theorem 2.5]{LZVM}, we conclude that $\{|w_\lambda|_2\}_\lambda$ is bounded and hence $w_\lambda$ weakly converges to $w_*$, up to a subsequence.
Then by \eqref{TO} and \eqref{abc-syst-plus-lambda_T}, 
\begin{equation*}\label{abc-bar}
\bar\rho_\lambda^2\ \longrightarrow\ \frac{1+\alpha}{3-\alpha}|\nabla w_*|_2^2=\bar\rho_*^2\quad\text{as $\lambda\to 0$},
\end{equation*}
that is $|w_\lambda |_2\to  |w_*|_2$. 
We conclude that $|w_\lambda-w_*|_2\to 0$ as $\lambda\to 0$, up to a subsequence.
\qed

\bigskip
\subsection*{Acknowledgements}
{\small R.M. has been supported by the ``Gruppo Nazionale per l'Analisi Matematica, la Probabilit\`a e le loro Applicazioni (GNAMPA)'' of the {\em Istituto Nazionale di Alta Matematica (INdAM)} and by the MIUR Excellence Department Project MatMod@TOV  awarded to the Department of Mathematics, University of Rome Tor Vergata, CUP E83C23000330006. G.R. has been supported by the ``Gruppo Nazionale per l'Analisi Matematica, la Probabilit\`a e le loro Applicazioni (GNAMPA)'' of the {\em Istituto Nazionale di Alta  Matematica (INdAM)} and by PRIN project 2017JPCAPN (Italy): {\em Qualitative and quantitative aspects of nonlinear PDEs}.}



\end{document}